\numberwithin{equation}{section}
\newcommand{\minus}{\scalebox{0.6}[1.0]{$-$}}
\newtheorem{thm}{Theorem}[section]
\newtheorem{lem}[thm]{Lemma}
\newtheorem{cor}[thm]{Corollary}
\newtheorem{pro}[thm]{Proposition}
\newtheorem*{prob*}{Direct Monodromy Problem}
\newtheorem*{probinv*}{Inverse Monodromy Problem}
\newtheorem*{probinvred*}{Reduced Inverse Monodromy Problem}
\newtheorem*{lem*}{Lemma}
\newtheorem*{cor*}{Corollary}
\newtheorem*{con*}{Conjecture}
\newtheorem*{thm*}{Theorem}
\newtheorem*{claim}{Claim}
\theoremstyle{definition}
\newtheorem{Def}[thm]{Definition}
\newtheorem{rem}[thm]{Remark}
\newtheorem{rem*}[thm]{Remark}
\newtheorem*{def*}{Definition}
\newcommand{\e}{\varepsilon}
\newcommand{\bb}[1]{\mathbb{ #1 }}
\newcommand{\om}{\omega}
\newcommand{\la}{\lambda}
\newcommand{\al}{\alpha}
\newcommand{\beq}{\begin{equation}}
\newcommand{\eeq}{\end{equation}}
\begin{document}

\allowdisplaybreaks

\newcommand{\arXivNumber}{1707.05222}

\renewcommand{\thefootnote}{}

\renewcommand{\PaperNumber}{002}

\FirstPageHeading

\ShortArticleName{Poles of Painlev\'e IV Rationals and their Distribution}

\ArticleName{Poles of Painlev\'e IV Rationals and their Distribution\footnote{This paper is a~contribution to the Special Issue on Painlev\'e Equations and Applications in Memory of Andrei Kapaev. The full collection is available at \href{https://www.emis.de/journals/SIGMA/Kapaev.html}{https://www.emis.de/journals/SIGMA/Kapaev.html}}}

\Author{Davide MASOERO~$^\dag$ and Pieter ROFFELSEN~$^\ddag$}

\AuthorNameForHeading{D.~Masoero and P.~Rof\/felsen}

\Address{$^\dag$~Grupo de F\'isica Matem\'atica e Departamento de Matem\'atica da Universidade de Lisboa,\\
\hphantom{$^\dag$}~Campo Grande Edif\'icio C6, 1749-016 Lisboa, Portugal}
\EmailD{\href{mailto:dmasoero@gmail.com}{dmasoero@gmail.com}}
\URLaddressD{\url{http://gfm.cii.fc.ul.pt/people/dmasoero/}}

\Address{$^\ddag$~School of Mathematics and Statistics F07, The University of Sydney, NSW 2006, Australia}
\EmailD{\href{mailto:p.roffelsen@maths.usyd.edu.au}{p.roffelsen@maths.usyd.edu.au}}

\ArticleDates{Received July 20, 2017, in f\/inal form December 18, 2017; Published online January 06, 2018}

\Abstract{We study the distribution of singularities (poles and zeros) of rational solutions of the Painlev\'e IV equation by means of the isomonodromic deformation method. Singularities are expressed in terms of the roots of generalised Hermite~$H_{m,n}$ and generalised Okamoto~$Q_{m,n}$ polynomials. We show that roots of generalised Hermite and Okamoto polynomials are described by an inverse monodromy problem for an anharmonic oscillator of degree two. As a consequence they turn out to be classif\/ied by the monodromy representation of a class of meromorphic functions with a f\/inite number of singularities introduced by Nevanlinna. We compute the asymptotic distribution of roots of the generalised Hermite polynomials in the asymptotic regime when~$m$ is large and~$n$ f\/ixed.}

\Keywords{Painlev\'e fourth equation; singularities of Painlev\'e transcendents; isomonodromic deformations; generalised Hermite polynomials;
generalised Okamoto polynomials}

\Classification{34M55; 34M56; 34M60; 33C15; 30C15}

\begin{flushright}
 \begin{minipage}{95mm}\it
	To the memory of Andrei A.~Kapaev, a master of Pain\-lev\'e equations, untiring author, referee and reviewer.
\end{minipage}
\end{flushright}

\renewcommand{\thefootnote}{\arabic{footnote}}
\setcounter{footnote}{0}

\section{Introduction}

In this paper we address, by means of the isomonodromic deformation method \cite{fokas,its86,jimbomiwaII1981}, the distribution of movable singularities (which are zeros and poles) of rational solutions of the fourth Painlev\'e equation, also called Painlev\'e~IV and denoted by ${\rm P}_{\rm IV}$, which is the following second order dif\/ferential equation,
\begin{gather}\label{eq:piv}
\om_{zz}=\displaystyle\frac{1}{2\om}\om_z^2+ \frac{3}{2} \om^3 + 4 z\om^2+2\big(z^2+1-2\theta_\infty\big)\om-\frac{8\theta_0^2}{\om}
,\qquad \theta:=(\theta_0,\theta_\infty) \in \mathbb{C}^2.
\end{gather}
We sometimes write ${\rm P}_{\rm IV}(\theta)$ to stipulate the particular parameter values in consideration.

${\rm P}_{\rm IV}$ is one of the famous six Painlev\'e equations, all of which have had an enormous impact in several branches of science,
including mathematical physics, algebraic
geometry, applied mathematics, f\/luid dynamics and statistical mechanics, see, e.g.,
\cite{deifttalk,dubrovintopological,fokasquantum} and references therein.

Special solutions, such as rational solutions, and the distribution of movable singularities have proven to be
particularly important in applications and were thoroughly
studied by means of ad-hoc methods which are unavailable for regular values or generic solutions; see, e.g.,
\cite{bertolavorob,costin12,dubrovinmazzocco,piwkb, noumi} and references therein. In the case under
consideration, we will show that singularities of rational solutions of Painlev\'e IV are characterised by the
monodromy representation of three particular classes of meromorphic
functions (introduced by Nevanlinna~\cite{nevanlinna32}) with a f\/inite number of critical points and transcendental singularities.

One of the most striking features of solutions of Painlev\'e equations is that their value distributions are often
observed to describe some approximate lattice structure, as was f\/irst discovered by Boutroux~\cite{boutroux} for solutions of Painlev\'e~I
and~II.
This is also the case for singularities of Painlev\'e~IV rationals. Indeed one of the main
inspirations of our work is the highly regular pattern of their distribution, which was
observed by Clarkson \cite{clarkson2003piv} (see~\cite{reegerfornberg2014} for more general solutions),
and which has so far eluded any rigorous clarif\/ication. By computing
the distribution of singularities in a particular asymptotic regime we furnish here the f\/irst rigorous, albeit partial, explanation\footnote{We notice that the same question for Painlev\'e~II rationals has recently been settled by many authors with a~wealth of dif\/ferent methods \cite{bertolavorob,buckinghamnoncr,buckinghamcr,millerrational,roffelsenirrational,roffelsennumber}.}.

For the sake of clarity of our exposition, before introducing our main results together with an outline of the article,
we brief\/ly review some well-known facts from the theory of
movable singularities and rational solutions of Painlev\'e~IV.

\subsection{Zeros and poles of solutions}
The Painlev\'e property implies that any local solution of ${\rm P}_{\rm IV}$ has a unique meromorphic continuation to the entire complex plane~\cite{steinmetz}. As a consequence the solution space is the set of meromorphic functions on~$\mathbb{C}$ that satisfy \eqref{eq:piv}, which we denote by
\begin{gather}\label{eq:defiwtheta}
\mathcal{W}_\theta=\{{\rm P}_\text{IV}(\theta) \ \text{transcendents}\}.
\end{gather}
Upon f\/ixing an $a\in \mathbb{C}$, any Painlev\'e IV transcendent (i.e., solution) enjoys a Laurent expansion at this point. In particular the generic Laurent expansion takes the form
\begin{gather}\label{eq:laurentgeneric}
\om(z)=b+c(z-a)+\om_2(z-a)^2+\om_3(z-a)^3+\mathcal{O}\big((z-a)^4\big),
\end{gather}
for $b\in\mathbb{C}^*$ and $c\in\mathbb{C}$, where the higher order coef\/f\/icients are of the form $\om_n=\frac{1}{b}p_n$, with~$p_n$ polynomial in $a$, $b$, $c$, $\theta$ for $n\geq 2$. However this expansion breaks down when $\omega(a)\in\{0,\infty\}$, i.e., $\omega$ has a zero or pole at $z=a$, in which case the Laurent expansions take the respective forms
\begin{gather}\label{eq:laurentzero}
\om(z)=4\epsilon\theta_0(z-a)+b(z-a)^2+\om_3(z-a)^3+\mathcal{O}\big((z-a)^4\big),\qquad \epsilon=\pm 1,
\end{gather}
where we refer to the value of $\epsilon$ as the sign of the zero, or
\begin{gather}\label{eq:laurentpole}
\om(z)=\frac{\epsilon}{z-a}-a+\om_1(z-a)+b(z-a)^2+\om_3(z-a)^3+\mathcal{O}\big((z-a)^4\big),\qquad \epsilon=\pm 1,
\end{gather}
where $\om_1=\tfrac{1}{3}\epsilon(a^2-2+4(\theta_\infty-\epsilon))$, in both cases $b\in\mathbb{C}$, and all higher order coef\/f\/icients have polynomial dependence on $a$, $b$, $\theta$; conversely, for any choice of parameters the Laurent series \eqref{eq:laurentgeneric}--\eqref{eq:laurentpole}
converge locally to a solution of ${\rm P}_{\rm IV}$. Indeed, one can f\/ind a positive constant~$C_{a,b}$ such that $|\omega_n|\leq C_{a,b}^n$, for $n\geq 3$, and thus the formal power-series actually has a non-zero radius of convergence, see, e.g.,~\cite{filipuk09, gromak}.

From an algebro-geometric point of view \cite{joshimilena, okamoto1979}, it is clear that zeros and poles of ${\rm P}_{\rm IV}$ transcendents play a fundamentally distinguished role; they are the movable singularities of solutions.

\subsection{Rational solutions}
Firstly, let us remark that ${\rm P}_{\rm IV}$ enjoys various B\"acklund transformations, which relate solutions with dif\/ferent parameter values. In particular we have transformations $\mathcal{R}_1$--$\mathcal{R}_4$, see Appendix~\ref{section:backlund} for their def\/initions, which allow us to relate the solution spaces $\mathcal{W}_\theta$ and $\mathcal{W}_{\theta'}$ whenever $\theta-\theta'\in\mathbb{Z}^2\cup (\mathbb{Z}+\tfrac{1}{2})^2$.

Painlev\'e IV admits a rational solution if and only if the parameters satisfy either
\begin{gather}
\theta_\infty\in\tfrac{1}{2}\mathbb{Z}, \qquad \theta_0-\theta_\infty \in\mathbb{Z},\label{eq:parameterhermite}
\end{gather}
or
\begin{gather}
\theta_\infty \in\tfrac{1}{2}\mathbb{Z}, \qquad
\theta_0-\theta_\infty \in\mathbb{Z}\pm \tfrac{2}{3}. \label{eq:parameterokamoto}
\end{gather}
Furthermore for any such parameter values the associated rational solution is unique \cite{gromakrussian,lukasevic,murata}.
Using the $\tau$-function formalism, Noumi and Yamada~\cite{noumiyamada} expressed these rational solutions conveniently in terms of
generalised Hermite $H_{m,n}(z)$ and generalised Okamoto polyno\-mials~$Q_{m,n}(z)$, see Appendix~\ref{appendix:generalisedhermite} for their precise def\/inition.

Firstly, the parameter cases \eqref{eq:parameterhermite}, up to equivalence $\theta_0\leftrightarrow -\theta_0$, are given by
\begin{subequations}
	\label{eq:hermiterationals}
	\begin{alignat}{4}
& \omega_{m,n}^{({\rm I})}=\frac{{\rm d}}{{\rm d}z}\log\frac{H_{m+1,n}}{H_{m,n}}, \quad && \theta_0=\tfrac{1}{2}n, \quad && \theta_\infty =m+\tfrac{1}{2}n+1, & \label{eq:hermitepar1}\\
& \omega_{m,n}^{({\rm II})} =\frac{{\rm d}}{{\rm d}z}\log\frac{H_{m,n}}{H_{m,n+1}},\quad && \theta_0=\tfrac{1}{2}m, \quad && \theta_\infty =-\tfrac{1}{2}m-n,& \label{eq:hermitepar2}\\
& \omega_{m,n}^{({\rm III})}=-2z+\frac{{\rm d}}{{\rm d}z}\log\frac{H_{m,n+1}}{H_{m+1,n}}, \quad \ \ && 	\theta_0=\tfrac{1}{2}(m+n+1),\quad \ & & \theta_\infty =\tfrac{1}{2}(n-m+1),& \label{eq:hermitepar3}
	\end{alignat}
\end{subequations}
where $m,n\in\mathbb{N}$, which we refer to as the Hermite I, II and III families respectively. Some particularly simple members of these respective families are
\begin{subequations}\label{eq:hermitesimple}
	\begin{alignat}{4}
& \omega_{0,1}^{({\rm I})} = \frac{1}{z}, \qquad && \theta_0 =\tfrac{1}{2}, \qquad && \theta_\infty=\tfrac{3}{2},&\label{eq:hermitesimple1}\\
& \omega_{1,0}^{({\rm II})}=- \frac{1}{z}, \qquad && \theta_0=\tfrac{1}{2}, \qquad && \theta_\infty =-\tfrac{1}{2},& \label{eq:hermitesimple2}\\
& \omega_{0,0}^{({\rm III})} =-2z, \qquad && \theta_0=\tfrac{1}{2},\qquad & & \theta_\infty=\tfrac{1}{2},&\label{eq:hermitesimple3}
	\end{alignat}
\end{subequations}
and the other ones can be obtained via application of the B\"acklund transformations $\mathcal{R}_1$--$\mathcal{R}_4$, as depicted in Table~\ref{table:bthermite}.

{\renewcommand{\arraystretch}{1.4}
	\begin{table}[t]
		\centering
		\begin{tabular}{l || l l l | l }
			& $\omega_{m,n}^{({\rm I})}$ & $\omega_{m,n}^{({\rm II})}$ & $\omega_{m,n}^{({\rm III})}$ & $\widetilde{\omega}_{m,n}$\\
			\hline\hline
			$\mathcal{R}_1$ & $\omega_{m+1,n-1}^{({\rm I})}$ & $\omega_{m-1,n}^{({\rm II})}$ & $\omega_{m-1,n}^{({\rm III})}$ & $\widetilde{\omega}_{m+1,n-1}$\\
			$\mathcal{R}_2$ & $\omega_{m-1,n+1}^{({\rm I})}$ & $\omega_{m+1,n}^{({\rm II})}$ & $\omega_{m+1,n}^{({\rm III})}$ & $\widetilde{\omega}_{m-1,n+1}$\\
			$\mathcal{R}_3$ & $\omega_{m,n+1}^{({\rm I})}$ & $\omega_{m+1,n-1}^{({\rm II})}$ & $\omega_{m,n+1}^{({\rm III})}$ & $\widetilde{\omega}_{m,n+1}$\\
			$\mathcal{R}_4$ & $\omega_{m,n-1}^{({\rm I})}$ & $\omega_{m-1,n+1}^{({\rm II})}$ & $\omega_{m,n-1}^{({\rm III})}$ & $\widetilde{\omega}_{m,n-1}$
		\end{tabular}
		\caption{Action of B\"acklund transformations on rational solutions of ${\rm P}_{\rm IV}$.}		\label{table:bthermite}
	\end{table}}

Secondly, the parameter cases \eqref{eq:parameterokamoto}, up to equivalence $\theta_0\leftrightarrow -\theta_0$, are given by
\begin{gather}
\widetilde{\omega}_{m,n}=-\frac{2}{3}z+\frac{{\rm d}}{{\rm d}z}\log\frac{Q_{m+1,n}}{Q_{m,n}}, \qquad \theta_0=-\tfrac{1}{6}+\tfrac{1}{2}n,
\qquad \theta_\infty=\tfrac{1}{2}(2m+n+1),	\label{eq:okamotorationals}
\end{gather}
where $m,n\in\mathbb{Z}$, which we refer to as the Okamoto family. A particularly simple member of the Okamoto family, is given by
\begin{gather*}
\widetilde{\omega}_{0,0}=-\tfrac{2}{3}z, \qquad \theta_0=-\tfrac{1}{6}, \qquad \theta_\infty=\tfrac{1}{2}.
\end{gather*}
and again the other ones can be obtained via application of the B\"acklund transformations \smash{$\mathcal{R}_1$--$\mathcal{R}_4$}, as Table~\ref{table:bthermite} shows.

Remarkably, all zeros and poles of rational solutions can be expressed as roots of the gene\-ralised Hermite and Okamoto polynomials,
see Table~\ref{table:zerosandpoles}. Therefore the study of the distribution of movable singularities of~${\rm P}_\text{IV}$ rationals
is reduced to that of zeros of the generalised Hermite and Okamoto polynomials.

{\renewcommand{\arraystretch}{1.4}
	\begin{table}[t]
		\centering
		\begin{tabular}{l || l | l | l | l }
			& zeros ($\epsilon=+1$) & zeros ($\epsilon=-1$) & poles ($\epsilon=+1$) & poles ($\epsilon=-1$)\\
			\hline\hline
			$\omega_{m,n}^{({\rm I})}$ & $H_{m+1,n-1}$ & $H_{m,n+1}$ & $H_{m+1,n}$ & $H_{m,n}$\\
			$\omega_{m,n}^{({\rm II})}$ & $H_{m-1,n+1}$ & $H_{m+1,n}$ & $H_{m,n}$ & $H_{m,n+1}$\\
			$\omega_{m,n}^{({\rm III})}$ & $H_{m,n}$ & $H_{m+1,n+1}$ & $H_{m,n+1}$ & $H_{m+1,n}$\\
			\hline
			$\widetilde{\omega}_{m,n}$ & $Q_{m+1,n-1}$ & $Q_{m,n+1}$ & $Q_{m+1,n}$ & $Q_{m,n}$\\
		\end{tabular}
\caption{The zeros and poles of rational solutions in terms of generalised Hermite and Okamoto polynomials. As an example, poles with residue $\e=+1$ of $\omega_{m,n}^{({\rm III})}$ coincide with the roots of $H_{m,n+1}$. The table is computed using equations \eqref{eq:hermiterationals} and \eqref{eq:okamotorationals} and	the action of the B\"acklund transformations~\eqref{eq:backluend1}.}\label{table:zerosandpoles}
	\end{table}
}

\subsection{Outline and main results}
In Section \ref{section:polesofrationals} we recall the isomonodromic deformation interpretation of Painlev\'e~IV, via the Garnier--Jimbo--Miwa Lax pair.
The Riemann--Hilbert correspondence associates bijectively any solution of Painlev\'e~IV to a unique monodromy datum of the Garnier--Jimbo--Miwa linear system; given a~point~$z$ in the complex plane and a solution $\omega$ of ${\rm P}_{\rm IV}$, the inverse monodromy problem furnishes the value $\omega(z)$ unless $z$ is a zero or a pole, in which cases the inverse monodromy problem for the linear system does not have any solution.

However, following \cite{piwkb}, we show that the inverse monodromy problem can be def\/ined in case of zeros and poles, and in fact it simplif\/ies to that of the anharmonic oscillator
\begin{gather*}
\psi''(\lambda) = V(\lambda;a,b,\theta)\psi(\lambda),\\ \nonumber
V(\lambda;a,b,\theta)=\lambda^2+2a\lambda+a^2+2(1-\theta_\infty)
-\big[b+\big(2\theta_\infty-\tfrac{1}{2}\big)a\big]\lambda^{-1}+\big(\theta_0^2-\tfrac{1}{4}\big)\lambda^{-2} .
\end{gather*}
The main result of this section is that the aforementioned simplif\/ication allows us the characterise zeros and poles of rational solutions exactly as
the solutions of an inverse monodromy problem concerning the anharmonic oscillator in question; see Theorem~\ref{thm:hermiteanharmonic}.

According to the beautiful theory developed by Nevanlinna and his school \cite{elfving1934, nevanlinna32}, anharmonic oscillators (in case all singularities in the complex plane are Fuchsian and apparent) naturally def\/ine Riemann surfaces which are inf\/initely-sheeted coverings of the Riemann sphere uniformised by meromorphic functions. In Section \ref{section:nevanlinna} we def\/ine three families of such Nevanlinna functions and show how they classify the zeros and poles of rational solutions. This characterisation is rather powerful, as an easy corollary gives us the solution to a previously open problem, namely
to determine exactly the number of real roots of the generalised Hermite polynomials; see Corollary \ref{cor:numberrealroots}.

Finally, in Section \ref{section:asymptotic} we study the asymptotic distribution of zeros of the generalised Hermite polynomials~$H_{m,n}$, and hence zeros and poles of corresponding rational solutions in the asymptotic regime $m \to \infty$ and~$n$ bounded. In order to state precisely our main result we need to introduce some new notation and functions. First of all, as we will f\/ind that the roots grow like $(2m+n)^{\frac12}$, it is convenient in our analysis to work with the new unknown $\al$ and `big parameter'~$E$, def\/ined by
\begin{gather}\label{eq:alE}
\al=E^{-\frac12}a,\qquad E=2m+n.
\end{gather}
The $m \cdot n $ roots of $H_{m,n}(z)$ turn out to be organised in $n$ approximately horizontal lines. We parametrise these lines by the set
\begin{gather*}
J_n:=\lbrace -n+1,-n+3, \dots , n-3, n-1 \rbrace ,
\end{gather*}
and introduce the real functions
\begin{alignat*}{4}
& f\colon \ && [-1,1] \to [-\tfrac{\pi}{4},\tfrac{\pi}{4}],\qquad && f(x)=\tfrac{1}{2}\big(x(1-x^2)^{\frac12}-\cos^{-1}(x)\big)+\frac{\pi}{4},& \\
& g_{j,E} \colon \ && (-1,1) \to \bb{R},\qquad && g_{j,E}(x) =
\frac{2 j \log \big(2E^{\frac12}\big(1-x^2\big)^{\frac34}\big)-\log F_{n,j}}{2 E (1-x^2)^{\frac12}},
\end{alignat*}
where $j\in J_n$ and the constant $F_{n,j}$ is def\/ined by
\begin{gather}\label{eq:Fnj}
F_{n,j}=\frac{\Gamma\big[\tfrac{1}{2}(1+n+j)\big]}{\Gamma\big[\tfrac{1}{2}(1+n-j)\big]}.
\end{gather}
Notice that $f$ is strictly monotone and therefore globally invertible. Finally, we def\/ine our approximate (rescaled) roots~$\al$.
\begin{Def}\label{def:aljk} For every $k$, with $|k|\leq \frac E4$ and $k$ integer if $m$ is odd or half-integer if $m$ is even, and every $j \in J_n$, we def\/ine the approximate root $\al_{j,k}=\al_R +i\al_I$, with $\al_R,\al_I \in \bb{R}$, as the unique complex number determined by the following relations
\begin{subequations}\label{eqrealandimaginaryI}
	\begin{gather}\label{eq:realI}
	f(\al_R)=\frac{\pi k}{E},\qquad \al_R \in(-1,1),\\ \label{eq:imaginaryI}
	\al_I= g_{j,E}(\al_R).
	\end{gather}
\end{subequations}
\end{Def}

The points $\al_{j,k}$ are the dominant term in the asymptotic expansion of the (rescaled) roots belonging to the 'bulk', as it is pictorially illustrated in Fig.~\ref{fig:approximate} below.

\begin{figure}[ht]\centering
\includegraphics[width=7cm]{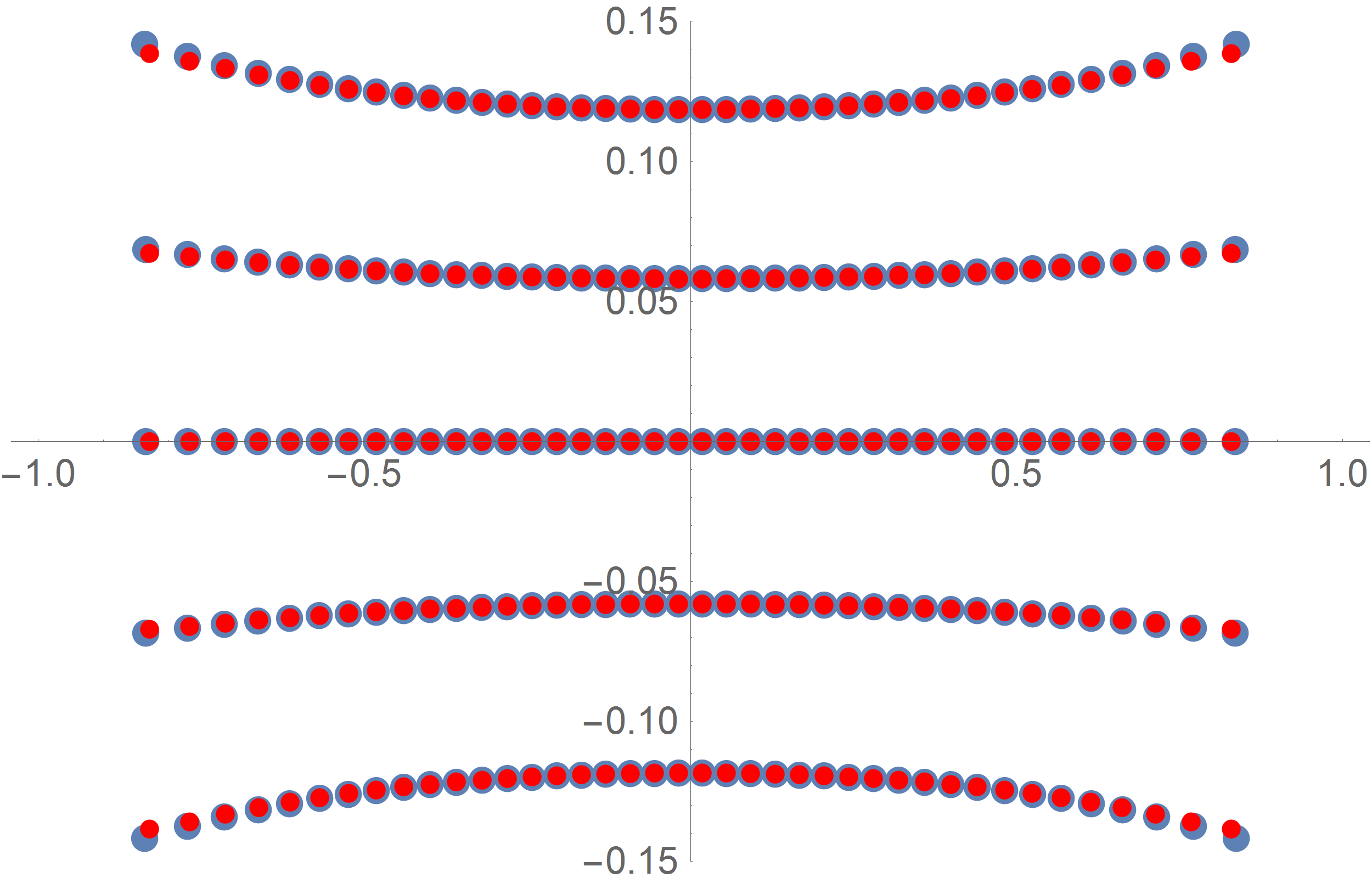} \qquad \includegraphics[width=7cm]{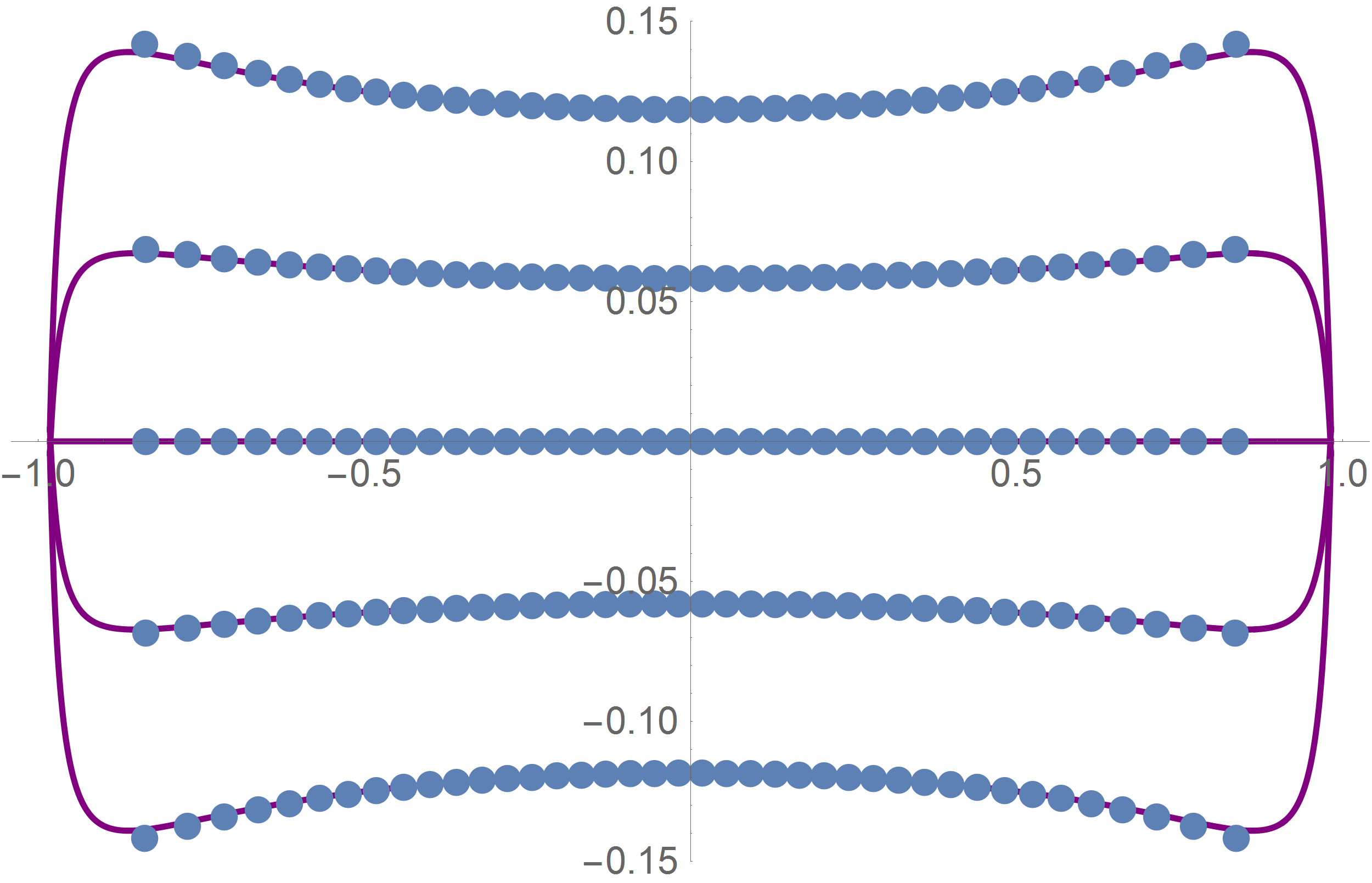}
\caption{Rescaled roots of $H_{40,5}$. In blue numerically exact location, in red the asymptotic approximation $\al_{j,k}$. In purple the curves $\Im \al= g_{j,E}(\Re \al)$, $E=85$, $j \in J_5$.} \label{fig:approximate}
\end{figure}

We consider two cases of 'bulk' behaviour. In the f\/irst case we compute the error in the approximation when we limit the real part of the roots to a~f\/ixed closed subinterval of $(-1,1)$.
\begin{thm}\label{thm:asymptoticzeros}
Fix $0<\sigma<\frac14$. There exist a constant $E_0$ and a constant $C_{\sigma}$ such that, if $E\geq E_0$, then for all $|k|\leq \sigma E$, the polynomial
	$H_{m,n}(z)$ has one and only one zero in each disc of the form $\big|z-E^{\frac12}\al_{j,k}\big|\leq C_{\sigma} E^{-\frac56}$.
\end{thm}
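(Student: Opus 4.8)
The plan is to derive the localisation from the isomonodromic characterisation of the roots together with a complex WKB analysis of the anharmonic oscillator in the semiclassical regime $E=2m+n\to\infty$. By Theorem~\ref{thm:hermiteanharmonic} the rescaled roots $\al=E^{-1/2}a$ are exactly those $\al$ for which the oscillator $\psi''=V\psi$ carries the distinguished monodromy datum attached to $H_{m,n}$. First I would rescale $\la=E^{1/2}\mu$ together with $a=E^{1/2}\al$; since $\theta_\infty\sim\tfrac12 E$ in the Hermite families \eqref{eq:hermiterationals}, the potential then acquires the semiclassical form $V=E\,Q(\mu;\al)+R(\mu;\al,n,\theta_0)$, with the large factor $E$ playing the role of $\hbar^{-2}$, the leading symbol $Q$ a rational function whose turning points stay in a fixed compact region throughout the regime under consideration, and the bounded remainder $R$ recording the $n$- and $\theta_0$-dependence. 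After eliminating the auxiliary Laurent parameter $b$, the monodromy condition becomes a single analytic equation $\mathcal F(\al,E)=0$ whose solutions are precisely the rescaled roots.

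The core of the proof is to convert the exact monodromy (Nevanlinna) conditions of Section~\ref{section:nevanlinna} into asymptotic quantisation conditions of Bohr--Sommerfeld type, in which the WKB periods $\int\sqrt{Q}\,d\mu$ along the relevant cycles are pinned to prescribed values modulo exponentially small terms. I expect the real part of the action between two turning points to reproduce exactly the function $f$ --- consistent with the identity $f'(\al_R)=(1-\al_R^2)^{1/2}$, the hallmark of a semicircular spectral density --- so that the leading condition reads $f(\al_R)=\pi k/E$, while the subleading balance between the dominant and recessive WKB solutions, governed by the formal monodromy at the singular points and producing the Gamma-ratio $F_{n,j}$, fixes the imaginary part as $\al_I=g_{j,E}(\al_R)$. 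Thus the system of Definition~\ref{def:aljk} is recognised as the WKB-leading form of the true quantisation condition, and the remaining task is to bound the discrepancy between them.

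To pass from the approximate condition to the exact count I would run a quantitative argument-principle comparison. Writing the WKB model of $\mathcal F$ as $\mathcal F_0(\al,E)$, whose zeros are the points $\al_{j,k}$, I would show that on the circle $|\al-\al_{j,k}|=C_\sigma E^{-4/3}$ --- which in the variable $z=E^{1/2}\al$ is exactly the circle $\big|z-E^{1/2}\al_{j,k}\big|=C_\sigma E^{-5/6}$ of the statement --- one has $|\mathcal F-\mathcal F_0|<|\mathcal F_0|$. Since $\mathcal F_0$ has a simple zero at $\al_{j,k}$ with $\al$-derivative of size $\sim E\,f'(\al_R)$, the modulus $|\mathcal F_0|$ is bounded below along this circle, whereas the WKB remainder feeding $|\mathcal F-\mathcal F_0|$ is of strictly smaller order; the exponent in the radius is precisely what is needed to make this inequality hold. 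Rouch\'e's theorem then produces exactly one zero of $\mathcal F$, hence one root of $H_{m,n}$, in each disc. As the inter-root spacing along a line is of order $E^{-1}$ in $\al$ (equivalently $E^{-1/2}$ in $z$) by the monotonicity of $f$, the discs are pairwise disjoint and the count is genuinely one-to-one.

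The main obstacle will be the \emph{uniform} control of the WKB remainder over the whole range $|k|\le\sigma E$ and all $j\in J_n$. As $\al_R\to\pm1$ the turning points of $Q$ coalesce, $f'(\al_R)=(1-\al_R^2)^{1/2}$ degenerates to zero, and the ordinary WKB expansion gives way to Airy-type behaviour, so that the error estimates blow up; this is exactly why the hypothesis $\sigma<\tfrac14$, equivalently $|f(\al_R)|<\tfrac{\pi}{4}$, is imposed, for it confines $\al_R$ to a fixed compact subinterval of $(-1,1)$ on which the turning points stay uniformly separated and $f'$ is bounded away from $0$. Establishing the remainder bound uniformly in $\al$ and $j$ on this subinterval, with the explicit $E$-dependence that feeds the Rouch\'e estimate, is the genuinely technical heart of the argument; once it is in place, the monotone invertibility of $f$ and the comparison above deliver both the existence and the uniqueness asserted in the theorem.
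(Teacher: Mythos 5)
Your proposal is correct and follows essentially the same route as the paper: characterise the roots by the inverse monodromy problem of Theorem~\ref{thm:hermiteanharmonic}, eliminate $b$ via the apparent-singularity (no-logarithm) condition, reduce the quantisation condition by matched Whittaker/WKB asymptotics to a model equation $\sin(\Phi(\alpha;E))=0$ whose solutions agree with the $\alpha_{j,k}$ up to higher order, and conclude with Rouch\'e's theorem on discs of radius $\sim E^{-4/3}$ in the $\alpha$-variable (i.e., $E^{-5/6}$ in $z$), with $\sigma<\frac14$ confining $\alpha_R$ to a compact subinterval of $(-1,1)$ on which all estimates are uniform. The one slip is the phrase ``modulo exponentially small terms'': the correction to the Bohr--Sommerfeld condition is in fact a power-law error of order $E^{-1/3}$ (coming from the Whittaker approximation near $\lambda=0$ and the turning-point matching), which is exactly why the radius $E^{-5/6}$ is optimal and why the Rouch\'e inequality is won by choosing the constant $C_\sigma$ large, rather than by any order-of-magnitude gap between the model and the exact Wronskian.
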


In the second case, we suppose that the real part of the (rescaled) roots belong to a closed subinterval of $(-1,1)$, which is growing to the full interval at some restricted rate, as $E$ becomes large. An interval of the kind $\big[{-}1+c E^{-1+\delta},1-cE^{-1+\delta}\big]$ for some $c>0$ and $\delta>\frac13$.
\begin{thm}\label{thm:asymptoticzerosedge}
Fix $\frac13<\delta\leq1$ and $s >0$. There exist a constant $E_0$ and a constant $C_{s}$ such that, if $E\geq E_0$, then for all $|k|\leq \big( \frac{1}{4}-s E^{-\frac{3}{2}(1-\delta)} \big) E$, the polynomial $H_{m,n}(z)$ has one and only one zero in each disc of the form $|z-E^{\frac{1}{2}}\al_{j,k}|\leq C_s E^{-\delta+\frac16}$.
\end{thm}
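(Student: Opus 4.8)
The plan is to regard Theorem~\ref{thm:asymptoticzerosedge} as a refinement of the bulk statement Theorem~\ref{thm:asymptoticzeros}, reusing the same complex-WKB machinery but carrying the dependence of every error constant on the distance of the rescaled roots to the edges $\pm1$. The starting point is the exact characterisation of Theorem~\ref{thm:hermiteanharmonic}: after the rescaling \eqref{eq:alE} the roots of $H_{m,n}$ are precisely the solutions of the inverse monodromy problem for the anharmonic oscillator $\psi''=V\psi$ in the large parameter $E=2m+n$ (with $\theta_\infty=\tfrac12E+1$). I would phrase this problem as the vanishing of a function $\Phi(\al)$, holomorphic in the complex variable $\al$, assembled from the Stokes and connection coefficients of the oscillator; the rescaled roots are exactly its zeros.

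Next I would expand the connection coefficients by WKB in $E^{\frac12}$, valid away from the turning points of $V$. The leading quantisation condition separates into a real part, governed by the action integral, and an imaginary part, governed by the subdominant exponential together with the Fuchsian data at the origin (where $\theta_0$ stays bounded). The real part reproduces exactly $f$, since $f'(x)=(1-x^2)^{\frac12}$ is the semicircle density that controls the equidistribution of the real parts $\al_R$; the imaginary part reproduces $g_{j,E}$ together with the ratio of Gamma-factors $F_{n,j}$ of \eqref{eq:Fnj}. This identifies $\al_{j,k}$ of Definition~\ref{def:aljk} as the solution of the leading-order equation $\Phi_0(\al_{j,k})=0$ and writes $\Phi=\Phi_0+(\text{error})$.

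The crucial and genuinely new step --- and the one I expect to be the main obstacle --- is to make the WKB error uniform as $\al_R\to\pm1$. At $\al=\pm1$ one of the turning points of $V$ collides with the Fuchsian singularity at the origin, so near the edge the naive WKB estimates break down and must be replaced by uniform (Airy-type) approximations in a shrinking neighbourhood of the coalescence. I would quantify the deterioration by bounding $|\Phi-\Phi_0|$ by a negative power of $E$ times a negative power of $(1-\al_R^2)$, and then insert the scaling forced by the cutoff on $k$: since $f(x)=\tfrac{\pi}{4}-\tfrac{2\sqrt2}{3}(1-x)^{\frac32}+\mathcal{O}\big((1-x)^{\frac52}\big)$ as $x\to1^-$, the hypothesis $|k|\leq\big(\tfrac14-sE^{-\frac32(1-\delta)}\big)E$ is equivalent to $1-\al_R\gtrsim E^{\delta-1}$, hence $(1-\al_R^2)^{\frac12}\gtrsim E^{\frac{\delta-1}{2}}$. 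Propagating these powers through the action and connection estimates is exactly what produces both the admissible range $\delta>\tfrac13$ and the enlarged radius $E^{-\delta+\frac16}$.

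Finally I would close with a Rouch\'e/argument-principle estimate. On the boundary of the disc $\big|z-E^{\frac12}\al_{j,k}\big|\leq C_sE^{-\delta+\frac16}$, which in the $\al$-variable reads $|\al-\al_{j,k}|\leq C_sE^{-\delta-\frac13}$, the radius is chosen large enough that $|\Phi_0|$ dominates $|\Phi-\Phi_0|$, so $\Phi$ and $\Phi_0$ have the same number of zeros inside. Since the neighbouring rescaled roots have real-part spacing $\Delta\al_R\sim E^{-\frac{1+\delta}{2}}$, the discs are pairwise disjoint precisely when $E^{-\delta-\frac13}\ll E^{-\frac{1+\delta}{2}}$, i.e.\ when $\delta>\tfrac13$; disjointness then upgrades ``at least one'' to ``one and only one''. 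Thus both the threshold $\delta>\tfrac13$ and the exponent $-\delta+\tfrac16$ are dictated by the uniform turning-point analysis of the previous paragraph rather than by the bulk estimate underlying Theorem~\ref{thm:asymptoticzeros}.
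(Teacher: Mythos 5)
Your proposal follows essentially the same route as the paper's proof in Section~\ref{sub:thm2}: that proof is exactly a re-run of the bulk machinery (Propositions~\ref{pro:apparent}, \ref{pro:estimateat0}, \ref{pro:wkbasy}, \ref{pro:wronskian} and Lemma~\ref{lem:system}) on $E$-dependent domains $D_E$ whose distance to $\al=\pm1$ is at least a constant times $E^{-1+\delta}$, tracking how each error exponent degrades with that distance. Your conversion of the cutoff on $k$ into $1-\al_R\gtrsim E^{\delta-1}$ via the expansion $f(x)=\frac{\pi}{4}-\frac{2\sqrt{2}}{3}(1-x)^{3/2}+\mathcal{O}\big((1-x)^{5/2}\big)$, your Rouch\'e set-up on circles of $\al$-radius $\sim E^{-\delta-\frac13}$, and your radius-versus-spacing count ($\Delta\al_R\sim E^{-\frac{1+\delta}{2}}$, forcing $\delta>\frac13$) all coincide with the paper's argument; the paper additionally records the constraints $\rho>1-\delta$ (validity of the Taylor lower bound on $|\sin\Phi|$) and $\rho<2\delta$ (so that the true zero $\tilde{\al}_{j,k}$ of equation~\eqref{eq:wronskian} stays inside the disc), both of which again reduce to $\delta>\frac13$ at $\rho=\delta+\frac13$. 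The one point where you deviate is the assertion that near the edge the WKB estimates ``must be replaced by uniform (Airy-type) approximations'' and that this is the main obstacle: the paper's proof of this theorem uses no Airy analysis at all. Precisely because $\delta>\frac13$ keeps $D_E$ at distance $\gg E^{-\frac23}$ from $\pm1$, the turning points $\la_+$ and $\la_-$ never approach coalescence with each other or with the Fuchsian singularity on $D_E$, so the ordinary Whittaker and WKB estimates of the bulk proof remain valid verbatim, only with weakened exponents (the paper takes $\gamma=\frac{3\delta-1}{6}$, giving errors of order $E^{-\frac{3\delta-1}{6}}$ in Propositions~\ref{pro:estimateat0} and \ref{pro:wkbasy}, hence the modified Wronskian decomposition in Proposition~\ref{pro:wronskian}). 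Airy-type analysis enters only in Section~\ref{sub:edge}, for the genuine edge regime $1-\al^2=\mathcal{O}\big(E^{-\frac23}\big)$, which lies outside the scope of Theorem~\ref{thm:asymptoticzerosedge}. So your plan would work, but the uniform turning-point analysis you single out can be dispensed with entirely: what the hypothesis $\delta>\frac13$ buys is exactly the right to avoid it.
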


In Fig.~\ref{fig:thmsnumeric} the error estimates in Theorems~\ref{thm:asymptoticzeros} and~\ref{thm:asymptoticzerosedge} are visualised. In particular they showcase that the bounds are optimal.

\begin{figure}[ht]	\centering
\begin{tabular}{ c c c c}
 \includegraphics[width=3cm]{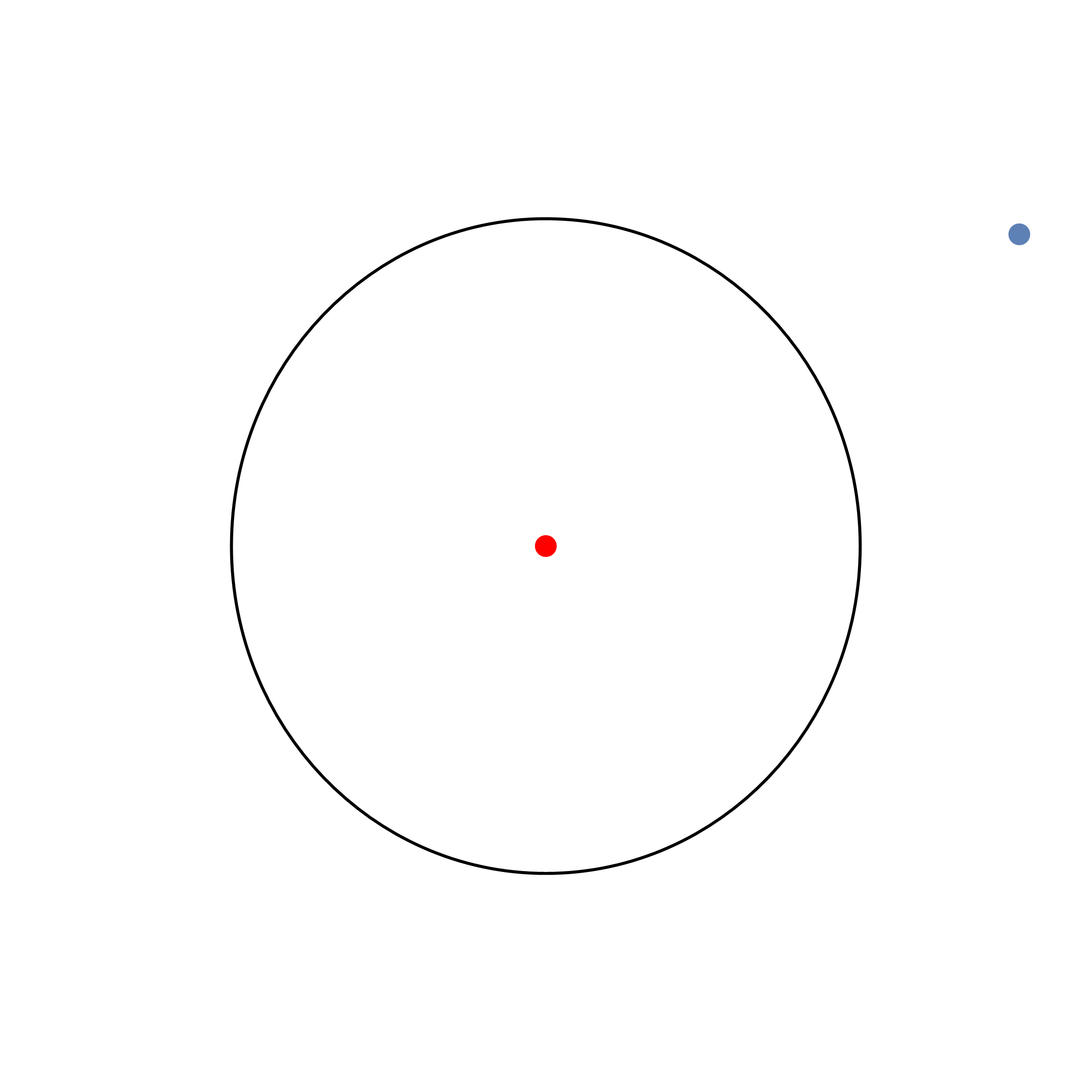} & \includegraphics[width=3cm]{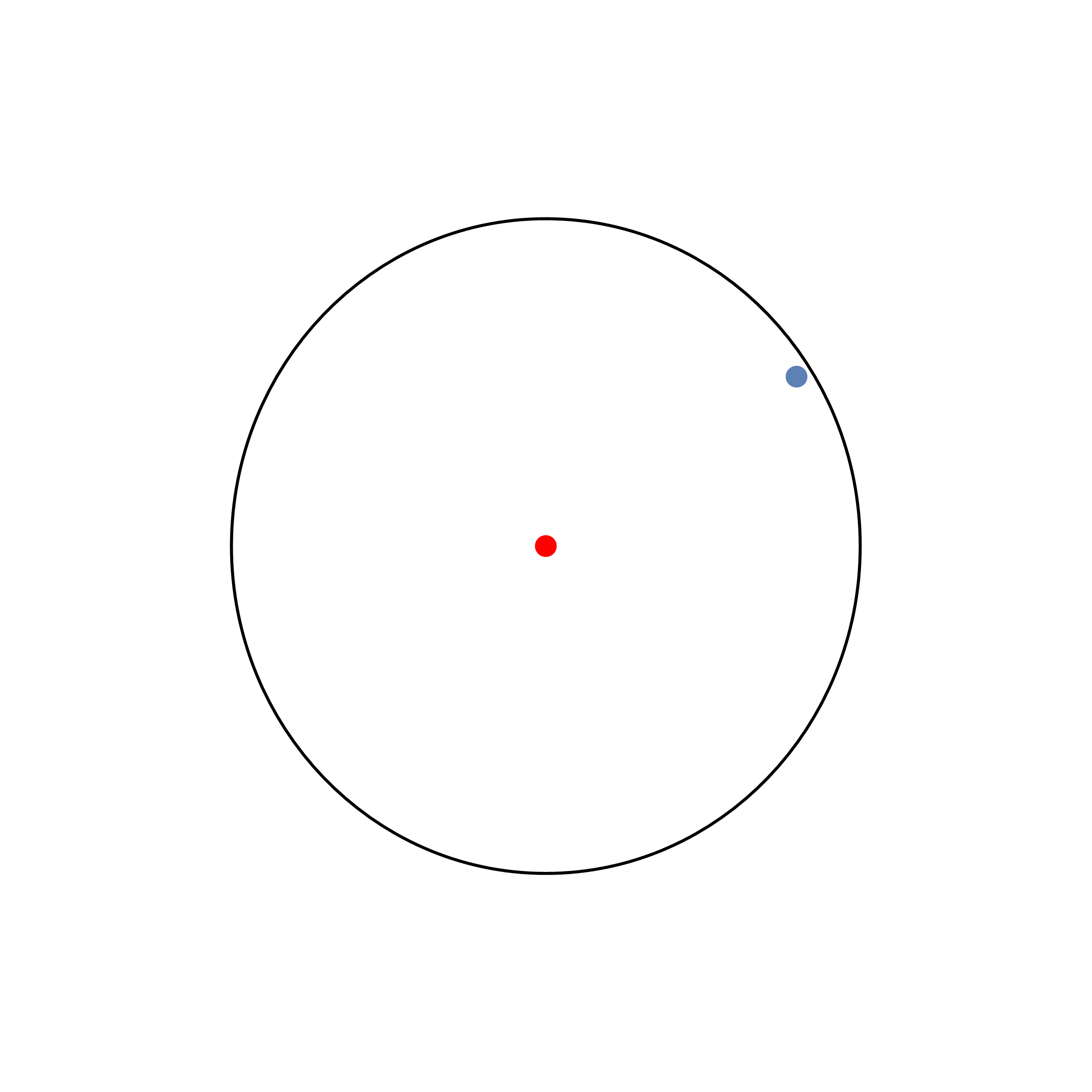} & \includegraphics[width=3cm]{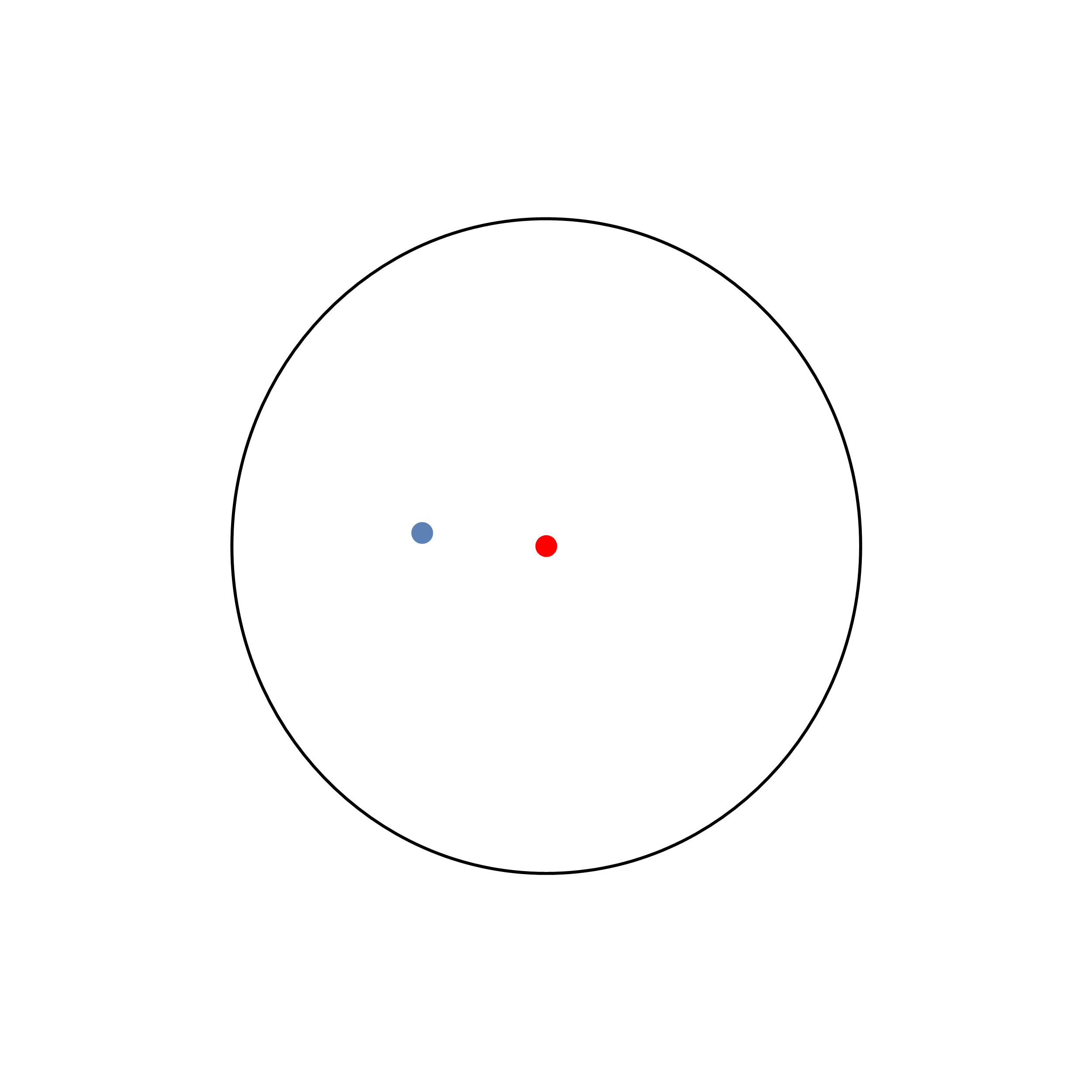}
 & \includegraphics[width=3cm]{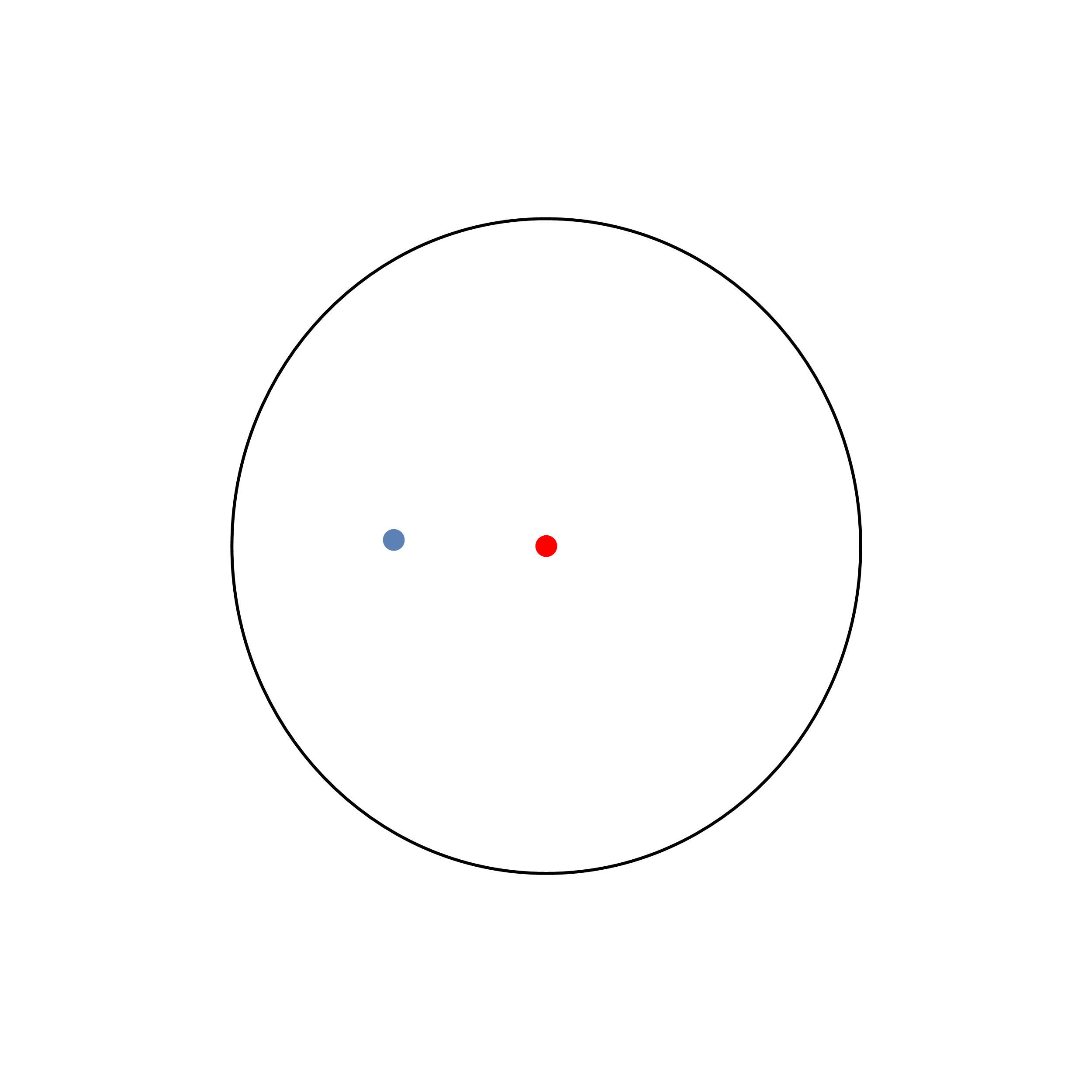}\\[-1ex]
 $m=8$ & $m=16$ & $m=100$ & $m=144$
 \end{tabular}\\
\begin{minipage}{0.8\textwidth}{\small \textbf{Bulk.} Rescaled asymptotic approximation
$\al_{4,k}$ of root of $H_{m,5}\big(E^\frac{1}{2}z\big)$ in red, encircled with a circle of radius $C_\sigma E^{-\frac{4}{3}}$,
where $C_\sigma=\tfrac{1}{3}$ and $k:=\frac{m+2}{4}\sim \tfrac{1}{8}E$ as $E\rightarrow \infty$, for ranging values of $m$.
 In blue the corresponding numerically exact location, conf\/irming the error estimate in Theorem~\ref{thm:asymptoticzeros}.}
\end{minipage}
 \begin{tabular}{ c c c c}
\includegraphics[width=3cm]{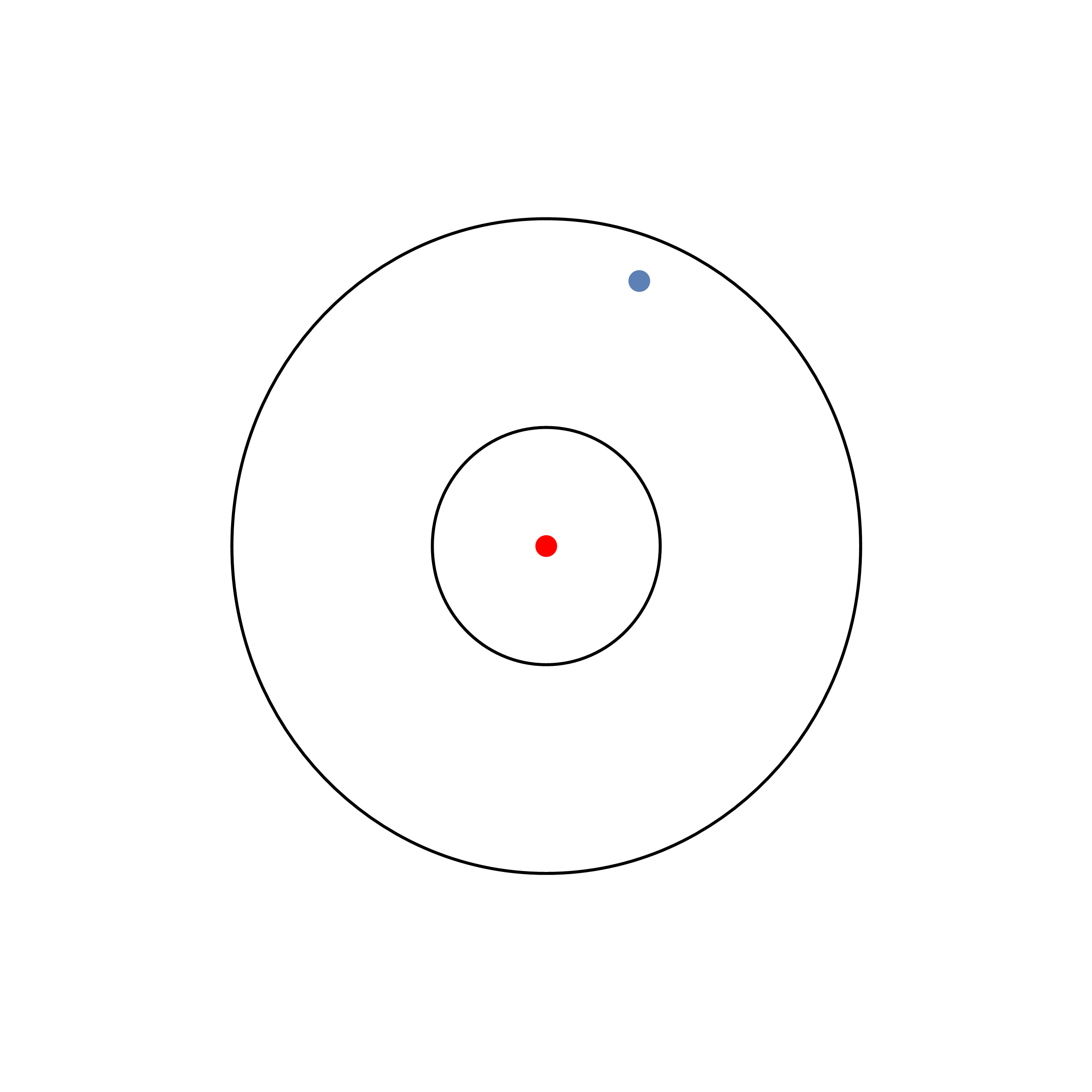} & \includegraphics[width=3cm]{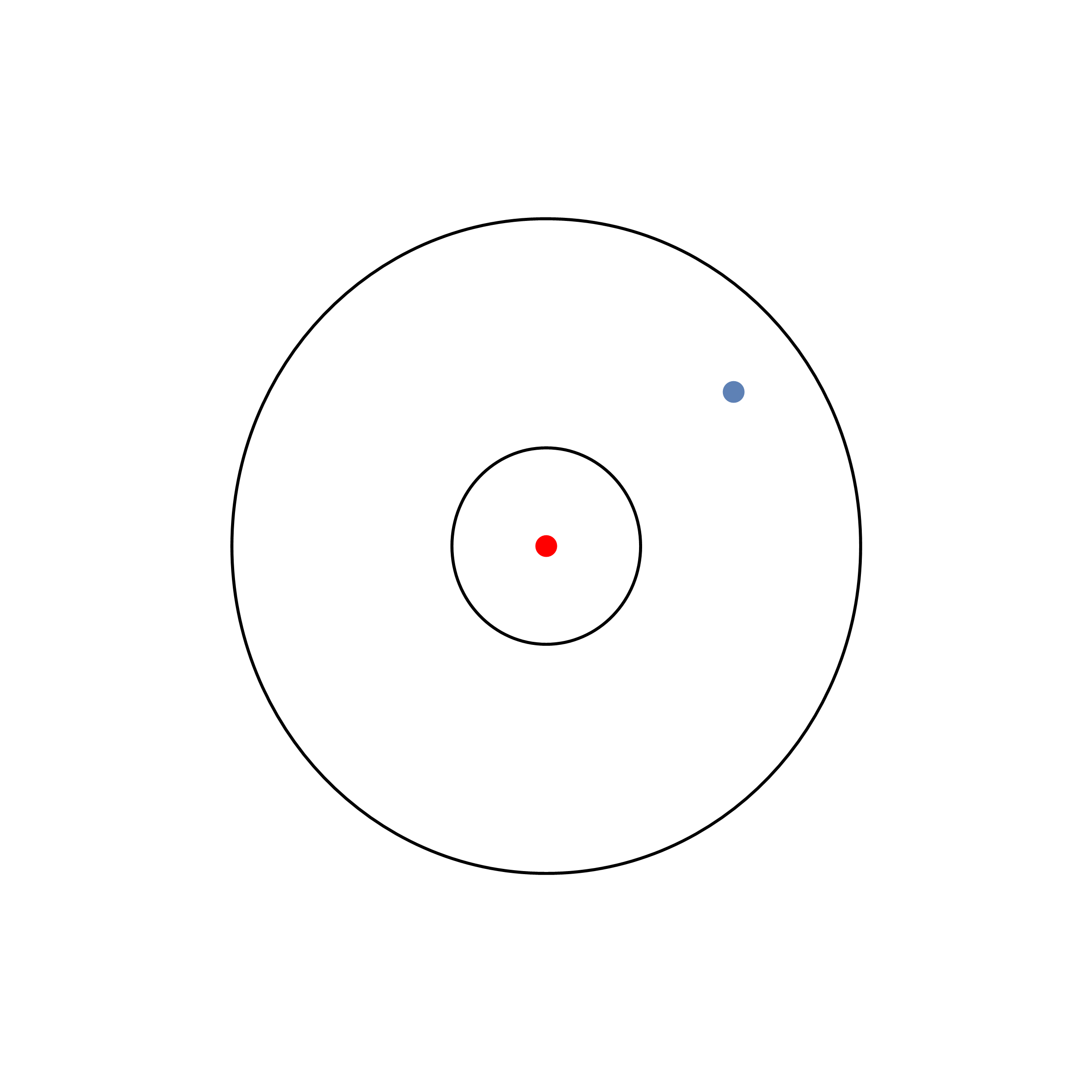} & \includegraphics[width=3cm]{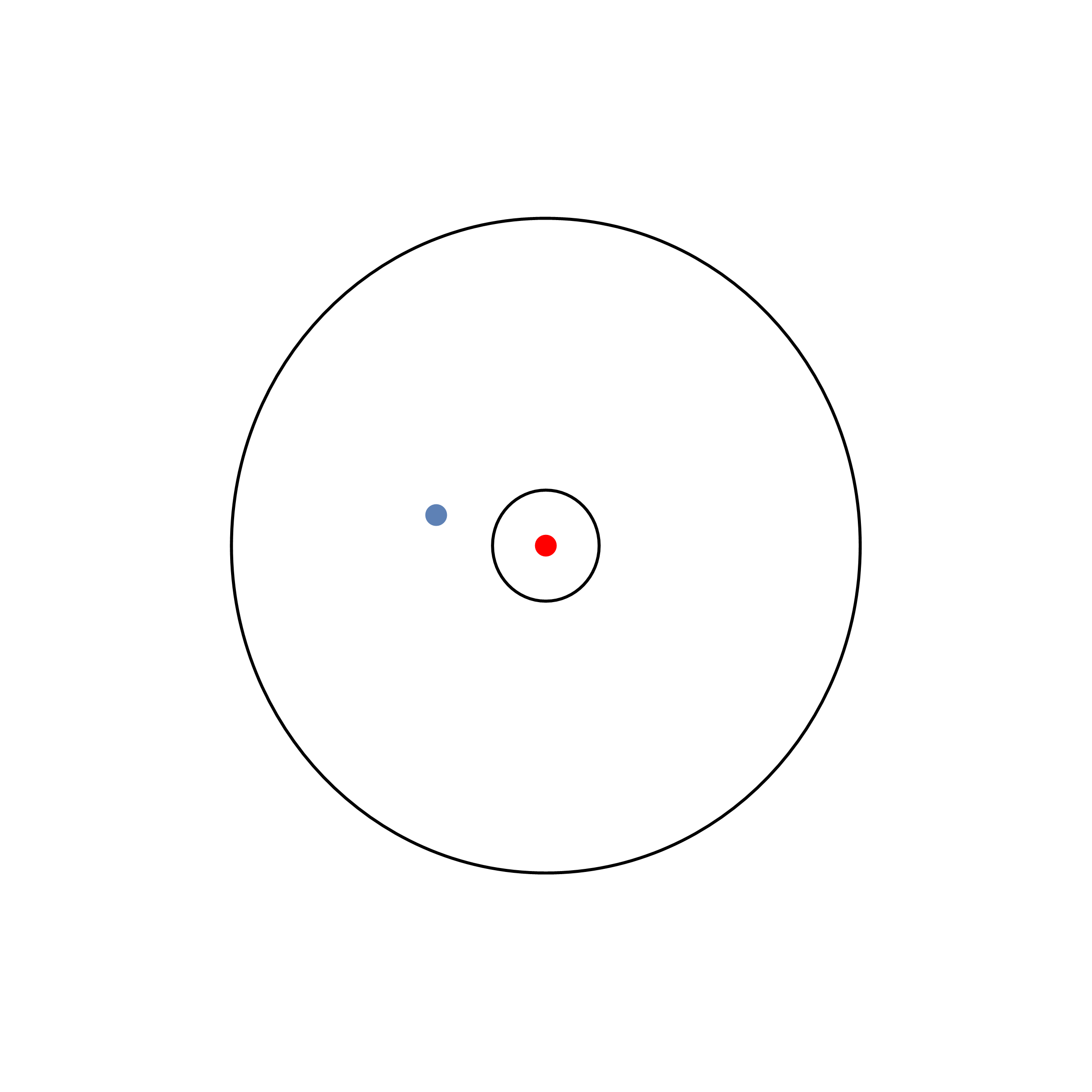} & \includegraphics[width=3cm]{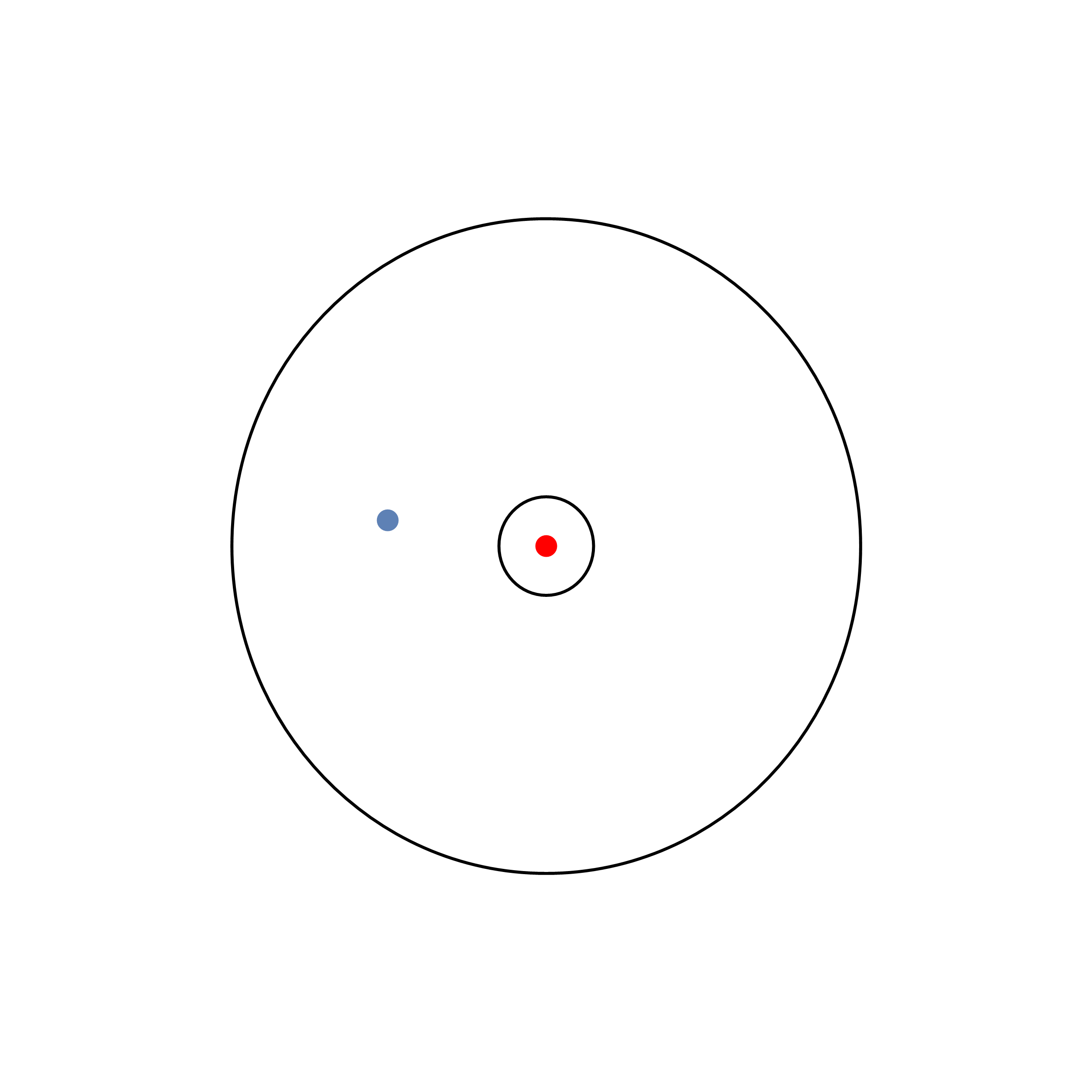}\\[-1ex]
 $m=8$ & $m=16$ & $m=100$ & $m=144$
	\end{tabular}\\
\begin{minipage}{0.8\textwidth}{\small \textbf{Approaching the edge.}
	Asymptotic approximation $\al_{4,k}$ of root of $H_{m,5}\big(E^\frac{1}{2}z\big)$ in red, encircled with two circles of radii
	$C_\sigma E^{-\frac{4}{3}}$ and $C_s E^{-1}$, where $C_\sigma=C_s=\tfrac{1}{12}$ and
	$k:=\big\lfloor\tfrac{1}{4}E-E^{\frac{1}{2}}\big\rfloor+\tfrac{1}{2}\sim \tfrac{1}{4}E$ as
	$E\rightarrow \infty$, for ranging values of $m$.		 In blue the corresponding numerically
	exact location, conf\/irming the error estimate in Theorem~\ref{thm:asymptoticzerosedge} with $s=1$ and $\delta=\tfrac{2}{3}$.}
		 \end{minipage}
\caption{Graphical illustration of errors in Theorems \ref{thm:asymptoticzeros} and~\ref{thm:asymptoticzerosedge}.} \label{fig:thmsnumeric}
\end{figure}

We end the introduction with a few remarks on our asymptotic results.
\begin{rem*}\sloppy
Notice that equation \eqref{eq:realI} for the location of the real part of the zeros of~$H_{m,n}(z)$ coincides with the equation describing the asymptotic location of the zeros of the standard Hermite polynomials \cite[Section~10.17]{bateman2}.
\end{rem*}

\begin{rem*}
	Suppose that the index $k$ in $\al_{j,k}$ is bounded by a number independent on $E$, namely $|k|\leq k_0$ for some $k_0 \in \bb{N}$,
	then the points $\al_{j,k}$ belong to the regular lattice
	\begin{gather*}
	\al_R=\frac{k \pi}{E}+\mathcal{O}\big(E^{-2}\big), \qquad \al_I=\frac{j \log \big(2 E^{\frac12} \big)
		-\frac{1}{2} \log F_{n,j}}{E}+\mathcal{O}\big(E^{-2}\big) ,
	\end{gather*}
	in the large $E$ limit. This explains the lattice-like pattern numerically observed in~\cite{clarkson2003piv}.
\end{rem*}

\begin{rem*} Our approximation scheme includes all zeros, except for a set which asymptotically has probability measure zero. Indeed let $N(A,B)$ be the counting function for $-\infty< A\leq B <\infty$ def\/ined as follows: set $N_E(A,B)$ equal to the number of rescaled roots whose real part belong to the interval $[A,B]$, divided by $m=\frac{E-n}2$, and def\/ine $N(A,B)=\lim\limits_{E\to \infty}N_E(A,B)$. An easy computation, based on the identity $ f'(x)= \sqrt{1-x^2}$, shows that
\begin{gather*}
	N(A,B)=\int_A^B \frac{2 n}{\pi} \sqrt{1-x^2} \chi_{[-1,1]}(x) {\rm d}x.
	\end{gather*}
Here $\chi_{[-1,1]}$ is the characteristic function of the interval $[-1,1]$.	This is a manifestation of Wigner's celebrated semi-circle law.
This behaviour was conjectured in~\cite{FEL}.
\end{rem*}

\begin{rem*} Even though after Theorems \ref{thm:asymptoticzeros} and \ref{thm:asymptoticzerosedge} the bulk of the interval $(-1,1)$ contains
the real part of almost all roots of the generalised Hermite polynomials in the asymptotic regime under consideration, there are roots which converge to the edge of the fundamental domain too fast for the hypothesis of Theorem~\ref{thm:asymptoticzerosedge}. In fact, their real part coalesce with $\pm1$ with speed $E^{-\frac23}$. These roots will be brief\/ly discussed in Section~\ref{sub:edge} at the end of the paper where we show that their distribution is
governed by an Airy-like behaviour.
\end{rem*}

\begin{rem*} While we were f\/inalising the present paper, the paper \cite{buckinghampiv} by R.~Buckingham appeared on the arXiv. This contains the asymptotic analysis, in the regime $m\to \infty$, $\frac{m}{n}=r$, of rational solutions of Hermite type in the `pole-free' region. Even though~\cite{buckinghampiv} does not address the distribution of poles and zeros, it contests the results of \cite{novok15} about the distribution of singularities in the regime $m=n \gg 0$. The paper \cite{novok15} was however already peer-reviewed. Since we are not able to judge on this matter, we await for the answer of its authors.
 \end{rem*}

\section{Poles of rational solutions and anharmonic oscillators}\label{section:polesofrationals}
It is well-known that Painlev\'e equations can be realised as isomonodromic deformations of linear systems. This section is dedicated to, using aforementioned formalism, giving an exact characterisation of poles of rational solutions in terms of anharmonic oscillators satisfying some prescribed constraints. Let us consider the following quadratic oscillators with centrifugal terms,
\begin{gather}
\psi''(\lambda) = V(\lambda;a,b,\theta)\psi(\lambda),\nonumber\\
V(\lambda;a,b,\theta)=\lambda^2+2a\lambda+a^2+2(1-\theta_\infty)
-\big[b+\big(2\theta_\infty-\tfrac{1}{2}\big)a\big]\lambda^{-1}+\big(\theta_0^2-\tfrac{1}{4}\big)\lambda^{-2},\label{eq:anharmoniccent}
\end{gather}
where $a,b\in\mathbb{C}$ and $\theta=(\theta_0,\theta_\infty)\in\mathbb{C}^*\times\mathbb{C}$. Without loss of generality,
we may always assume $\Re[\theta_0]\geq 0$, because of the invariance of~\eqref{eq:anharmoniccent} under $\theta_0\mapsto-\theta_0$.

Now note that $\lambda=0$ is a regular singular point with indices $\tfrac{1}{2}\pm \theta_0$, and there generically exist two linearly independent solutions $\psi_{\pm}(\lambda)$, with corresponding Frobenius expansions, uniquely def\/ined by the asymptotic behaviour at zero. However, in the resonant case
$\theta_0=\tfrac{1}{2}n\in\tfrac{1}{2}\mathbb{N}^*$, the Frobenius expansion of the dominant solution is not uniquely def\/ined by its behaviour in zero $\psi_{\minus}(\lambda)\sim\lambda^{\frac{1}{2}-\theta_0}$ as $\lambda\rightarrow 0$, and logarithmic terms may appear. Indeed, we have
\begin{gather*}
\psi_+(\lambda)=\lambda^{\tfrac{1}{2}+\theta_0}(1+\mathcal{O}(\lambda)),\qquad \lambda\rightarrow 0,\\
 \psi_-(\lambda)=c\log(\lambda) \psi_+(\lambda)+\lambda^{\tfrac{1}{2}-\theta_0}\bigg(1+\sum_{k\geq1}c_k \la^k\bigg),
\end{gather*}
where the constant $c$ and $c_k$'s are uniquely def\/ined by imposing $c_{n}=0$.

For convenience of the reader we recall the def\/inition of an \textit{apparent} singularity, a concept of remarkable importance for the rest of the paper.
\begin{Def}A resonant regular singularity of a second order linear ODE is called apparent, if the dominant solution $\psi_+$ does not have any logarithmic
contribution.

Similarly, in the case of a f\/irst order system of linear ODEs, we say that a regular singularity is apparent, if the monodromy about the singularity is a~scalar multiple of the identity.
\end{Def}

In turn, equation \eqref{eq:anharmoniccent} has an irregular singularity at $\lambda=\infty$, hence solutions exhibit the Stokes phenomenon, which we discuss brief\/ly. For each $k \in \bb{Z}_4$, we def\/ine the Stokes sector $\Omega_k=e^{\frac{k\pi i}{4}}\Omega_0$, where $\Omega_0=\{ - \frac{\pi}{4} < \arg{\lambda} <\frac{\pi}{4}\}$. In each Stokes sector there exists an (up to normalisation) unique subdominant solution $\psi_k(\lambda)$, which decays exponentially as $\lambda\rightarrow \infty$ in this sector.

Fixing the principal branch for powers of $\lambda$ in the $\lambda$-plane, with respect to the branch cut $\arg{\lambda}=-\frac{\pi}{4}$, and denoting
\begin{gather}\label{eq:gfunction}
g(\lambda,z)=\tfrac{1}{2}\lambda^2+z \lambda ,
\end{gather}
we f\/ix the following normalisation of the subdominant solution $\psi_k(\la)$,
\begin{gather} \label{eq:1psikscalar}
\psi_{k}(\lambda)=\begin{cases}
\big(1+\mathcal{O}\big(\lambda^{-1}\big)\big)e^{g(\lambda,a)}\big(\frac{1}{\lambda}\big)^{-\tfrac{1}{2}+\theta_\infty} & \text{if $k$ is odd,}\\
\big({-}\frac{1}{2}+\mathcal{O}\big(\lambda^{-1}\big)\big)e^{-g(\lambda,a)}\big(\frac{1}{\lambda}\big)^{\tfrac{3}{2}-\theta_\infty} & \text{if $k$ is even,}
\end{cases}
\end{gather}
as $\lambda\rightarrow \infty$ in $\Omega_k$, for $k\in\mathbb{Z}_4$.

We write $\psi\equiv \phi$, for solutions of \eqref{eq:anharmoniccent}, if\/f they are linearly dependent.

We have the three following characterisations of roots of Hermite polynomials~$H_{m,n}$, i.e., zeros and poles of rational solutions of Hermite type, via
an inverse monodromy problem for the anharmonic oscillator~\eqref{eq:anharmoniccent}.
\begin{thm}\label{thm:hermiteanharmonic}
Fix $m,n\in\mathbb{N}^*$, then $z=a$ is a zero of the generalised Hermite polyno\-mial~$H_{m,n}(z)$, is equivalent to any of the following three statements:
	\begin{enumerate}[label={{\rm H.\arabic*:}},ref={\rm H.\arabic*}] \itemsep=0pt
\item There exists $b\in\mathbb{C}$ $($a fortiori unique$)$, such that the anharmonic oscillator \eqref{eq:anharmoniccent} with
		$\theta_0=\tfrac{1}{2}n$ and $\theta_\infty=m+1+\tfrac{1}{2}n$, has an apparent singularity at $\lambda=0$,
		and the subdominant solution at $\lambda=\infty$ in $\Omega_0$, is also subdominant at $\lambda=0$, i.e.,
		$\psi_0(\lambda)\equiv \psi_{+}(\lambda)$.
		
		Furthermore, in such case, it turns out that $\psi_0(\lambda)\equiv \psi_2(\lambda)$ and
		\begin{gather*}
		\psi_0(\lambda)=\lambda^{\frac{1}{2}(n+1)}e^{-g(\lambda,a)}p(\lambda),
		\end{gather*}
		where $p(\lambda)$ is a polynomial of degree $m-1$ with no repeated roots and $p(0)\neq 0$,
		and $g(\la,z)$ is as def\/ined in~\eqref{eq:gfunction}. This in particular implies that
		$\psi_0(\lambda)$ has exactly $m-1$ nonzero simple roots in the complex plane.	\label{H1}

\item There exists $b\in\mathbb{C}$ $($a fortiori unique$)$, such that the anharmonic oscillator~\eqref{eq:anharmoniccent}
		with $\theta_0=\tfrac{1}{2}m$ and $\theta_\infty=1-\tfrac{1}{2}m-n$, has an apparent singularity at $\lambda=0$, and
		the subdominant solution at $\lambda=\infty$ in $\Omega_1$, is also subdominant at $\lambda=0$, i.e.,
		$\psi_1(\lambda)\equiv \psi_{+}(\lambda)$.
		
		Furthermore, in such case, it turns out $\psi_1(\lambda)\equiv \psi_3(\lambda)$ and
		\begin{gather*}
		\psi_1(\lambda)=\lambda^{\frac{1}{2}(m+1)}e^{g(\lambda,a)}p(\lambda),
		\end{gather*}
		where $p(\lambda)$ is a polynomial of degree $n-1$ with no repeated roots and $p(0)\neq 0$,
		and $g(\la,z)$ is as def\/ined in~\eqref{eq:gfunction}. This in particular implies that~$\psi_1(\lambda)$ has exactly $n-1$ nonzero simple roots in the complex plane.\label{H2}

\item There exists $b\in\mathbb{C}$ $($a fortiori unique$)$, such that, for the anharmonic oscillator~\eqref{eq:anharmoniccent} with
		$\theta_0=\tfrac{1}{2}(m+n)$ and $\theta_\infty=\tfrac{1}{2}(n-m+2)$, subdominant solutions near $\lambda=\infty$ in
		opposite Stokes sectors are linearly dependent, i.e., $\psi_0(\lambda)\equiv \psi_2(\lambda)$ and $\psi_1(\lambda)\equiv \psi_3(\lambda)$.
		
		Furthermore, in such case, it turns out that $\lambda=0$ is an apparent singularity and
			\begin{gather*}
			\psi_0(\lambda)=\lambda^{-\frac{m+n-1}{2}}e^{-g(\lambda,a)}p(\lambda),\qquad \psi_1(\lambda)=\lambda^{-\frac{m+n-1}{2}}e^{g(\lambda,a)}q(\lambda),
			\end{gather*}
		where $p(\lambda)$ and $q(\lambda)$ are polynomials of degree $n-1$ and $m-1$ respectively without repeated roots and $p(0),q(0)\neq0$,
		and $g(\la,z)$ is as def\/ined in \eqref{eq:gfunction}. This in particular implies that~$\psi_0(\lambda)$ and~$\psi_1(\lambda)$ have exactly $n-1$ and $m-1$ nonzero simple roots in the complex plane respectively.	\label{H3}
	\end{enumerate}
	\end{thm}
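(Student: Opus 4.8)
The plan is to exploit the isomonodromic reduction recalled in this section. At a zero or pole $z=a$ of a Painlev\'e IV transcendent $\omega$, the inverse monodromy problem for the Garnier--Jimbo--Miwa system degenerates and, following \cite{piwkb}, collapses onto the scalar anharmonic oscillator \eqref{eq:anharmoniccent}, whose accessory parameters $a$ and $b$ are read off from the Laurent data \eqref{eq:laurentzero}--\eqref{eq:laurentpole} and whose $\theta$ is inherited from $\omega$. I would first fix, once and for all, the dictionary between the Stokes and connection data of \eqref{eq:anharmoniccent} and the monodromy datum of $\omega$: the local monodromy at $\lambda=0$ being scalar, i.e.\ $\lambda=0$ apparent, records single-valuedness of the transcendent, while a coincidence of subdominant solutions in prescribed sectors (or of a subdominant solution at $\infty$ with the recessive solution $\psi_+$ at $0$) records the vanishing of the relevant Stokes multiplier or connection coefficient. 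Since the rational solutions are exactly those carrying the distinguished ``trivial'' monodromy datum, each of H.1--H.3 will be the transcription of this datum for one of the three admissible parameter choices attached to $H_{m,n}$.

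For the forward implication, suppose $z=a$ is a root of $H_{m,n}$. By Table~\ref{table:zerosandpoles}, $a$ is then a pole of residue $\epsilon=-1$ of a uniquely determined rational solution of Hermite type I, II or III, namely of $\omega^{({\rm I})}_{m,n}$, $\omega^{({\rm II})}_{m,n-1}$ and $\omega^{({\rm III})}_{m-1,n}$ respectively, whose parameters $(\theta_0,\theta_\infty)$ are exactly those appearing in H.1, H.2 and H.3. Inserting the pole Laurent data \eqref{eq:laurentpole} into the reduction fixes the accessory parameter $b$ and produces the oscillator \eqref{eq:anharmoniccent} with these $\theta$; the triviality of the monodromy of a rational solution then transcribes, through the dictionary above, into the apparent-singularity condition at $\lambda=0$ together with the stated coincidences of subdominant solutions. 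The three cases differ only through this parameter choice and the resulting Stokes pattern: in H.1 and H.2 a single subdominant solution at $\infty$ (in $\Omega_0$, resp.\ $\Omega_1$, as dictated by the sign of the exponential in \eqref{eq:1psikscalar}) coincides with the recessive solution $\psi_+$ at $0$, whereas in H.3 the two opposite-sector subdominant solutions coincide pairwise and $\psi_0$ matches instead the dominant Frobenius solution at $0$.

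For the converse I would invert this correspondence: the data ``$\lambda=0$ apparent together with the prescribed coincidences'' is precisely the trivial monodromy datum, so by the Riemann--Hilbert correspondence it is realised by a necessarily unique rational Painlev\'e IV solution with the given $\theta$, of which $a$ must be a singularity, hence a root of $H_{m,n}$ by Table~\ref{table:zerosandpoles}; uniqueness of $b$ follows because, once $a$ and the monodromy are fixed, the oscillator and therefore its accessory parameters are determined. The explicit quasi-polynomial forms are then extracted as follows. Apparentness removes the possible logarithmic term in the Frobenius expansion at $\lambda=0$, so the relevant subdominant-at-$\infty$ solution is a genuine power $\lambda^{\frac12\pm\theta_0}$ times a function holomorphic and non-vanishing at $\lambda=0$; since \eqref{eq:anharmoniccent} has no other finite singularity, the quotient $p(\lambda)=\lambda^{-(\frac12\pm\theta_0)}e^{\pm g(\lambda,a)}\psi(\lambda)$ extends to an entire function, which the subdominant normalisation \eqref{eq:1psikscalar} forces to grow at most polynomially, hence to be a polynomial. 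Matching the exponent of its leading term at $\infty$ against the $\theta_\infty$-dependent subdominant exponent pins the degree to $m-1$ or $n-1$ as stated; $p(0)\neq0$ is the non-vanishing of the Frobenius leading coefficient, and the absence of repeated roots follows because a double root of $p$ would make $\psi$ and $\psi'$ vanish simultaneously, forcing $\psi\equiv0$ by uniqueness for the second-order ODE. The secondary coincidences (e.g.\ $\psi_0\equiv\psi_2$ in H.1) are finally read off the asymptotics of these explicit quasi-polynomials in the opposite Stokes sectors.

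The delicate point, and the one I expect to dominate the work, is the two-way dictionary invoked in the first paragraph: proving rigorously that ``rational solution'' is equivalent to exactly these apparentness-and-coincidence conditions, with the correct identification of which Stokes sectors are involved for each family. This rests on a careful tracking of the Stokes structure of \eqref{eq:anharmoniccent} through the reduction of \cite{piwkb}, that is, on controlling how the monodromy manifold of Painlev\'e IV is mapped onto the Stokes data of the oscillator. Once that is in place, the degree-counting, the simplicity of the roots and the quasi-polynomiality are essentially bookkeeping.
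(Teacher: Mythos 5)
Your overall route is the same as the paper's: localisation of the scalar Garnier--Jimbo--Miwa pair at a pole with residue $-1$ (following \cite{piwkb}), identification via Table~\ref{table:zerosandpoles} of the roots of $H_{m,n}$ with such poles of $\om_{m,n}^{({\rm I})}$, $\om_{m,n-1}^{({\rm II})}$ and $\om_{m-1,n}^{({\rm III})}$, the monodromy data attached to these rational solutions, and injectivity of the monodromy map for the converse; your simplicity-of-roots argument (a double root of $p$ at a regular point forces $\psi\equiv 0$) is also the correct one. There are, however, two genuine gaps. The first is your extraction of the polynomial form: you claim that entirety of $p(\lambda)=\lambda^{-\frac{1}{2}(n+1)}e^{g(\lambda,a)}\psi_0(\lambda)$ together with the subdominant normalisation \eqref{eq:1psikscalar} ``forces [$p$] to grow at most polynomially, hence to be a polynomial'', with the coincidence $\psi_0\equiv\psi_2$ read off only afterwards. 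This is backwards, and the step fails: \eqref{eq:1psikscalar} controls $\psi_0$ on $\Omega_0$ and, by the usual extension of recessive asymptotics, on $\Omega_{\pm 1}$, but says nothing on $\Omega_2$, where $\psi_0$ may carry a dominant component $\sim e^{+g}$, making $p\sim e^{2g}$ there. Since $p$ is a priori only of order at most two, while Phragm\'en--Lindel\"of on a sector of opening $\frac{\pi}{2}$ requires order strictly less than two, no growth theorem closes this gap: the function $e^{\frac{1}{4}\lambda^2}D_\nu(\lambda)$, with $D_\nu$ a parabolic cylinder function and $\nu\notin\mathbb{N}$, is entire, polynomially bounded for $|\arg\lambda|\leq\frac{3\pi}{4}-\varepsilon$, and is not a polynomial. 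The vanishing of $s_1$, $s_3$ (i.e., $\psi_0\equiv\psi_2$) \emph{is} the quantisation condition responsible for polynomiality, and must be derived before, not after; the paper does this by combining Lemma~\ref{lem:trivialmon} (apparentness plus the cyclic relation give $s_0=-s_2$, $s_1=-s_3$, $s_1s_2=0$) with a Liouville argument excluding the branch $s_2=0$ (that branch would make $e^{-g}\lambda^{\frac{1}{2}(n-1)}\psi_1$ a nonzero entire function decaying in every direction); see the proof of Proposition~\ref{pro:hermitecharacterisation}. In case \ref{H3} the problem is worse, because there apparentness of $\lambda=0$ is a conclusion, not a hypothesis, so your scheme (which starts from apparentness to build $p$) cannot even begin.

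The second gap is that your forward direction rests entirely on the assertion that the rational solutions ``are exactly those carrying the distinguished monodromy datum''. This is Proposition~\ref{prop:rationalmonodromy}, and it is a substantive input rather than a formality: for the Hermite I and II families it is quoted from Kapaev, but for the Hermite III family it was missing from the literature, and the paper has to prove it, by computing the monodromy of the seed solution \eqref{eq:hermitesimple3} explicitly and transporting it along the family using the B\"acklund invariance of Proposition~\ref{pro:schlesinger}. (Note also that ``trivial'' is a misnomer: the Hermite I datum \eqref{eq:monodromydataHI} has $s_0=-s_2\neq 0$.) You rightly flag the dictionary as the dominant work, but without Proposition~\ref{prop:rationalmonodromy}, without the passage from the distinguished data to the specific conditions H.1--H.3 (which again requires Lemma~\ref{lem:trivialmon} and the Liouville arguments above), and without the commutation of localisation with monodromy (Proposition~\ref{prop:commute}, needed in your converse to conclude that $a$ is actually a pole of the rational solution), neither implication of the theorem is established.
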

	
To state the corresponding theorem for Okamoto rationals, we brief\/ly discuss the Stokes multipliers for the anharmonic oscillator~\eqref{eq:anharmoniccent}.
If $k=1,2$, then the asymptotic characterisa\-tion~\eqref{eq:1psikscalar} of the solution~$\psi_k(\la)$ is in fact valid on the larger sector
$\Omega_{k+1}\cup\overline{\Omega}_{k}\cup \Omega_{k-1}$. It follows that $\{\psi_k,\psi_{k-1}\}$ is necessarily linearly independent and hence forms a basis of solutions of~\eqref{eq:anharmoniccent}. Comparison of the asymptotic behaviour in $\Omega_k$ as $\lambda \rightarrow \infty$, of $\psi_{k+1}$ and the basis elements, leads to the relation
\begin{gather}\label{eq:stmult12}
\psi_{k+1}(\lambda)=\psi_{k-1}(\lambda)+s_k\psi_{k}(\lambda), \qquad k=1,2
\end{gather}
for a unique $s_k\in\mathbb{C}$, called the $k$th Stokes multiplier. Appropriately modifying the above argument for the $k=0,3$ cases, due to the choice of branch in the $\lambda$-plane, leads to
\begin{subequations}\label{eq:stmult03}
	\begin{gather}
	\psi_{0}(\lambda)=-e^{-\pi i \theta_\infty}\big(\psi_{2}\big(e^{2\pi i}\lambda\big)+s_3\psi_{3}\big(e^{2\pi i}\lambda\big)\big),\\
	\psi_{1}(\lambda)=-e^{\pi i \theta_\infty}\psi_{3}\big(e^{2\pi i}\lambda\big)+s_0\psi_{0}(\lambda).
	\end{gather}
\end{subequations}
for unique $s_0,s_3\in\mathbb{C}$.

We have the following characterisation of roots of the generalised Okamoto polynomials, or equivalently of zeros and poles of rational solutions of Okamoto type.
\begin{thm}\label{thm:okamotoanharmonic}
		Fix $m,n\in\mathbb{Z}$, then $z=a$ is a zero of the generalised Okamoto polyno\-mial~$Q_{m,n}(z)$, if and only if
there exists $b\in\mathbb{C}$, such that the Stokes data of the anharmonic oscillator~\eqref{eq:anharmoniccent} with $\theta_0=\tfrac{1}{2}n-\tfrac{1}{6}$ and $\theta_\infty=\tfrac{1}{2}(2m+n+1)$, satisfy
			\begin{gather*}
			s_0=s_2,\qquad s_1=s_3,\qquad s_0s_1+1=0.
			\end{gather*}
\end{thm}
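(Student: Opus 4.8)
The plan is to run the same isomonodromic machinery used for the Hermite families in Theorem~\ref{thm:hermiteanharmonic}, adapted to the non-resonant parameter $\theta_0=\tfrac12 n-\tfrac16$. First I would use Table~\ref{table:zerosandpoles} to recast the statement: a root of $Q_{m,n}$ is precisely a pole (with residue $\epsilon=-1$) of the Okamoto rational $\widetilde{\omega}_{m,n}$, whose ${\rm P}_{\rm IV}$ parameters, by \eqref{eq:okamotorationals}, are exactly the $\theta_0=\tfrac12 n-\tfrac16$, $\theta_\infty=\tfrac12(2m+n+1)$ appearing in the theorem. By the reduction of Section~\ref{section:polesofrationals}, at such a pole $z=a$ the inverse monodromy problem collapses to the anharmonic oscillator \eqref{eq:anharmoniccent} with the Laurent coefficient $b$ of \eqref{eq:laurentpole}, and the Stokes data of this oscillator coincides with the ($z$-independent, by isomonodromy) monodromy datum of $\widetilde{\omega}_{m,n}$. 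The theorem is therefore equivalent to the single assertion that the monodromy datum of every Okamoto rational lies on the locus $s_0=s_2$, $s_1=s_3$, $s_0s_1+1=0$, together with a Riemann--Hilbert argument for the converse.

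The heart of the proof is computing this monodromy datum, and I would attack it through the cyclic relation $M_0M_\infty=\mathrm{Id}$ tying the monodromy $M_0$ at $\lambda=0$ to the monodromy $M_\infty$ at infinity. Because $\theta_0=\tfrac12 n-\tfrac16$ is never a half-integer, $\lambda=0$ is a genuine (non-apparent) regular singularity whose monodromy eigenvalues $-e^{\pm2\pi i\theta_0}=(-1)^{n+1}e^{\mp i\pi/3}$ are primitive sixth/third roots of unity; this is the monodromy shadow of the $\mathbb Z_3$-structure distinguishing Okamoto from Hermite. Using the Stokes relations \eqref{eq:stmult12}--\eqref{eq:stmult03} I would write $M_\infty$ explicitly in the basis $\{\psi_0,\psi_1\}$ as a word in the four Stokes matrices and the formal monodromy factors $e^{\pm i\pi\theta_\infty}$ appearing in \eqref{eq:stmult03}, and then impose that $M_\infty$ be conjugate to $M_0^{-1}$. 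Matching the trace and the full conjugacy class to the root-of-unity eigenvalues above is what should force both the $\mathbb Z_2$ symmetry $s_0=s_2$, $s_1=s_3$ and the quadratic relation $s_0s_1+1=0$.

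As an independent check and a cleaner route to fix the discrete ambiguities, I would compute the monodromy of the seed solution $\widetilde{\omega}_{0,0}=-\tfrac23 z$ (where $\theta_0=-\tfrac16$, $\theta_\infty=\tfrac12$ and the linear system is essentially explicitly integrable) and verify the three conditions there, then propagate to all $(m,n)$ by tracking the action of the B\"acklund transformations $\mathcal{R}_1$--$\mathcal{R}_4$ on the Stokes data. Since the conditions in the statement carry no dependence on $(m,n)$, the whole Okamoto orbit must remain on this locus while only $\theta$ shifts as in Table~\ref{table:bthermite}; showing that the birational B\"acklund action preserves $\{s_0=s_2,\ s_1=s_3,\ s_0s_1+1=0\}$ exactly is the key invariance to establish.

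I expect the main obstacle to be the connection problem linking the local pole data to the global Stokes data: unlike the Hermite case, where the conditions are geometric (apparent singularity plus alignment of subdominant solutions, i.e.\ vanishing Stokes multipliers) and can be read off from the explicit $(\text{polynomial})\times e^{\pm g}$ form of $\psi_k$, here the conditions are purely about the Stokes multipliers and require controlling how the Frobenius solutions $\psi_\pm$ at $\lambda=0$ decompose in the sectorial bases $\{\psi_k\}$ at infinity. Carefully bookkeeping the branch choices in \eqref{eq:stmult03} when forming $M_\infty$, and verifying that the conjugacy constraint reproduces exactly $s_0s_1=-1$ rather than some $\theta$-dependent variant, is where the delicate computation lies. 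The converse direction is comparatively soft: given a $b$ for which the Stokes conditions hold, the Riemann--Hilbert correspondence reconstructs a ${\rm P}_{\rm IV}(\theta)$ solution with these monodromy data having a pole at $z=a$; since the conditions single out the Okamoto monodromy and rational solutions are unique, that solution is $\widetilde{\omega}_{m,n}$ and $a$ is a root of $Q_{m,n}$.
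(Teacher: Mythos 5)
Your scaffolding coincides with the paper's own proof, which obtains this theorem as a direct consequence of Theorem~\ref{thm:exactchar} (localisation at a residue~$-1$ pole preserves monodromy, plus injectivity of the monodromy map, Proposition~\ref{pro:injectivenonresonant}) and Proposition~\ref{prop:rationalmonodromy}; your paragraph on Table~\ref{table:zerosandpoles}, \eqref{eq:okamotorationals} and the Riemann--Hilbert converse is exactly that. The genuine gap is in what you call the heart of the proof. The relation you propose to exploit --- that $-S_0S_1S_2S_3e^{2\pi i\theta_\infty\sigma_3}$ be conjugate to $M_0^{-1}$ --- is the semi-cyclic identity \eqref{eq:semi}, and it holds for the Stokes data of \emph{every} solution of ${\rm P}_{\rm IV}(\theta)$: taking traces, it is precisely the single equation \eqref{eq:stcubic} cutting out the whole monodromy manifold $M_\theta^s$. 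Since $M_0$ is a fixed diagonalisable matrix with distinct eigenvalues and conjugacy of unimodular $2\times2$ matrices is detected by the trace alone, matching ``the full conjugacy class'' produces one scalar equation, never three. A dimension count makes the failure stark: four Stokes multipliers, minus the cyclic relation, minus the gauge action \eqref{eq:stokesaction}, leave the two-dimensional space $\mathcal{M}_\theta$ on which $\mathcal{T}_\theta$ is injective, and the locus $s_0=s_2$, $s_1=s_3$, $s_0s_1+1=0$ is a single point (orbit) in it. The root-of-unity eigenvalues of $M_0$ that you invoke are data of $\theta$ alone, shared by all transcendents in $\mathcal{W}_\theta$, so no bookkeeping of branches in \eqref{eq:stmult03} can make them single out the Okamoto rational among its two-parameter family of neighbours.

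Your third paragraph is the route that can be made to work, and it parallels what the paper itself does for the Hermite~III entry of Proposition~\ref{prop:rationalmonodromy} (explicit seed plus B\"acklund invariance of monodromy, Proposition~\ref{pro:schlesinger} and Table~\ref{table:bthermite}); for the Okamoto entry the paper instead simply cites \cite{kapaev1998} and \cite{milne}. But the seed computation needs a real idea that your sketch elides: $Q_{0,0}=1$ has no roots, so $\widetilde{\omega}_{0,0}=-\tfrac{2}{3}z$ has no residue~$-1$ poles at which to localise, and no solution of its Lax pair has the polynomial-times-exponential form that made the Hermite seeds elementary (such a form forces a vanishing Stokes multiplier, incompatible with $s_0s_1=-1$). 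The rescue is to localise instead at $z=0$, which is a zero of the seed of sign $+1$ with Laurent data $a=b=0$ in \eqref{eq:laurentzero}: by Lemma~\ref{lem:behavioursolutions} (second bullet, together with the analogue of Lemma~\ref{lem:ztoalem} for zeros) the Stokes data of the seed equals that of the oscillator $\psi''=\big(\lambda^2+\theta_0(\theta_0-1)\lambda^{-2}\big)\psi=\big(\lambda^2+\tfrac{7}{36}\lambda^{-2}\big)\psi$, which is a modified Bessel equation of order $\tfrac13$ in the variable $\tfrac12\lambda^2$. Its invariance under $\lambda\mapsto i\lambda$ yields $s_0=s_2$ and $s_1=s_3$, and the Bessel connection formulae yield $s_ks_{k+1}=-4\cos^2\tfrac{\pi}{3}=-1$; constancy of Stokes multipliers in $z$ transports this to the seed, and Proposition~\ref{pro:schlesinger} to every $\widetilde{\omega}_{m,n}$. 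With that step supplied, your outline closes into a complete proof, indeed more self-contained than the paper's citation of the literature.
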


The remainder of this section is dedicated to the proofs of Theorems~\ref{thm:hermiteanharmonic} and~\ref{thm:okamotoanharmonic}. In Section~\ref{section:iso} we discuss the realisation of ${\rm P}_{\rm IV}$ as an isomonodromic deformation. Then, in Section~\ref{section:localisation} we show that, upon localising the isomonodromic system near poles, we arrive at the anharmonic oscillator~\eqref{eq:anharmoniccent}, while preserving the monodromy of the system. This allows us to characterise poles of ${\rm P}_{\rm IV}$ transcendents as solutions of certain inverse monodromy problems concerning the anharmonic oscillator,
in particular leading to proofs of the aforementioned theorems in Section~\ref{section:proofofcharacterisations}.

\begin{rem*} In recent literature some special interest is given to anharmonic oscillators for which one solution is expressible in closed form as a polynomial (or rational function) times an exponential function. These are called quasi exactly solvable potentials \cite{bender98quasi,turbiner2016}. We remark here that, by Theorem \ref{thm:hermiteanharmonic} (see~\cite{marquettequesne2016} for some similar considerations), all three types of oscillators related to the roots of generalised Hermite polynomials are quasi exactly solvable in this sense. Moreover, the type~III case is particularly special as all solutions are expressible in terms of a linear combination of rational functions times an exponential and hence deserve to be called super quasi exactly solvable. It is interesting to note that, while quasi exactly solvable potentials are mostly studied with the help of the ${\mathfrak{sl}}_2$ Lie algebra, in the present case they are related to B\"acklund transformations of ${\rm P}_{\rm IV}$, which form the af\/f\/ine Weyl group of type $A_2^{(1)}={\mathfrak{sl}}_3^{(1)}(\bb{C})$ \cite{noumi}.
\end{rem*}
		
\subsection{The Garnier--Jimbo--Miwa Lax pair}\label{section:iso}
In this section, we introduce the isomonodromic deformation method for ${\rm P}_{\rm IV}$, as developed by Kitaev~\cite{kitaev1988}, Ablowitz et al.~\cite{fokasmugan1988} and Kapaev \cite{kapaev1996,kapaev1998}. The main aim of this section, is to characterise rational solutions by means of monodromy data of an associated linear problem.

Jimbo and Miwa \cite{jimbomiwaII1981} realised ${\rm P}_{\rm IV}$ as the compatibility condition of the following two linear dif\/ferential systems
\begin{subequations}\label{eq:laxpair}
	\begin{gather}
	Y_{\lambda}(\lambda,z)=
	\begin{pmatrix}\displaystyle
	\lambda+z+\lambda^{-1}(\theta_0-v) &
	u\left(1-\displaystyle\frac{\om}{2\lambda}\right) \\
	\displaystyle\frac{2}{u}\left(v-\theta_0-\theta_\infty+\displaystyle\frac{v}{\lambda \om}(v-2\theta_0)\right)&
	-\lambda-z-\lambda^{-1}(\theta_0-v)
	\end{pmatrix}
	Y(\lambda,z),\label{eq:1isopiv}\\
	Y_{z}(\lambda,z)=
	\begin{pmatrix}\displaystyle
	\lambda &
	u \\
	\displaystyle\frac{2}{u}\left(v-\theta_0-\theta_\infty\right)&
	-\lambda
	\end{pmatrix}
	Y(\lambda,z), \label{eq:1isopivdeformation}
	\end{gather}
\end{subequations}
where $u$ is an auxiliary function satisfying
\begin{gather}\label{eq:udefi}
\frac{u_z}{u}=-\om-2z,
\end{gather}
and $v$ is def\/ined by
\begin{gather*}
4v=-\om_z+\om^2+2z\om+4\theta_0.
\end{gather*}
That is, $Y_{\lambda z}=Y_{z\lambda}$ implies that $\om$ and $u$ satisfy ${\rm P}_{\rm IV}(\theta)$ and~\eqref{eq:udefi} respectively.

Note that the linear system \eqref{eq:1isopiv}, associated to ${\rm P}_{\rm IV}$, is not def\/ined at zeros and poles of~$\om$.
To study what happens near such points, we follow the strategy pioneered by A.~Its and V.~Novoskhenov for Painlev\'e~II~\cite{its86},
and then employed by one of the authors \cite{myphd,piwkb} for Painlev\'e~I, see also~\cite{chudnovsky90}.
It turns out that, even though the matrix linear system associated to ${\rm P}_{\rm IV}$ is not def\/ined at zeros and poles, on the contrary its scalar version has a well-def\/ined limit. We consider the equivalent 2nd order scalar dif\/ferential equation for the gauged f\/irst component
$\Psi(\lambda,z)=\lambda^{\frac{1}{2}}Y_1(\lambda,z)$ in~\eqref{eq:laxpair},
\begin{subequations}
	\label{eq:scalarlaxpair}
\begin{gather}
\Psi_{\lambda\lambda}(\lambda,z) = \frac{2}{2\lambda-\om}\Psi_{\lambda}(\lambda,z)+V(\lambda,z)\Psi(\lambda,z), \label{eq:anharmonicgen}\\
\Psi_{z}(\lambda,z) = \frac{2\lambda}{2\lambda-\om}\Psi_{\lambda}(\lambda,z)-\frac{\lambda(\om+2z)+1+2(\theta_0-v)}{2\lambda-\om}\Psi(\lambda,z),\label{eq:deform}\\
V(\lambda,z)=\lambda^2+2z\lambda+z^2-2\theta_\infty+1+R(\lambda,z)\lambda^{-1}+\big(\theta_0^2-\tfrac{1}{4}\big)\lambda^{-2} ,\nonumber
\end{gather}
\end{subequations}
which we refer to as the scalar Garnier--Jimbo--Miwa (GJM) Lax pair, where
\begin{gather*}
R(\lambda,z)=\big(\theta_0+\theta_\infty-v-\tfrac{1}{2}\big)\om+2z(\theta_0-v)-\frac{2(2\theta_0-v)v}{\om}-\frac{\om^2+2z\om-4v+4\theta_0+2}{2(2\lambda-\om)}.
\end{gather*}
Besides $\lambda=0$ regular singular and $\lambda=\infty$ an irregular singular point, equation~\eqref{eq:anharmonicgen} has a~further apparent singularity at $\lambda=\tfrac{1}{2}\om$, with indices~$\{0,2\}$. So solutions of \eqref{eq:anharmonicgen} can branch only at $\lambda=0$ and $\lambda=\infty$, and hence live on the universal covering space of $\mathbb{C}^*$, which we denote by $\mathbb{C}^\infty$.
\begin{lem}\label{lem:behavioursolutions}
Any local solution $\Psi(\lambda,z)$ of \eqref{eq:scalarlaxpair}, extends uniquely to a global single-valued meromorphic function on~$\mathbb{C}^\infty\times \mathbb{C}$, singular in $z$ only where $\om(z)$ has a pole with $+1$ residue, in which case $\Psi(\lambda,z)$ has a simple pole in~$z$. The behaviour of $\Psi(\lambda,z)$ near zeros and poles of~$\om(z)$ is characterised as follows:
	\begin{itemize}\itemsep=0pt
		\item 	If $\om(z)$ has a zero with negative sign at $z=a$, say corresponding expansion takes the form~\eqref{eq:laurentpole} with $\epsilon=-1$, then $\Psi(\lambda,z)$ is holomorphic at $z=a$ and	$\psi(\lambda):=\lambda^{-\frac{1}{2}}\Psi(\lambda,a)$
defines a solution of the anharmonic oscillator
		\begin{align*}
		\psi''(\lambda)=\big( \lambda^2+2a\lambda+a^2+1-2\theta_\infty+\tfrac{1}{2}b\lambda^{-1}+
		\theta_0(\theta_0+1)\lambda^{-2}\big)\psi(\lambda) .
		\end{align*}
		\item 	If $\om(z)$ has a zero with positive sign at $z=a$, say corresponding expansion takes the form~\eqref{eq:laurentpole}
		with $\epsilon=1$, then $\Psi(\lambda,z)$ is holomorphic at $z=a$ and $\psi(\lambda):=\lambda^{-\frac{1}{2}}\Psi(\lambda,a)$
	 defines a solution of the anharmonic oscillator
		\begin{align*}
		\psi''(\lambda)=\big( \lambda^2+2a\lambda+a^2+1-2\theta_\infty+\tfrac{1}{2}b\lambda^{-1}+
		\theta_0(\theta_0-1)\lambda^{-2} \big)\psi(\lambda) .
	\end{align*}
		\item 	If $\om(z)$ has a pole with residue $-1$ at $z=a$, say corresponding expansion takes the form~\eqref{eq:laurentpole} with $\epsilon=-1$, then $\Psi(\lambda,z)$ is holomorphic at $z=a$ and $\psi(\lambda):=\Psi(\lambda,a)$ defines a solution of the anharmonic oscillator~\eqref{eq:anharmoniccent}, namely
		\begin{gather*}
\psi''(\lambda) = V(\lambda;a,b,\theta)\psi(\lambda),\\
V(\lambda;a,b,\theta)=\lambda^2+2a\lambda+a^2+2(1-\theta_\infty)
-\big[b+\big(2\theta_\infty-\tfrac{1}{2}\big)a\big]\lambda^{-1}+\big(\theta_0^2-\tfrac{1}{4}\big)\lambda^{-2},
\end{gather*}
	\item 	If $\om(z)$ has a pole with residue $+1$ at $z=a$, say corresponding expansion takes the form~\eqref{eq:laurentpole} with $\epsilon=+1$, then $\Psi(\lambda,z)$ has a simple pole at $z=a$, and corresponding residue $\psi(\lambda):=\lim\limits_{z\rightarrow a}{(z-a)\Psi(\lambda,z)}$ defines a solution of the anharmonic oscillator
	\begin{gather*}
	\psi''(\lambda)=\big( \lambda^2+2a\lambda+a^2-2\theta_\infty-\big[b+\big(2\theta_\infty-\tfrac{3}{2}\big)a\big]\lambda^{-1}+
	\big(\theta_0^2-\tfrac{1}{4}\big)\lambda^{-2} \big)\psi(\lambda) .
	\end{gather*}
	\end{itemize}	
\end{lem}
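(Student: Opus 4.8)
The plan is to treat the two equations of the scalar GJM pair \eqref{eq:scalarlaxpair} as an overdetermined but (by construction) compatible system, and to separate the single-valuedness statement in $\lambda$ from the meromorphy/singularity analysis in $z$. In the $\lambda$-variable everything is already in place: for fixed $z$ away from zeros and poles of $\om$, the equation \eqref{eq:anharmonicgen} has only the regular point $\lambda=0$, the irregular point $\lambda=\infty$, and the \emph{apparent} point $\lambda=\tfrac12\om$ with indices $\{0,2\}$; since the latter is apparent no branching occurs there, so every local solution continues to a single-valued function on $\mathbb{C}^\infty$. The task therefore reduces to understanding the dependence on $z$, which is governed by the deformation equation \eqref{eq:deform}, $\Psi_z=\tfrac{2\lambda}{2\lambda-\om}\Psi_\lambda-\tfrac{\lambda(\om+2z)+1+2(\theta_0-v)}{2\lambda-\om}\Psi$, whose coefficients are meromorphic in $z$ with possible poles only at (i) zeros of $\om$, (ii) poles of $\om$, and (iii) the moving locus $\om(z)=2\lambda$ for the fixed value of $\lambda$.

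I would first dispose of locus (iii). For fixed generic $\lambda_*$ and a point $z_*$ with $\om(z_*)=2\lambda_*$ the coefficients of \eqref{eq:deform} blow up, but the corresponding singularity of the $\lambda$-equation is apparent, and by compatibility its (trivial) monodromy is $z$-independent. Hence, as the apparent singularity sweeps past $\lambda_*$, the combination $\tfrac{2\lambda}{2\lambda-\om}\Psi_\lambda$ stays finite because along the index-$2$ solution $\Psi_\lambda$ vanishes to matching order; the upshot is that $\Psi(\lambda_*,z)$ is holomorphic in $z$ at $z_*$. This is the standard mechanism by which apparent singularities fail to produce movable singularities of the wave function in an isomonodromic deformation, and I would run it exactly as in the Painlev\'e~I/II treatments \cite{its86,piwkb}.

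The heart of the proof is the local analysis at zeros and poles of $\om$, which simultaneously establishes the four stated oscillators, the pole structure in $z$, and (combined with the previous paragraph) the global single-valued meromorphic extension. The key preliminary computation is the value of $v$, obtained by substituting \eqref{eq:laurentzero} or \eqref{eq:laurentpole} into $4v=-\om_z+\om^2+2z\om+4\theta_0$: at a zero one finds $v\to\theta_0(1-\e)$, while at a pole one finds that the double and simple poles of $-\om_z+\om^2$ and $2z\om$ cancel precisely when $\e=-1$, giving the finite value $v\to\theta_0+\theta_\infty$, whereas for $\e=+1$ a double pole $v\sim\tfrac12(z-a)^{-2}$ survives. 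At a zero, $\tfrac{1}{2\lambda-\om}\to\tfrac1{2\lambda}$ is harmless and the apparently dangerous $\tfrac{2(2\theta_0-v)v}{\om}$ term in $R(\lambda,z)$ stays finite because $(2\theta_0-v)v$ vanishes to the order of $\om$; taking $z\to a$ and collecting terms, the $\lambda^{-2}$ coefficient turns into $\theta_0(\theta_0\mp1)$, i.e.\ the index at $\lambda=0$ shifts by $\mp\tfrac12$, which is exactly the form claimed. At a pole one must expand $\tfrac1{2\lambda-\om}=-\tfrac1\om-\tfrac{2\lambda}{\om^2}-\cdots$ in the term $-\tfrac{\om^2+2z\om-4v+4\theta_0+2}{2(2\lambda-\om)}$ of $R$; here the $\mathcal O(\om)$-divergent piece cancels against $(\theta_0+\theta_\infty-v-\tfrac12)\om$ using the computed value of $v$, and the $\mathcal O(1)$ piece coming from the $\lambda/\om^2$ term contributes a constant in $\lambda$ that accounts for the shift from $z^2-2\theta_\infty+1$ to $a^2+2(1-\theta_\infty)$ in \eqref{eq:anharmoniccent}. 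For $\e=-1$ this limit is finite and $\Psi$ is holomorphic; for $\e=+1$ the divergence of $v$ forces a simple pole of $\Psi$ in $z$, and I would identify the residue $\psi(\lambda)=\lim_{z\to a}(z-a)\Psi(\lambda,z)$ by reading off the leading term of the deformation equation, obtaining the stated oscillator. In each case the gauge $\psi=\lambda^{-1/2}\Psi$ (respectively $\psi=\Psi$ for the residue-$(-1)$ pole) removes the first-derivative term $\tfrac{2}{2\lambda-\om}\Psi_\lambda$.

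The main obstacle is unambiguously the pole computations: three quantities ($\om$, $v$, and $(2\lambda-\om)^{-1}$) diverge simultaneously, and one must track the exact cancellation of the $\mathcal O(\om)$ terms in $R(\lambda,z)$ and correctly extract the finite $\lambda^{-1}$ coefficient together with the constant shift produced by the $\lambda$-dependence of the apparent-singularity term. A secondary but genuine point requiring care is confirming that the residue-$(+1)$ pole of $\Psi$ in $z$ is \emph{simple} (no higher-order pole and no branching), which I would verify by checking that, after the substitution, the deformation equation \eqref{eq:deform} is consistent with a first-order pole and that the extracted residue $\psi(\lambda)$ indeed solves a regular-singular oscillator at $\lambda=0$. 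Once the local behaviour is pinned down in all four cases, the global claim follows: between the exceptional points the coefficients are holomorphic, at the $\om=2\lambda$ locus $\Psi$ is holomorphic, at zeros and residue-$(-1)$ poles $\Psi$ is holomorphic, and at residue-$(+1)$ poles it has a simple pole, so $\Psi$ is single-valued and meromorphic on $\mathbb{C}^\infty\times\mathbb{C}$ with the asserted singularity set.
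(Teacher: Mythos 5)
Your proposal is correct in substance, and its computational core is the same as the paper's: everything hinges on the Laurent data \eqref{eq:laurentpole}, on the compatibility of the pair \eqref{eq:scalarlaxpair}, and on the explicit cancellations behind \eqref{eq:Vconvergence}. Your limits $v\to\theta_0(1-\e)$ at zeros, $v\to\theta_0+\theta_\infty$ at residue-$(-1)$ poles, $v\sim\tfrac12(z-a)^{-2}$ at residue-$(+1)$ poles, and the cancellation of the divergent pieces of $R(\la,z)$ against the apparent-singularity term all check out. The difference is organisational: you analyse and integrate the deformation equation \eqref{eq:deform} at fixed $\la$, whereas the paper rewrites \eqref{eq:scalarlaxpair} as the first-order system \eqref{eq:U} and constructs a \emph{jointly} analytic fundamental solution in the factorised form $\mathcal{U}(\la,z)=U(\la,z)F(z)$, where $U$ solves the $\la$-equation and the compatibility condition \eqref{eq:compatibility} forces $C=U^{-1}BU-U^{-1}U_z$ to be independent of $\la$, so that $F$ solves an honest ODE in $z$ alone. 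What that device buys is precisely the two points your sketch leaves loose.

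First, at the locus $\om(z_*)=2\la_*$ your argument is circular as written: finiteness of the right-hand side of \eqref{eq:deform} evaluated on $\Psi$ presupposes that $\Psi(\la_*,\cdot)$ already continues across $z_*$, which is exactly what is to be proved. The non-circular version continues $\Psi$ in $z$ at a nearby $\la'$ where the coefficients are regular, and then uses that every solution of \eqref{eq:anharmonicgen} is analytic across the apparent singularity $\la=\tfrac12\om(z)$ (all solutions there have vanishing $\la$-derivative, not only the index-$2$ one), together with holomorphic dependence on initial data; this is what the paper's joint fundamental solution encodes automatically. Second, at residue-$(+1)$ poles, ``consistency with a first-order pole'' does not exclude logarithms or branching. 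What does exclude them is the computation that the $z$-system has a regular singular point at $z=a$ whose residue matrix is the \emph{scalar} matrix $-I$; this in turn requires verifying that the $(z-a)^{-2}$ and $(z-a)^{-1}$ parts of $V(\la,z)$ cancel even when $\e=+1$ (they do, but only after using the precise value of $\om_1$ from \eqref{eq:laurentpole}). Once the residue is scalar, the gauge $(z-a)\Psi$ reduces the system to one with analytic coefficients, which yields simultaneously the simplicity of the pole, single-valuedness in $z$, and the identification of the residue's oscillator stated in the lemma. With these two repairs your route closes and is, modulo packaging, the paper's proof.
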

\begin{proof}
Note that, without assuming the Painlev\'e property of ${\rm P}_{\rm IV}$, the statement of the lemma is rather nontrivial, as it in particular implies the Painlev\'e property for ${\rm P}_{\rm IV}$, see for instance Fokas et al.~\cite{fokas}.

However, we know that, around any point $z=a$ in the f\/inite complex plane, $\om(z)$ has a~convergent Laurent expansion, taking any of the forms \eqref{eq:laurentgeneric}--\eqref{eq:laurentpole}, which greatly simplif\/ies the proof. The unique global single-valued meromorphic continuation of local solutions is proved by showing:
\begin{enumerate}\itemsep=0pt
	\item[1)] near any point $(\lambda,z)=(\lambda_0,a)\in \mathbb{C}^\infty\times \mathbb{C}$, there exists a meromorphic local fundamental solution\footnote{In this context, a local fundamental solution is def\/ined as a row vector of two linearly independent solutions on a simply connected domain.} of \eqref{eq:scalarlaxpair};
	\item[2)] any two local fundamental solutions $Y(\lambda,z)$ and $\widetilde{Y}(\lambda,z)$ of \eqref{eq:scalarlaxpair}, with non-empty intersection of domains, are related by a constant invertible matrix $C$ as in \eqref{eq:connectionmatrix}, on this intersection. 
\end{enumerate}
To prove the f\/irst statement, we have to distinguishing between the dif\/ferent possibilities \smash{\eqref{eq:laurentgeneric}--\eqref{eq:laurentpole}}, and whether $\lambda_0=\tfrac{1}{2}\om(a)$ or not. We give a detailed account of the case where $z=a$ is a~pole with residue~$-1$ of~$\om(z)$. The other cases are proven analogously.

So suppose $z=a$ is a pole with residue $-1$ of $\om(z)$, and let $b\in\mathbb{C}$ be def\/ined by~\eqref{eq:laurentpole} with $\epsilon=-1$.
Firstly, we choose small open discs $\lambda_0\in \Lambda\subseteq \mathbb{C}^\infty$ and $a\in Z\subseteq\mathbb{C}$ such that $\om(z)$ has no zeros or poles on $Z\setminus\{a\}$ and $2\lambda-w(z)$ does not vanish on $\Lambda\times (Z\setminus\{a\})$.

Writing $\mathcal{U}_1=\Psi$ and $\mathcal{U}_2=\Psi_\lambda$, equations~\eqref{eq:scalarlaxpair} can be rewritten as
\begin{subequations}\label{eq:U}
\begin{gather}
\mathcal{U}_\lambda =A(\lambda,z)\mathcal{U},\label{eq:U1}\\
\mathcal{U}_z =B(\lambda,z)\mathcal{U},\label{eq:U2}
\end{gather}
\end{subequations}
where
\begin{gather*}
A=\begin{pmatrix}
0 & 1\\
V& \frac{2}{2\lambda-w}
\end{pmatrix},\qquad B=\begin{pmatrix}
b & \frac{2\lambda}{2\lambda-w}\\
b_\lambda+\frac{2\lambda}{2\lambda-w} V & b+\frac{2}{2\lambda-w}
\end{pmatrix},\\
b:=-\frac{\lambda(\om+2z)+1+2(\theta_0-v)}{2\lambda-\om}.
\end{gather*}
Furthermore, note that, as $\om$ solves ${\rm P}_{\rm IV}$, the compatibility condition
\begin{gather}\label{eq:compatibility}
A_z-B_\lambda+AB-BA=0,
\end{gather}
is satisf\/ied. Direct calculation gives
\begin{gather}\label{eq:Vconvergence}
V(\lambda,z)=V(\lambda;a,b,\theta)+\mathcal{O}(z-a),\qquad z\rightarrow a,
\end{gather}
locally uniformly in $\lambda\in \Lambda$, where $V(\lambda;a,b,\theta)$ is the potential def\/ined in~\eqref{eq:anharmoniccent}. Using this fact, it easily follows that $A(\lambda,z)$ is analytic on $\Lambda\times Z$.

We will show the existence of a unique fundamental solution $\mathcal{U}(\lambda,z)$ of equations~\eqref{eq:U}, analytic on $\Lambda\times Z$, with $\mathcal{U}(\lambda_0,a)=I$. To this end, f\/irstly note that the Cauchy problem
\begin{gather*}
U_\lambda=A(\lambda,z)U, \lambda\in\Lambda,\\
U(\lambda_0)=I,
\end{gather*}
has a unique analytic solution $U=U(\lambda,z)$, for every $z\in Z$, and in fact $U(\lambda,z)$ is analytic on $\Lambda\times Z$, as $A(\lambda,z)$ is analytic on $\Lambda\times Z$. Furthermore it is easy to see that
\begin{gather}\label{eq:Udet}
|U(\lambda,z)|=\frac{2\lambda-w}{2\lambda_0-w}.
\end{gather}
 We now search for a solution of \eqref{eq:U}, of the form
\begin{gather}\label{eq:mathcalU}
\mathcal{U}(\lambda,z)=U(\lambda,z)F(z),
\end{gather}
where $F(z)$ analytic on $Z$ with $F(a)=I$, independent of $\lambda$. Note that $\mathcal{U}(\lambda,z)$ trivially satis\-f\/ies~\eqref{eq:U1}, and~\eqref{eq:U2} reduces to
\begin{gather}
F_z=C(\lambda,z)F, \qquad z\in Z,\nonumber\\
F(a)=I,\label{eq:Fcauchy}
\end{gather}
where $C:=U^{-1}BU-U^{-1}U_z$. From equation \eqref{eq:Udet}, it easily follows that $C(\lambda,z)$ is analytic on~$\Lambda\times Z$. Hence, to show that the Cauchy problem \eqref{eq:Fcauchy} has a unique solution~$F$, independent of $\lambda$, it remains to show that $C_\lambda=0$. By direct calculation, using
\begin{gather*}
\big(U^{-1}\big)_\lambda=-U^{-1}U_\lambda U^{-1},\qquad U_{z\lambda}=A_zU+AU_z,
\end{gather*}
we obtain
\begin{gather*}
C_\lambda=U^{-1} (-AB+B_\lambda+BA-A_z )U=0,
\end{gather*}
where the latter equality is precisely equation \eqref{eq:compatibility}.

We conclude that the Cauchy problem \eqref{eq:U2} has a unique fundamental solution $F(z)$, ana\-ly\-tic on $Z$, which is independent of $\lambda$. Equation \eqref{eq:mathcalU} now def\/ines a unique fundamental solu\-tion~$\mathcal{U}(\lambda,z)$ of~\eqref{eq:U}, analytic on $\Lambda\times Z$, with $\mathcal{U}(\lambda_0,a)=I$. Set
\begin{gather}\label{eq:Ystar}
Y^*(\lambda,z)=(\Psi_1^*(\lambda,z),\Psi_2^*(\lambda,z)):=(\mathcal{U}_{11}(\lambda,z),\mathcal{U}_{12}(\lambda,z)),
\end{gather}
then $Y^*(\lambda,z)$ def\/ines a fundamental solution of \eqref{eq:scalarlaxpair}, on the open environment $\Lambda\times Z$ of $(\lambda_0,a)$. Furthermore, note that, by equation \eqref{eq:Vconvergence},
\begin{gather*}
A(\lambda,a)=\begin{pmatrix}
0 & 1\\
V(\lambda;a,b,\theta) & 0
\end{pmatrix},
\end{gather*}
with the potential $V(\lambda;a,b,\theta)$ as def\/ined in \eqref{eq:anharmoniccent}. In particular $Y^*(\lambda,a)$ denotes a local fundamental solution on $\Lambda$ of the anharmonic oscillator \eqref{eq:anharmoniccent}.

Considering part (2), if $Y(\lambda,z)$ and $\widetilde{Y}(\lambda,z)$ are local fundamental solutions, with non-empty intersection of domains, then there exists a unique meromorphic matrix $C=C(\lambda,z)$ specif\/ied by equation~\eqref{eq:connectionmatrix} on this intersection. By equations~\eqref{eq:anharmonicgen} and~\eqref{eq:deform}, we have respectively $C_\lambda=0$ and $C_z=0$, hence $C\in {\rm GL}_2(\mathbb{C})$ is constant on this intersection.

We conclude that any local solution $\Psi(\lambda,z)$ of \eqref{eq:scalarlaxpair}, extends uniquely to a global single-valued meromorphic function on $\mathbb{C}^\infty\times \mathbb{C}$. Suppose now that $\om(z)$ has a pole with residue~$-1$ at $z=a$, say corresponding expansion takes the form \eqref{eq:laurentpole} with $\epsilon=-1$, and let $Y^*=(\Psi_1^*,\Psi_2^*)$ denote a local analytic fundamental solution of~\eqref{eq:scalarlaxpair}, as constructed in \eqref{eq:Ystar}, around $(\lambda_0,a)$, for any $\lambda_0\in C^\infty$. Then there must exist constants $c_1,c_2\in\mathbb{C}$ such that $\Psi=c_1\Psi_1^*+c_2\Psi_2^*$, in particular $\Psi(\lambda,z)$ is indeed analytic at $(\lambda,z)=(\lambda_0,a)$ and $\psi(\lambda)=\Psi(\lambda,a)$ def\/ines a solution of the anharmonic oscillator~\eqref{eq:anharmoniccent}. The behaviour of solutions of~\eqref{eq:scalarlaxpair} near zeros and poles with~$+1$ residue of $\omega(z)$, is proven similarly.
\end{proof}

\begin{lem}\label{lem:lineardeplocalglobal}
Given solutions $\Psi(\lambda,z)$ and $\widetilde{\Psi}(\lambda,z)$ of~\eqref{eq:scalarlaxpair}, suppose there exists $z^*\in\mathbb{C}$, not a~pole with residue~$+1$ of~$\om(z)$, and $c\in\mathbb{C}$ such that
	\begin{gather}\label{eq:lineardependence}
	\Psi(\lambda,z)=c\widetilde{\Psi}(\lambda,z),\qquad \lambda\in\mathbb{C}^\infty,
	\end{gather}
	holds at $z=z^*$, then~\eqref{eq:lineardependence} holds globally in $z$.

 Furthermore, suppose $Y(\lambda,z)$ and $\widetilde{Y}(\lambda,z)$ are fundamental solutions of~\eqref{eq:scalarlaxpair}, then there exists a constant matrix $C\in {\rm GL}_2(\mathbb{C})$ such that
	\begin{gather}\label{eq:connectionmatrix}
	\widetilde{Y}(\lambda,z)=Y(\lambda,z)C.
	\end{gather}
\end{lem}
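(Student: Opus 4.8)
The plan is to reduce both assertions to a single propagation fact: a solution of the scalar Lax pair \eqref{eq:scalarlaxpair} that vanishes identically in $\lambda$ at one value $z^*$ of $z$ (with $z^*$ not a pole of $\om$ with residue $+1$) must vanish identically. To this end set $\Phi(\lambda,z):=\Psi(\lambda,z)-c\widetilde\Psi(\lambda,z)$. By linearity of \eqref{eq:scalarlaxpair}, $\Phi$ is again a solution, and by Lemma~\ref{lem:behavioursolutions} it extends to a global single-valued meromorphic function on $\mathbb{C}^\infty\times\mathbb{C}$, holomorphic in $z$ away from the poles of $\om$ with residue $+1$. The hypothesis gives $\Phi(\lambda,z^*)=0$ for all $\lambda$; since $z^*$ is not such a pole, $\Phi$ is holomorphic in $z$ there, and differentiating the identically-zero function $\lambda\mapsto\Phi(\lambda,z^*)$ yields $\Phi_\lambda(\lambda,z^*)=0$ as well.

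First I would propagate this vanishing in the deformation variable $z$. Exactly as in the proof of Lemma~\ref{lem:behavioursolutions}, putting $\mathcal{U}^\Phi:=(\Phi,\Phi_\lambda)^{T}$ recasts the Lax pair as the first-order system \eqref{eq:U}, so in particular $\mathcal{U}^\Phi_z=B(\lambda,z)\mathcal{U}^\Phi$. I choose a product neighbourhood $\Lambda\times Z$ of a point $(\lambda_0,z^*)$ on which, as in that proof, $B$ is analytic: when $z^*$ is a pole of $\om$ this holds automatically for bounded $\lambda$ since $2\lambda-\om\to\infty$, and otherwise one takes $\lambda_0$ with $2\lambda_0\neq\om(z^*)$ and shrinks. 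On $\Lambda\times Z$ the data $\mathcal{U}^\Phi(\lambda,z^*)=0$ together with uniqueness for the linear Cauchy problem \eqref{eq:U2} in $z$ force $\mathcal{U}^\Phi\equiv0$, whence $\Phi\equiv0$ on $\Lambda\times Z$. Since $\Phi$ is single-valued meromorphic on the connected manifold $\mathbb{C}^\infty\times\mathbb{C}$, unique continuation then gives $\Phi\equiv0$ everywhere, which is precisely the statement that \eqref{eq:lineardependence} holds globally.

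For the connection matrix, let $Y=(\Psi_1,\Psi_2)$ and $\widetilde Y=(\widetilde\Psi_1,\widetilde\Psi_2)$ be fundamental solutions and fix any $z_0\in\mathbb{C}$ that is not a pole of $\om$ with residue $+1$. By the first part, $\Psi_1(\cdot,z_0)$ and $\Psi_2(\cdot,z_0)$ cannot be proportional---otherwise they would be proportional globally and $Y$ would not be fundamental---so they form a basis of the two-dimensional solution space of the $\lambda$-equation \eqref{eq:anharmonicgen} at $z_0$. Hence there are constants $c_{ji}$ with $\widetilde\Psi_i(\lambda,z_0)=\sum_j c_{ji}\Psi_j(\lambda,z_0)$ for all $\lambda$. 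Each $\Phi_i:=\widetilde\Psi_i-\sum_j c_{ji}\Psi_j$ is then a solution of \eqref{eq:scalarlaxpair} vanishing identically in $\lambda$ at $z_0$, so by the first part $\Phi_i\equiv0$; that is, $\widetilde Y=YC$ with $C=(c_{ji})$ constant, establishing \eqref{eq:connectionmatrix}. Finally $C\in{\rm GL}_2(\mathbb{C})$, since a singular $C$ would force the two rows of $\widetilde Y$ to be proportional, contradicting that $\widetilde Y$ is fundamental.

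The step requiring most care is the propagation in the second paragraph: one must run the reduction to $\mathcal{U}^\Phi_z=B\mathcal{U}^\Phi$ and the ensuing uniqueness argument on a genuine analyticity domain for $B$, that is, away from both the poles of $\om$ and the moving apparent singularity $\lambda=\tfrac12\om$. Once the vanishing is secured on such an open set, the passage to a global identity is immediate from the single-valued meromorphic continuation furnished by Lemma~\ref{lem:behavioursolutions}.
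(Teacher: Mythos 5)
Your proposal is correct. For the first claim, your route and the paper's share the same core idea---propagate the vanishing of $\Phi=\Psi-c\widetilde{\Psi}$ in the deformation variable $z$, then globalize via the single-valued meromorphic continuation furnished by Lemma~\ref{lem:behavioursolutions}---but the implementations differ: the paper inductively shows, directly from \eqref{eq:deform}, that all $z$-derivatives of $\Psi$ and $c\widetilde{\Psi}$ agree at $z^*$ and concludes by analyticity in $z$, whereas you recast the pair as the first-order system $\mathcal{U}_z=B\,\mathcal{U}$ and invoke uniqueness of the linear Cauchy problem with zero initial data; the two are equivalent, yours requiring the (correctly supplied) care about choosing a domain on which $B$ is analytic, which the derivative-by-derivative induction sidesteps. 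For the second claim the routes genuinely differ: the paper simply cites item~(2) established inside the proof of Lemma~\ref{lem:behavioursolutions}, where the meromorphic matrix $C(\lambda,z)$ defined by \eqref{eq:connectionmatrix} is shown to be constant because $C_\lambda=0$ and $C_z=0$ follow from \eqref{eq:anharmonicgen} and \eqref{eq:deform} respectively; you instead deduce it as a corollary of the first claim, fixing the basis $\Psi_1(\cdot,z_0)$, $\Psi_2(\cdot,z_0)$ at a single good point $z_0$, expanding $\widetilde{\Psi}_i$ in it with constant coefficients, and propagating---with part~1 also (correctly) used to rule out proportionality of the basis elements. Both are valid; yours is self-contained in that it never re-enters the proof of Lemma~\ref{lem:behavioursolutions}, while the paper's is shorter because that work was already done there. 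One terminological slip only: a singular $C$ forces the two components of the row vector $\widetilde{Y}$ (its columns, i.e., the solutions $\widetilde{\Psi}_1$, $\widetilde{\Psi}_2$), not its ``rows'', to be linearly dependent.
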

\begin{proof}
	Using the deformation equation \eqref{eq:deform}, one inductively shows that
\begin{gather*}
\frac{\partial^n\Psi}{\partial z^n}(\lambda,z^*)=c \frac{\partial^n\widetilde{\Psi}}{\partial z^n}(\lambda,z^*), \qquad \text{for all $n\in\mathbb{N}$}.
\end{gather*} Hence \eqref{eq:lineardependence} holds locally in $z$ around $z=z^*$, and hence globally by Lemma \ref{lem:behavioursolutions}.
	The second part follows from item~(2) in the proof of Lemma~\ref{lem:behavioursolutions}.
\end{proof}

We write $\Psi\equiv \Phi$, for solutions of \eqref{eq:scalarlaxpair}, if\/f they are linearly dependent.
Ablowitz et al.~\cite{fokasmugan1988} show that,
for $k\in\mathbb{Z}_4$, there exists a unique subdominant solution of \eqref{eq:scalarlaxpair} in $\Omega_k$, normalised by means of the asymptotic characterisation
\begin{gather}\label{eq:asympinfscalar}
\Psi_k(\lambda,z)=\begin{cases} \big(1+\mathcal{O}\big(\lambda^{-1}\big)\big)e^{ g(\lambda,z)}\big(\frac{1}{\lambda}\big)^{\theta_\infty-\tfrac{1}{2}} & \text{if $k$ odd,}\\
\big({-}\tfrac{1}{2}u+\mathcal{O}\big(\lambda^{-1}\big)\big)e^{-g(\lambda,z)}\big(\frac{1}{\lambda}\big)^{\tfrac{1}{2}-\theta_\infty} & \text{if $k$ even,}
\end{cases}
\end{gather}
as $\lambda\rightarrow \infty$ in $\Omega_k$, locally uniformly in $z$ away from zeros and poles of $\om(z)$. The Stokes phenomenon near $\lambda=\infty$ in \eqref{eq:scalarlaxpair} now translates to the exponentially small jumps
\begin{subequations}\label{eq:stmultscalarlaxpair}
	\begin{gather}
	\Psi_{k+1}(\lambda,z)=\Psi_{k-1}(\lambda,z)+s_k\Psi_{k}(\lambda,z), \qquad k=1,2,\\
	\Psi_{0}(\lambda,z)=-e^{-2\pi i \theta_\infty}\big(\Psi_{2}(e^{2\pi i}\lambda,z)+s_3\Psi_{3}\big(e^{2\pi i}\lambda\big),z\big),\\
	\Psi_{1}(\lambda,z)=-e^{2\pi i \theta_\infty}\Psi_{3}\big(e^{2\pi i}\lambda,z\big)+s_0\Psi_{0}(\lambda,z).
	\end{gather}
\end{subequations}
for a unique $s_k=s_k(z)\in\mathbb{C}$, called the $k$th Stokes multiplier. From Lemma \ref{lem:lineardeplocalglobal}, we immediately obtain that the Stokes multipliers $s_k=s_k(z)$ are constant with respect to $z$.

\begin{subequations}\label{eq:scalarpsi10} Considering \eqref{eq:scalarlaxpair} near $\lambda=0$, we restrict our discussion to $\theta_0\notin -\tfrac{1}{2}\mathbb{N}$. Kapaev \cite{kapaev1996,kapaev1998} shows the existence of solutions $\Psi_1^0(\lambda,z)$ and $\Psi_2^0(\lambda,z)$ of \eqref{eq:scalarlaxpair} with Frobenius expansions in $\lambda$, at $\lambda=0$,
	\begin{gather}
	\Psi_+(\lambda,z)=\kappa\lambda^{\tfrac{1}{2}+\theta_0} (1+\lambda F_1(\lambda;z) ), \\
	\Psi_-(\lambda,z)=\kappa^{-1}\lambda^{\tfrac{1}{2}-\theta_0}\left(\frac{u\om}{4\theta_0}+\lambda F_2(\lambda;z)\right)+j\log(\lambda)\Psi_+(\lambda,z), \label{eq:scalarpsi10minus}
	\end{gather}
\end{subequations}
where $\kappa=\kappa(z)$ satisf\/ies
\begin{gather}\label{eq:kappadefi}
\frac{\kappa_z}{\kappa}=-2\frac{v}{\om},
\end{gather}
 and $F_1(\lambda;z)$ and $F_2(\lambda;z)$ entire in $\lambda$, with $j=0$ in the non-resonant case $\theta_0\notin \tfrac{1}{2}\mathbb{Z}$.

In the resonant case $\theta_0\in \tfrac{1}{2}\mathbb{N}^*$, we have
\begin{gather*}
-e^{-2\pi i \theta_0}\Psi_-\big(e^{2\pi i}\lambda,z\big)=\Psi_-(\lambda,z)+j2\pi i \Psi_+(\lambda,z),
\end{gather*}
hence $j=j(z)$ is independent of $z$, by Lemma~\ref{lem:lineardeplocalglobal}. We call $\lambda=0$ an apparent singularity if $j=0$. The main objective of this section, is to prove the following two propositions.
\begin{pro}\label{pro:hermitecharacterisation}
	Fix $m,n\in\mathbb{N}^*$, then the three families of Hermite rationals are characterised via the scalar GJM Lax pair \eqref{eq:scalarlaxpair}, as follows.
	\begin{enumerate}[label={\rm H.\Roman*:}, ref={\rm H.\Roman*}]\itemsep=0pt
		\item Considering parameter values $\theta_0=\tfrac{1}{2}n$ and $\theta_\infty=m+1+\tfrac{1}{2}n$, a solution $\om\in \mathcal{W}_\theta$, recall definition~\eqref{eq:defiwtheta}, equals the rational solution $\om_{m,n}^{({\rm I})}$, if and only if the scalar GJM Lax pair~\eqref{eq:scalarlaxpair} has an apparent singularity at $\lambda=0$,
		and the subdominant solution at $\lambda=\infty$ in $\Omega_0$, is also subdominant at $\lambda=0$, i.e.,
		$\Psi_0(\lambda,z)\equiv \Psi_{+}(\lambda,z)$.
		
		Furthermore, in such case, it turns out that $\Psi_0(\lambda,z)\equiv \Psi_2(\lambda,z)$ and
		\begin{gather}\label{eq:h1psi0}
		\Psi_0(\lambda,z)=\lambda^{\frac{1}{2}(n+1)}e^{-g(\lambda,z)}P_1(\lambda,z),
		\end{gather}
		where $P_1(\lambda,z)$ is a polynomial in $\lambda$ of degree $m$, with constant term $P_1(0,z)=c\kappa(z)$, for some $c\in\mathbb{C}^*$,
		and $g(\la,z)$ is as defined in~\eqref{eq:gfunction}.		\label{H1general}

		\item Considering parameter values $\theta_0=\tfrac{1}{2}m$ and $\theta_\infty=1-\tfrac{1}{2}m-n$, a solution $\om\in \mathcal{W}_\theta$ equals the rational solution $\om_{m,n-1}^{({\rm II})}$, if and only if the scalar GJM Lax pair~\eqref{eq:scalarlaxpair} has an apparent singularity at $\lambda=0$,
		and the subdominant solution at $\lambda=\infty$ in $\Omega_1$, is also subdominant at $\lambda=0$, i.e.,
		$\Psi_1(\lambda,z)\equiv \Psi_{+}(\lambda,z)$.
	
		Furthermore, in such case, it turns out that $\Psi_1(\lambda,z)\equiv \Psi_3(\lambda,z)$ and
		\begin{gather*}
		\Psi_1(\lambda,z)=\lambda^{\frac{1}{2}(m+1)}e^{g(\lambda,z)}P_2(\lambda,z),
		\end{gather*}
		where $P_2(\lambda,z)$ is a polynomial in $\lambda$ of degree $n-1$ with constant term $P_2(0,z)=c\kappa(z)$, for some $c\in\mathbb{C}^*$,
		and $g(\la,z)$ is as defined in \eqref{eq:gfunction}.				\label{H2general}

\item Considering parameter values $\theta_0=\tfrac{1}{2}(m+n)$ and $\theta_\infty=\tfrac{1}{2}(n-m+2)$, a solution $\om\in \mathcal{W}_\theta$ equals the rational solution $\om_{m-1,n}^{({\rm III})}$, if and only if subdominant solutions near $\lambda=\infty$, of the scalar GJM Lax pair~\eqref{eq:scalarlaxpair}, in
		opposite Stokes sectors are linearly dependent, i.e., $\Psi_0(\lambda,z)\equiv \Psi_2(\lambda,z)$ and $\Psi_1(\lambda,z)\equiv \psi_3(\lambda,z)$.			
		Furthermore, in such case, it turns out that $\lambda=0$ is an apparent singularity and
		\begin{gather*}
		\Psi_0(\lambda)=\lambda^{-\frac{m+n-1}{2}}e^{-g(\lambda,a)}P_3(\lambda,z),\qquad \Psi_1(\lambda)=\lambda^{-\frac{m+n-1}{2}}e^{g(\lambda,a)}Q_3(\lambda,z),
		\end{gather*}
		where $P_3(\lambda,z)$ and $Q_3(\lambda,z)$ are polynomials in $\lambda$ of degree $n$ and $m-1$ respectively, with constant terms $P_3(0,z)=c_1 u\om\kappa^{-1}$ and $Q_3(0,z)=c_2 u\om\kappa^{-1}$, for some $c_1,c_2\in\mathbb{C}^*$,
		and~$g(\la,z)$ is as defined in~\eqref{eq:gfunction}. 	\label{H3general}
	\end{enumerate}
\end{pro}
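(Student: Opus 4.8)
The plan is to prove each biconditional in two halves---\emph{existence} (the tabulated rational solution realises the stated monodromy conditions) and \emph{rigidity} (these conditions determine at most one element of $\mathcal{W}_\theta$)---and to extract the closed forms for the subdominant solutions along the way. The enabling remark, already available from Lemma~\ref{lem:lineardeplocalglobal} and the discussion following it, is that the Stokes multipliers $s_k$ and the resonance parameter $j$ are independent of $z$; hence apparentness at $\lambda=0$ (that is, $j=0$) and every coincidence of the form $\Psi_0\equiv\Psi_+$, $\Psi_0\equiv\Psi_2$, etc.\ is isomonodromic. By Lemma~\ref{lem:lineardeplocalglobal} such a coincidence may therefore be tested at a single convenient $z$, and the conditions cut out a fixed monodromy datum to which the injective Riemann--Hilbert correspondence applies.

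I would first dispatch the closed forms, since they follow formally from the conditions and feed the rigidity step. In case \ref{H1general}, assume $\lambda=0$ is apparent and $\Psi_0\equiv\Psi_+$. Near $\lambda=0$ the solution then behaves like $\lambda^{\frac{1}{2}+\theta_0}=\lambda^{\frac{1}{2}(n+1)}$ with no logarithm, while near $\lambda=\infty$ in $\Omega_0$ it is normalised by \eqref{eq:asympinfscalar} as $e^{-g(\lambda,z)}\lambda^{\theta_\infty-\frac{1}{2}}$. Putting
\begin{gather*}
P_1(\lambda,z)=\lambda^{-\frac{1}{2}(n+1)}e^{g(\lambda,z)}\Psi_0(\lambda,z),
\end{gather*}
the branch factor $\lambda^{\frac{1}{2}(n+1)}$ absorbs the entire monodromy of $\Psi_0$ about the origin, so $P_1$ is single-valued on $\mathbb{C}^*$; apparentness makes it holomorphic and nonzero at $\lambda=0$, while the behaviour at infinity forces $P_1=\mathcal{O}(\lambda^m)$. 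By Liouville's theorem $P_1$ is then a polynomial of degree $m$, which is precisely \eqref{eq:h1psi0}; since $e^{-g}$ is subdominant in $\Omega_2$ as well, the closed form also gives $\Psi_0\equiv\Psi_2$. Cases \ref{H2general} and \ref{H3general} use the same extraction, the type~III argument being run simultaneously for $\Psi_0$ and $\Psi_1$, where opposite-sector subdominance of both now forces $\lambda=0$ to be apparent as an output.

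For existence I would construct the closed-form solution for the rational solution by induction along the B\"acklund lattice of Table~\ref{table:bthermite}. On the seed solutions \eqref{eq:hermitesimple} the scalar Lax pair \eqref{eq:scalarlaxpair} is elementary, and the relevant coincidences ($\Psi_0\equiv\Psi_+\equiv\Psi_2$ and their analogues) are verified by inspection; each $\mathcal{R}_i$ acts as an explicit rational gauge transformation of \eqref{eq:scalarlaxpair} that shifts $(m,n)$ as tabulated and carries a polynomial-times-exponential subdominant solution to another of the same shape, the polynomial degree shifting in accordance with the lattice step. Tracking the images of the distinguished solutions $\Psi_k$ and $\Psi_\pm$ under each $\mathcal{R}_i$ propagates the base cases to all $m,n\in\mathbb{N}^*$. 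Equivalently, one may posit the ansatz \eqref{eq:h1psi0} with undetermined $z$-dependent coefficients, substitute into \eqref{eq:anharmonicgen}--\eqref{eq:deform} to obtain a recursion for the coefficients together with a deformation flow, and identify these with the Hankel/$\tau$-function description of $H_{m,n}$, the constant term $P_1(0,z)=c\kappa(z)$ being fixed by \eqref{eq:kappadefi}. The construction simultaneously reads off the concrete monodromy datum realised by the rational solution.

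Rigidity then closes the argument: the conditions together with the derived closed form pin down the Stokes data via the relations \eqref{eq:stmultscalarlaxpair}, so by injectivity of the Riemann--Hilbert correspondence at most one $\omega\in\mathcal{W}_\theta$ can satisfy them; since existence exhibits the rational solution as a solution, it is the only one. The main obstacle is the existence step. Either the inductive gauge computation must be made watertight---in particular the bookkeeping of the exponential prefactors $e^{\pm g}$ and of the branch cut $\arg\lambda=-\tfrac{\pi}{4}$ under each $\mathcal{R}_i$, and the separate control of the resonant singularity $\lambda=0$ through the vanishing of the logarithmic coefficient $j$---or, in the recursion approach, one must prove that the coefficient recursion for $P_1$ truncates at degree $m$ exactly on the rational solution. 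The type~III family is the most delicate, as both $\Psi_0$ and $\Psi_1$ carry nontrivial polynomials, of degrees $n$ and $m-1$, and the apparentness of $\lambda=0$ there is deduced rather than assumed.
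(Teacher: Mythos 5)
Your overall architecture --- identify the monodromy data cut out by the stated conditions, invoke injectivity of the monodromy map (Proposition~\ref{pro:injectiveresonant}), and extract the closed forms by a Liouville argument --- is the same as the paper's, and your existence step by B\"acklund induction from the seeds \eqref{eq:hermitesimple} is a workable substitute for the paper's citation of Kapaev (it is essentially how the paper itself handles the Hermite~III family, via Propositions~\ref{pro:schlesinger} and~\ref{prop:rationalmonodromy}). The genuine gap is in the step you treat as formal: the extraction of \eqref{eq:h1psi0}. Setting $P_1=\lambda^{-\frac{1}{2}(n+1)}e^{g}\Psi_0$, you assert that ``the behaviour at infinity forces $P_1=\mathcal{O}(\lambda^m)$'' and apply Liouville's theorem. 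But the normalisation \eqref{eq:asympinfscalar} describes $\Psi_0$ only as $\lambda\to\infty$ in $\Omega_0$, and that representation extends at most to the adjacent sectors. In the opposite sector $\Omega_2$ one has, from \eqref{eq:stmultscalarlaxpair}, $\Psi_0=\Psi_2-s_1\Psi_1$, and $\Psi_1$ grows like $e^{+g}$ (times a power of $\lambda$) there; so whenever $s_1\neq 0$ the function $P_1$ grows like $e^{2g}$ in $\Omega_2$ and Liouville is inapplicable. Polynomial boundedness of $P_1$ in \emph{all} directions is equivalent to $s_1=0$, i.e., to $\Psi_0\equiv\Psi_2$ --- which you deduce only afterwards, as a consequence of the closed form. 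The argument is circular, and your rigidity step inherits the gap, since it relies on the closed form to pin down the Stokes data.

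What is missing is the resolution of a dichotomy that your hypotheses genuinely leave open. Apparentness of the resonant singularity translates (Lemma~\ref{lem:trivialmon}, using $\theta_0+\theta_\infty\in\mathbb{Z}$) into $s_0=-s_2$, $s_1=-s_3$, $s_1s_2=0$, and $\Psi_0\equiv\Psi_+$ gives $i_E=0$; this still permits the branch $s_2=0$, $s_1\neq 0$. The paper excludes that branch by a \emph{second}, separate Liouville argument applied to $\Psi_1$: if $s_2=0$ then $\Psi_1\equiv\Psi_3$, so its representation $e^{g}\lambda^{\frac12-\theta_\infty}$ holds in every direction, and --- using apparentness to exclude logarithms at $\lambda=0$ --- the function $e^{-g}\lambda^{\frac{1}{2}(n-1)}\Psi_1$ would be entire and decay like $\lambda^{-m-1}$ at infinity, hence vanish identically, a contradiction. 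Only after this does one get $s_1=s_3=0$, hence $\Psi_0\equiv\Psi_2$, and only then does your Liouville extraction of $P_1$ become legitimate. The same difficulty recurs more sharply in case \ref{H3general}, where apparentness is an output rather than an input: there $\lambda^{\frac{m+n-1}{2}}e^{g}\Psi_0$ could a priori contain a term $\lambda^{m+n}\log\lambda$ coming from the $\Psi_-$ component (note $i_E\notin\{0,\infty\}$ forces that component to be nonzero), so single-valuedness must itself be derived from the Stokes relations before any Liouville argument can start.
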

\begin{rem}\label{rem:simple}
Each of the polynomials $P_1(\lambda,z)$, $P_2(\lambda,z)$, $P_3(\lambda,z)$ and $Q_3(\lambda,z)$ in Proposi\-tion~\ref{pro:hermitecharacterisation}, has only nonzero simple roots in the complex $\lambda$-plane.
\end{rem}

\begin{pro}\label{pro:okamotoanharmonicgeneral}
Fix $m,n\in\mathbb{Z}$, let $\theta_0=\tfrac{1}{2}n-\tfrac{1}{6}$ and $\theta_\infty=\tfrac{1}{2}(2m+n+1)$, then a solution $\om\in \mathcal{W}_\theta$ equals the rational solution $\widetilde{w}_{m,n}(z)$, if and only if the Stokes data of the scalar GJM Lax pair~\eqref{eq:scalarlaxpair} satisfy
	\begin{gather*}
	s_0=s_2,\qquad s_1=s_3,\qquad s_0s_1+1=0.
	\end{gather*}
\end{pro}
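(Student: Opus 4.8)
The plan is to deduce both implications from the Riemann--Hilbert correspondence. As recalled in the isomonodromic setup, the Stokes multipliers $(s_0,s_1,s_2,s_3)$ are $z$-independent (Lemma~\ref{lem:lineardeplocalglobal}) and the assignment $\omega\mapsto(s_0,s_1,s_2,s_3)$ embeds $\mathcal{W}_\theta$ injectively into the monodromy manifold $\mathcal{M}_\theta$. Since the rational solution $\widetilde{\omega}_{m,n}$ exists and is unique for the prescribed parameters, it is enough to show (i) that the triple $s_0=s_2$, $s_1=s_3$, $s_0s_1+1=0$ determines a single point of $\mathcal{M}_\theta$, and (ii) that $\widetilde{\omega}_{m,n}$ is mapped to that point. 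Given (i) and (ii), injectivity of the direct monodromy map yields the stated equivalence at once.

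For step (i) I would first make the cyclic relation explicit. Composing the jumps \eqref{eq:stmultscalarlaxpair} to express $(\Psi_0,\Psi_1)(e^{2\pi i}\lambda)$ in terms of $(\Psi_0,\Psi_1)(\lambda)$ produces the monodromy matrix $M$ around $\lambda=0$; one checks that $\det M=1$ identically and that the sole remaining constraint is the trace identity $\mathrm{tr}\,M=-2\cos(2\pi\theta_0)$, the trace of the local monodromy at the regular singular point. Substituting $s_0=s_2$, $s_1=s_3$ and $s_0s_1=-1$ collapses the trace to $\mathrm{tr}\,M=e^{-2\pi i\theta_\infty}$, which for $\theta_\infty=\tfrac12(2m+n+1)$ and $\theta_0=\tfrac12 n-\tfrac16$ equals $-2\cos(2\pi\theta_0)=(-1)^{n+1}$; hence the three constraints are consistent and define a one-dimensional locus inside Stokes space. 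The decisive observation is that this locus is a single orbit of the residual gauge action coming from the freedom $u\mapsto cu$ in \eqref{eq:udefi}: inspecting \eqref{eq:stmultscalarlaxpair} shows that rescaling the even-indexed subdominant solutions $\Psi_0,\Psi_2$ by a factor $c$ acts as $(s_0,s_1,s_2,s_3)\mapsto(c^{-1}s_0,cs_1,c^{-1}s_2,cs_3)$, which leaves $\mathrm{tr}\,M$ invariant, preserves each of the three constraints, and is transitive on $\{(p,q,p,q)\colon pq=-1\}$. Therefore the constraints single out exactly one point of $\mathcal{M}_\theta$, which is the quotient of the trace-relation variety by this $\mathbb{C}^*$-action.

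Step (ii) is the heart of the matter; I would establish it by induction along the B\"acklund orbit generated from $\widetilde{\omega}_{0,0}=-\tfrac23 z$. At the base point the oddness $\widetilde{\omega}_{0,0}(-z)=-\widetilde{\omega}_{0,0}(z)$ (a symmetry of ${\rm P}_{\rm IV}$ at fixed $\theta$) lifts to a symmetry of \eqref{eq:scalarlaxpair} that interchanges opposite Stokes sectors, forcing $s_0=s_2$ and $s_1=s_3$; together with the trace identity this leaves only a discrete choice for the gauge-invariant product $s_0s_1$, which I resolve by evaluating the linear system at the zero $z=0$ of $\widetilde{\omega}_{0,0}$, where Lemma~\ref{lem:behavioursolutions} reduces it to an explicit anharmonic oscillator and pins $s_0s_1=-1$. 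For the inductive step I use that $\mathcal{R}_1$--$\mathcal{R}_4$ are realised as Schlesinger transformations of \eqref{eq:laxpair}, shifting $(m,n)$ as in Table~\ref{table:bthermite} while acting on $(s_0,s_1,s_2,s_3)$ by explicit maps; the claim reduces to checking that each such map carries the constraint locus for one parameter value onto that for the next.

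The main obstacle is this propagation: controlling the Schlesinger action on the Stokes multipliers, including the $\theta_\infty$-dependent twists in \eqref{eq:stmultscalarlaxpair}, and confirming invariance of $\{s_0=s_2,\; s_1=s_3,\; s_0s_1+1=0\}$ throughout the lattice $m,n\in\mathbb{Z}$. A route that avoids this bookkeeping, parallel to Proposition~\ref{pro:hermitecharacterisation}, is to build explicit solutions of \eqref{eq:scalarlaxpair} from the Okamoto polynomials $Q_{m,n}$ and their $\tau$-functions---rational in $\lambda$ times $e^{\pm g(\lambda,z)}$ and a power of $\lambda$---and to read $s_0=s_2$, $s_1=s_3$, $s_0s_1=-1$ off their linear dependencies; the difficulty then becomes proving the needed polynomiality and non-degeneracy. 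In either approach, once (ii) holds, the equivalence is immediate from injectivity of the Riemann--Hilbert map and the uniqueness of step (i).
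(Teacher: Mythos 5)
Your overall architecture coincides with the paper's (implicit) proof of this proposition: since $\theta_0=\tfrac12 n-\tfrac16\notin\tfrac12\mathbb{Z}$, the monodromy mapping is injective (Proposition~\ref{pro:injectivenonresonant}); the constraints $s_0=s_2$, $s_1=s_3$, $s_0s_1=-1$ are compatible with the cubic relation \eqref{eq:stcubic} and form a single orbit of the gauge action \eqref{eq:stokesaction}, hence a single point of $\mathcal{M}_\theta$; and the Okamoto rationals are sent to that point. Your step (i) is correct and complete, and it is precisely the part the paper leaves to the reader. The real difference is step (ii): the paper does not prove that the Okamoto family has this Stokes data --- it imports it as Proposition~\ref{prop:rationalmonodromy}, citing Kapaev and Milne et al., its only new monodromy computation being the Hermite~III case. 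You attempt to derive this input from scratch, and that is where the gaps are.

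Concretely: first, the ``main obstacle'' you flag --- propagating the Stokes data along the B\"acklund lattice --- is a non-issue, because Proposition~\ref{pro:schlesinger} asserts that the monodromy data themselves are invariant under $\mathcal{R}_1$--$\mathcal{R}_4$ (not merely that some constraint locus is preserved); so your step (ii) collapses at once to the base case $\widetilde{\omega}_{0,0}=-\tfrac23 z$. Second, that base case is not actually established. Even granting the symmetry argument (which needs genuine branch and normalisation bookkeeping, since $(\lambda,z)\mapsto\bigl(e^{i\pi}\lambda,-z\bigr)$ squares to the monodromy rather than the identity, and the relations \eqref{eq:stmultscalarlaxpair} carry $e^{\pm 2\pi i\theta_\infty}$ twists), the conditions $s_0=s_2$, $s_1=s_3$ inserted into \eqref{eq:stcubic} give $(s_0s_1+1)(s_0s_1+3)=0$, i.e., two distinct gauge orbits $s_0s_1=-1$ and $s_0s_1=-3$; distinguishing them is the entire content of the claim, and your proposal only asserts the answer. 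Moreover, your mechanism for resolving it --- evaluating at the zero $z=0$ via Lemma~\ref{lem:behavioursolutions} --- presupposes that the Stokes multipliers of \eqref{eq:scalarlaxpair} coincide with those of the limiting oscillator; the paper proves such a localisation statement (Lemma~\ref{lem:ztoalem}, Proposition~\ref{prop:commute}) only at poles with residue $-1$, so an analogue at zeros would have to be supplied. Finally, the fallback route via solutions ``rational in $\lambda$ times $e^{\pm g(\lambda,z)}$'' built from the $Q_{m,n}$ cannot work: a solution proportional to $e^{-g}$ (resp.\ $e^{g}$) times a rational function is subdominant in the two opposite sectors $\Omega_0$ and $\Omega_2$ (resp.\ $\Omega_1$ and $\Omega_3$), which forces $s_1=0$ (resp.\ $s_2=0$), contradicting $s_0s_1=-1$ together with $s_0=s_2$. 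Closed-form solutions of that type exist exactly in the Hermite cases of Proposition~\ref{pro:hermitecharacterisation}; their absence in the Okamoto case is the reason the paper falls back on the literature here.
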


In order to prove Propositions \ref{pro:hermitecharacterisation} and \ref{pro:okamotoanharmonicgeneral},
we f\/irst discuss the monodromy space and monodromy mapping induced by the scalar GJM Lax pair.
There are dif\/ferent cases to be taken care of, the non-resonant and resonant, both of which are necessary to
study rational solutions. We then discuss the monodromy corresponding to the rational solutions. The Okamoto and
Hermite~I and~II families of rational solutions, can be found in the literature~\cite{kapaev1998}, see also~\cite{milne}.
However, the Hermite~III case seems to be missing in the literature; hence we study it in Proposition~\ref{prop:rationalmonodromy} below.
Finally we use these data to prove aforementioned propositions. For sake of later convenience, we introduce the following matrices
\begin{gather*}
\sigma_3=\begin{pmatrix}
1 & 0\\
0 & -1
\end{pmatrix},\qquad \sigma_+=\begin{pmatrix}
0 & 1 \\
0 & 0
\end{pmatrix}.
\end{gather*}

\subsubsection{Monodromy of linear problem}
We discuss the monodromy data of the scalar GJM Lax pair, following Ablowitz et al.~\cite{fokasmugan1988} and Kapaev~\cite{kapaev1996,kapaev1998} closely. Let us def\/ine fundamental solutions of the scalar GJM Lax pair \eqref{eq:scalarlaxpair} near $\lambda=\infty$,
\begin{alignat*}{3}
& Y_0(\lambda,z) =\begin{pmatrix}
-e^{2\pi i \theta_\infty}\Psi_3(e^{2\pi i}\lambda,z) & \Psi_0(\lambda,z)
\end{pmatrix},\qquad &&
Y_2(\lambda,z) =\begin{pmatrix}
\Psi_1(\lambda,z) & \Psi_2(\lambda,z)
\end{pmatrix},& \\
& Y_1(\lambda,z) =\begin{pmatrix}
\Psi_1(\lambda,z) & \Psi_0(\lambda,z)
\end{pmatrix},\qquad &&
Y_3(\lambda,z)=\begin{pmatrix}
\Psi_3(\lambda,z) & \Psi_2(\lambda,z)
\end{pmatrix},&
\end{alignat*}
and a fundamental solution near $\lambda=0$,
\begin{gather*}
Y^0(\lambda,z)=\begin{pmatrix}
\Psi_+(\lambda,z) & \Psi_-(\lambda,z)
\end{pmatrix}.
\end{gather*}
By Lemma \ref{lem:lineardeplocalglobal}, there exist constant matrices $S_0,S_1,S_2,S_3,E\in {\rm GL}_2(\mathbb{C})$ such that
\begin{gather}
Y_{k+1}(\lambda,z) =Y_{k}(\lambda,z)S_k,\qquad k=0,1,2,\nonumber\\
Y_{0}(\lambda,z) =-Y_{3}\big(e^{2\pi i}\lambda,z\big)S_3e^{2\pi i \theta_\infty \sigma_3},\nonumber\\
Y_{0}(\lambda,z) =Y^{0}(\lambda,z)E.\label{eq:connection}
\end{gather}
Given any solutions $\Psi$ and $\Phi$ of the scalar GJM Lax pair \eqref{eq:scalarlaxpair}, it follows by direct calculation that there exists a $c\in\mathbb{C}$ such that their Wronskian equals
\begin{gather*}
\Psi(\lambda,z)\Phi_\lambda(\lambda,z)-\Psi_\lambda(\lambda,z)\Phi(\lambda,z)=c u(z)\big(\lambda-\tfrac{1}{2}\om(z)\big).
\end{gather*}
It is easily seen that $Y_{0}(\lambda,z)$ and $Y^{0}(\lambda,z)$ have identical Wronskian, given by $c=1$ in the above formula.
Therefore, by~\eqref{eq:connection}, the connection matrix $E$ has unit determinant. For $k\in\mathbb{Z}_4$, we call $S_k$ the $k$-th Stokes matrix, which, by~\eqref{eq:stmultscalarlaxpair}, equals
\begin{gather}\label{eq:stconst}
S_k=\begin{pmatrix}
1 & 0\\
s_k & 1
\end{pmatrix}\quad \text{if $k$ is even,}\qquad S_k=\begin{pmatrix}
1 & s_k\\
0 & 1
\end{pmatrix}\quad \text{if $k$ is odd.}
\end{gather}
Writing $Y^0(e^{2\pi i}\lambda,z)=Y^0(\lambda,z)M_0$, we have the semi-cyclic relation
\begin{gather}\label{eq:semi}
-S_0S_1S_2S_3e^{2\pi i \theta_\infty\sigma_3}=E^{-1}M_0^{-1}E,
\end{gather}
which, by taking traces, implies
\begin{gather}\label{eq:stcubic}
(1+s_1s_2)e^{2\pi i \theta_\infty}+(s_0s_3+(1+s_2s_3)(1+s_0s_1))e^{-2\pi i \theta_\infty}=2 \cos{2\pi \theta_0},
\end{gather}
as $M_0$ is given explicitly by
\begin{gather}\label{eq:M0defi}
M_0=-e^{2\pi i \theta_0\sigma_3}\quad \text{if $\theta_0\notin\tfrac{1}{2}\mathbb{Z}$,} \qquad
M_0=(-1)^{n+1}\left(I+2 j\pi i\sigma_+\right)\quad \text{if $\theta_0= \tfrac{1}{2}n\in \tfrac{1}{2}\mathbb{N}^*$.}
\end{gather}
Now the Stokes data $\mathbf{s}=(s_0,s_1,s_2,s_3)$ and connection matrix $E$ depend on the choice of solution $\om \in \mathcal{W}_\theta$, and auxiliary functions $u$ and $\kappa$, characterised by \eqref{eq:udefi} and~\eqref{eq:kappadefi} respectively. Dif\/ferent choices $\widetilde{u}=\alpha u$ and $\widetilde{\kappa}=\beta \kappa$, with $\alpha,\beta\in\mathbb{C}^*$, lead to a change of monodromy data given by
\begin{gather}
\widetilde{\mathbf{s}} =\big(\alpha^{-1} s_0,\alpha s_1,\alpha^{-1} s_2,\alpha s_3\big), \label{eq:stokesaction}\\
\widetilde{E} =\begin{pmatrix}
\beta^{-1} E_{11} & \alpha\beta^{-1} E_{12}\\
\alpha^{-1} \beta E_{21} & \beta E_{22}
\end{pmatrix}.\label{eq:connectionaction}
\end{gather}
Let $M_\theta^s$ denote space obtained by cutting $\big\{\mathbf{s}\in\mathbb{C}^4\big\}$ with respect to \eqref{eq:stcubic}.
\begin{def*}
 In the non-resonant case $\theta_0\notin \tfrac{1}{2}\mathbb{Z}$, the monodromy space $\mathcal{M}_\theta$ is def\/ined as the quotient of $M_\theta^s$ with respect to the action given in~\eqref{eq:stokesaction}. We denote the corresponding monodromy mapping by
 \begin{gather*}
 \mathcal{T}_\theta\colon \ \mathcal{W}_\theta\rightarrow \mathcal{M}_\theta,
 \end{gather*}
 which sends a solution of ${\rm P}_{\rm IV}(\theta)$ to corresponding orbit of Stokes data of \eqref{eq:scalarlaxpair}.
\end{def*}
\begin{pro}\label{pro:injectivenonresonant}
Let $\theta_0\notin \tfrac{1}{2}\mathbb{Z}$, then the monodromy mapping $\mathcal{T}_\theta$ is injective.
\end{pro}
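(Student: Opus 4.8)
The plan is to run the standard uniqueness (``vanishing lemma'') argument for the direct monodromy map: if two ${\rm P}_{\rm IV}$ transcendents produce the same Stokes data, then the ratio of the two associated fundamental solutions is a single-valued rational matrix which Liouville's theorem forces to be constant, and this constant matrix identifies the two linear systems and hence the two transcendents.

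First I would reduce to an \emph{exact} coincidence of monodromy data. Suppose $\omega,\tilde\omega\in\mathcal{W}_\theta$ satisfy $\mathcal{T}_\theta(\omega)=\mathcal{T}_\theta(\tilde\omega)$. Since $\mathcal{M}_\theta$ is the quotient of $M_\theta^s$ by the action \eqref{eq:stokesaction}, there is an $\alpha\in\mathbb{C}^*$ so that, after rescaling the auxiliary function $\tilde u\mapsto\alpha\tilde u$, the Stokes data coincide, $\mathbf{s}=\tilde{\mathbf{s}}$. The one place the hypothesis $\theta_0\notin\tfrac12\mathbb{Z}$ is indispensable is the connection matrix: with $\mathbf{s}=\tilde{\mathbf{s}}$ the cyclic relation \eqref{eq:semi} gives $E^{-1}M_0^{-1}E=\tilde E^{-1}M_0^{-1}\tilde E$, so $\tilde E E^{-1}$ commutes with $M_0^{-1}$; in the non-resonant case $M_0=-e^{2\pi i\theta_0\sigma_3}$ has distinct eigenvalues, forcing $\tilde E E^{-1}$ to be diagonal, and as both connection matrices have unit determinant this diagonal matrix equals $\operatorname{diag}(\beta^{-1},\beta)$. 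This is exactly the residual $\kappa$-gauge \eqref{eq:connectionaction}, so after rescaling $\tilde\kappa\mapsto\beta\tilde\kappa$ we may assume $S_k=\tilde S_k$ for all $k$ and $E=\tilde E$.

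Next I would form the comparison matrix. It is cleanest to pass from the scalar pair \eqref{eq:scalarlaxpair} to the matrix system \eqref{eq:1isopiv}, which is gauge-equivalent to it (via $\Psi=\lambda^{\frac12}Y_1$), carries the same Stokes matrices $S_k$ and connection matrix $E$, but whose traceless coefficient matrix $\mathcal{A}(\lambda,z)$ is singular only at $\lambda=0$ and $\lambda=\infty$ (the apparent singularity at $\lambda=\tfrac12\om$ is an artifact of the scalar gauge). Fix a generic $z_0$, neither a zero nor a pole of $\om$ or $\tilde\om$, let $Y(\lambda),\tilde Y(\lambda)$ be the associated canonical fundamental matrices, and set $G(\lambda)=Y(\lambda)\tilde Y(\lambda)^{-1}$. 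Since $\operatorname{tr}\mathcal{A}=0$, the determinants $\det Y,\det\tilde Y$ are nonzero constants, so $G$ is holomorphic on $\mathbb{C}^*$. Crossing each Stokes ray multiplies both $Y$ and $\tilde Y$ on the right by the same $S_k$, leaving $G$ unchanged; the loop relations at $\infty$ and around $0$ use only $\theta_\infty$, $S_3$, $M_0$ and the matched $E$, so $G$ is single-valued, hence a rational matrix on $\mathbb{P}^1$. Near $\lambda=0$ both $Y$ and $\tilde Y$ factor as $\hat H(\lambda)\,\lambda^{\theta_0\sigma_3}E$ with $\hat H$ holomorphic and invertible and with the \emph{same} exponent matrix $\theta_0\sigma_3$ (same $\theta_0$) and the \emph{same} $E$, so the singular factors cancel and $G$ is holomorphic and invertible there; at $\lambda=\infty$ the factors $e^{\pm g}\lambda^{\mp(\theta_\infty-\frac12)}$ cancel and $G$ tends to a fixed constant. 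Therefore $G$ is holomorphic and invertible on all of $\mathbb{P}^1$, and Liouville's theorem gives that $G$ is a constant invertible matrix.

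Finally I would read off $\om=\tilde\om$. Differentiating $Y=G\tilde Y$ and using $Y_\lambda=\mathcal{A}Y$, $\tilde Y_\lambda=\tilde{\mathcal{A}}\tilde Y$ with $G$ constant yields $\mathcal{A}(\lambda)=G\,\tilde{\mathcal{A}}(\lambda)\,G^{-1}$. The prescribed asymptotics \eqref{eq:asympinfscalar} pin $G$ down to an explicit diagonal matrix, determined by the ratio $u/\tilde u$, i.e.\ by $\alpha$; comparing the conjugation-invariant diagonal entry $\lambda+z_0+\lambda^{-1}(\theta_0-v)$ of $\mathcal{A}$ forces $v=\tilde v$, and comparing the $(1,2)$-entry $u\big(1-\tfrac{\om}{2\lambda}\big)$ then forces $\om(z_0)=\tilde\om(z_0)$. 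As $z_0$ was an arbitrary generic point and $\om,\tilde\om$ are meromorphic, we conclude $\om\equiv\tilde\om$, which is the asserted injectivity of $\mathcal{T}_\theta$. The main obstacle is the connection-matrix matching step, where distinctness of the eigenvalues of $M_0$—precisely non-resonance—is essential; a secondary point requiring care is the exact cancellation of the Frobenius factors at $\lambda=0$, which is what makes $G$ regular there.
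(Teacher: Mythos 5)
Your proposal is correct, but it is not comparable to the paper's own argument for the simple reason that the paper offers none: Proposition~\ref{pro:injectivenonresonant} is proved there by the single line ``See Ablowitz et al.~\cite{fokasmugan1988}''. What you have written out is, in effect, the standard Riemann--Hilbert uniqueness (vanishing-lemma) argument that underlies that citation, and your version hangs together. The two points that genuinely needed care are handled correctly. First, since in the non-resonant case $\mathcal{M}_\theta$ records only the orbit of Stokes data, you must recover the connection matrix before any Liouville argument can start; your use of the semi-cyclic relation \eqref{eq:semi} to get $\tilde{E}E^{-1}$ commuting with $M_0^{-1}=-e^{-2\pi i\theta_0\sigma_3}$, whose eigenvalues are distinct precisely because $\theta_0\notin\tfrac12\mathbb{Z}$, and the identification of the resulting unit-determinant diagonal factor with the residual $\kappa$-gauge \eqref{eq:connectionaction}, is exactly the step where non-resonance is indispensable, and you flag it as such. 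Second, passing from the scalar pair \eqref{eq:scalarlaxpair} to the matrix system \eqref{eq:1isopiv} to dispose of the apparent singularity at $\lambda=\tfrac12\om$ is legitimate: the scalar canonical solutions lift uniquely to vector solutions (the $(1,2)$-entry $u\big(1-\tfrac{\om}{2\lambda}\big)$ is generically nonzero, so the second component is determined by the first), and the $\lambda^{\frac12}$ gauge only shifts $M_0$ by a sign, consistently for both systems, so the matched Stokes and connection data do transfer. From there, single-valuedness of $G=Y\tilde{Y}^{-1}$, its invertible extension to $\lambda=0$ (where the matched $E$ and the common exponent $\theta_0\sigma_3$ cancel) and to $\lambda=\infty$, Liouville, and the entry-by-entry comparison forcing $v=\tilde v$ and $\om(z_0)=\tilde\om(z_0)$ at every generic $z_0$, are all standard and correctly executed. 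In short: the paper buys brevity by outsourcing the proof; your write-up supplies the self-contained argument the reference is standing in for, with the role of the hypothesis $\theta_0\notin\tfrac12\mathbb{Z}$ made explicit rather than buried in the cited work.
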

\begin{proof}
	See Ablowitz et al.~\cite{fokasmugan1988}.
\end{proof}

In the resonant case $\theta_0\in \tfrac{1}{2}\mathbb{N}^*$, the dominant solution
$\Psi_-(\lambda,z)$ at $\lambda=0$, is no longer uniquely specif\/ied by the asymptotic expansion in
\eqref{eq:scalarpsi10minus} as one can add arbitrary multiples of the subdominant solution $\Psi_+(\lambda,z)$ to it.
This amounts to arbitrary right multiplication of the fundamental solution at $\lambda=0$,
\begin{gather*}
\widetilde{Y}^0=Y^0(I-r\sigma_+),\qquad r\in\mathbb{C},
\end{gather*}
which correspondingly transforms $E$ as $\widetilde{E}=(I+r\sigma_+)E$. Invariant under this transformation, and the action induced by changing $\kappa$ in \eqref{eq:connectionaction}, is the quantity
\begin{gather}\label{eq:iEdefi}
i_E:=E_{22}/E_{21}\in\mathbb{P}^1.
\end{gather}
\begin{lem} \label{lem:trivialmon}
	Let $\theta_0\in\tfrac{1}{2}\mathbb{N}^*$, then $\lambda=0$ is an apparent singularity of~\eqref{eq:1isopiv}, i.e., $j=0$, if and only if the stokes multipliers $\mathbf{s}$ are elements of the one-dimensional submanifold $M_\theta^0$ of $M_\theta^s$, defined by
	\begin{gather}\label{eq:submanifoldtrivial}
	s_0=-s_2 e^{2\pi i (\theta_\infty+\theta_0)}, \qquad s_1=-s_3 e^{-2\pi i (\theta_\infty+\theta_0)}, \qquad 1+s_1s_2= e^{-2\pi i (\theta_\infty+\theta_0)}.
	\end{gather}
	Furthermore, in case $j\neq 0$, then the quantity $i_E$ is given explicitly by
	\begin{gather*}
	i_E=e^{-2 \pi i \theta_\infty}\frac{s_1+s_3+s_1s_2s_3}{e^{2 \pi i \theta_\infty}(1+s_1s_2)-e^{2 \pi i \theta_0}},
	\end{gather*}	
	where we note that numerator and denominator of the right-hand side vanish simultaneously if and only if~\eqref{eq:submanifoldtrivial} holds, for $\mathbf{s}\in M_\theta^s$.
\end{lem}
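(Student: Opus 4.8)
The plan is to read everything off the semi-cyclic relation \eqref{eq:semi}. Writing $L:=-S_0S_1S_2S_3e^{2\pi i\theta_\infty\sigma_3}$, relation \eqref{eq:semi} reads $L=E^{-1}M_0^{-1}E$, equivalently $EL=M_0^{-1}E$. In the resonant case \eqref{eq:M0defi} gives $M_0=(-1)^{n+1}(I+2j\pi i\sigma_+)$, so $M_0^{-1}=(-1)^{n+1}(I-2j\pi i\sigma_+)$ is upper triangular with both diagonal entries $(-1)^{n+1}$. Hence, by the definition of apparentness, $\lambda=0$ is apparent ($j=0$) precisely when $M_0$ is the scalar $(-1)^{n+1}I$, i.e.\ when $L=E^{-1}M_0^{-1}E=(-1)^{n+1}I$ is itself scalar. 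First I would compute $L$ explicitly from \eqref{eq:stconst}: the product $S_0S_1S_2S_3$ has entries $1+s_1s_2$, $s_1+s_3+s_1s_2s_3$, $s_0+s_2+s_0s_1s_2$ and $1+s_0s_1+s_2s_3+s_0s_3+s_0s_1s_2s_3$, which after right multiplication by $e^{2\pi i\theta_\infty\sigma_3}$ and the overall sign give the four entries $L_{ij}$.

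For the first equivalence I impose $L=(-1)^{n+1}I$. Using $e^{\pm2\pi i\theta_0}=(-1)^n$, the $(1,1)$ entry yields $1+s_1s_2=e^{-2\pi i(\theta_\infty+\theta_0)}$, the third relation of \eqref{eq:submanifoldtrivial}; feeding this into the vanishing of the $(1,2)$ and $(2,1)$ entries produces $s_1=-s_3e^{-2\pi i(\theta_\infty+\theta_0)}$ and $s_0=-s_2e^{2\pi i(\theta_\infty+\theta_0)}$, the other two relations. The $(2,2)$ equation is then automatic, since $\det L=1$ forces the two diagonal entries to be mutually inverse. Conversely, substituting \eqref{eq:submanifoldtrivial} back makes both off-diagonal entries vanish and both diagonal entries equal $(-1)^{n+1}$, so $L$ is scalar and, by \eqref{eq:semi}, $M_0^{-1}$ is scalar, i.e.\ $j=0$. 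This gives the equivalence $j=0\Leftrightarrow\mathbf{s}\in M_\theta^0$.

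For the explicit value of $i_E$ when $j\neq0$, the key observation is that the bottom row of $EL=M_0^{-1}E$ says that $(E_{21},E_{22})$ is a left eigenvector of $L$ for the eigenvalue $(-1)^{n+1}$ (because the bottom row of the upper-triangular $M_0^{-1}$ is $(0,(-1)^{n+1})$). Since $j\neq0$ makes $M_0^{-1}$, hence $L$, a genuine Jordan block, this eigenvector is unique up to scale, so $i_E=E_{22}/E_{21}$ from \eqref{eq:iEdefi} is pinned down by the two scalar equations of that row; eliminating gives $i_E=L_{12}/((-1)^{n+1}-L_{22})$. The numerator is immediate. For the denominator I would not expand $L_{22}$ but instead use that the cubic \eqref{eq:stcubic} is exactly the trace of \eqref{eq:semi}, which lets me rewrite $-L_{22}$ as $e^{2\pi i\theta_0}+e^{-2\pi i\theta_0}-(1+s_1s_2)e^{2\pi i\theta_\infty}$ and hence collapse $(-1)^{n+1}-L_{22}$ to $-\big(e^{2\pi i\theta_\infty}(1+s_1s_2)-e^{2\pi i\theta_0}\big)$. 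Dividing out the common sign delivers the stated formula. The simultaneous vanishing is then read off directly: the denominator vanishes exactly when the third relation of \eqref{eq:submanifoldtrivial} holds, and given that, the numerator $s_1+s_3(1+s_1s_2)$ vanishes exactly when the second does, with the cubic \eqref{eq:stcubic} used to reconcile this with the remaining relation.

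I expect the bookkeeping of the resonant monodromy—consistently trading $(-1)^{n+1}$ for $\mp e^{\pm2\pi i\theta_0}$—to be the chief source of sign errors, and the one genuinely non-mechanical step to be the use of the cubic relation to simplify the $(2,2)$ entry: recognising $L_{22}$ as the companion term in \eqref{eq:stcubic} is what turns a messy quartic expression into the compact denominator. Care is also needed in the final simultaneous-vanishing statement, where the cubic \eqref{eq:stcubic} must be invoked to tie the vanishing locus of the displayed expression to the full set of relations \eqref{eq:submanifoldtrivial}.
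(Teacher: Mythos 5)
Your strategy is exactly the one the paper gestures at: the paper's entire proof of this lemma is a one-line appeal to \eqref{eq:semi} and \eqref{eq:M0defi} (plus a reference to Kapaev), so your explicit computation of $L=-S_0S_1S_2S_3e^{2\pi i\theta_\infty\sigma_3}$ is the intended argument carried out in full. Your product $S_0S_1S_2S_3$ is correct, the first equivalence ($j=0$ iff $L=(-1)^{n+1}I$ iff \eqref{eq:submanifoldtrivial}, with the $(2,2)$ entry disposed of by $\det L=1$) is correct, and your derivation of the $i_E$ formula -- reading the bottom row of $EL=M_0^{-1}E$ as a left-eigenvector condition and trading $L_{22}$ for $L_{11}$ via the trace identity \eqref{eq:stcubic} -- is also correct.

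The genuine gap is the final simultaneous-vanishing step, and it cannot be repaired. Vanishing of the denominator is the third relation of \eqref{eq:submanifoldtrivial}; given that, vanishing of the numerator is the second relation; so far so good. But you then assert that the cubic \eqref{eq:stcubic} recovers the remaining first relation. It does not: on the locus cut out by the second and third relations, the cubic is satisfied \emph{identically}, with $s_0$ left completely free. Writing $q=e^{-2\pi i(\theta_\infty+\theta_0)}$, the relations $s_1=-s_3q$ and $1+s_1s_2=q$ give $1+s_2s_3=q^{-1}$, whence
\begin{gather*}
s_0s_3+(1+s_2s_3)(1+s_0s_1)=s_0s_3+q^{-1}(1-s_0s_3q)=q^{-1},
\end{gather*}
so the left-hand side of \eqref{eq:stcubic} equals $qe^{2\pi i\theta_\infty}+q^{-1}e^{-2\pi i\theta_\infty}=e^{-2\pi i\theta_0}+e^{2\pi i\theta_0}$ for \emph{every} $s_0$. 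Concretely, when $\theta_0+\theta_\infty\in\mathbb{Z}$ the point $\mathbf{s}=(5,0,1,0)$ lies in $M_\theta^s$, annihilates both numerator and denominator, yet violates $s_0=-s_2$.

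This shows that the parenthetical ``only if'' in the lemma is itself false as stated (which is presumably why the paper offers no computation): the simultaneous-vanishing locus is the two-dimensional subvariety of $M_\theta^s$ cut out by the second and third relations alone, which strictly contains the one-dimensional $M_\theta^0$. On the part of that locus where the first relation fails, $L$ is a nonscalar lower-triangular Jordan block, so by \eqref{eq:semi} one has $j\neq 0$, the conjugation forces $E_{22}=0$, and hence $i_E=0$; your expression $L_{12}/\big((-1)^{n+1}-L_{22}\big)$ is genuinely indeterminate there, and one must instead use the other row of the eigenvector equation, namely $i_E=\big((-1)^{n+1}-L_{11}\big)/L_{21}=0$. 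The honest conclusion of your (otherwise correct) argument is therefore weaker than the lemma claims: the displayed formula for $i_E$ is valid in $\mathbb{P}^1$ whenever numerator and denominator do not both vanish, and they both vanish precisely on the locus defined by the second and third relations of \eqref{eq:submanifoldtrivial}, not on $M_\theta^0$. You flagged this step yourself as the delicate one; the right reaction was not to wave the cubic at it but to test it, which immediately produces the counterexample above.
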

\begin{proof}
	This follows directly from equations \eqref{eq:semi} and \eqref{eq:M0defi}, see also Kapaev \cite{kapaev1998}.
\end{proof}

Note that the action in \eqref{eq:connectionaction} gives
\begin{gather}
\widetilde{i}_E=\alpha i_E,\label{eq:ieaction}
\end{gather}
and we def\/ine
\begin{gather*}
M_\theta=\big(M_\theta^s\setminus M_\theta^0\big)\sqcup \big(M_\theta^0\times \big\{i_E\in \mathbb{P}^1\big\}\big).
\end{gather*}
\begin{def*}
In the resonant case $\theta_0\in \tfrac{1}{2}\mathbb{N}^*$, the monodromy space $\mathcal{M}_\theta$ is def\/ined as the quotient of $M_\theta$ with respect to the action given in~\eqref{eq:stokesaction} and~\eqref{eq:ieaction}. We denote corresponding monodromy mapping by
	\begin{gather*}
	\mathcal{T}_\theta\colon \ \mathcal{W}_\theta\rightarrow \mathcal{M}_\theta,
	\end{gather*}
	which sends a solution of ${\rm P}_{\rm IV}(\theta)$ to corresponding orbit of monodromy data of \eqref{eq:scalarlaxpair}.
\end{def*}
\begin{pro}\label{pro:injectiveresonant}
	Let $\theta_0\in \tfrac{1}{2}\mathbb{N}^*$, then the monodromy mapping $\mathcal{T}_\theta$ is injective.
\end{pro}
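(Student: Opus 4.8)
The plan is to adapt the Riemann--Hilbert uniqueness argument of Ablowitz et al.~\cite{fokasmugan1988} used in the non-resonant case (Proposition~\ref{pro:injectivenonresonant}), the only genuinely new ingredient being the analysis at the resonant regular singular point $\lambda=0$. Suppose $\omega,\widetilde{\omega}\in\mathcal{W}_\theta$ satisfy $\mathcal{T}_\theta(\omega)=\mathcal{T}_\theta(\widetilde{\omega})$. By definition of $\mathcal{M}_\theta$ as the quotient of $M_\theta$ under the actions \eqref{eq:stokesaction} and \eqref{eq:ieaction}, after rescaling the auxiliary functions $u,\kappa$ of each solution by suitable constants $\alpha,\beta\in\mathbb{C}^*$ (and, in the resonant setting, fixing the normalisation of $\Psi_-$ via the shift $r$), I may assume that the Stokes multipliers $\mathbf{s}=(s_0,s_1,s_2,s_3)$ and the quantity $i_E$ of \eqref{eq:iEdefi} literally coincide for $\omega$ and $\widetilde{\omega}$. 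Invoking Lemma~\ref{lem:trivialmon}, this forces the full monodromy at $\lambda=0$ to agree: in the apparent case ($\mathbf{s}\in M_\theta^0$, $j=0$) the monodromy matrix $M_0$ of \eqref{eq:M0defi} is the common scalar $(-1)^{n+1}I$, while in the non-apparent case the explicit formula for $i_E$ shows that $\mathbf{s}$ already determines $M_0$ and, together with the residual gauge freedom \eqref{eq:connectionaction}, the connection matrix $E$. In either case we may arrange $E=\widetilde{E}$.

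With these normalisations fixed, I would form the fundamental solutions $Y_0(\lambda,z)$ and $\widetilde{Y}_0(\lambda,z)$ near $\lambda=\infty$ defined just before \eqref{eq:connection}, and set $\chi(\lambda,z):=Y_0(\lambda,z)\widetilde{Y}_0(\lambda,z)^{-1}$ for fixed $z$ away from the zeros and poles of $\omega$ and $\widetilde{\omega}$. The key point is that $\chi$ is single-valued and meromorphic on $\mathbb{C}^*$: crossing each Stokes ray, $Y_0$ and $\widetilde{Y}_0$ are continued by the same Stokes matrix $S_k$ of \eqref{eq:stconst}, so the jumps cancel in $\chi$; and the continuation of $Y_0,\widetilde{Y}_0$ around $\lambda=0$ is governed by the common $E$ and $M_0$ through \eqref{eq:connection}, so that $\chi$ is unramified at the origin.

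It then remains to control $\chi$ at the two singular points. At $\lambda=\infty$ the normalisation \eqref{eq:asympinfscalar} (with the matched value of $u$) gives $\chi(\lambda,z)\to I$. At $\lambda=0$ the Frobenius expansions \eqref{eq:scalarpsi10}, expressed through the common $E$, show that the leading factors $\lambda^{\frac12\pm\theta_0}$ cancel in $\chi$, so $\chi$ stays bounded as $\lambda\to0$ and hence extends holomorphically across the origin. Thus $\chi(\cdot,z)$ is a bounded entire $2\times2$ matrix function of $\lambda$ tending to $I$ at infinity, whence $\chi\equiv I$ by Liouville's theorem and $Y_0\equiv\widetilde{Y}_0$. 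Since $\omega$ is recovered from $Y_0$---for instance as twice the location of the apparent singularity $\lambda=\tfrac12\omega$ of \eqref{eq:anharmonicgen}, or from the coefficient of $\lambda^{-1}$ in the reconstructed potential---I conclude $\omega=\widetilde{\omega}$ for generic $z$, and hence identically by meromorphicity.

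The main obstacle is the analysis at $\lambda=0$: because of the resonance the dominant solution acquires the logarithmic term in \eqref{eq:scalarpsi10minus} and the monodromy $M_0$ of \eqref{eq:M0defi} is in general non-scalar, so equality of the \emph{orbits} in $\mathcal{M}_\theta$ does not immediately yield equality of the connection matrices needed to make $\chi$ single-valued and bounded at the origin. Bridging this gap is exactly the purpose of Lemma~\ref{lem:trivialmon}, which characterises the apparent locus $M_\theta^0$ and records the explicit dependence of $i_E$ on $\mathbf{s}$; the delicate step is to verify that matching $(\mathbf{s},i_E)$ up to the actions \eqref{eq:stokesaction}, \eqref{eq:connectionaction} and \eqref{eq:ieaction} indeed suffices to gauge $E$ to $\widetilde{E}$.
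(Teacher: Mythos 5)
The paper does not actually prove this proposition: its entire proof is the citation ``See Kapaev \cite{kapaev1998}''. Your Riemann--Hilbert/Liouville sketch is therefore the natural standalone route (and essentially the one the cited literature takes), but as written it has two genuine gaps.

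The first gap is the step you yourself flag as ``delicate'', which is in fact the whole content of the proposition in the resonant case, and your treatment of it is partly incorrect. Lemma~\ref{lem:trivialmon} expresses $i_E$ in terms of $\mathbf{s}$; it does \emph{not} determine $M_0$. Indeed $M_0=(-1)^{n+1}(I+2j\pi i\sigma_+)$ involves the logarithm coefficient $j$, which is not a function of the Stokes data: under the $\kappa$-gauge it rescales as $j\mapsto\beta^{-2}j$, so only its vanishing is invariant. What actually closes the argument is the following. In the non-apparent case, first use the $\beta$-gauge to arrange $j=\widetilde{j}$ (possible since both are nonzero); then the semi-cyclic relation \eqref{eq:semi}, with identical Stokes matrices and now identical $M_0$, forces $E\widetilde{E}^{-1}$ to commute with $M_0^{-1}$, and the centraliser of $(-1)^{n+1}(I-2j\pi i\sigma_+)$ among unit-determinant matrices is exactly $\{\pm(I+r\sigma_+)\}$, i.e.\ the residual $r$-shift and sign ($\beta=-1$) gauge; hence $E$ can be gauged to $\widetilde{E}$. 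In the apparent case \eqref{eq:semi} puts no constraint on $E$ at all (both sides are scalar), and one must instead use $i_E=\widetilde{i}_E$ together with $\det E=\det\widetilde{E}=1$: equality of $i_E$ matches the bottom rows up to the $\beta$-scaling, and then unit determinant forces the top rows to differ by $r$ times the bottom row, which is the $r$-gauge. Without this analysis the proposition is simply not proved; this is precisely what distinguishes the resonant case from Proposition~\ref{pro:injectivenonresonant}.

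The second gap is that the Liouville step, as set up, does not run. The objects $Y_0$, $\widetilde{Y}_0$ defined before \eqref{eq:connection} are row vectors of scalar solutions, not invertible matrices; if you promote them to $2\times2$ matrices by adjoining $\lambda$-derivatives, their determinants are the Wronskians $u(z)\big(\lambda-\tfrac12\om(z)\big)$ and $\widetilde{u}(z)\big(\lambda-\tfrac12\widetilde{\om}(z)\big)$, so $\chi=Y_0\widetilde{Y}_0^{-1}$ acquires a pole at the apparent singularity $\lambda=\tfrac12\widetilde{\om}(z)$ and is not entire. Moreover, the asymptotics \eqref{eq:asympinfscalar} show that the limit of $\chi$ at $\lambda=\infty$ involves the ratio $u(z)/\widetilde{u}(z)$, with an off-diagonal entry growing linearly in $\lambda$ unless $u(z)=\widetilde{u}(z)$ pointwise; your normalisation only fixes $u$ and $\widetilde{u}$ up to constants, and they solve different equations \eqref{eq:udefi}, so $\chi\to I$ is unavailable before one already knows $\om=\widetilde{\om}$. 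Both defects disappear if you run the argument on the matrix Lax pair \eqref{eq:1isopiv} instead: its fundamental solutions are invertible on all of $\mathbb{C}^*$ and are normalised at infinity as $I+\mathcal{O}(1/\lambda)$ times explicit exponential and power factors not involving $u$, so once $E=\widetilde{E}$ and $M_0=\widetilde{M}_0$ are secured as above, $\chi$ is entire, tends to $I$, Liouville gives $\chi\equiv I$, and equality of the coefficient matrices yields $\om=\widetilde{\om}$.
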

\begin{proof}
	See Kapaev \cite{kapaev1998}.
\end{proof}

Finally, the monodromy data, are not only invariant under the ${\rm P}_{\rm IV}$ f\/low, but also under the action of the B\"acklund transformations $\mathcal{R}_1$--$\mathcal{R}_4$, def\/ined in Appendix~\ref{section:backlund}.
\begin{pro}\label{pro:schlesinger}
	For $1\leq i\leq 4$ and $\theta\in\mathbb{C}^2$ with $\theta_0,\theta_0^{(i)}\notin -\tfrac{1}{2}\mathbb{N}$, monodromy data correspon\-ding to solutions are invariant under the B\"acklund transformation $\mathcal{R}_i$, i.e.,
	\begin{gather*}
	\mathcal{T}_{\theta^{(i)}}\circ \mathcal{R}_i=\mathcal{T}_\theta.
	\end{gather*}
	\end{pro}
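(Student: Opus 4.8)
The plan is to realise each Bäcklund transformation $\mathcal{R}_i$ as a \emph{Schlesinger transformation} of the Garnier--Jimbo--Miwa Lax pair \eqref{eq:laxpair}, and then to argue that such a transformation cannot change the orbit of monodromy data. Concretely, for each $i$ I would exhibit a matrix $G_i=G_i(\lambda,z)$, rational in $\lambda$ with poles only at $\lambda=0$ and $\lambda=\infty$, intertwining the two linear systems: if $Y(\lambda,z)$ solves \eqref{eq:laxpair} with parameter $\theta$ and auxiliary functions $(\om,u,v)$, then $\widetilde Y(\lambda,z):=G_i(\lambda,z)Y(\lambda,z)$ solves \eqref{eq:laxpair} with the transformed parameter $\theta^{(i)}$ and the transformed functions $(\mathcal{R}_i\om,\widetilde u,\widetilde v)$. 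The essential point is that $G_i$ is single-valued and meromorphic on $\mathbb{C}^\infty$, so it carries no monodromy of its own; hence analytic continuation around $\lambda=0$ and the Stokes structure near $\lambda=\infty$ are transported without alteration. Passing to the scalar system \eqref{eq:scalarlaxpair} via the gauged first component $\Psi=\lambda^{\frac12}Y_1$ then induces the corresponding statement for $\Psi$.

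First I would recall from Appendix~\ref{section:backlund} the explicit action of each $\mathcal{R}_i$ on $(\om,u,v)$ and on $\theta$, and then write down the intertwiner $G_i$ (following the constructions of Kapaev~\cite{kapaev1998}); each $G_i$ is a Laurent polynomial of low degree in $\lambda$ whose coefficients are rational in the Bäcklund data. The intertwining property is verified by substituting $\widetilde Y=G_iY$ into \eqref{eq:1isopiv}--\eqref{eq:1isopivdeformation} and using the Bäcklund relations together with ${\rm P}_{\rm IV}(\theta)$; this is a direct, if lengthy, calculation. A bookkeeping check I would make at the outset is that the parameter shifts produced by $\mathcal{R}_i$ leave the defining cubic \eqref{eq:stcubic} invariant as a subset of $\big\{\mathbf{s}\in\mathbb{C}^4\big\}$ --- for instance a simultaneous shift of $\theta_0$ and $\theta_\infty$ by $\tfrac12$ flips the sign of both $e^{\pm 2\pi i\theta_\infty}$ and of $2\cos 2\pi\theta_0$, leaving the zero locus unchanged --- so that $\mathcal{M}_{\theta}$ and $\mathcal{M}_{\theta^{(i)}}$ are genuinely the same monodromy space and the equality $\mathcal{T}_{\theta^{(i)}}\circ\mathcal{R}_i=\mathcal{T}_\theta$ is meaningful.

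Next I would track the effect of $G_i$ on the canonical solutions. Near $\lambda=\infty$, because the leading behaviour of $G_i$ is diagonal (up to the shift of $\theta_\infty$) and preserves the sectorial decay, $G_i$ maps the subdominant solution $\Psi_k$ of the $\theta$-system to a $z$-independent scalar multiple of the subdominant solution $\widetilde\Psi_k$ of the $\theta^{(i)}$-system in the same sector $\Omega_k$, with the scalars fixed by the normalisation \eqref{eq:asympinfscalar}. Feeding this into the Stokes relations \eqref{eq:stmultscalarlaxpair} shows that the Stokes multipliers are preserved up to the rescaling \eqref{eq:stokesaction} induced by the change $u\mapsto\widetilde u$. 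Near $\lambda=0$, where the indices $\tfrac12\pm\theta_0$ shift, $G_i$ sends the Frobenius solutions $\Psi_\pm$ of \eqref{eq:scalarpsi10} to those of the new system, so by \eqref{eq:connection} the connection matrix $E$ (resp.\ the invariant $i_E$ in the resonant case) changes only by the gauge \eqref{eq:connectionaction}, resp.\ \eqref{eq:ieaction}, induced by $\kappa\mapsto\widetilde\kappa$. Quotienting by these actions gives that the orbits of monodromy data coincide, i.e.\ $\mathcal{T}_{\theta^{(i)}}(\mathcal{R}_i\om)=\mathcal{T}_\theta(\om)$; invariance of the continuation and Stokes data along $z$ is guaranteed throughout by Lemma~\ref{lem:lineardeplocalglobal}.

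The main obstacle I expect is the explicit construction and verification of the four intertwiners $G_i$, and in particular confirming that the induced transformation of the raw data $(\mathbf{s},E)$ lands \emph{exactly} in the prescribed gauge orbit rather than in some larger family. Two points demand extra care: the resonant case $\theta_0\in\tfrac12\mathbb{N}^*$ at $\lambda=0$, where $\Psi_-$ is only defined up to multiples of $\Psi_+$ and one must argue through the invariant $i_E$; and the branch-cut conventions in the $\lambda$-plane that enter the $k=0,3$ Stokes relations, which must be respected so that the scalars picked up by $G_i$ are consistent with \eqref{eq:stmultscalarlaxpair}. Once these are settled, the hypothesis $\theta_0,\theta_0^{(i)}\notin-\tfrac12\mathbb{N}$ ensures the Frobenius theory at $\lambda=0$, and hence that both $\mathcal{T}_\theta$ and $\mathcal{T}_{\theta^{(i)}}$ are defined, completing the argument.
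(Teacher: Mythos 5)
Your proposal is correct and follows essentially the same route as the paper: the paper's proof of Proposition~\ref{pro:schlesinger} is simply a citation to Fokas et al.~\cite{fokasmugan1988}, where the B\"acklund transformations $\mathcal{R}_1$--$\mathcal{R}_4$ are realised as Schlesinger (gauge) transformations of the Lax pair~\eqref{eq:laxpair} preserving the monodromy data up to the prescribed gauge actions --- exactly the argument you sketch, and the paper itself notes this realisation in Appendix~\ref{section:backlund}. Your outline, including the care required in the resonant case via $i_E$ and the branch-cut conventions, is a faithful reconstruction of the proof the paper delegates to that reference.
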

	\begin{proof}
	See Fokas et al.~\cite{fokasmugan1988}.
	\end{proof}

\subsubsection{Monodromy corresponding to rational solutions}\label{section:monodromyrational}
\begin{pro}\label{prop:rationalmonodromy}
	The monodromy data corresponding to the Hermite I family \eqref{eq:hermitepar1}, are given by
	\begin{gather}\label{eq:monodromydataHI}
	s_0=-s_2\neq 0,\qquad s_1=s_3=0,\qquad i_E=0.
	\end{gather}
	The monodromy data corresponding to the Hermite II family \eqref{eq:hermitepar2}, are given by
	\begin{gather*}
	s_1=-s_3\neq 0,\qquad s_0=s_2=0,\qquad i_E=\infty.
	\end{gather*}
	The monodromy data corresponding to the Hermite III family \eqref{eq:hermitepar3}, are given by
	\begin{gather*}
	s_0=s_1=s_2=s_3=0,\quad i_E\in\mathbb{P}^1\setminus\{0,\infty\}.
	\end{gather*}
	The Stokes data corresponding to the Okamoto family \eqref{eq:okamotorationals}, are given by
	\begin{gather*}
	s_0=s_2\qquad s_1=s_3,\qquad s_0s_1+1=0.
	\end{gather*}
\end{pro}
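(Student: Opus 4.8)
The plan is to use that the monodromy datum is a \emph{B\"acklund invariant}, so that within each family it suffices to treat one conveniently chosen member. By Lemma~\ref{lem:lineardeplocalglobal} the Stokes multipliers $s_k$, and in the resonant case the connection coordinate $i_E$ of \eqref{eq:iEdefi}, are independent of $z$, and by Proposition~\ref{pro:schlesinger} they are preserved by the B\"acklund transformations $\mathcal{R}_1$--$\mathcal{R}_4$. Reading Table~\ref{table:bthermite}, these maps act transitively on the index lattice of each family while keeping $\theta_0\in\tfrac12\mathbb{N}^*$ (Hermite) or $\theta_0\notin\tfrac12\mathbb{Z}$ (Okamoto, where indeed $\tfrac12 n-\tfrac16\notin\tfrac12\mathbb{Z}$ for all $n\in\mathbb{Z}$), so Proposition~\ref{pro:schlesinger} applies at every step. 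Hence it is enough to compute the monodromy of the simplest members $\om_{0,1}^{({\rm I})}=\tfrac1z$, $\om_{1,0}^{({\rm II})}=-\tfrac1z$, $\om_{0,0}^{({\rm III})}=-2z$ and $\widetilde{\om}_{0,0}=-\tfrac23 z$. One checks that each stated set of relations is invariant under the scaling action \eqref{eq:stokesaction} together with \eqref{eq:ieaction}: they only constrain the ratios $s_0/s_2$, $s_1/s_3$, the values $i_E\in\{0,\infty\}$ (both fixed points of $i_E\mapsto\alpha i_E$), the condition $i_E\in\mathbb{C}^*$, and the scaling-invariant product $s_0s_1$; so they genuinely describe orbits in $\mathcal{M}_\theta$.

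For the Hermite~I, II and Okamoto families the simplest members reduce \eqref{eq:anharmonicgen} to a quadratic oscillator whose solutions are parabolic cylinder (Weber) functions, whose Stokes and connection data are classical and are recorded by Kapaev~\cite{kapaev1998} (see also Milne~\cite{milne}); the task is to reconcile their normalisation with \eqref{eq:asympinfscalar} and to confirm consistency with the cubic constraint \eqref{eq:stcubic}. For the two resonant families this yields $s_1=s_3=0$, $s_0=-s_2\neq0$ (Hermite~I) and $s_0=s_2=0$, $s_1=-s_3\neq0$ (Hermite~II), which match the apparent-singularity relations of Lemma~\ref{lem:trivialmon} since here $e^{2\pi i(\theta_\infty+\theta_0)}=1$. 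As $\lambda=0$ is then apparent, $i_E$ is a free coordinate on $M_\theta^0\times\{i_E\}$ and must be pinned down separately, by extracting the $\Psi_-$-component of the relevant subdominant solution in \eqref{eq:connection}: that solution being purely $\Psi_+$ at $\lambda=0$ forces $i_E=E_{22}/E_{21}=0$ for Hermite~I and $i_E=\infty$ for Hermite~II. For the nonresonant Okamoto member, substituting $s_0=s_2$, $s_1=s_3$, $s_0s_1=-1$ into \eqref{eq:stcubic} with $\theta_0=-\tfrac16$, $\theta_\infty=\tfrac12$ reproduces $2\cos2\pi\theta_0=(-1)^n$, confirming these are the Stokes data.

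The genuinely new case is Hermite~III, which I would settle by an elementary closed-form computation. For $\om_{0,0}^{({\rm III})}=-2z$ with $\theta_0=\theta_\infty=\tfrac12$ one computes $v=1$, $u$ constant and $R(\lambda,z)\equiv0$, so that the potential collapses to the perfect square $V=(\lambda+z)^2$ and \eqref{eq:anharmonicgen} becomes
\begin{gather*}
\Psi_{\lambda\lambda}=\frac{1}{\lambda+z}\Psi_\lambda+(\lambda+z)^2\Psi,
\end{gather*}
with the two entire solutions $\Psi^{\pm}(\lambda,z)=e^{\pm\frac12(\lambda+z)^2}=e^{\pm\frac12z^2}e^{\pm g(\lambda,z)}$, for $g$ as in \eqref{eq:gfunction}. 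Since these are single-valued on $\mathbb{C}$, the solution normalised by \eqref{eq:asympinfscalar} is $e^{-\frac12z^2}\Psi^{+}$ in both odd sectors and $-\tfrac12 u\,e^{\frac12z^2}\Psi^{-}$ in both even sectors, so that $\Psi_1\equiv\Psi_3$, $\Psi_0\equiv\Psi_2$ and $\Psi_k(e^{2\pi i}\lambda)=\Psi_k(\lambda)$. Feeding these identities into the Stokes relations \eqref{eq:stmultscalarlaxpair}, with $e^{\pm2\pi i\theta_\infty}=-1$, and using the linear independence of $\Psi^{+}$ and $\Psi^{-}$, forces $s_0=s_1=s_2=s_3=0$; single-valuedness makes $\lambda=0$ apparent. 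Finally, both $\Psi_0$ (an $e^{-g}$-type solution) and $\Psi_3(e^{2\pi i}\lambda)$ (an $e^{g}$-type solution) are nonvanishing at $\lambda=0$, hence have nonzero $\Psi_-$-component in \eqref{eq:connection}, giving $E_{21},E_{22}\neq0$ and thus $i_E\in\mathbb{P}^1\setminus\{0,\infty\}$. By the B\"acklund invariance of the first paragraph this extends to all Hermite~III rationals.

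The main obstacle is not the Hermite~III computation, which is elementary once $V$ is recognised as $(\lambda+z)^2$, but the careful bookkeeping for the other three families: matching Kapaev's normalisation and branch conventions to \eqref{eq:1psikscalar}/\eqref{eq:asympinfscalar}, and, in the resonant Hermite~I and~II cases, correctly extracting $i_E$ from the Frobenius data at $\lambda=0$. This last point is the crux, because for an apparent singularity $i_E$ is \emph{not} determined by the Stokes multipliers alone and requires the explicit $\Psi_\pm$-decomposition in \eqref{eq:connection}. A secondary point to verify is that the B\"acklund chains of Table~\ref{table:bthermite} indeed connect each simplest member to arbitrary $(m,n)$ without leaving the parameter region $\theta_0\notin-\tfrac12\mathbb{N}$ demanded by Proposition~\ref{pro:schlesinger}.
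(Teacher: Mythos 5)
Your proposal is correct and takes essentially the same route as the paper's own proof: invoke B\"acklund invariance (Proposition~\ref{pro:schlesinger}) to reduce to the simplest members, defer the Okamoto and Hermite~I,~II families to Kapaev~\cite{kapaev1998} and Milne et al.~\cite{milne}, and settle Hermite~III by an explicit computation for $\om_{0,0}^{({\rm III})}=-2z$, where the paper likewise exhibits the fundamental solution $\big(e^{g(\lambda,z)},\,-\tfrac{1}{2}\alpha e^{-g(\lambda,z)}\big)$, concludes $s_0=s_1=s_2=s_3=0$ with $\lambda=0$ apparent, and then determines~$i_E$. The only (harmless) deviation is at the last step: you deduce $i_E\in\mathbb{P}^1\setminus\{0,\infty\}$ from the nonvanishing of $\Psi_0$ and $\Psi_3$ at $\lambda=0$, whereas the paper computes $i_E=-\tfrac{1}{2}\alpha$ explicitly by decomposing $\Psi_+$ in the basis $\big(e^{g},\,-\tfrac{1}{2}\alpha e^{-g}\big)$ with $\kappa(z)=\beta z$.
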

\begin{proof}
	As to the Okamoto family, see Kapaev \cite{kapaev1998} and Milne et al.~\cite{milne}. In the former paper, Kapaev also handles the Hermite I and
	II cases. Let us consider the Hermite~III case. Because the monodromy data are invariant under B\"acklund transformations $\mathcal{R}_1$--$\mathcal{R}_4$, by Proposition \ref{pro:schlesinger}, we only consider the simple case~\eqref{eq:hermitesimple3}. Then, for any $\alpha\in\mathbb{C}^*$, a solution of \eqref{eq:udefi} is given by $u(z)=\alpha$. Now, 	considering the scalar GJM Lax pair~\eqref{eq:scalarlaxpair}, it follows by direct calculation that a~fundamental solution is given by
	\begin{gather*}
	Y_*(\lambda,z)=\begin{pmatrix}
	e^{g(\lambda,z)} & -\tfrac{1}{2}\alpha e^{-g(\lambda,z)}
	\end{pmatrix}.
	\end{gather*}
	Then, comparison with the asymptotic characterisations \eqref{eq:asympinfscalar}, gives
	\begin{gather*}
	Y_0(\lambda,z)=Y_1(\lambda,z)=Y_2(\lambda,z)=Y_3(\lambda,z)=Y_*(\lambda,z),
	\end{gather*}
	which implies that all Stokes multipliers vanish and $\lambda=0$ is an apparent singularity.
Now, for any $\beta\in\mathbb{C}^*$, a solution of \eqref{eq:kappadefi} is given by $\kappa(z)=\beta z$. It is straightforward to check that
 	\begin{gather*}
 	\Psi_+(\lambda,z)=Y_*(\lambda,z)\cdot \begin{pmatrix}
 	\tfrac{1}{2}\beta\\
 	\alpha^{-1}\beta
 	\end{pmatrix},
 	\end{gather*}
 hence $i_E=-\tfrac{1}{2}\alpha\in\mathbb{P}^1\setminus\{0,\infty\}$.	
\end{proof}

\begin{proof}[Proof of Proposition \ref{pro:hermitecharacterisation}]
Let us consider the Hermite I case \ref{H1general}. Because of the injectivity of the monodromy mapping \ref{pro:injectiveresonant}, to establish the f\/irst part, all we have to show is that the monodromy data corresponding to Hermite I \eqref{eq:monodromydataHI},
are equivalent to \eqref{eq:scalarlaxpair} having an apparent singularity at $\lambda=0$, and $\Psi_0(\lambda,z)\equiv \Psi_{+}(\lambda,z)$. Now suppose that the monodromy data of \eqref{eq:scalarlaxpair} are given by \eqref{eq:monodromydataHI}, then Lemma \ref{lem:trivialmon} shows that $\lambda=0$ is indeed an apparent singularity. The fact that $i_E=0$ readily translates to $\Psi_0(\lambda,z)\equiv \Psi_{+}(\lambda,z)$.

Conversely, suppose $\lambda=0$ is an apparent singularity and $\Psi_0(\lambda,z)\equiv \Psi_{+}(\lambda,z)$, then the latter immediately gives $i_E=0$. Furthermore Lemma \ref{lem:trivialmon} shows
 \begin{gather*}
 s_0=-s_2,\qquad s_1=-s_3,\qquad s_1s_2=0,
 \end{gather*}
 Hence $s_1=0$ or $s_2=0$. Suppose, for the sake of contradiction, that $s_2=0$. Then $\Psi_1(\lambda,z)=\Psi_3(\lambda,z)$ and hence
 \begin{gather*}
 \Psi_1(\lambda,z)\sim e^{g(\lambda,z)}\lambda^{-m-\frac{1}{2}(n+1)},
 \end{gather*}
 as $\lambda\rightarrow \infty$ in $\mathbb{C}$.
 It is now easily seen that
 \begin{gather*}
 f(\lambda,z)=e^{-g(\lambda,z)} \lambda^{\frac{1}{2}(n-1)}\psi_1(\lambda)
 \end{gather*}
 is an entire function satisfying $f(\lambda,z)\sim \lambda^{-m-1}$ as $\lambda\rightarrow \infty$ in $\mathbb{C}$. Such a function does not exist, hence $s_2\neq 0$ and we are left with $s_1=s_3=0$.

 Considering the second part of the Hermite I case \ref{H1general}, as $s_1=0$, we indeed have $\Psi_0(\lambda,z)\equiv \Psi_2(\lambda,z)$. Let us def\/ine $P_1(\lambda,z)$ by equation \eqref{eq:h1psi0}. Then it is easily seen that $P_1(\lambda,z)$ is entire satisfying $P_1(\lambda,z)\sim -\tfrac{1}{2}u(z)\lambda^m$, as $\lambda\rightarrow \infty$ in $\mathbb{C}$, from which it follows that $P_1(\lambda,z)$ is a polynomial in $\lambda$ of degree $m$. The expression for the constant term of $P_1(\lambda,z)$ stems from the asymptotic characterisation of $\Psi_+(\lambda,z)$.
The cases \ref{H2general} and \ref{H3general} follow by a similar line of argument.
\end{proof}

\subsection{Localisation of Lax pair at poles}\label{section:localisation}
Recall that the scalar GJM Lax pair~\eqref{eq:scalarlaxpair} has a regular singular point at $\lambda=0$, an irregular singular point at $\lambda=\infty$ and a further apparent singularity at $\lambda=\tfrac{1}{2}\om$. Now, considering Lemma~\ref{lem:behavioursolutions}, a pole of $\om$ with $-1$ residue is a point where the further apparent singularity merges with the irregular singular point, resulting in an integer jump of one of the exponents of the irregular singular point, as can be seen by comparison of the asymptotic expansions~\eqref{eq:asympinfscalar} and~\eqref{eq:1psikscalar}, in the $k$ is odd case.
In this section, we wish to show that the monodromy of the scalar GJM Lax pair is preserved in such a limit.

\subsubsection{Monodromy of anharmonic oscillator}
Let us reconsider the anharmonic oscillator \eqref{eq:anharmoniccent}, for some f\/ixed $a,b\in\mathbb{C}$. For $k\in \mathbb{Z}_4$,
we def\/ined unique solutions $\psi_k(\lambda)$, subdominant in $\Omega_k$, by \eqref{eq:1psikscalar}.
Let us also recall the Stokes phenomenon of the anharmonic oscillator near $\lambda=\infty$, made explicit by equations \eqref{eq:stmult12} and \eqref{eq:stmult03}, with corresponding Stokes data $\mathbf{s}\in\mathbb{C}^4$.

Similar to equations \eqref{eq:scalarpsi10}, there exist, for $\theta_0\notin -\tfrac{1}{2}\mathbb{N}$, solutions of \eqref{eq:anharmoniccent} enjoying Frobenius expansions near $\lambda=0$ of the form
\begin{gather*}
\psi_+(\lambda)=\lambda^{\tfrac{1}{2}+\theta_0} (1+\lambda f_1(\lambda)),\qquad
\psi_-(\lambda)=\lambda^{\tfrac{1}{2}-\theta_0}\left(-\frac{1}{2\theta_0}+\lambda f_2(\lambda)\right)+j\log(\lambda)\psi_1^0(\lambda),
\end{gather*}
where $f_1(\lambda)$ and $f_2(\lambda)$ entire and $j=0$ in the non-resonant case $\theta_0\notin \tfrac{1}{2}\mathbb{N}^*$. There exists a unique matrix $E\in {\rm GL}_2(\mathbb{C})$, which we call the connection matrix, such that
 \begin{gather*}
 \begin{pmatrix}
 -e^{2\pi i \theta_\infty}\psi_3(e^{2\pi i}\lambda) & \psi_0(\lambda)
 \end{pmatrix}=\begin{pmatrix}
\psi_+(\lambda) & \psi_-(\lambda)
 \end{pmatrix}\cdot E.
 \end{gather*}
 As both fundamental solutions appearing in the above equation have unit Wronskian, the connection matrix has unit determinant.

Let us def\/ine $S_k$ by \eqref{eq:stconst}, def\/ine $M_0$ by \eqref{eq:M0defi}, then equations \eqref{eq:semi} and hence \eqref{eq:stcubic} hold, so $\mathbf{s}\in M_\theta^s$. Furthermore Lemma~\ref{lem:trivialmon} also holds true in this case. We def\/ine $i_E\in\mathbb{P}^1$ by \eqref{eq:iEdefi} if $\theta_0\in\tfrac{1}{2}\mathbb{N}^*$. Finally we def\/ine the monodromy mapping for the anharmonic oscillator \eqref{eq:anharmoniccent},
\begin{gather*}
\mathcal{T}_\theta^o\colon \ \mathbb{C}^2\rightarrow M_\theta,(a,b)\mapsto \mathcal{T}_\theta^o(a,b),
\end{gather*}
where $T_\theta^o(a,b)$ is the orbit corresponding to the monodromy data $\{\mathbf{s},E\}$ of~\eqref{eq:anharmoniccent} within $M_\theta$.

\subsubsection{Localisation and Monodromy}
We now wish to compare the monodromy data of the GJM scalar equation \eqref{eq:scalarlaxpair} and corresponding anharmonic oscillator
\eqref{eq:anharmoniccent}, upon localisation.
\begin{lem}\label{lem:ztoalem}
	Let $\om(z)$ be a solution of ${\rm P}_{\rm IV}$ and fix some $u(z)$ and $\kappa(z)$ satisfying~\eqref{eq:udefi} and~\eqref{eq:kappadefi} respectively. Say $z=a$ is a pole of $\om(z)$ with residue~$-1$ and corresponding coefficient
	$b$ as in \eqref{eq:laurentpole} with $\epsilon=-1$, and let $u_0,\kappa_0\neq 0$ be defined by
	\begin{gather}\label{eq:ukappaatpole}
	u(z)=u_0(z-a)+\mathcal{O}\big((z-a)^2\big),\qquad \kappa(z)=\kappa_0+\mathcal{O} (z-a ), \qquad z\rightarrow a.
	\end{gather}
	Then, for $k\in\mathbb{Z}$, we have
	\begin{gather} \label{eq:psiasymp}
	\Psi_k(\lambda,a)=\begin{cases}
	\psi_{k}(\lambda) & \text{if $k$ is odd,}\\
	\tfrac{1}{2}u_0\psi_{k}(\lambda) & \text{if $k$ is even.}
	\end{cases}
	\end{gather}
	Furthermore
	\begin{gather}\label{eq:psi0asymp}
	\Psi_+(\lambda,a)=\kappa_0\psi_+(\lambda),\qquad
	\Psi_-(\lambda,a)=\tfrac{1}{2}u_0\kappa_0^{-1} \psi_-(\lambda)+c\psi_+(\lambda),
	\end{gather}
	for some $c\in\mathbb{C}$, with $c=0$ in the non-resonant case $\theta_0\notin \tfrac{1}{2}\mathbb{Z}$.
\end{lem}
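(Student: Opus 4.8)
The plan is to take the limit $z\to a$ in the defining asymptotic and Frobenius characterisations of the distinguished solutions of \eqref{eq:scalarlaxpair}, tracking the two degenerations that occur at a residue-$(-1)$ pole: by \eqref{eq:ukappaatpole} the auxiliary function vanishes there, $u(a)=0$ with $u'(a)=u_0$, while $\om(z)$ blows up like $-(z-a)^{-1}$. The starting point is Lemma~\ref{lem:behavioursolutions}: since $z=a$ is a pole with residue $-1$ (and not $+1$), every solution of \eqref{eq:scalarlaxpair} is holomorphic in $z$ at $a$, and its restriction $\psi(\lambda):=\Psi(\lambda,a)$ solves the anharmonic oscillator \eqref{eq:anharmoniccent}. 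In particular each $\Psi_k(\lambda,a)$ and each $\Psi_\pm(\lambda,a)$ is a genuine solution of \eqref{eq:anharmoniccent}, so it remains only to identify which one.

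For \eqref{eq:psiasymp} I would first argue that $\Psi_k(\lambda,a)$ is subdominant in $\Omega_k$, so that by uniqueness of the subdominant solution it equals $c_k\psi_k(\lambda)$ for a scalar $c_k$. The point is that the subdominant/dominant dichotomy at $\lambda=\infty$ is governed by the leading term $e^{\pm g(\lambda,z)}=e^{\pm(\frac12\lambda^2+z\lambda)}$, whose dominant part $\pm\frac12\lambda^2$ is independent of $z$, so subdominance in $\Omega_k$ persists as $z\to a$. For odd $k$ the normalisation in \eqref{eq:asympinfscalar} has leading coefficient $1$, exactly as in \eqref{eq:1psikscalar}, and this coefficient is $z$-independent, so the limit preserves it and $c_k=1$. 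For even $k$ the leading coefficient of \eqref{eq:asympinfscalar} is $-\frac12 u(z)$, which vanishes at $a$; this is precisely the exponent jump distinguishing the even power $\lambda^{\theta_\infty-\frac12}$ in \eqref{eq:asympinfscalar} from $\lambda^{\theta_\infty-\frac32}$ in \eqref{eq:1psikscalar}, and it is why $c_k$ cannot simply be read off. To pin down the even constant I would use the Wronskian identity $\Psi\Phi_\lambda-\Psi_\lambda\Phi=c\,u(z)\big(\lambda-\tfrac12\om(z)\big)$ for solutions of \eqref{eq:scalarlaxpair}: pairing an odd with an even solution and matching the leading behaviour via \eqref{eq:asympinfscalar} fixes the constant to $1$, giving $W[\Psi_{\mathrm{odd}},\Psi_{\mathrm{even}}]=u(z)\big(\lambda-\tfrac12\om(z)\big)$. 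Inserting the Laurent data $u(z)=u_0(z-a)+\cdots$ and $\om(z)=-(z-a)^{-1}-a+\cdots$, the right-hand side tends to $\tfrac12 u_0$ as $z\to a$, while the left-hand side tends to $c_{\mathrm{odd}}c_{\mathrm{even}}\,W[\psi_{\mathrm{odd}},\psi_{\mathrm{even}}]$; a direct computation from \eqref{eq:1psikscalar} gives the constant Wronskian $W[\psi_{\mathrm{odd}},\psi_{\mathrm{even}}]=1$. With $c_{\mathrm{odd}}=1$ this yields $c_{\mathrm{even}}=\tfrac12 u_0$, which is \eqref{eq:psiasymp} for $k\in\mathbb{Z}_4$; the extension to all $k\in\mathbb{Z}$ then follows from the connection relations \eqref{eq:stmultscalarlaxpair} and \eqref{eq:stmult03}, where one checks that the differing monodromy prefactors $e^{\mp 2\pi i\theta_\infty}$ and $e^{\mp\pi i\theta_\infty}$ are consistent with the even/odd $\lambda$-power shift.

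For \eqref{eq:psi0asymp} I would evaluate the Frobenius expansions \eqref{eq:scalarpsi10} at $z=a$. Since $\kappa(a)=\kappa_0$, the solution $\Psi_+(\lambda,z)=\kappa(z)\lambda^{\frac12+\theta_0}(1+\cdots)$ restricts to $\kappa_0\lambda^{\frac12+\theta_0}(1+\cdots)=\kappa_0\psi_+(\lambda)$. For $\Psi_-$ the coefficient $\tfrac{u\om}{4\theta_0}$ in \eqref{eq:scalarpsi10minus} is an indeterminate $0\cdot\infty$ at the pole, but the Laurent expansions give $(u\om)(a)=\lim_{z\to a}u_0(z-a)\big({-}(z-a)^{-1}+\cdots\big)=-u_0$; comparing the resulting leading coefficient $\kappa_0^{-1}\tfrac{-u_0}{4\theta_0}$ of the $\lambda^{\frac12-\theta_0}$ part against the coefficient $-\tfrac{1}{2\theta_0}$ of $\psi_-$ produces the factor $\tfrac12 u_0\kappa_0^{-1}$. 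Any residual $\psi_+$-component is absorbed into the term $c\psi_+(\lambda)$: in the non-resonant case $\theta_0\notin\tfrac12\mathbb{Z}$ the Frobenius solution $\psi_-$ is rigidly fixed by its $\lambda^{\frac12-\theta_0}$ coefficient ($j=0$, no logarithm), forcing $c=0$, whereas in the resonant case the freedom to add multiples of $\psi_+$ leaves $c$ undetermined.

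I expect the main obstacle to be the justification of the limit interchange underlying the subdominance claim: the asymptotics \eqref{eq:asympinfscalar} hold only locally uniformly in $z$ away from poles, so one cannot literally set $z=a$ in them. Instead one must invoke the analytic dependence of the normalised subdominant solution on the coefficients of the equation, using that the first-derivative term $\tfrac{2}{2\lambda-\om}\Psi_\lambda$ in \eqref{eq:anharmonicgen} vanishes and $V(\lambda,z)\to V(\lambda;a,b,\theta)$ by \eqref{eq:Vconvergence} as $z\to a$, to transport the normalisation (and hence the value $c_{\mathrm{odd}}=1$) continuously through the pole. Granting this, everything else reduces to the bookkeeping of leading coefficients sketched above.
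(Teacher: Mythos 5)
Your treatment of \eqref{eq:psi0asymp} coincides with the paper's proof (evaluate the Frobenius data \eqref{eq:scalarpsi10} at $z=a$, using that $F_1$, $F_2$ are analytic there and that $u\om\to-u_0$), and your Wronskian trick for the even sectors is a legitimate shortcut the paper does not use: granted $\Psi_{\mathrm{odd}}(\cdot,a)=\psi_{\mathrm{odd}}$, letting $z\to a$ in $W[\Psi_{\mathrm{odd}},\Psi_{\mathrm{even}}]=u(z)\big(\lambda-\tfrac12\om(z)\big)\to\tfrac12u_0$ and comparing with $W[\psi_{\mathrm{odd}},\psi_{\mathrm{even}}]=1$ does produce the factor $\tfrac12u_0$.

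However, there is a genuine gap at exactly the step you flag, and the patch you propose does not close it. The claim that $\Psi_k(\lambda,a)$ is subdominant in $\Omega_k$ with unit leading coefficient for odd $k$ is the analytic core of the lemma; it cannot be obtained by ``invoking analytic dependence of the normalised subdominant solution on the coefficients''. Such parameter-dependence results (e.g., Sibuya's theory, \cite{sibuya75}, which the paper uses elsewhere) apply to families staying within a fixed class of equations with uniform control at the irregular singular point, whereas the family \eqref{eq:anharmonicgen} undergoes a confluence as $z\to a$: the apparent singularity at $\lambda=\tfrac12\om(z)$ escapes to infinity and merges with the irregular singular point. This is precisely why one formal exponent jumps by an integer (compare \eqref{eq:asympinfscalar} with \eqref{eq:1psikscalar}) and why the even-sector normalisation $-\tfrac12u(z)$ degenerates. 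Consequently the asymptotics \eqref{eq:asympinfscalar} are not uniform in $z$ near $a$ --- their region of validity recedes like $|\lambda|\gg|\om(z)|$ --- and pointwise convergence $\Psi_k(\lambda,z)\to\Psi_k(\lambda,a)$ on compact $\lambda$-sets says nothing about the behaviour of the limit as $\lambda\to\infty$: a limit of subdominant solutions can a priori acquire a dominant component, and nothing soft rules this out (your other data, the limiting Wronskians and Stokes relations, cannot detect it either, since they only constrain the product $c_{\mathrm{odd}}c_{\mathrm{even}}=\tfrac12u_0$). The paper closes this gap by constructing $\Psi_k(\lambda,z)$ as the solution of a Volterra-type integral equation whose kernel is built from the action integral $e^{\int_\mu^\lambda\sqrt{V(\nu,z)}\,{\rm d}\nu}$, proving convergence of the kernel as $z\to a$ and estimating the resulting series, following \cite[Theorem~4.5]{myphd}; it is this uniform control at $\lambda=\infty$ that yields both subdominance of the limit and the normalisation in \eqref{eq:psiasymp}. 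Without an argument of this type, your $c_{\mathrm{odd}}=1$ is an assumption rather than a conclusion.
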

\begin{proof}
As was proven in Lemma \ref{lem:behavioursolutions}, in the limit $z \to a$, any solution $\Psi(\la,z)$ of \eqref{eq:scalarlaxpair}
converges to a solution $\psi(\la)$ of \eqref{eq:anharmoniccent}, since the potential $V(\la,z)$ of the former converges to the potential $V(\la;a,b,\theta)$
of the latter, see equation \eqref{eq:Vconvergence}.

We f\/irst consider the convergence of Frobenius solutions near $\lambda=0$, as $z\rightarrow a$, see \eqref{eq:psi0asymp}. It follows from Lemma \ref{lem:behavioursolutions}, that the terms $F_1(\lambda;z)$ and $F_2(\lambda;z)$ in equations \eqref{eq:scalarpsi10}, are analytic in $z$ away from poles with $+1$ residue of $\om(z)$. In particular, at $z=a$ equations \eqref{eq:scalarpsi10} reduce to
	\begin{gather*}
	\Psi_+(\lambda,a)=\kappa_0\lambda^{\tfrac{1}{2}+\theta_0}(1+\lambda F_1(\lambda;a)),\\
	\Psi_-(\lambda,a)=\kappa_0^{-1}\lambda^{\tfrac{1}{2}-\theta_0}\left(-\frac{u_0}{4\theta_0}+\lambda F_2(\lambda;a)\right)+j\log(\lambda)\Psi_+(\lambda,a),
	\end{gather*}
from which equations \eqref{eq:psi0asymp} trivially follow.

Establishing the convergence in \eqref{eq:psiasymp} is more involved because the singularity at $\lambda=\infty$ is irregular.
Following \cite[Theorem~4.5]{myphd}, where the same limit is established in the Painlev\'e I case, one def\/ines $\Psi_k(\la,z)$
by means of a linear integral equation of Volterra type, where the kernel $K(\la,\mu;z)$ is expressed in term of the
action integral $e^{\int^\la_\mu \sqrt{V(\nu,z)}{\rm d}\nu}$, see \cite[equation~(4.17)]{myphd}.
From the convergence of the kernel in the limit $\lim\limits_{z \to a}K(\la,\mu;z)=K(\la,\mu;a)$, trivial but tedious estimates
lead to the proof of the convergence of the solutions $\Psi_k(\la,z)$.
\end{proof}

We def\/ine the Laurent mapping
\begin{gather}\label{eq:laurentmapping}
\mathcal{L}_\theta^\pm\colon \ \mathbb{C}^2\mapsto \mathcal{W}_\theta, \qquad (a,b)\mapsto \mathcal{L}_\theta^\pm(a,b),
\end{gather}
where $\mathcal{L}_\theta^\pm(a,b)$ denotes the meromorphic continuation of \eqref{eq:laurentpole} with $\epsilon=\pm 1$.
\begin{pro}\label{prop:commute}
	For any parameter values $\theta\in\mathbb{C}^*\times\mathbb{C}$,
	\begin{gather*}
	\mathcal{T}_\theta\circ \mathcal{L}_\theta^{\minus}=\mathcal{T}_\theta^o,
	\end{gather*}
	where $T_\theta$ is the monodromy mapping of the scalar GJM Lax pair, $T_\theta^o$ is the monodromy mapping of the anharmonic oscillator and $\mathcal{L}_\theta^-$ is the Laurent mapping~\eqref{eq:laurentmapping}.
\end{pro}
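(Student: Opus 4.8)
The plan is to use that the Stokes multipliers $s_k$ of the scalar GJM Lax pair are independent of $z$, as established through Lemma~\ref{lem:lineardeplocalglobal}, so that the monodromy datum $\{\mathbf{s},E\}$ (equivalently $\{\mathbf{s},i_E\}$) of \eqref{eq:scalarlaxpair} may be evaluated at the single value $z=a$. Fix $(a,b)\in\mathbb{C}^2$ and put $\om=\mathcal{L}_\theta^{\minus}(a,b)$, a solution with a pole of residue $-1$ at $z=a$ and Laurent coefficient $b$; choose $u,\kappa$ satisfying \eqref{eq:udefi} and \eqref{eq:kappadefi}, and let $u_0,\kappa_0\neq0$ be the coefficients in \eqref{eq:ukappaatpole}. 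By Lemma~\ref{lem:behavioursolutions} the solutions $\Psi_k(\lambda,z)$ and $\Psi_\pm(\lambda,z)$ all stay holomorphic at $z=a$, so this evaluation is legitimate and the resulting datum is, by definition, $\mathcal{T}_\theta(\om)$.

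First I would substitute the limiting identities of Lemma~\ref{lem:ztoalem}, namely $\Psi_k(\lambda,a)=\psi_k(\lambda)$ for $k$ odd and $\Psi_k(\lambda,a)=\tfrac{1}{2}u_0\psi_k(\lambda)$ for $k$ even, together with $\Psi_+(\lambda,a)=\kappa_0\psi_+(\lambda)$ and $\Psi_-(\lambda,a)=\tfrac{1}{2}u_0\kappa_0^{-1}\psi_-(\lambda)+c\psi_+(\lambda)$, directly into the relations \eqref{eq:stmultscalarlaxpair} defining the scalar Stokes multipliers and into the connection relations \eqref{eq:connection}. Reading off coefficients and comparing with the corresponding anharmonic relations \eqref{eq:stmult12}--\eqref{eq:stmult03}, the parity-dependent factor $\tfrac{1}{2}u_0$ enters the odd-indexed multipliers $s_1,s_3$ multiplicatively and the even-indexed multipliers $s_0,s_2$ by its inverse, so that $\mathbf{s}^{\text{scalar}}$ is exactly $\mathbf{s}^{o}$ acted upon by \eqref{eq:stokesaction} with $\alpha=\tfrac{1}{2}u_0$. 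A short linear-algebra computation then gives $E^{\text{scalar}}=D^{-1}E^{o}G$, with $G=\mathrm{diag}(1,\tfrac{1}{2}u_0)$ and $D$ the upper-triangular matrix with diagonal $(\kappa_0,\tfrac{1}{2}u_0\kappa_0^{-1})$ and upper-right entry $c$; in particular $i_E^{\text{scalar}}=\tfrac{1}{2}u_0\,i_E^{o}$, which matches \eqref{eq:connectionaction} and \eqref{eq:ieaction} for the same $\alpha=\tfrac{1}{2}u_0$ and $\beta=\kappa_0$. Hence the scalar datum at $z=a$ is the image of the anharmonic datum under the equivalence defining $\mathcal{M}_\theta$, so the two lie in a common orbit and $\mathcal{T}_\theta(\om)=\mathcal{T}_\theta^{o}(a,b)$.

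The hard part is not this final comparison, which is bookkeeping, but ensuring that the limiting relations of Lemma~\ref{lem:ztoalem} can be inserted into the monodromy relations without hidden discrepancies in the branch behaviour. The delicate point is that the even-indexed normalisations in \eqref{eq:1psikscalar} and \eqref{eq:asympinfscalar} differ by a power of $\lambda$: the leading coefficient $-\tfrac{1}{2}u$ of $\Psi_k$ vanishes at $z=a$ since $u(a)=0$, so the subleading term takes over and produces precisely the factor $\tfrac{1}{2}u_0$. This must be propagated consistently through the factors $e^{\pm2\pi i\theta_\infty}$ that arise upon continuing $\psi_2,\psi_3$ around $\lambda=0$ in the $k=0,3$ relations, which is most safely done by working from the connection relation $Y_0=-Y_3(e^{2\pi i}\lambda)S_3e^{2\pi i\theta_\infty\sigma_3}$ of \eqref{eq:connection} rather than from \eqref{eq:stmult03} directly. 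A second point needing care is the resonant case $\theta_0\in\tfrac{1}{2}\mathbb{N}^*$, where one must check that the extra coordinate $i_E$ is transported correctly and that the ambiguity $\widetilde{E}=(I+r\sigma_+)E$ of the connection matrix leaves $i_E$ invariant, so that $i_E^{\text{scalar}}=\tfrac{1}{2}u_0\,i_E^{o}$ is well defined. Once these are settled the equality of orbits follows uniformly in the resonant and non-resonant cases.
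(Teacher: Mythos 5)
Your proof is correct and follows essentially the same route as the paper: evaluate the $z$-independent monodromy data of the scalar GJM Lax pair at $z=a$ using Lemma~\ref{lem:ztoalem}, and check that the resulting discrepancies lie in the group orbit defining the monodromy space $\mathcal{M}_\theta$. The paper's own proof is identical in substance but terser: it first normalises $u_0=\kappa_0=1$ so that conservation of the Stokes multipliers and of $i_E$ (in the resonant case) is immediate, whereas you carry the factors $\tfrac{1}{2}u_0$, $\kappa_0$ and $c$ through explicitly and match them against the actions \eqref{eq:stokesaction}, \eqref{eq:connectionaction} and \eqref{eq:ieaction}.
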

\begin{proof}
	Take any $(a,b)\in\mathbb{C}^2$, then $\omega:=\mathcal{L}_\theta^-(a,b)$ is a solution of ${\rm P}_{\rm IV}(\theta)$ with Laurent expansion~\eqref{eq:laurentpole} about $z=a$ with $\epsilon=-1$. Let us take some $u(z)$ and $\kappa(z)$ satisfying \eqref{eq:udefi} and \eqref{eq:kappadefi} respectively, which we may assume are normalised such that $u_0=\kappa_0=1$ in \eqref{eq:ukappaatpole}. Then, using Lemma \ref{lem:ztoalem}, it is easy to see that the the Stokes multipliers, and quantity $i_E$ in the resonant case, are conserved as $z\rightarrow a$.
\end{proof}

\subsection{Exact characterisation of poles}\label{section:proofofcharacterisations}
\begin{thm}\label{thm:exactchar}
	Fix $\theta\in\mathbb{C}^*\times\mathbb{C}$ and a solution $\om$ of ${\rm P}_{\rm IV}(\theta)$, then $z=a$ is a pole with residue $-1$ of $\om$, if and only if there exists $b\in\mathbb{C}$ such that the monodromy of the anharmonic oscillator~\eqref{eq:anharmoniccent}, coincides with the monodromy of the scalar GJM Lax pair corresponding to the solution~$\om$, i.e., $\mathcal{T}_\theta^o(a,b)=\mathcal{T}_\theta(\om)$. In such case~$b$ turns out to be the coefficient in~\eqref{eq:laurentpole}.
\end{thm}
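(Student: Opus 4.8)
The plan is to read off the theorem directly from the commuting diagram of Proposition~\ref{prop:commute}, combined with the injectivity of the monodromy mapping established in Propositions~\ref{pro:injectivenonresonant} and~\ref{pro:injectiveresonant}. Since we assume $\theta\in\mathbb{C}^*\times\mathbb{C}$ and may always normalise $\Re[\theta_0]\geq0$, the parameter $\theta_0$ is either non-resonant ($\theta_0\notin\tfrac12\mathbb{Z}$) or resonant ($\theta_0\in\tfrac12\mathbb{N}^*$), so that $\mathcal{T}_\theta$ is injective in both cases. Thus all of the analytic content is already front-loaded into Lemmas~\ref{lem:behavioursolutions} and~\ref{lem:ztoalem} and Proposition~\ref{prop:commute}, and what remains is a short formal argument chasing the diagram.

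For the forward implication I would argue as follows. Suppose $z=a$ is a pole with residue $-1$ of $\om$, and let $b\in\mathbb{C}$ be the coefficient appearing in the Laurent expansion~\eqref{eq:laurentpole} with $\epsilon=-1$. By the very definition of the Laurent mapping~\eqref{eq:laurentmapping}, this means $\om=\mathcal{L}_\theta^{\minus}(a,b)$. Applying Proposition~\ref{prop:commute} then gives
\[
\mathcal{T}_\theta(\om)=\mathcal{T}_\theta\big(\mathcal{L}_\theta^{\minus}(a,b)\big)=\mathcal{T}_\theta^o(a,b),
\]
which is exactly the asserted coincidence of monodromy data, and simultaneously pins down $b$ as the stated Laurent coefficient.

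For the converse I would assume there exists $b\in\mathbb{C}$ with $\mathcal{T}_\theta^o(a,b)=\mathcal{T}_\theta(\om)$ and set $\widetilde{\om}:=\mathcal{L}_\theta^{\minus}(a,b)$, which by construction solves ${\rm P}_{\rm IV}(\theta)$ and has a pole with residue $-1$ at $z=a$ with Laurent coefficient $b$. Proposition~\ref{prop:commute} yields $\mathcal{T}_\theta(\widetilde{\om})=\mathcal{T}_\theta^o(a,b)=\mathcal{T}_\theta(\om)$, so injectivity of $\mathcal{T}_\theta$ forces $\widetilde{\om}=\om$; hence $\om$ inherits the pole with residue $-1$ at $z=a$, with $b$ its coefficient, as required. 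The one point I would be careful to spell out is that the monodromy data attached to $\om$ and to $\widetilde{\om}$ depend a priori on the normalisations of the auxiliary functions $u$ and $\kappa$; but since $\mathcal{M}_\theta$ is a quotient by precisely the action~\eqref{eq:stokesaction}--\eqref{eq:ieaction} induced by rescaling $u$ and $\kappa$, the orbits are well defined independently of these choices and the comparison is legitimate. I do not expect any genuine obstacle at this final stage: the delicate work is the convergence of subdominant solutions in the irregular-singular limit $z\to a$ (Lemma~\ref{lem:ztoalem}) underlying Proposition~\ref{prop:commute}, and once that is granted the theorem is a two-line consequence of the commuting square together with injectivity.
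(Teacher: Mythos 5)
Your proposal is correct and follows essentially the same route as the paper: the forward direction is read off from Proposition~\ref{prop:commute} applied to $\om=\mathcal{L}_\theta^{\minus}(a,b)$, and the converse sets $\widetilde{\om}=\mathcal{L}_\theta^{\minus}(a,b)$, invokes Proposition~\ref{prop:commute} to get $\mathcal{T}_\theta(\widetilde{\om})=\mathcal{T}_\theta^o(a,b)=\mathcal{T}_\theta(\om)$, and concludes $\om=\widetilde{\om}$ by the injectivity Propositions~\ref{pro:injectivenonresonant} and~\ref{pro:injectiveresonant}. Your added remark on the independence of the monodromy orbit from the normalisations of $u$ and $\kappa$ is a legitimate clarification already built into the definition of $\mathcal{M}_\theta$ as a quotient.
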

\begin{proof}
We have already established the ``only if" part. As to its converse, suppose $b\in\mathbb{C}$ is such that $\mathcal{T}_\theta^o(a,b)=\mathcal{T}_\theta(\om)$. Let $\widetilde{\om}$ be the solution of ${\rm P}_{\rm IV}(\theta)$ def\/ined by $\widetilde{\om}=\mathcal{L}_\theta^{\minus}(a,b)$. Then we know, by Proposition \ref{prop:commute}, that $\mathcal{T}_\theta(\widetilde{\om})=\mathcal{T}_\theta^o(a,b)=T_\theta(\om)$. Hence we have $\om=\widetilde{\om}$, by the injectivity of the monodromy mapping, see Propositions \ref{pro:injectivenonresonant} and \ref{pro:injectiveresonant}. In particular $z=a$ is indeed a pole with residue $-1$ of $\om$, and $b$ is the coef\/f\/icient in \eqref{eq:laurentpole}.
\end{proof}

Note that Theorem \ref{thm:okamotoanharmonic} is a direct consequence of Proposition \ref{prop:rationalmonodromy} and Theorem \ref{thm:exactchar}. However, Theorem \ref{thm:hermiteanharmonic} still requires some work.
\begin{proof}[Proof of Theorem \ref{thm:hermiteanharmonic} and Remark \ref{rem:simple}]
	We f\/ix $m,n\in\mathbb{N}^*$ and let us consider the equivalence with \ref{H1}. We set $\theta_0=\tfrac{1}{2}n$ and $\theta_\infty=m+1+\tfrac{1}{2}n$. From Proposition~\ref{prop:rationalmonodromy} and Theorem~\ref{thm:exactchar}, we conclude that, $z=a$ is a zero of $H_{m,n}(z)$, if and only if, there exists $b\in\mathbb{C}$ such that the monodromy of the anharmonic oscillator~\eqref{eq:anharmoniccent} is given by~\eqref{eq:monodromydataHI}. The latter statement is easily seen to be equivalent to \ref{H1}, by an argument identical to the proof of the f\/irst part of~\ref{H1general} in Proposition~\ref{pro:hermitecharacterisation}. Furthermore, in case $z=a$ is indeed a zero of $H_{m,n}(z)$, then comparison of~\ref{H1} and~\ref{H1general}, gives $P_1(\lambda,a)=\tfrac{1}{2}u_0p_1(\lambda)$, by Lemma~\ref{lem:ztoalem}.
		The roots of $p_1(\lambda)$ are necessarily simple. Now $P_1(\lambda,z)$ might a priori have a double root at $\lambda=\tfrac{1}{2}\om(z)$, for special values of~$z$. However, by Lemma~\ref{lem:lineardeplocalglobal}, this would imply that $P_1(\lambda,z)$ has a double root at $\lambda=\tfrac{1}{2}\om(z)$, for all values of~$z$, not equal to a zero or pole of $\om(z)$. Indeed, the latter follows from the fact that there exists an up to scalar multiplication unique solution of~\eqref{eq:scalarlaxpair}, which has a double root at $\lambda=\tfrac{1}{2}\om(z)$, for all values of $z$, not equal to a zero or pole of~$\om(z)$.
		
		Because of the identity $P_1(\lambda,a)=\tfrac{1}{2}u_0p_1(\lambda)$, this would in turn imply that $p_1(\lambda)$ is of degree at most $m-2$, in contradiction with the fact that $p_1(\lambda)$ is necessarily of degree $m-1$.
		We conclude that $P_1(\lambda,z)$, does not has a double root at $\lambda=\tfrac{1}{2}\om(z)$, for all values of $z$, not a zero or pole of $\om(z)$. In particular, for any such $z$, all the roots of $P_1(\lambda,z)$ are simple and nonzero.
		
		The equivalence with \ref{H2} and \ref{H3} is shown by the same line of argument.
	\end{proof}

\section{Nevanlinna functions and poles of rational solutions}\label{section:nevanlinna}
We have showed that poles of rational solutions are in bijection with anharmonic oscillators having particular prescribed monodromy.
Here we show that these oscillators naturally def\/ine Riemann surfaces which are inf\/initely-sheeted coverings of the
Riemann sphere uniformised by meromorphic functions.

More precisely we introduce two discrete classes of Riemann surfaces and we show that they classify roots of the generalised Hermite polynomials. We then derive a slightly weaker characterisation for roots of Okamoto polynomials.

Our approach is based on the seminal work by Nevanlinna~\cite{nevanlinna32} and Elfving~\cite{elfving1934}, which has lately been revived and found useful in modern applications, see, e.g.,~\cite{eremenko,pibelyi}.

The following Def\/inition is instrumental to the analysis below.
\begin{Def}
Let $\bb{K}$ be a one dimensional complex manifold.

A holomorphic function $f\colon \bb{K} \to \bb{P}^1$ (that is a meromorphic function) is called a branched covering of the sphere if there
exists a f\/inite subset $S \subset \bb{P}^1$ such that the restriction
\begin{gather*}
f\colon \ \bb{K}\setminus f^{-1}(S) \to \bb{P}^1 \setminus S
\end{gather*}
is a topological covering.

The minimal set $B$ among all the sets satisfying the above property is called the branching locus of $f$.
\end{Def}
We remark that in what follows we will always restrict to the case when $\bb{K}$ is either the complex plane or the Riemann sphere.

\subsection{Anharmonic oscillators and Nevanlinna theory}
Before tackling our characterisation, we brief\/ly sketch the theory of the Riemann surfaces associated with anharmonic oscillators
and suggest \cite{pibelyi} for a complete introduction. Consider an anharmonic oscillator
\begin{gather}\label{eq:anharmonic}
\psi''(\lambda)=r(\lambda)\psi(\lambda),
\end{gather}
where $r$ is some rational function, with at most double poles in the complex plane. Then, for any two linearly independent solutions $\{\psi,\phi\}$ of~\eqref{eq:anharmonic}, the function $f=\psi/\phi \colon \bb{C} \to \bb{P}^1$ is locally invertible at any point $\la$, unless~$\la$ is a double pole of~$r$.

Suppose $\la^*$ is a double pole and locally $r(\la)=\frac{n^2-1}{4(\la-\la^*)^2}+\mathcal{O}(\frac{1}{\la-\la^*})$ for some $n \in \bb{C}$.
Then the corresponding indices of \eqref{eq:anharmonic} are $ \frac{1\pm n}{2}$.

If $n\in \bb{N}$ and the Fuchsian singularity is apparent, then $f$ is locally single valued, but in this case $\la^*$ is a critical point of~$f$ of order $n-1$. If, on the contrary, $n \in \bb{N}$ but the Fuchsian singularity $\la^*$ is not apparent, or $n \notin \bb{N}$, then $f$ is a multivalued function in the neighbourhood of~$\la^*$.

The point at inf\/inity is in general an irregular singularity. Suppose $r(\la)=\la^M+ \mathcal{O}\big(\la^{M-1}\big)$ as $\la \to \infty$, then equation~\eqref{eq:anharmonic} admits $M+2$ Stokes sectors
\begin{gather*}
 \Omega_k^M=\left\lbrace \left| \arg\lambda - \frac{2 \pi k}{M+2} \right| < \frac{\pi}{2M+2} \right\rbrace , \qquad k=0,\dots,M+1 .
\end{gather*}
Each Stokes sector can be thought as a critical point of inf\/inite order~-- technically a logarithmic direct transcendental singularity~\cite{eremenko2004}. Indeed, in each Stokes sector, the function~$f$ has a well-def\/ined asymptotic value
\begin{gather*}
w_k:=\lim_{\lambda\rightarrow \infty,\, \lambda\in \Omega_k^M}{f(\lambda)}\in \mathbb{P}^1,
\end{gather*}
while all its derivatives vanish exponentially fast; here the limit must be taken along curves not tangential to the boundary of the Stokes sector. These asymptotic values can be directly computed from the Stokes multipliers but we do not need the general relation here \cite{masoerobethe,nevanlinna32}.

Now, $f$ is only unique up to composition by M\"obius transforms. Indeed if $\{\psi^*,\phi^*\}$ is another choice of linearly independent solutions of \eqref{eq:anharmonic}, then
\begin{gather}\label{eq:mobius}
\widetilde{f}=m\circ f,\qquad m(z)=\frac{az+b}{cz+d},\qquad \begin{pmatrix}\psi^*\\\phi^*
\end{pmatrix}=\begin{pmatrix}
a & b\\
c & d
\end{pmatrix}\begin{pmatrix}
\psi\\
\phi
\end{pmatrix}.
\end{gather}

Summing up, to any anharmonic oscillator \eqref{eq:anharmonic}, such that all poles of the potential $r$ in the plane are apparent Fuchsian singularities,
we can associate a branched covering of the sphere, up to automorphism of the target sphere, namely up to M\"obius equivalence. The branching locus $B$ is the union of the critical values and asymptotic values.

In turn, one can recover the potential $r$ from $f$, by means of the Schwarzian derivative,
\begin{gather*}
r=-\frac12 \mathcal{S}(f),\qquad \mathcal{S}(f):=\frac{f'''}{f'}-\frac{3}{2}\left(\frac{f''}{f'}\right)^2,
\end{gather*}
which indeed is invariant under composition by M\"obius transformations~\eqref{eq:mobius}.

Finally, we notice that two meromorphic functions $f$ and $f^*$, of the kind described above, are topologically equivalent coverings of the sphere
 if and only if there exist $c\neq0$ and $d$ such that $f^*(\la)=f(c \la+d)$.

The question that remains is whether all branched coverings of the sphere as above can be obtained by means of anharmonic oscillators.
This was positively settled by Elfving, as the following theorem shows.
\begin{thm}[Nevanlinna, Elfving]\label{thm:prenevanlinna}
Let $f$ be a function with $p <\infty$ transcendental singularities and $m <\infty$ critical points, lying over $q \geq 2$ points. Then all transcendental singularities are direct, logarithmic and $r=-\frac12 \mathcal{S}(f)$ is a rational	function of degree less or equal to $p-2+2m$. 	In particular, suppose the function $f$ has no critical points, i.e., $m=0$. Then~$r(z)$ is a polynomial of degree~$p-2$.
\end{thm}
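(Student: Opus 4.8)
The plan is to prove that $r:=-\tfrac12\mathcal{S}(f)$ extends to a rational function by analysing it separately on the finite plane and at $\lambda=\infty$, and then to recover its degree from the counts $m$ and $p$. First I would use that the Schwarzian is invariant under post-composition with M\"obius transformations, so $r$ is a well-defined meromorphic function on $\mathbb{C}$ and the poles of $f$ (the points mapped to $\infty$) are not singular for $r$. Wherever $f$ is locally injective, $\mathcal{S}(f)$ is holomorphic, so the only finite poles of $r$ sit at the $m$ critical points. At a critical point $\lambda_i$ of order $n_i-1$, the local normal form $f(\lambda)=c+(\lambda-\lambda_i)^{n_i}(1+o(1))$, valid up to a M\"obius change of the target, gives $\mathcal{S}(f)=\frac{1-n_i^2}{2(\lambda-\lambda_i)^2}+O\big((\lambda-\lambda_i)^{-1}\big)$; hence each critical point contributes exactly a double pole, and on $\mathbb{C}$ the function $r$ is meromorphic with a denominator of degree $2m$.

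The core of the proof is the behaviour near $\lambda=\infty$ together with the classification of the transcendental singularities. Since there are only finitely many critical points, I would fix $R$ so large that $f$ has no critical point in $|\lambda|>R$ and decompose a punctured neighbourhood of infinity into the finitely many tracts carrying the transcendental singularities. For a singularity over $a\in\mathbb{P}^1$, its tract $T$ (a component of $f^{-1}(D(a,\varepsilon)\setminus\{a\})$ reaching infinity) maps by $f$ as an unbranched holomorphic covering onto $D(a,\varepsilon)\setminus\{a\}$; finiteness of the singular set forces $T$ to be simply connected, so this covering is either finite-sheeted, in which case $f$ would extend holomorphically across the singularity and it would not be transcendental, or the universal exponential cover. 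Therefore every transcendental singularity is logarithmic, and a fortiori direct since $f\neq a$ on $T$. On such a tract $f$ is asymptotically modelled by $a+e^{\varphi(\lambda)}$ with $\varphi$ a conformal map onto a half-plane, and comparison with the model $e^{\lambda}$ (whose Schwarzian is the constant $-\tfrac12$) shows that $\mathcal{S}(f)$ stays bounded there; patching the tracts yields a meromorphic continuation of $r$ across $\infty$. Combined with the previous paragraph, $r$ is meromorphic on all of $\mathbb{P}^1$, hence rational.

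It then remains to compute the degree. As recalled before the statement, if $r(\lambda)\sim\lambda^{M}$ as $\lambda\to\infty$ then $\psi''=r\psi$ has $M+2$ Stokes sectors at infinity, and each sector carries exactly one logarithmic asymptotic value, i.e.\ one transcendental singularity; hence $p=M+2$ and $\deg_\infty r=p-2$. Together with the denominator of degree $2m$ coming from the finite poles, the degree of $r$ as a rational function is at most $(p-2)+2m$, with equality in the generic situation. In the special case $m=0$ there are no finite poles, so the rational function $r$ is entire, therefore a polynomial, and the Stokes-sector count forces its degree to be exactly $p-2$.

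The step I expect to be the main obstacle is the asymptotic analysis on the tracts: establishing rigorously that each transcendental singularity is logarithmic---which relies on the simple connectivity of the tracts, and hence on the finite-type structure of the associated line complex---and showing that the exponential model controls $\mathcal{S}(f)$ sharply enough to extend $r$ meromorphically across infinity with exactly the growth dictated by the Stokes-sector count. This matching between the topological data ($p$ logarithmic ends and $m$ critical points) and the analytic invariant $\deg r$ is precisely where the theory of Nevanlinna and Elfving does the real work.
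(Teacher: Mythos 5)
The paper does not actually prove this theorem: its ``proof'' is a citation (Nevanlinna for $m=0$, Elfving for the general case), so your argument has to be judged on whether it could stand on its own, and it cannot, because the decisive analytic step is wrong. Your first paragraph (Schwarzian is M\"obius-invariant, holomorphic where $f$ is locally injective, and has a double pole with coefficient $\frac{1-n_i^2}{2}$ at a critical point of order $n_i-1$) is correct, and so is your covering-space classification of the transcendental singularities as logarithmic and direct, using that a tract over a punctured disc containing no other singular value is either a finite or the universal cover. The gap is the passage across $\lambda=\infty$. You claim that on a logarithmic tract, writing $f=a+e^{\varphi}$, ``comparison with the model $e^{\lambda}$ shows that $\mathcal{S}(f)$ stays bounded there.'' This is false whenever $p>2$, and it contradicts your own third paragraph: if $r=-\frac12\mathcal{S}(f)$ were bounded near $\infty$ it would extend holomorphically there with $\deg_\infty r=0$, forcing $p=2$, while you also assert $r\sim\lambda^{p-2}$. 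Concretely, for $\psi''=\big(\lambda^2-E\big)\psi$ the ratio of two independent solutions has $p=4$ logarithmic singularities, no critical points, and $\mathcal{S}(f)=-2\big(\lambda^2-E\big)$, which is unbounded on every tract. The composition formula gives $\mathcal{S}(f)=-\frac12(\varphi')^2+\mathcal{S}(\varphi)$ on the tract, so everything hinges on proving that $\varphi'$ grows at most polynomially, i.e., that the tracts are asymptotically sector-like; that growth estimate is precisely the content of Nevanlinna's and Elfving's work, and you have not supplied it.

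Two further defects compound this. First, ``patching the tracts'' cannot yield meromorphy at $\infty$: $r$ is in any case single-valued and holomorphic on $|\lambda|>R$ (its only finite poles are the critical points), so the sole question is whether $\infty$ is a pole or an essential singularity, which is a growth estimate, not a continuation problem; moreover the tracts do not even cover a punctured neighbourhood of $\infty$ (for $e^{\lambda}$ the two tracts are the half-planes $\{\Re\lambda<-C\}$ and $\{\Re\lambda>C\}$, which miss the middle strip), so boundedness on the tracts, even if it held, would control nothing in between. Second, your degree computation is circular as a rationality proof: the correspondence ``$r\sim\lambda^{M}$ at infinity $\Leftrightarrow$ $M+2$ Stokes sectors, each carrying one transcendental singularity'' presupposes that $r$ already has a pole at $\infty$. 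It is the legitimate way to read off the degree \emph{once} rationality is known (and this is exactly how the paper uses the theorem), but it cannot substitute for the missing estimate.
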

	\begin{proof}
		For the case $m=0$ see \cite{nevanlinna32}. For the general case, see~\cite{elfving1934}.
	\end{proof}

\subsubsection{Combinatorics of branched coverings}\label{sub:combinatorics}
In order to present our results, we brief\/ly introduce the concept of a line complex corresponding to a branched covering of the sphere,
and refer the reader to \cite{elfving1934,pibelyi,nevanlinna32} for the precise def\/initions. Let $f$ be a branched covering of the sphere,
with ordered branching locus $\{b_1,\ldots,b_n\}$, and let us f\/ix an oriented Jordan curve $\gamma$, passing through all
of the branching points, respecting the particular ordering. This curve divides the sphere into an inner and outer polygon,
with common sides given by the arcs $(b_1,b_2),\ldots,(b_n,b_1)$, see Fig.~\ref{fig:linecomplex}.

We choose a point $P_i$ in the inner polygon and a point $P_o$ in the outer polygon, as well as for each
$1\leq k\leq n$, an analytic line (i.e., curve) $l_k$ connecting $P_i$ and $P_o$, going only through the side $(b_k,b_{k+1})$
and only once, with convention $b_{n+1}=b_1$. The line complex is the graph, given by the inverse image under $f$
of the union of lines $l_k$, where the set of vertices $V$ is given by the (disjoint) union of $V_i:=f^{-1}(P_i)$ and $V_o:=f^{-1}(P_o)$.
Note that the set of vertices $V$ is at most countable and the vertices do not accumulate in~$\mathbb{C}$. Furthermore the line complex is bipartite with respect to the partition $V=V_i\sqcup V_o$. We colour the edges of the graph by means of uniquely assigning the line corresponding to each edge via $f$, in particular adopting the cyclic ordering of the lines. The edges belonging to $V_i$ have positive circular order. The edges around a vertex belonging to $V_o$ have negative circular order. Notice that each line $l_k$ def\/ines a map $V_i \to V_o$ and thus the composition $l_k^{-1}\circ l_{k-1}$ def\/ines the monodromy representation $\sigma_k$ of a loop around $b_k$ on the set of internal vertices.
\begin{figure}[htpb]
	\centering\vspace*{-7mm}
	\begin{tikzpicture}
	\tikzstyle{circ} = [circle, minimum width=8pt, draw, inner sep=0pt];
	\tikzstyle{star} = [circle, minimum width=8pt, fill, inner sep=0pt];
	\tikzstyle{starsmall} = [circle, minimum width=4pt, fill, inner sep=0pt];
	
	\node[label=above right:{$P_i$},circ] (Pi) {};
	\draw (0: 3cm) node[label=above right:{$P_o$},star] (Po) {};
	
	\draw (300: 1.5cm) node[label=below right:$b_1$,starsmall] {};
	\draw (60: 1.5cm) node[label=above right:$b_2$,starsmall] {};
	\draw (180: 1.5cm) node[label=above left:$b_3$,starsmall] {};

	\draw[decoration={markings, mark=at position 0.25 with {\arrow{>}}},
	postaction={decorate}
	] (0,0) circle (1.5cm);
	
	\draw (90: 1.5cm) node[label=above:$\gamma$] {};

	\path (Pi) edge (Po);
	\draw (0: 1.5375cm) node[label=above right:$l_1$] {};
	
	\draw (0,0) .. controls (-2.25,3) and (0.75,3.75) .. node[midway,above]{$l_2$} (3,0);
	
	\draw (0,0) .. controls (-2.25,-3) and (0.75,-3.75) ..node[midway,below]{$l_3$} (3,0);
	
	\draw[fill=white] (0,0) circle (0.14cm);
	\end{tikzpicture}\vspace{-7mm}
	\caption{The case when the branching locus has three points. A choice of the triangle and of the three intersecting lines.
	The line complex is the inverse image of the above geometric conf\/iguration.} \label{fig:linecomplex}
\end{figure}
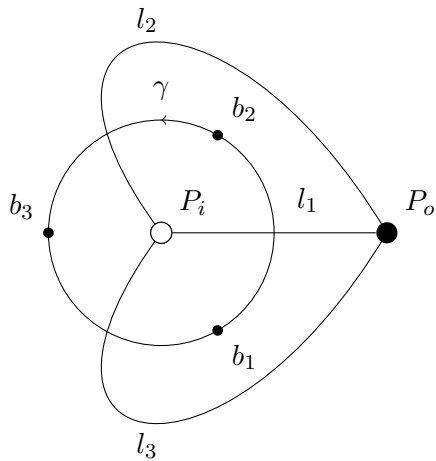

Critical points of $f$ are encircled by a closed loop of the graph with more than one internal (and external) vertex; the multiplicity of the critical point is the number of internal or external vertices in the loop minus one. If $b_k$ is the corresponding critical value, then the monodromy representation $\sigma_k$ acts on the internal vertices of the loop by a simple shift.

Stokes sectors are bounded by an inf\/inite sequence of internal and external vertices. If $b_k$ is the corresponding critical value, then the monodromy representation $\sigma_k$ acts transitively on the internal vertices of the sequences of internal vertices by a simple shift.

Note that the line complex of $f$, up to orientation-preserving homeomorphism of the domain, is far from unique, as it depends on the particular ordering of branch points and of the curve~$\gamma$. However, for f\/ixed choice of the ordering and curve, to any line complex corresponds a~unique equivalence class of meromorphic functions. Moreover, as to be expected, the braid group acts transitively on the possible line complexes of~$f$~\cite{elfving1934,lando2004}.

\subsection{Hermite oscillators and families of coverings}
In this subsection we construct two distinguished families of branched coverings of the sphere and show that they classify roots of generalised Hermite polynomials.

\begin{def*}
For $m,n\in\mathbb{N}^*$, we call the anharmonic oscillator \eqref{eq:anharmoniccent} (or its potential) with
$\theta_0=\tfrac{1}{2}n$ and $\theta_\infty=m+1+\tfrac{1}{2}n$, a level $(m,n)$ Hermite~I oscillator,
if it satisf\/ies all the properties in case \ref{H1} of Theorem~\ref{thm:hermiteanharmonic}.
\end{def*}

\begin{def*}
	For $m,n\in\mathbb{N}^*$, we call the anharmonic oscillator \eqref{eq:anharmoniccent} (or its potential) with
	$\theta_0=\tfrac{1}{2}m$ and $\theta_\infty=1-\tfrac{1}{2}m-n$,
	a level $(m,n)$ Hermite II oscillator, if it satisf\/ies all the properties in case \ref{H2} of Theorem \ref{thm:hermiteanharmonic}.
\end{def*}

\begin{def*}
	For $m,n\in\mathbb{N}^*$, we call the anharmonic oscillator \eqref{eq:anharmoniccent} (or its potential) with
	$\theta_0=\tfrac{1}{2}(m+n)$ and $\theta_\infty=\tfrac{1}{2}(n-m+2)$, a level $(m,n)$ Hermite III oscillator,
	if it satisf\/ies all the properties in case \ref{H3} of Theorem \ref{thm:hermiteanharmonic}.
\end{def*}

We can compute the branching locus of the Hermite I, II and III oscillators by Theorem~\ref{thm:hermiteanharmonic}. Let us consider the case I f\/irst. If $f=\frac{\psi_0}{\psi_1}$ then $w_0=w_2=f(0)=0$ while~$w_1$ and~$w_{-1}$ are nonzero and distinct. Similarly in case II, take $f=\frac{\psi_1}{\psi_0}$ then $w_1=w_3=f(0)=0$ while~$w_0$ and~$w_{2}$ are nonzero and distinct. Case~III is dif\/ferent. If $f=\frac{\psi_0}{\psi_1}$ then $w_0=w_2=0$, $w_1 =w_2=\infty$ and~$f(0)$ is dif\/ferent from zero and inf\/inity. We thus def\/ine two families of functions~$F_1$ and~$F_2$:
\begin{enumerate}\itemsep=0pt
 \item A function $f$ belongs to the family $F_1$, if it has a unique critical point and four direct singularities, with two asymptotic values coinciding with the critical value. Moreover $f$ is normalised such that
 \begin{itemize}\itemsep=0pt
 \item $\mathcal{S}(f)(\la)=-2\la^2+O(\la)$ as $\la \to \infty $;
 \item The critical point is $\la=0$;
 \item Let $w_k$ denote the asymptotic value of $f$ in the sector $\Omega_k^2$, $k=0,1,2,-1$. We have $w_0=w_2=f(0)=0$, $w_1=i$ and $w_{-1}=-i$.
 \end{itemize}

 \item A function $f$ belongs to the family $F_2$, if it has a unique critical point and four direct singularities, with the asymptotic values coinciding pairwise.
 Moreover $f$ is normalised such that
 \begin{itemize}\itemsep=0pt
 \item $\mathcal{S}(f)(\la)=-2\la^2+O(\la)$ as $\la \to \infty $;
 \item The critical point is $\la=0$;
 \item Let $w_k$ denote the asymptotic value of $f$ in the sector $\Omega_k^2$, $k=0,1,2,-1$. We have $w_0=w_2=0$, $w_1=w_{-1}=\infty$ and $f(0)=1$.
 \end{itemize}
\end{enumerate}

Notice that all functions in these families are Belyi functions \cite{pibelyi} because the f\/ive singularities lie over three distinct points.

In Fig.~\ref{fig:h12complex} (resp.~\ref{fig:h3complex}), where we use the notation depicted in Fig.~\ref{fig:defelementsh3complex}, we classify all line complexes describing functions in the families~$F_1$ (resp.~$F_2$), according to the combinatorics of coverings described above. Each line complex is completely determined by a set of arbitrary positive integers $n_1,\dots,n_4$. In Fig.~\ref{fig:h12complex}, the values of $b_1$, $b_2$, $b_3$ are
$b_1=-i$, $b_2=0$ and $b_3=i$ and the curve $\gamma$ is chosen to be the imaginary line. In Fig.~\ref{fig:h3complex}, the values of $b_1$, $b_2$, $b_3$ are
$b_1=1$, $b_2=0$ and $b_3=\infty$ and the curve $\gamma$ is chosen to be the real axis.

By the general theory, Fig.~\ref{fig:h12complex} (resp.~\ref{fig:h3complex}) classif\/ies all functions in $F_1$ (resp.~$F_2$). Indeed each of the line complexes described determines a unique function in $F_1$ (resp.~$F_2$), and vice versa for every $f \in F_1$ (resp.~$f \in F_2$) the line complex of $f$ is one of those depicted in Fig.~\ref{fig:h12complex} (resp.~\ref{fig:h3complex}).

One can read from the line complex in Fig.~\ref{fig:h12complex} that the critical point and two transcendental singularities lie over $b_2$, while the points $b_1$ and $b_3$ are simple asymptotic values. Moreover, the critical point has multiplicity $n_2+n_3-1$, and the equation $f(\la)=b_2$ has further $n_1+n_4$ simple solutions. For f\/ixed $n=n_2+n_3+1$ and $m=n_1+n_4+1$, there are $m \times n$ distinct line complexes and thus functions in $F_1$.

\begin{figure}[htpb]
	\centering
	\begin{tikzpicture}
	\tikzstyle{circ} = [circle, minimum width=8pt, draw, inner sep=0pt];
	\tikzstyle{star} = [circle, minimum width=8pt, fill, inner sep=0pt];
	
	\node[circ] (L1) {};
	\node[star] (L2) [left of=L1] {};
	\node[circ] (L3) [left of=L2] {};
	\node[star] (L4) [left of=L3] {};
	
	\node[circ] (UL1) [above left of=L4] {};
	\node[star] (UL2) [above left of=UL1] {};
	\node[circ] (DL1) [below left of=L4] {};
	\node[star] (DL2) [below left of=DL1] {};
	
	\node[star] (M1) [above right of=L1] {};
	\node[star] (M4) [below right of=L1] {};
	\node[circ] (M2) [right of=M1] {};
	\node[circ] (M3) [right of=M4] {};
	
	\node[star] (R1) [below right of=M2] {};
	\node[circ] (R2) [right of=R1] {};
	\node[star] (R3) [right of=R2] {};
	\node[circ] (R4) [right of=R3] {};
	
	\node[star] (UR1) [above right of=R4] {};
	\node[circ] (UR2) [above right of=UR1] {};
	\node[star] (DR1) [below right of=R4] {};
	\node[circ] (DR2) [below right of=DR1] {};
	
	\path (L2) edge node [above,inner sep=1pt]{$l_3$} (L1);
	\path (L4) edge node [above,inner sep=1pt]{$l_3$} (L3);
	
	\path (R1) edge node [above,inner sep=1pt]{$l_3$} (R2);
	\path (R3) edge node [above,inner sep=1pt]{$l_3$} (R4);
	
	\path (L4) edge node [above right,inner sep=1pt]{$l_2$} (UL1);
	\path (L4) edge node [below right,inner sep=1pt]{$l_1$} (DL1);
	
	\path (L1) edge node [above left,inner sep=1pt]{$l_2$} (M1);
	\path (L1) edge node [below left,inner sep=1pt]{$l_1$} (M4);
	
	\path (M2) edge node [above right,inner sep=1pt]{$l_2$} (R1);
	\path (M3) edge node [below right,inner sep=1pt]{$l_1$} (R1);
	
	\path (R4) edge node [above left,inner sep=1pt]{$l_2$} (UR1);
	\path (R4) edge node [below left,inner sep=1pt]{$l_1$} (DR1);
	
	\path (L3) edge[bend left=60] node [above,inner sep=1pt]{$l_2$} (L2);
	\path (L3) edge[bend right=60] node [below,inner sep=1pt]{$l_1$} (L2);
	
	\path (M1) edge[bend left=60] node [above,inner sep=1pt]{$l_3$} (M2);
	\path (M1) edge[bend right=60] node [below left,inner sep=1pt]{$l_1$} (M2);
	\path (M4) edge[bend left=60] node [above right,inner sep=1pt]{$l_2$} (M3);
	\path (M4) edge[bend right=60] node [below,inner sep=1pt]{$l_3$} (M3);
	
	\path (UL1) edge[bend left=60] node [below left,inner sep=1pt]{$l_1$} (UL2);
	\path (UL1) edge[bend right=60] node [above right,inner sep=1pt]{$l_3$} (UL2);
	
	\path (DL1) edge[bend left=60] node [below right,inner sep=1pt]{$l_3$} (DL2);
	\path (DL1) edge[bend right=60] node [above left,inner sep=1pt]{$l_2$} (DL2);
	
	\path (R2) edge[bend left=60] node [above,inner sep=1pt]{$l_2$} (R3);
	\path (R2) edge[bend right=60] node [below,inner sep=1pt]{$l_1$} (R3);
	
	\path (UR1) edge[bend left=60] node [above left,inner sep=1pt]{$l_3$} (UR2);
	\path (UR1) edge[bend right=60] node [below right,inner sep=1pt]{$l_1$} (UR2);
	
	\path (DR1) edge[bend left=60] node [above right,inner sep=1pt]{$l_2$} (DR2);
	\path (DR1) edge[bend right=60] node [below left,inner sep=1pt]{$l_3$} (DR2);

	\path (L3) edge[draw=none] node {$\boldsymbol{n_1}$} (L2);
	\path (M1) edge[draw=none] node {$\boldsymbol{n_2}$} (M2);
	\path (M4) edge[draw=none] node {$\boldsymbol{n_3}$} (M3);
	\path (R2) edge[draw=none] node {$\boldsymbol{n_4}$} (R3);
	
	\path (UL2) edge[draw=none] node[rotate=-45] {$\boldsymbol{\infty}$} (UL1);
	\path (DL2) edge[draw=none] node[rotate=45] {$\boldsymbol{\infty}$} (DL1);
	\path (UR2) edge[draw=none] node[rotate=45] {$\boldsymbol{\infty}$} (UR1);
	\path (DR2) edge[draw=none] node[rotate=-45] {$\boldsymbol{\infty}$} (DR1);
	\end{tikzpicture}
	\caption{Line complexes of Hermite I and II Nevanlinna functions.} \label{fig:h12complex}
\end{figure}
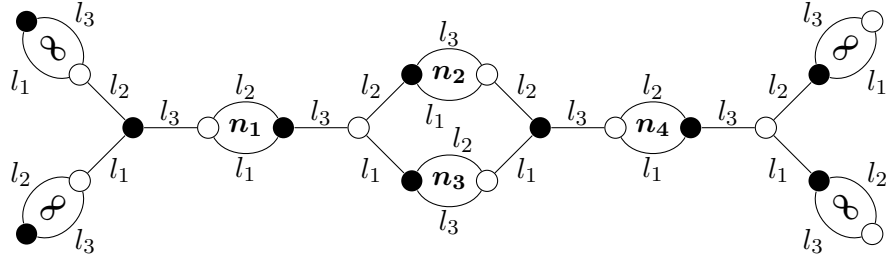

As for the line complex in Fig.~\ref{fig:h3complex} a similar analysis can be performed.
One can read that the critical point lies over $b_1$,
two transcendental singularities lie over $b_2$, and the other two over $b_3$. Moreover,
the critical point has multiplicity $n_1+n_2+n_3+n_4+1$, the equation $f(\la)=b_3$ has $n_1+n_3$ simple solutions,
and the equation $f(\la)=b_2$ has $n_2+n_4$ simple solutions.

\begin{figure}[htpb]
	\centering
	\begin{tikzpicture}
	\tikzstyle{square} = [circle, minimum width=3pt, draw, inner sep=3pt];
	\tikzstyle{squarefl} = [circle, minimum width=3pt, fill, inner sep=3pt];
	\node[square] (L1) {};
	\node[squarefl] (L2) [right of=L1] {};
	\node[square] (L3) [right of=L2] {};
	\node[squarefl] (L4) [right of=L3] {};
	
	\path (L1) edge node [above]{$l_k$} (L2);
	\path (L2) edge[bend left=60] node [above]{$l_i$} (L3);
	\path (L2) edge[bend right=60] node [below]{$l_j$} (L3);
	\path (L3) edge node [above]{$l_k$} (L4);

	\path (L2) edge[draw=none] node {$\mathbf{n}$} (L3);

	\node[square] (R1) [right of=L4] {};
	\node[squarefl] (R2) [right of=R1] {};
	\node[square] (R3) [right of=R2] {};
	\node[squarefl] (R4) [right of=R3] {};
	\path (R1) edge node [above]{$l_k$} (R2);
	\path (R2) edge[bend left=60] node [above]{$l_i$} (R3);
	\path (R2) edge[bend right=60] node [below]{$l_j$} (R3);
	\path (R3) edge node [above]{$l_k$} (R4);
	
	\path (L4) edge[draw=none] node[below=-5.2pt] {{\LARGE $=$}} (R1);
	
	\node[squarefl] (R5) [right of=R4] {};
	\node[square] (R6) [right of=R5] {};
	\node[squarefl] (R7) [right of=R6] {};

	\path (R5) edge[bend left=60] node [above]{$l_i$} (R6);
	\path (R5) edge[bend right=60] node [below]{$l_j$} (R6);
	\path (R6) edge node [above]{$l_k$} (R7);
	
	\path (R4) edge[draw=none] node {$\cdots$} (R5);
	\draw[decoration={brace,mirror,raise=26pt},decorate,thick]
	(R2) -- node[below=28pt] {$n$ times} (R7);
	\end{tikzpicture}
	\caption{Shorthand notation used in depicting line complexes where $n\in\mathbb{N}$ and $\{i,j,k\}=\{1,2,3\}$.}\label{fig:defelementsh3complex}
\end{figure}
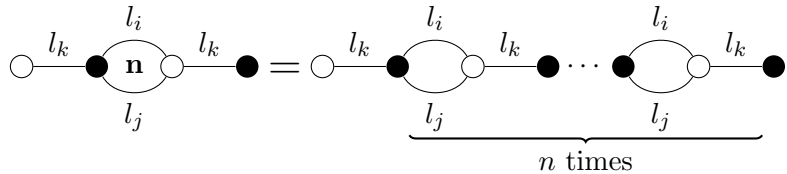

We now prove that a function belongs to the family $F_1$ if and only if $f$ is the ratio of two solutions of a~Hermite I or Hermite II harmonic oscillator.
\begin{thm}\label{thm:nevanlinnahermite12}
	The ratio of two particular solutions of a $(m,n)$ Hermite I oscillator or to a~$(m,n)$ Hermite~II oscillator
	belongs to the family $F_1$ with, in the case I, $m=n_1+n_4+1$ and $n=n_2+n_3+1$, and in the case~II, $m=n_2+n_3+1$ and $n=n_1+n_4+1$.

	Conversely, if $f$ belongs to the family $F_1$ then $f$ is the ratio of
	two solutions of a $(n_1+n_4+1,$ $n_2+n_3+1)$ Hermite I oscillator and to a $(n_2+n_3+1,n_1+n_4+1)$ Hermite II oscillator.
	In other words, $-\frac12 \mathcal{S}(f)(\la)$ is a level $(n_1+n_4+1,n_2+n_3+1)$ Hermite I harmonic oscillator and
	$\frac12 \mathcal{S}(f)(i \la)$ is a level $(n_2+n_3+1,n_1+n_4+1)$ Hermite II oscillators.
\end{thm}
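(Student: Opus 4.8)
The plan is to pass between the analytic data of the oscillators and the topological data of $F_1$ through the Schwarzian correspondence $r=-\tfrac12\mathcal{S}(f)$ of Theorem~\ref{thm:prenevanlinna}, proving the two directions separately and using the rotation $\la\mapsto i\la$ to move between the Hermite~I and Hermite~II descriptions of the same $f$. For the forward direction starting from a level $(m,n)$ Hermite~I oscillator, case~\ref{H1} of Theorem~\ref{thm:hermiteanharmonic} gives that $\la=0$ is apparent and that $\psi_0\equiv\psi_2\equiv\psi_+$, so the ratio $f=\psi_0/\psi_1$ is a single-valued meromorphic function on $\bb{C}$: branching can occur only at $\la=0$, where it is killed by apparentness together with $n=2\theta_0\in\bb{N}$, and at $\la=\infty$, which is irregular and produces, since $V=\la^2+\mathcal{O}(\la)$, exactly four Stokes sectors. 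I would read off that $\la=0$ is the unique critical point, of local degree $n=2\theta_0$ (the index difference of the two Frobenius solutions at $0$), and that each sector yields a direct logarithmic transcendental singularity with an asymptotic value $w_k$. From $\psi_0\equiv\psi_2$ one gets $w_0=w_2=f(0)=0$ and $w_1,w_{-1}$ nonzero and distinct, so the five singularities lie over three points; after the M\"obius normalisation sending these three values to $0,i,-i$ (the relation $\mathcal{S}(f)=-2V=-2\la^2+\mathcal{O}(\la)$ holding automatically) we land in $F_1$.

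To identify the parameters I would compare $f$ with the line complexes of Fig.~\ref{fig:h12complex}. The finite solutions of $f=0$ are precisely the zeros of $\psi_0=\la^{(n+1)/2}e^{-g}p(\la)$, namely the critical point at $0$ and the $m-1$ simple roots of $p$ provided by case~\ref{H1}; reading the critical order and the number of further simple preimages off Fig.~\ref{fig:h12complex} yields $n=n_2+n_3+1$ and $m=n_1+n_4+1$. For a level $(m,n)$ Hermite~II oscillator the same construction applied to $f=\psi_1/\psi_0$ gives $w_1=w_3=f(0)=0$ by case~\ref{H2}; now the roles of even and odd sectors are exchanged, so to reach the \emph{normalised} family $F_1$ I pre-compose with $\la\mapsto i\la$. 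Since $\mathcal{S}(f(i\,\cdot))=-\mathcal{S}(f)(i\,\cdot)$, the rotated potential equals $-V(i\la)$, which is again of the form~\eqref{eq:anharmoniccent} with the \emph{same} centrifugal coefficient; hence $\theta_0$ is preserved, the level $(m,n)$ is transposed, and $\theta_\infty\mapsto 2-\theta_\infty$, giving exactly the Hermite~II assignment of the theorem.

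For the converse let $f\in F_1$ and set $r=-\tfrac12\mathcal{S}(f)$. Theorem~\ref{thm:prenevanlinna} with $p=4$ transcendental singularities and one critical point gives that $r$ is rational of degree at most $4-2+2=4$; as the Schwarzian has double poles only at critical points, $r$ has a double pole solely at $\la=0$ and behaves like $\la^2$ at infinity, so $r=\la^2+c_1\la+c_0+c_{-1}\la^{-1}+c_{-2}\la^{-2}$, exactly the normal form~\eqref{eq:anharmoniccent}. The coefficient $c_{-2}=(n^2-1)/4$ is fixed by the critical order $n$, so $\theta_0=\tfrac12 n$, and the remaining coefficients determine $a$, $b$, $\theta_\infty$. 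Apparentness of $\la=0$ is the single-valuedness of $f$, while the normalisation $w_0=w_2=f(0)=0$ forces the subdominant solution in $\Omega_0$ to be the one vanishing at $0$, i.e.\ $\psi_0\equiv\psi_+$; thus case~\ref{H1} holds and $r$ is a Hermite~I potential of the level read from the line complex. Applying $\la\mapsto i\la$ as above produces, from the same $f$, the transposed Hermite~II oscillator.

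The main obstacle is the converse: one must verify that the purely topological and normalisation data defining $F_1$ force $r$ into the precise form~\eqref{eq:anharmoniccent} \emph{and} that the resulting oscillator fulfils \emph{all} the hypotheses of case~\ref{H1}---not only apparentness but the subdominance identity $\psi_0\equiv\psi_+$ and the correct value of $\theta_\infty$. This means translating the sectorwise asymptotics of $f$ into the normalisations~\eqref{eq:1psikscalar} and reconciling the combinatorial counts of the line complex with the polynomial degrees of Theorem~\ref{thm:hermiteanharmonic}; organising the rotation $\la\mapsto i\la$ so that a single $f$ simultaneously yields a Hermite~I and a transposed Hermite~II oscillator is the second point requiring care.
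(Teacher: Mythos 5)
Your forward direction and the rotation $\la\mapsto i\la$ relating the Hermite~I and Hermite~II descriptions are sound and match the paper, which disposes of that implication with ``follows by construction of the family $F_1$'' (the construction being the sector-by-sector computation of asymptotic values carried out just before the theorem). The genuine gap is in your converse, at exactly the point you flag as ``the main obstacle'' but never resolve: after writing $r=-\tfrac12\mathcal{S}(f)=\la^2+c_1\la+c_0+c_{-1}\la^{-1}+c_{-2}\la^{-2}$, you assert that ``the remaining coefficients determine $a$, $b$, $\theta_\infty$''. This is vacuous. \emph{Any} value of $c_0$ can be matched to the normal form~\eqref{eq:anharmoniccent} by solving $c_0=a^2+2(1-\theta_\infty)$ for $\theta_\infty$, so nothing in your argument forces $\theta_\infty$ to equal the specific half-integer $m+1+\tfrac n2$ with $m=n_1+n_4+1$. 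Without that, the potential is not a level $(m,n)$ Hermite~I oscillator---the definition prescribes those parameter values---and case~\ref{H1} of Theorem~\ref{thm:hermiteanharmonic} cannot even be invoked, since it is stated only for those parameters.

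The paper closes precisely this gap with a quantitative step that is absent from your proposal. Since $w_0=w_2$, the solutions $\psi_0$ and $\psi_2$ are linearly dependent, so the WKB estimate $\psi_0'/\psi_0=-V^{1/2}+\mathcal{O}(\la^{-2})$, a priori valid only in $\Omega_{-1}\cup\overline{\Omega}_0\cup\Omega_{1}$, extends to a full neighbourhood of infinity (no Stokes switching occurs for this particular solution). The argument principle then equates the number of zeros of $\psi_0$---which the line complex supplies: a zero at the origin whose order is fixed by the multiplicity of the critical point, plus the $n_1+n_4$ simple zeros lying over the critical value---with the residue of $-V^{1/2}$ at infinity, namely $\tfrac18 c_1^2-\tfrac12 c_0=\theta_\infty-1$. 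This pins $\theta_\infty$ to the required value, identifies $m=n_1+n_4+1$ and $a=\tfrac{c_1}2$, and only then is $r$ exactly a level $(n_1+n_4+1,\,n_2+n_3+1)$ Hermite~I potential; your remaining observations (single-valuedness gives apparentness, $w_0=f(0)=0$ gives $\psi_0\equiv\psi_+$) then complete case~\ref{H1}. Without this zero-counting/residue computation---or some substitute, e.g.\ a Liouville-type argument showing directly that $\la^{-\frac{n+1}{2}}e^{g(\la,a)}\psi_0$ is a polynomial---the converse is not proved.
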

\begin{proof}
The f\/irst part of the theorem follows by construction of the family $F_1$. We now prove the second part. Since $f$ belongs to $F_1$, it has a unique critical point of multiplicity $n_2+n_3$. Around the critical point $\la=0$, $-\frac12 \mathcal{S}(f)= \frac{n^2-1}{4\la^2}+\mathcal{O}(\frac{1}{\la})$ where $n=n_2+n_3$. Moreover, $f$ has four transcendental singularities and therefore $-\frac12 \mathcal{S}(f)(\la)$ is of the form $P_2(\la)+\frac{c_{-1}}{\la}+\frac{n^2-1}{4\la^2}$ for some $c_{-1} \in \bb{C}$ and some quadratic polynomial of the form $P_2(\la)=\la^2+c_1 \la+c_0$.

Consider the equation $\psi''(\la)=-\frac12 \mathcal{S}(f)(\la) \psi(\la)$. By the WKB asymptotic, see, e.g., Lem\-ma~\ref{lem:wkbturning}(iii) below, the logarithmic derivative of the subdominant solution $\psi_0$ (resp.\ $\psi_2$) is well approximated~-- for $|\la|\gg0$ in the union of the Stokes sectors $\Omega_0$, $\Omega_1$, $\Omega_{-1}$ (resp.\ $\Omega_2$, $\Omega_1$, $\Omega_{-1}$)~-- by $-V^{\frac12}(\la)$ up to an error $\mathcal{O}(\la^{-2})$, where the sign of the square root is chosen such that $V^{\frac12}=\la + \mathcal{O}(1)$. Since $w_0$ and $w_2$ coincide, $\psi_0$ and $\psi_2$ are linearly dependent. Therefore we can extend the WKB asymptotic to the whole complex plane
 \begin{gather}\label{eq:wkblog}
 \frac{\psi'_0(\la)}{\psi_0(\la)}=-V(\la)^{\frac12}+\mathcal{O}\big(\la^{-2}\big) \qquad \mbox{as} \quad |\la| \to \infty .
 \end{gather}
Consider the function $f=\frac{\psi_0}{\psi_1}$. The asymptotic values $w_0=w_2$ are clearly $0$ and so is, by the hypothesis, the critical value. Therefore, $\psi_0$ has a zero of order $\frac{n_2+n_3+1}2$ at $0$ ($\psi_0$ is two-valued if $n_2+n_3$ is even) and further $n_1+n_4$ simple zeros. We conclude that
\begin{gather*}
\lim_{R \to +\infty}\frac{1}{2 \pi i}\oint_{|\mu|=R}\frac{\psi'_0(\mu)}{\psi_0(\mu)}=\frac{n_2+n_3+1}2+n_1+n_4.
\end{gather*}
Because of the WKB estimates~(\ref{eq:wkblog}), the latter number coincides with the residue of~$\sqrt{V}$ at inf\/inity, which is equal to $\frac18 c_1^2-\frac12 c_0$. This means that $V(\la)$ is precisely of the form~(\ref{eq:anharmoniccent}) for a~level $(m,n)$ Hermite I potential with $m=n_1+n_4+1$, $n=n_2+n_3+1$ and $a=\frac{c_1}{2}$.
\end{proof}

\begin{cor}Fix $m,n\in\mathbb{N}^*$. Let $\widehat{F}_{m,n}=\lbrace f \in F_1 \, \text{s.t.}\, m=n_1+n_4+1 \,\text{and}\, n=n_2+n_3+1 \rbrace$.
Denote $\mathcal{S}(f)_1\colon f \to \bb{C}$ the coefficient of the linear term of the Schwarzian derivative of~$f$.
The mapping \begin{gather*}
 \Pi\colon \ \widehat{F}_{m,n} \to \bb{C},\qquad f \mapsto -\frac14\mathcal{S}(f)_1,
 \end{gather*}
is a bijection between the set $\widehat{F}_{m,n}$ and the set of the roots of generalised Hermite polyno\-mial~$H_{m,n}(z)$.
\end{cor}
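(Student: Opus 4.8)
The plan is to realise $\Pi$ as the composite of two bijections already furnished by Theorems~\ref{thm:nevanlinnahermite12} and~\ref{thm:hermiteanharmonic}: the first matching $\widehat{F}_{m,n}$ with the level $(m,n)$ Hermite~I oscillators, the second matching those oscillators with the roots of $H_{m,n}$. First I would make the map explicit. Given $f\in\widehat{F}_{m,n}$, the converse half of Theorem~\ref{thm:nevanlinnahermite12} asserts that $-\tfrac12\mathcal{S}(f)(\la)=V(\la;a,b,\theta)$ is a level $(m,n)$ Hermite~I potential, with $\theta_0=\tfrac12 n$ and $\theta_\infty=m+1+\tfrac12 n$. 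Reading off the coefficient of $\la$ in~\eqref{eq:anharmoniccent} gives $2a$, so the linear coefficient of $\mathcal{S}(f)=-2V$ equals $-4a$; hence $\Pi(f)=-\tfrac14\mathcal{S}(f)_1=a$. Thus $\Pi$ is nothing but the map sending $f$ to the parameter $a$ of its associated Hermite~I oscillator.

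Next I would check that the image lands in, and exhausts, the roots of $H_{m,n}$. By construction $V(\la;a,b,\theta)$ is a Hermite~I oscillator, i.e.\ it satisfies all the properties of case~\ref{H1}; Theorem~\ref{thm:hermiteanharmonic} then says precisely that $a$ is a zero of $H_{m,n}$, so $\Pi$ does map into the root set. For surjectivity I would run the correspondence backwards: given a root $a$ of $H_{m,n}$, Theorem~\ref{thm:hermiteanharmonic} furnishes an (\emph{a fortiori} unique) $b\in\bb{C}$ for which~\eqref{eq:anharmoniccent} is a Hermite~I oscillator, and the first half of Theorem~\ref{thm:nevanlinnahermite12} produces the ratio $f=\psi_0/\psi_1\in F_1$, whose level is exactly $(m,n)$ via the identities $m=n_1+n_4+1$, $n=n_2+n_3+1$. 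Hence $f\in\widehat{F}_{m,n}$ with $\Pi(f)=a$.

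For injectivity, suppose $\Pi(f_1)=\Pi(f_2)=a$. By the first step both $-\tfrac12\mathcal{S}(f_i)$ are Hermite~I oscillators with the same $a$, and since $b$ is \emph{a fortiori} unique in Theorem~\ref{thm:hermiteanharmonic} they share the same $b$, whence $\mathcal{S}(f_1)=\mathcal{S}(f_2)$. Because the Schwarzian derivative determines a function up to post-composition with a M\"obius transformation, $f_2=M\circ f_1$ for some $M$; the normalisation defining $F_1$, namely $f(0)=w_0=w_2=0$, $w_1=i$, $w_{-1}=-i$, forces $M$ to fix the three distinct points $0,i,-i$ and hence to be the identity, so $f_1=f_2$.

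The substance of the statement is carried entirely by Theorems~\ref{thm:nevanlinnahermite12} and~\ref{thm:hermiteanharmonic}, so the only genuinely new work is the bookkeeping identifying $\Pi(f)$ with $a$ and the injectivity argument. The latter is the step to watch: it hinges on two facts that must be invoked carefully, the uniqueness of $b$ (which guarantees equal Schwarzians) and the rigidity of the $F_1$-normalisation (which kills the residual M\"obius freedom). Everything else reduces to a routine reading-off of coefficients.
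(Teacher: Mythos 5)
Your proof is correct and follows essentially the same route the paper intends: the corollary is stated without separate proof precisely because it is the composition of Theorem~\ref{thm:nevanlinnahermite12} (functions in $\widehat{F}_{m,n}$ correspond to level $(m,n)$ Hermite~I oscillators, with the identification $a=\tfrac{c_1}{2}$, i.e.\ $\Pi(f)=a$, made inside that theorem's proof) with Theorem~\ref{thm:hermiteanharmonic} (such oscillators exist exactly at the roots of $H_{m,n}$, with $b$ unique). Your explicit verification of injectivity -- equal Schwarzians from the uniqueness of $b$, then killing the residual M\"obius freedom via the $F_1$ normalisation at $0,i,-i$ -- is exactly the bookkeeping the paper leaves implicit.
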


\begin{cor}\label{cor:numberrealroots}
 For all $m,n\in\mathbb{N}$, $H_{m,n}(z)$ is a polynomial of order $m \times n$. Moreover, $H_{m,n}(z)$ has exactly $m$ real roots when $n$ is odd, and none when $n$ is even.
\end{cor}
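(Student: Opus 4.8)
The plan is to derive both assertions from the bijection $\Pi\colon \widehat{F}_{m,n}\to\{\text{roots of }H_{m,n}\}$ of the preceding corollary, together with the combinatorial classification of the line complexes in Figure~\ref{fig:h12complex}. First I would dispose of the degree claim. For $m,n\geq 1$, the elements of $F_1$ with $m=n_1+n_4+1$ and $n=n_2+n_3+1$ are parametrised by the non-negative integers $(n_1,n_2,n_3,n_4)$ subject to $n_1+n_4=m-1$ and $n_2+n_3=n-1$, and there are exactly $m\cdot n$ of them, whence $\#\widehat{F}_{m,n}=m\cdot n$. As $\Pi$ is a bijection onto the zero set of $H_{m,n}$, the polynomial has exactly $mn$ distinct roots; since $\deg H_{m,n}=mn$ by its definition in Appendix~\ref{appendix:generalisedhermite}, all of these are simple. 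The boundary cases $mn=0$ are trivial, the polynomial being constant.

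For the real roots I would exploit that $H_{m,n}$ has real coefficients, so its zero set is stable under complex conjugation, and lift this symmetry to $\widehat{F}_{m,n}$. Since the Hermite~I parameters $\theta_0=\tfrac12 n$ and $\theta_\infty=m+1+\tfrac12 n$ are real, the potential satisfies $\overline{V(\lambda;a,b,\theta)}=V(\bar\lambda;\bar a,\bar b,\theta)$, so $\psi(\lambda)\mapsto\overline{\psi(\bar\lambda)}$ carries solutions of a level $(m,n)$ Hermite~I oscillator with data $(a,b)$ to those with data $(\bar a,\bar b)$. Setting $c(f)(\lambda):=\overline{f(\bar\lambda)}$ for $f=\psi_0/\psi_1$, one checks from the normalisation of $F_1$ that $c$ maps $\widehat{F}_{m,n}$ into itself: conjugation fixes the sectors $\Omega_0,\Omega_2$ (with asymptotic value $0$) and swaps $\Omega_1\leftrightarrow\Omega_{-1}$ (interchanging $w_1=i$ and $w_{-1}=-i$), while $c(f)(0)=0$ and the leading Schwarzian coefficient are preserved. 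From $\mathcal{S}(c(f))(\lambda)=\overline{\mathcal{S}(f)(\bar\lambda)}$ one reads off $\Pi(c(f))=\overline{\Pi(f)}$. Hence, by injectivity of $\Pi$, a root $a=\Pi(f)$ is real if and only if $f$ is a fixed point of $c$, so the number of real roots equals the number of fixed points of $c$ on $\widehat{F}_{m,n}$.

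It then remains to count these fixed points combinatorially. The involution $c$, being induced by $\lambda\mapsto\bar\lambda$, acts on the line complexes of Figure~\ref{fig:h12complex} as reflection in the real axis, accompanied by the relabelling $l_1\leftrightarrow l_3$ forced by $b_1=-i\leftrightarrow b_3=i$. On the combinatorial data this reflection fixes the integers $n_1,n_4$ attached to the two arms lying along the real axis and interchanges the integers $n_2,n_3$ attached to the upper and lower arms over $b_2=0$; that is, $c\colon (n_1,n_2,n_3,n_4)\mapsto (n_1,n_3,n_2,n_4)$. A line complex is therefore fixed precisely when $n_2=n_3$, which is possible if and only if $n-1=n_2+n_3$ is even, i.e.\ $n$ is odd, and then forces $n_2=n_3=\tfrac{n-1}{2}$ while leaving $(n_1,n_4)$ free with $n_1+n_4=m-1$. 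The latter admits exactly the $m$ solutions $n_1=0,1,\dots,m-1$, so $c$ has $m$ fixed points when $n$ is odd and none when $n$ is even, giving the stated count.

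The delicate point, and the step I would spend the most care on, is the precise identification of the action of $c$ on the combinatorial data, namely the claim that the anti-holomorphic involution corresponds exactly to the transposition $n_2\leftrightarrow n_3$. This requires verifying that the reflected-and-relabelled graph is again a bona fide line complex in the family $F_1$, so that $c$ genuinely descends to the parametrisation of Figure~\ref{fig:h12complex}, and that the reflection fixes the real-axis arms rather than exchanging the pair $(n_1,n_4)$; the asymptotic-value bookkeeping of the second paragraph, combined with the transitivity of the braid-group action on line complexes, is what pins this down.
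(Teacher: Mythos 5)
Your proposal is correct and follows essentially the same route as the paper's own proof: the count of elements of $\widehat{F}_{m,n}$ via the indices $(n_1,n_2,n_3,n_4)$ gives the order $m\times n$, and real roots are identified with functions fixed under $f\mapsto\overline{f(\overline{\la})}$, which acts on the line complexes of Fig.~\ref{fig:h12complex} exactly by $(n_1,n_2,n_3,n_4)\mapsto(n_1,n_3,n_2,n_4)$, so fixed points exist iff $n_2=n_3$, i.e., iff $n$ is odd, and then number $m$. The only cosmetic difference is that the paper phrases the reality criterion through the chain ``$a$ real iff $(a,b)$ real iff $\mathcal{S}(f)$ real iff $f$ real,'' whereas you argue via $\Pi(c(f))=\overline{\Pi(f)}$ and injectivity of $\Pi$; these are equivalent.
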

 \begin{proof} About the order of $H_{m,n}(z)$. By the previous corollary, the number of roots of $H_{m,n}$ coincides with the number of distinct
 Stokes complexes such that $m=n_1+n_4+1$ and $n=n_2+n_3+1$; as it was already noted, this number equals $m \times n$.

 Concerning the number of real roots. By construction of the line complexes (Fig.~\ref{fig:h12complex}), if $f \in \widehat{F}_{m,n} $, has indices~$\{n_i\}$, then its conjugate, def\/ined as $\overline{f(\overline{\la})}$, belongs to $\widehat{F}_{m,n} $ with indi\-ces~$\{n'_i\}$, given by $n_1'=n_1$, $n_2'=n_3$, $n_3'=n_2$ and $n_4'=n_4$. In particular $f$ is self-conjugated, i.e., $f$ is a real analytic function, if and only if $n_2=n_3$.

 Clearly $a$ is a real root if\/f $(a,b)$ ($b$ is the coef\/f\/icient of the Laurent expansion at $z=a$ def\/ined in~\eqref{eq:laurentpole}) are reals
 if\/f the Schwarzian derivative of~$f$ is real if\/f~$f$ is real.

 Therefore the number of real roots is equal to the number of real functions in $\widehat{F}_{m,n}$. Since $n=n_2+n_3+1$, the constraint $n_2=n_3$ has one and only one solution if $n$ is odd, and no solutions if $n$ is even. Therefore, if $n$ is even, there are no normalised real functions $f$ and thus~$H_{m,n}(z)$ has no real roots.

 On the other hand, if $n$ is odd, the number of real functions equals the number of non-negative pairs of integers $n_1$, $n_4$ such $n_1+n_4=m-1$. This
 number is clearly~$m$.
 \end{proof}

We now prove the analogue of Theorem \ref{thm:nevanlinnahermite12} for Hermite~III oscillators.

\begin{thm}\label{thm:nevanlinnahermite3} The ratio of two particular solutions of a level $(m,n)$ Hermite~III oscillator belongs to the family~$F_2$ with $m=n_1+n_3+1$ and $n=n_2+n_4+1$.

Conversely, if $f$ belongs to the family $F_2$, then $f$ is the ratio of two solutions of a~level $(n_1+n_3+1,n_2+n_4+1)$ Hermite~III oscillator. In other words $-\frac12 \mathcal{S}(f)(\la)$ is a level $(n_1+n_3+1,$ $n_2+n_4+1)$ Hermite~III harmonic oscillator.
\end{thm}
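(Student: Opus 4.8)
The plan is to follow the template of the proof of Theorem~\ref{thm:nevanlinnahermite12}, transporting each step from the family $F_1$ to $F_2$ and from Hermite~I/II oscillators to Hermite~III oscillators. For the direct part I would take $f=\psi_0/\psi_1$, the ratio of the two subdominant solutions of a level $(m,n)$ Hermite~III oscillator. By case~\ref{H3} of Theorem~\ref{thm:hermiteanharmonic} one has $\psi_0\equiv\psi_2$, $\psi_1\equiv\psi_3$ and $\lambda=0$ apparent, together with the explicit forms $\psi_0=\lambda^{-\frac{m+n-1}{2}}e^{-g(\lambda,a)}p(\lambda)$, $\psi_1=\lambda^{-\frac{m+n-1}{2}}e^{g(\lambda,a)}q(\lambda)$ with $\deg p=n-1$, $\deg q=m-1$ and $p(0),q(0)\neq0$. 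From these I read off the asymptotic values $w_0=w_2=0$, $w_1=w_{-1}=\infty$ and the finite nonzero critical value $f(0)=p(0)/q(0)$; after the M\"obius normalisation $f\mapsto f/f(0)$, which fixes $f(0)=1$ and preserves the three distinguished asymptotic values, the function lies in $F_2$. Counting $f^{-1}(0)$ (the $n-1$ simple zeros of $\psi_0$, i.e.\ of $p$) and $f^{-1}(\infty)$ (the $m-1$ simple zeros of $\psi_1$, i.e.\ of $q$), and comparing with the line-complex data of $F_2$, yields $n_2+n_4=n-1$ and $n_1+n_3=m-1$, that is $m=n_1+n_3+1$ and $n=n_2+n_4+1$.

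For the converse, let $f\in F_2$ and set $r=-\tfrac12\mathcal{S}(f)$. Since $f$ has a single critical point at $\lambda=0$, apparent of multiplicity $n_1+n_2+n_3+n_4+1$, and exactly four direct transcendental singularities at $\lambda=\infty$, Theorem~\ref{thm:prenevanlinna} forces
\begin{gather*}
r(\lambda)=\lambda^2+c_1\lambda+c_0+\frac{c_{-1}}{\lambda}+\frac{(m+n)^2-1}{4\lambda^2},\qquad m+n:=n_1+n_2+n_3+n_4+2,
\end{gather*}
the quadratic part with unit leading coefficient coming from the normalisation $\mathcal{S}(f)=-2\lambda^2+O(\lambda)$. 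This is precisely the shape of \eqref{eq:anharmoniccent} with $\theta_0=\tfrac12(m+n)$ and $a=\tfrac12 c_1$, so it only remains to determine $\theta_\infty$. I would write $f=\psi_0/\psi_1$, which is legitimate after rescaling the two solutions because $w_0=w_2=0$ and $w_1=w_{-1}=\infty$ force the numerator and denominator to be recessive in the even, respectively odd, Stokes sectors, whence $\psi_0\equiv\psi_2$ and $\psi_1\equiv\psi_3$.

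Using the subdominant WKB asymptotics of Lemma~\ref{lem:wkbturning}(iii), namely $\psi_0\sim V^{-1/4}e^{-\int\sqrt V}$ giving $\psi_0'/\psi_0=-V^{1/2}-\tfrac14 V'/V+O(\lambda^{-2})$, and extending this estimate over the whole punctured plane (the extension being licensed exactly by $\psi_0\equiv\psi_2$, which makes $\psi_0$ recessive in opposite sectors), the argument principle applied to the single-valued logarithmic derivative reads
\begin{gather*}
\theta_\infty-\tfrac32=\lim_{R\to\infty}\frac{1}{2\pi i}\oint_{|\mu|=R}\frac{\psi_0'(\mu)}{\psi_0(\mu)}\,{\rm d}\mu=(n_2+n_4)-\frac{m+n-1}{2},
\end{gather*}
the right-hand side accounting for the $n_2+n_4$ simple zeros of $\psi_0$ (the zeros of $f$) and the pole of order $\tfrac{m+n-1}{2}$ at $\lambda=0$, while the left-hand side is the residue at infinity of $-V^{1/2}-\tfrac14 V'/V$. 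Solving gives $\theta_\infty=\tfrac12(n-m+2)$. Finally, $\lambda=0$ being apparent (otherwise $f$ would be multivalued around its critical point) together with $\psi_0\equiv\psi_2$ and $\psi_1\equiv\psi_3$ are exactly the defining conditions of case~\ref{H3}, so $r=-\tfrac12\mathcal{S}(f)$ is a level $(m,n)$ Hermite~III potential with $m=n_1+n_3+1$ and $n=n_2+n_4+1$.

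The main obstacle I anticipate is the WKB step: one must justify the subdominant asymptotics of $\psi_0'/\psi_0$ to order $O(\lambda^{-2})$, so that the coefficient of $\lambda^{-1}$, hence the residue at infinity, is captured exactly, including the $-\tfrac12$ contributed by the $V^{-1/4}$ prefactor, and, crucially, one must extend it across all Stokes sectors via $\psi_0\equiv\psi_2$; without this global extension the argument principle cannot be applied on the full circle $|\mu|=R$. The bookkeeping of the half-integer order of $\psi_0$ at $\lambda=0$ when $m+n$ is even, where $\psi_0$ is genuinely two-valued yet $\psi_0'/\psi_0$ stays single-valued, also requires a little care, but does not affect the residue computation.
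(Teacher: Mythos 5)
Your proof is correct and is, in all essentials, the proof the paper has in mind: the paper omits the argument for this theorem, declaring it almost identical to that of Theorem~\ref{thm:nevanlinnahermite12}, and your proposal is exactly that template transported to the Hermite~III setting --- direct part by construction from case~\ref{H3} of Theorem~\ref{thm:hermiteanharmonic}, converse via Theorem~\ref{thm:prenevanlinna} (rational shape of the Schwarzian, $\theta_0$ read off from the critical-point multiplicity), recovery of $\psi_0\equiv\psi_2$ and $\psi_1\equiv\psi_3$ from the coinciding asymptotic values, and a residue computation for $\psi_0'/\psi_0$ to fix $\theta_\infty$.

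One genuine difference is worth recording, and it is in your favour. The paper's template (the proof of Theorem~\ref{thm:nevanlinnahermite12}) approximates the logarithmic derivative by $-V^{\frac12}$ alone, so its residue at infinity is $\frac18 c_1^2-\frac12 c_0=\theta_\infty-1$; with that value the zero count does not balance to integer levels (one ends up off by $\frac12$). Your version keeps the WKB prefactor term $-\frac14 V'/V\sim -\frac{1}{2\la}$, giving residue $\theta_\infty-\tfrac32$, and then $(n_2+n_4)-\frac{m+n-1}{2}=\theta_\infty-\tfrac32$ yields exactly $\theta_\infty=\tfrac12(n-m+2)$ with $m=n_1+n_3+1$, $n=n_2+n_4+1$: this is the bookkeeping that actually closes. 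Regarding the obstacle you flag --- justifying the $\mathcal{O}(\la^{-2})$ error globally --- note that Lemma~\ref{lem:wkbturning}(iii) alone only bounds $|\tilde{c}_{\pm1}-c_{\pm1}|$ by $\big|\frac{V'}{4V^{3/2}}c_{\pm1}\big|$ and therefore cannot pin down the coefficient of $\la^{-1}$; the cleanest fix is to bypass WKB at this step: once $\psi_0\equiv\psi_2$ is known, the differentiable (Sibuya-type) asymptotic characterisation~\eqref{eq:1psikscalar} of the subdominant solution is valid in every direction of the plane, and differentiating it gives $\psi_0'/\psi_0=-\la-a+\big(\theta_\infty-\tfrac32\big)\la^{-1}+\mathcal{O}\big(\la^{-2}\big)$ directly, which is precisely the input your argument-principle step needs.
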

\begin{proof}
The proof is almost identical to the proof of Theorem~\ref{thm:nevanlinnahermite12} and therefore omitted.
\end{proof}
	
	\begin{figure}
		\centering
		\begin{tikzpicture}
		\tikzstyle{circ} = [circle, minimum width=8pt, draw, inner sep=0pt];
		\tikzstyle{star} = [circle, minimum width=8pt, fill, inner sep=0pt];
		
		\node[circ] (A00) at (-1.53,-1.5) {};
		\node[star] (A01) at (-1.5,-0.5) {};
		\node[circ] (A02) at (-1.5,0.5) {};
		\node[star] (A03) at (-1.5,1.5) {};
		\node[star] (A10) at (-0.5,-1.5) {};
		\node[circ] (A20) at (0.5,-1.5) {};
		\node[star] (A30) at (1.5,-1.5) {};
		\node[circ] (A31) at (1.5,-0.5) {};
		\node[star] (A32) at (1.5,0.5) {};
		\node[circ] (A13) at (-0.5,1.5) {};
		\node[star] (A23) at (0.5,1.5) {};
		\node[circ] (A33) at (1.5,1.5) {};
		
		\node[star] (DL) at (-2.207,-2.207) {};
		\node[circ] (DL2) at (-2.914,-2.914) {};
		
		\node[circ] (UL) at (-2.207,2.207) {};
		\node[star] (UL2) at (-2.914,2.914) {};
		
		\node[star] (UR) at (2.207,2.207) {};
		\node[circ] (UR2) at (2.914,2.914) {};
		
		\node[circ] (DR) at (2.207,-2.207) {};
		\node[star] (DR2) at (2.914,-2.914) {};

		\path (A00) edge node [below,inner sep=1pt]{$l_1$} (A10);
		\path (A10) edge[bend left=60] node [above,inner sep=1pt]{$l_2$} (A20);
		\path (A10) edge[bend right=60] node [below,inner sep=1pt]{$l_3$} (A20);
		\path (A20) edge node [below,inner sep=1pt]{$l_1$} (A30);
		\node at (0,-1.5) {$\mathbf{n_4}$};
		
		\path (A00) edge node [left,inner sep=1pt]{$l_2$} (A01);
		\path (A01) edge[bend left=60] node [left,inner sep=1pt]{$l_3$} (A02);
		\path (A01) edge[bend right=60] node [right,inner sep=1pt]{$l_1$} (A02);
		\path (A02) edge node [left,inner sep=1pt]{$l_2$} (A03);
		\node at (-1.5,0) {$\mathbf{n_1}$};
		
		\path (A03) edge node [above,inner sep=1pt]{$l_1$} (A13);
		\path (A13) edge[bend left=60] node [above,inner sep=1pt]{$l_3$} (A23);
		\path (A13) edge[bend right=60] node [below,inner sep=1pt]{$l_2$} (A23);
		\path (A23) edge node [above,inner sep=1pt]{$l_1$} (A33);
		\node at (0,1.5) {$\mathbf{n_2}$};
		
		\path (A30) edge node [right,inner sep=1pt]{$l_2$} (A31);
		\path (A31) edge[bend left=60] node [left,inner sep=1pt]{$l_1$} (A32);
		\path (A31) edge[bend right=60] node [right,inner sep=1pt]{$l_3$} (A32);
		\path (A32) edge node [right,inner sep=1pt]{$l_2$} (A33);
		\node at (1.5,0) {$\mathbf{n_3}$};

		\path (A00) edge node [above left,inner sep=1pt]{$l_3$} (DL);
		\path (DL) edge[bend left=60] node [below right,inner sep=1pt]{$l_1$} (DL2);
		\path (DL) edge[bend right=60] node [above left,inner sep=1pt]{$l_2$} (DL2);
		\node[rotate=45] at (-2.560,-2.560) {$\boldsymbol{\infty}$};
		
		\path (A03) edge node [below left,inner sep=1pt]{$l_3$} (UL);
		\path (UL) edge[bend left=60] node [below left,inner sep=1pt]{$l_2$} (UL2);
		\path (UL) edge[bend right=60] node [above right,inner sep=1pt]{$l_1$} (UL2);
		\node[rotate=-45] at (-2.560,2.560) {$\boldsymbol{\infty}$};
		
		\path (A33) edge node [below right,inner sep=1pt]{$l_3$} (UR);
		\path (UR) edge[bend left=60] node [above left,inner sep=1pt]{$l_1$} (UR2);
		\path (UR) edge[bend right=60] node [below right,inner sep=1pt]{$l_2$} (UR2);
		\node[rotate=45] at (2.560,2.560) {$\boldsymbol{\infty}$};
		
		\path (A30) edge node [above right,inner sep=1pt]{$l_3$} (DR);
		\path (DR) edge[bend left=60] node [above right,inner sep=1pt]{$l_2$} (DR2);
		\path (DR) edge[bend right=60] node [below left,inner sep=1pt]{$l_1$} (DR2);
		\node[rotate=-45] at (2.560,-2.560) {$\boldsymbol{\infty}$};
		\end{tikzpicture}
		\caption{Line complexes of Hermite III Nevanlinna functions.} \label{fig:h3complex}
	\end{figure}
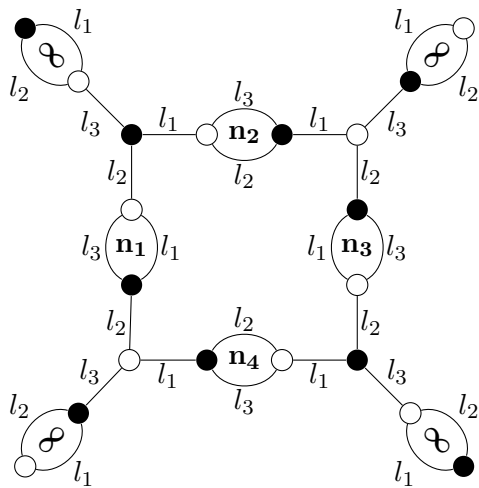
	
\subsection{Okamoto oscillators}
In this subsection we discuss the Okamoto case. We identify a family of meromorphic functions to which the oscillators related to roots of the generalised Okamoto polynomials belong. Firstly, it is helpful to apply the following change of variables,
\begin{gather}\label{eq:changevariables}
\widetilde{\psi}(x)=x^{-1}\psi\big(\tfrac{1}{3}\sqrt{3}x^3\big),
\end{gather}
which yields
\begin{gather}\label{eq:anharmonicokamoto}
\widetilde{\psi}''(x) = V_{10}(x;a,b,\theta)\widetilde{\psi}(x)\\ \nonumber
V_{10}(x) = 3x^4V\big(\tfrac{1}{3}\sqrt{3}x^3\big)+2x^{-2}= x^{10}+2\sqrt{3}ax^7+3\big(a^2+2(1-\theta_\infty)\big)x^4\\ \nonumber
\hphantom{V_{10}(x)=}{}
-3\sqrt{3}\big[b+\big(2\theta_\infty-\tfrac{1}{2}\big)a\big]x +\big(2+9\big(\theta_0^2-\tfrac{1}{4}\big)\big)x^{-2}.
\end{gather}
This equation has a regular singular point at $x=0$ with indices $\tfrac{1}{2}\pm 3\theta_0$, and an irregular singular point at $x=\infty$ of Poincar\'e rank $11$.

\begin{def*}For $m,n\in\mathbb{Z}$, we call the anharmonic oscillator \eqref{eq:anharmonicokamoto} (or its potential) with $\theta_0=\tfrac{1}{2}n-\tfrac{1}{6}$ and $\theta_\infty=\tfrac{1}{2}(2m+n+1)$, a level $(m,n)$ Okamoto oscillator, if it Stokes multipliers satisfy
	\begin{gather}\label{eq:okamotostokes}
	1+s_ks_{k+1}=0, \qquad k\in\mathbb{Z}_{12}.
	\end{gather}
\end{def*}

\begin{Def}\label{def:F3} We say that a meromorphic function $f$ belongs to the class $F_3^n$, $n \in \bb{Z}$ if it has~$13$ singularities: $12$ asymptotic values and $1$ critical point. Moreover
\begin{itemize}\itemsep=0pt
\item The Schwarzian derivative is normalised such that $-\frac12{\mathcal{S}}(f)(x)=x^{10}+O(x^9)$ as $x \to \infty$.
\item The critical point lies at $x=0$. It has multiplicity $|3n-1|-1$ for some $n \in \bb{Z}$ and $f(0)=0$ if $n\geq 1$ and $f(0)=\infty$ otherwise.
\item The asymptotic values $w_0,\dots,w_{11}$ in the Stokes sectors $\Omega_k^{10}$, $k =0,\dots, 11$ are
\begin{subequations}\label{eq:asymptoticokamoto}
	\begin{alignat}{5}
& w_0 =1,\qquad &&w_1 =\zeta^{-1},\qquad && w_2 =\zeta, \qquad && w_3=1&,\\
& w_4 =\zeta^{-1},\qquad && w_5 =\zeta,\qquad && w_6=1, \qquad && w_7 =\zeta^{-1}, & \\
& w_8 =\zeta,\qquad && w_9 =1,\qquad && w_{10} =\zeta^{-1}, \qquad && w_{11} =\zeta, &	
	\end{alignat}
\end{subequations}
	where $\zeta=e^{\frac{2\pi i}{3}}$.
	\item $f$ possesses the symmetry
	\begin{gather}\label{eq:symmetryokamoto}
	f(\zeta x)=\zeta^{-1}f(x).
	\end{gather}
\end{itemize}
\end{Def}

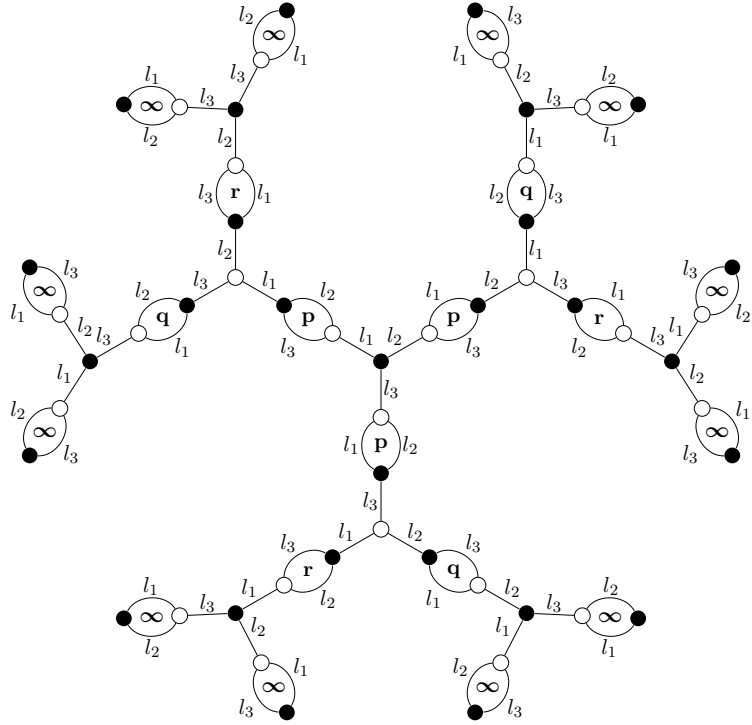
\begin{figure}[ht]
		\centering
		\resizebox{10cm}{!}{
		\begin{tikzpicture}
		\tikzstyle{circ} = [circle, minimum width=8pt, draw, inner sep=0pt];
		\tikzstyle{star} = [circle, minimum width=8pt, fill, inner sep=0pt];
		
		\node[star] (Or) at (0,0) {};
		
		\node[circ] (x1) at ({3*cos(pi/6 r)},{3*sin(pi/6 r)} ) {};
		\node[circ] (x2) at ({3*cos(5*pi/6 r)},{3*sin(5*pi/6 r)} ) {};
		\node[circ] (x3) at ({3*cos(-9*pi/6 r)},{3*sin(9*pi/6 r)} ) {};
		
		\node[star] (y1) at ({3*sqrt(3)*cos(0*2*pi/6 r)},{3*sqrt(3)*sin(0*2*pi/6 r)} ) {};
		\node[star] (y2) at ({3*sqrt(3)*cos(1*2*pi/6 r)},{3*sqrt(3)*sin(1*2*pi/6 r)} ) {};
		\node[star] (y3) at ({3*sqrt(3)*cos(2*2*pi/6 r)},{3*sqrt(3)*sin(2*2*pi/6 r)} ) {};
		\node[star] (y4) at ({3*sqrt(3)*cos(3*2*pi/6 r)},{3*sqrt(3)*sin(3*2*pi/6 r)} ) {};
		\node[star] (y5) at ({3*sqrt(3)*cos(4*2*pi/6 r)},{3*sqrt(3)*sin(4*2*pi/6 r)} ) {};
		\node[star] (y6) at ({3*sqrt(3)*cos(5*2*pi/6 r)},{3*sqrt(3)*sin(5*2*pi/6 r)} ) {};
		
		\node[star] (z1) at ({5/4*3*sqrt(3)*cos((0+1/2)*2*pi/12 r)},{5/4*3*sqrt(3)*sin((0+1/2)*2*pi/12 r)} ) {};
		\node[star] (z2) at ({5/4*3*sqrt(3)*cos((1+1/2)*2*pi/12 r)},{5/4*3*sqrt(3)*sin((1+1/2)*2*pi/12 r)} ) {};
		\node[star] (z3) at ({5/4*3*sqrt(3)*cos((2+1/2)*2*pi/12 r)},{5/4*3*sqrt(3)*sin((2+1/2)*2*pi/12 r)} ) {};
		\node[star] (z4) at ({5/4*3*sqrt(3)*cos((3+1/2)*2*pi/12 r)},{5/4*3*sqrt(3)*sin((3+1/2)*2*pi/12 r)} ) {};
		\node[star] (z5) at ({5/4*3*sqrt(3)*cos((4+1/2)*2*pi/12 r)},{5/4*3*sqrt(3)*sin((4+1/2)*2*pi/12 r)} ) {};
		\node[star] (z6) at ({5/4*3*sqrt(3)*cos((5+1/2)*2*pi/12 r)},{5/4*3*sqrt(3)*sin((5+1/2)*2*pi/12 r)} ) {};
		\node[star] (z7) at ({5/4*3*sqrt(3)*cos((6+1/2)*2*pi/12 r)},{5/4*3*sqrt(3)*sin((6+1/2)*2*pi/12 r)} ) {};
		\node[star] (z8) at ({5/4*3*sqrt(3)*cos((7+1/2)*2*pi/12 r)},{5/4*3*sqrt(3)*sin((7+1/2)*2*pi/12 r)} ) {};
		\node[star] (z9) at ({5/4*3*sqrt(3)*cos((8+1/2)*2*pi/12 r)},{5/4*3*sqrt(3)*sin((8+1/2)*2*pi/12 r)} ) {};
		\node[star] (z10) at ({5/4*3*sqrt(3)*cos((9+1/2)*2*pi/12 r)},{5/4*3*sqrt(3)*sin((9+1/2)*2*pi/12 r)} ) {};
		\node[star] (z11) at ({5/4*3*sqrt(3)*cos((10+1/2)*2*pi/12 r)},{5/4*3*sqrt(3)*sin((10+1/2)*2*pi/12 r)} ) {};
		\node[star] (z12) at ({5/4*3*sqrt(3)*cos((11+1/2)*2*pi/12 r)},{5/4*3*sqrt(3)*sin((11+1/2)*2*pi/12 r)} ) {};
		
		\node[circ] (a11) at ({1*cos(pi/6 r)},{1*sin(pi/6 r)} ) {};
		\node[circ] (a12) at ({1*cos(5*pi/6 r)},{1*sin(5*pi/6 r)} ) {};
		\node[circ] (a13) at ({1*cos(-9*pi/6 r)},{1*sin(9*pi/6 r)} ) {};
		
		\node[star] (b11) at ({2*cos(pi/6 r)},{2*sin(pi/6 r)} ) {};
		\node[star] (b12) at ({2*cos(5*pi/6 r)},{2*sin(5*pi/6 r)} ) {};
		\node[star] (b13) at ({2*cos(-9*pi/6 r)},{2*sin(9*pi/6 r)} ) {};

		\node[circ] (a21) at ($(x1)!0.6666!(y1)$) {};
		\node[circ] (a22) at ($(x1)!0.6666!(y2)$) {};
		\node[circ] (a23) at ($(x2)!0.6666!(y3)$) {};
		\node[circ] (a24) at ($(x2)!0.6666!(y4)$) {};
		\node[circ] (a25) at ($(x3)!0.6666!(y5)$) {};
		\node[circ] (a26) at ($(x3)!0.6666!(y6)$) {};
		
		\node[star] (b21) at ($(x1)!0.3333!(y1)$) {};
		\node[star] (b22) at ($(x1)!0.3333!(y2)$) {};
		\node[star] (b23) at ($(x2)!0.3333!(y3)$) {};
		\node[star] (b24) at ($(x2)!0.3333!(y4)$) {};
		\node[star] (b25) at ($(x3)!0.3333!(y5)$) {};
		\node[star] (b26) at ($(x3)!0.3333!(y6)$) {};
		
		\node[circ] (a31) at ($(y1)!0.5!(z1)$) {};
		\node[circ] (a32) at ($(y2)!0.5!(z2)$) {};
		\node[circ] (a33) at ($(y2)!0.5!(z3)$) {};
		\node[circ] (a34) at ($(y3)!0.5!(z4)$) {};
		\node[circ] (a35) at ($(y3)!0.5!(z5)$) {};
		\node[circ] (a36) at ($(y4)!0.5!(z6)$) {};
		\node[circ] (a37) at ($(y4)!0.5!(z7)$) {};
		\node[circ] (a38) at ($(y5)!0.5!(z8)$) {};
		\node[circ] (a39) at ($(y5)!0.5!(z9)$) {};
		\node[circ] (a310) at ($(y6)!0.5!(z10)$) {};
		\node[circ] (a311) at ($(y6)!0.5!(z11)$) {};
		\node[circ] (a312) at ($(y1)!0.5!(z12)$) {};

		\path (Or) edge node [above left,inner sep=1pt]{$l_2$} (a11);
		\path (a11) edge[bend left=60] node [above left,inner sep=1pt]{$l_1$} (b11);
		\path (a11) edge[bend right=60] node [below right,inner sep=1pt]{$l_3$} (b11);
		\path (b11) edge node [above left,inner sep=1pt]{$l_2$} (x1);
		\node at ($(a11)!0.5!(b11)$) {$\mathbf{p}$};
		
		\path (Or) edge node [above right,inner sep=1pt]{$l_1$} (a12);
		\path (a12) edge[bend left=60] node [below left,inner sep=1pt]{$l_3$} (b12);
		\path (a12) edge[bend right=60] node [above right,inner sep=1pt]{$l_2$} (b12);
		\path (b12) edge node [above right,inner sep=1pt]{$l_1$} (x2);
		\node at ($(a12)!0.5!(b12)$) {$\mathbf{p}$};
		
		\path (Or) edge node [right,inner sep=1pt]{$l_3$} (a13);
		\path (a13) edge[bend left=60] node [right,inner sep=1pt]{$l_2$} (b13);
		\path (a13) edge[bend right=60] node [left,inner sep=1pt]{$l_1$} (b13);
		\path (b13) edge node [left,inner sep=1pt]{$l_3$} (x3);
		\node at ($(a13)!0.5!(b13)$) {$\mathbf{p}$};

		\path (x1) edge node [above right,inner sep=1pt]{$l_3$} (b21);
		\path (b21) edge[bend left=60] node [above right,inner sep=1pt]{$l_1$} (a21);
		\path (b21) edge[bend right=60] node [below left,inner sep=1pt]{$l_2$} (a21);
		\path (a21) edge node [above right,inner sep=1pt]{$l_3$} (y1);
		\node at ($(a21)!0.5!(b21)$) {$\mathbf{r}$};
		
		\path (x1) edge node [right,inner sep=1pt]{$l_1$} (b22);
		\path (b22) edge[bend left=60] node [left,inner sep=1pt]{$l_2$} (a22);
		\path (b22) edge[bend right=60] node [right,inner sep=1pt]{$l_3$} (a22);
		\path (a22) edge node [right,inner sep=1pt]{$l_1$} (y2);
		\node at ($(a22)!0.5!(b22)$) {$\mathbf{q}$};
		
		\path (x2) edge node [left,inner sep=1pt]{$l_2$} (b23);
		\path (b23) edge[bend left=60] node [left,inner sep=1pt]{$l_3$} (a23);
		\path (b23) edge[bend right=60] node [right,inner sep=1pt]{$l_1$} (a23);
		\path (a23) edge node [left,inner sep=1pt]{$l_2$} (y3);
		\node at ($(a23)!0.5!(b23)$) {$\mathbf{r}$};
		
		\path (x2) edge node [above left,inner sep=1pt]{$l_3$} (b24);
		\path (b24) edge[bend left=60] node [below right,inner sep=1pt]{$l_1$} (a24);
		\path (b24) edge[bend right=60] node [above left,inner sep=1pt]{$l_2$} (a24);
		\path (a24) edge node [above left,inner sep=1pt]{$l_3$} (y4);
		\node at ($(a24)!0.5!(b24)$) {$\mathbf{q}$};
		
		\path (x3) edge node [above left,inner sep=1pt]{$l_1$} (b25);
		\path (b25) edge[bend left=60] node [below right,inner sep=1pt]{$l_2$} (a25);
		\path (b25) edge[bend right=60] node [above left,inner sep=1pt]{$l_3$} (a25);
		\path (a25) edge node [above left,inner sep=1pt]{$l_1$} (y5);
		\node at ($(a25)!0.5!(b25)$) {$\mathbf{r}$};
		
		\path (x3) edge node [above right,inner sep=1pt]{$l_2$} (b26);
		\path (b26) edge[bend left=60] node [above right,inner sep=1pt]{$l_3$} (a26);
		\path (b26) edge[bend right=60] node [below left,inner sep=1pt]{$l_1$} (a26);
		\path (a26) edge node [above right,inner sep=1pt]{$l_2$} (y6);
		\node at ($(a26)!0.5!(b26)$) {$\mathbf{q}$};

		\path (y1) edge node [above left,inner sep=1pt]{$l_1$} (a31);
		\path (a31) edge[bend left=60] node [above left,inner sep=1pt]{$l_3$} (z1);
		\path (a31) edge[bend right=60] node [below right,inner sep=1pt]{$l_2$} (z1);
		\node at ($(a31)!0.5!(z1)$) {$\boldsymbol{\infty}$};
		
		\path (y2) edge node [above,inner sep=1pt]{$l_3$} (a32);
		\path (a32) edge[bend left=60] node [above,inner sep=1pt]{$l_2$} (z2);
		\path (a32) edge[bend right=60] node [below,inner sep=1pt]{$l_1$} (z2);
		\node at ($(a32)!0.5!(z2)$) {$\boldsymbol{\infty}$};
		
		\path (y2) edge node [above right,inner sep=1pt]{$l_2$} (a33);
		\path (a33) edge[bend left=60] node [below left,inner sep=1pt]{$l_1$} (z3);
		\path (a33) edge[bend right=60] node [above right,inner sep=1pt]{$l_3$} (z3);
		\node at ($(a33)!0.5!(z3)$) {$\boldsymbol{\infty}$};
		
		\path (y3) edge node [above left,inner sep=1pt]{$l_3$} (a34);
		\path (a34) edge[bend left=60] node [above left,inner sep=1pt]{$l_2$} (z4);
		\path (a34) edge[bend right=60] node [below right,inner sep=1pt]{$l_1$} (z4);
		\node at ($(a34)!0.5!(z4)$) {$\boldsymbol{\infty}$};
		
		\path (y3) edge node [above,inner sep=1pt]{$l_3$} (a35);
		\path (a35) edge[bend left=60] node [below,inner sep=1pt]{$l_2$} (z5);
		\path (a35) edge[bend right=60] node [above,inner sep=1pt]{$l_1$} (z5);
		\node at ($(a35)!0.5!(z5)$) {$\boldsymbol{\infty}$};
		
		\path (y4) edge node [above right,inner sep=1pt]{$l_2$} (a36);
		\path (a36) edge[bend left=60] node [below left,inner sep=1pt]{$l_1$} (z6);
		\path (a36) edge[bend right=60] node [above right,inner sep=1pt]{$l_3$} (z6);
		\node at ($(a36)!0.5!(z6)$) {$\boldsymbol{\infty}$};
		
		\path (y4) edge node [above left,inner sep=1pt]{$l_1$} (a37);
		\path (a37) edge[bend left=60] node [below right,inner sep=1pt]{$l_3$} (z7);
		\path (a37) edge[bend right=60] node [above left,inner sep=1pt]{$l_2$} (z7);
		\node at ($(a37)!0.5!(z7)$) {$\boldsymbol{\infty}$};
		
		\path (y5) edge node [above,inner sep=1pt]{$l_3$} (a38);
		\path (a38) edge[bend left=60] node [below,inner sep=1pt]{$l_2$} (z8);
		\path (a38) edge[bend right=60] node [above,inner sep=1pt]{$l_1$} (z8);
		\node at ($(a38)!0.5!(z8)$) {$\boldsymbol{\infty}$};
		
		\path (y5) edge node [above right,inner sep=1pt]{$l_2$} (a39);
		\path (a39) edge[bend left=60] node [above right,inner sep=1pt]{$l_1$} (z9);
		\path (a39) edge[bend right=60] node [below left,inner sep=1pt]{$l_3$} (z9);
		\node at ($(a39)!0.5!(z9)$) {$\boldsymbol{\infty}$};
		
		\path (y6) edge node [above left,inner sep=1pt]{$l_1$} (a310);
		\path (a310) edge[bend left=60] node [below right,inner sep=1pt]{$l_3$} (z10);
		\path (a310) edge[bend right=60] node [above left,inner sep=1pt]{$l_2$} (z10);
		\node at ($(a310)!0.5!(z10)$) {$\boldsymbol{\infty}$};
		
		\path (y6) edge node [above,inner sep=1pt]{$l_3$} (a311);
		\path (a311) edge[bend left=60] node [above,inner sep=1pt]{$l_2$} (z11);
		\path (a311) edge[bend right=60] node [below,inner sep=1pt]{$l_1$} (z11);
		\node at ($(a311)!0.5!(z11)$) {$\boldsymbol{\infty}$};
		
		\path (y1) edge node [above right,inner sep=1pt]{$l_2$} (a312);
		\path (a312) edge[bend left=60] node [above right,inner sep=1pt]{$l_1$} (z12);
		\path (a312) edge[bend right=60] node [below left,inner sep=1pt]{$l_3$} (z12);
		\node at ($(a312)!0.5!(z12)$) {$\boldsymbol{\infty}$};
		
\end{tikzpicture}}
		\caption{Line complexes of functions in the class $F_3^n$ with $n=0$; $p$, $q$, $r$ are arbitrary natural numbers.}		\label{fig:okamotocomplexI}
	\end{figure}

\begin{thm}\label{thm:nevanlinnaokamoto}
 The ratio of two particular solutions of a level $(m,n)$ Okamoto oscillator belongs to the family $F_3^n$.

Conversely, if $f$ belongs to $F_3^n$, then $-\frac12\mathcal{S}(f)(x)$ is a level $(m,n)$ Okamoto oscillator, for some $m\in\mathbb{Z}$.
\end{thm}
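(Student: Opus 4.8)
The plan is to follow the proof of Theorem~\ref{thm:nevanlinnahermite12} step by step, the only genuine novelties being the twelve Stokes sectors and the order-three symmetry \eqref{eq:symmetryokamoto}. For the forward implication I start from a level $(m,n)$ Okamoto oscillator, i.e.\ the potential $V_{10}$ of \eqref{eq:anharmonicokamoto} with $\theta_0=\tfrac{n}{2}-\tfrac16$, $\theta_\infty=\tfrac12(2m+n+1)$ and Stokes multipliers obeying $1+s_ks_{k+1}=0$, pick two distinguished subdominant solutions and form their ratio $f$. The normalisation $-\tfrac12\mathcal{S}(f)=V_{10}=x^{10}+\mathcal{O}(x^9)$ is automatic. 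At $x=0$ the indices are $\tfrac12\pm 3\theta_0=\tfrac12\pm\tfrac{3n-1}{2}$, and since $\lambda=0$ is non-resonant for the underlying degree-two oscillator (as $\theta_0\notin\tfrac12\mathbb{Z}$) a direct computation shows the loop $x\mapsto e^{2\pi i}x$ induces three turns around $\lambda=0$, giving scalar monodromy $M_0^3=(-1)^{3n}I$; hence $f$ is single-valued at $0$ with a critical point of multiplicity $|3n-1|-1$ and $f(0)=0$ or $\infty$ according to the sign of $3n-1$. The symmetry \eqref{eq:symmetryokamoto} reflects the invariance $V_{10}(\zeta x)=\zeta V_{10}(x)$, and the cyclic pattern \eqref{eq:asymptoticokamoto} of asymptotic values is exactly what the relations $1+s_ks_{k+1}=0$ produce, since they collapse the M\"obius recursion for the $w_k$ to period three. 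Thus $f\in F_3^n$.

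For the converse, fix $f\in F_3^n$ and set $V:=-\tfrac12\mathcal{S}(f)$. Theorem~\ref{thm:prenevanlinna}, applied with $p=12$ transcendental singularities and one critical point, shows $V$ is rational of degree at most $12$ whose only finite pole sits at $x=0$. The prescribed critical multiplicity forces a double pole $V=\tfrac{(3n-1)^2-1}{4}x^{-2}+\mathcal{O}(x^{-1})$, and $\tfrac{(3n-1)^2-1}{4}=2+9(\theta_0^2-\tfrac14)$ precisely for $\theta_0=\tfrac{n}{2}-\tfrac16$, with the sign of $3n-1$ and the value $f(0)\in\{0,\infty\}$ recovering the integer $n$ unambiguously. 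Finally the symmetry \eqref{eq:symmetryokamoto} gives $V(\zeta x)=\zeta V(x)$, so $V$ contains only the monomials $x^{k}$ with $k\equiv1\pmod3$; together with the leading term $x^{10}$ and the double pole at $0$ this pins $V$ down to the exact shape of $V_{10}(x;a,b,\theta)$ for some $a,b\in\mathbb{C}$ and some $\theta_\infty$.

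There remains the identification of $\theta_\infty$ and, decisively, the proof that $m:=\theta_\infty-\tfrac{n+1}{2}$ is an integer; I reproduce the residue argument of Theorem~\ref{thm:nevanlinnahermite12}. Because the asymptotic values repeat with period three, the linear dependences forced by $1+s_ks_{k+1}=0$ let me extend the WKB estimate $\psi_0'/\psi_0=-V^{1/2}-\tfrac14(V'/V)+\mathcal{O}(x^{-2})$ for the distinguished subdominant solution from a union of sectors to the whole punctured plane (Lemma~\ref{lem:wkbturning}(iii)). Evaluating $\tfrac{1}{2\pi i}\oint_{|x|=R}\psi_0'/\psi_0$ and letting $R\to\infty$ equates the number of zeros of $\psi_0$, namely the fractional contribution from the critical point at $x=0$ together with the finitely many simple zeros prescribed by the line complex of Fig.~\ref{fig:okamotocomplexI}, with the residue of $V^{1/2}+\tfrac14(V'/V)$ at infinity, an explicit affine function of $\theta_\infty$ whose $x^{-1}$ coefficient reduces to $3(1-\theta_\infty)$ plus a universal correction. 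Since the zero count is manifestly an integer read off from the finite line complex, this quantises $\theta_\infty$ to $\tfrac12(2m+n+1)$ with $m\in\mathbb{Z}$, and the period-three Stokes pattern confirms that $V$ is a level $(m,n)$ Okamoto oscillator. I expect this final step to be the main obstacle: the global WKB extension across twelve sectors and the careful bookkeeping of branch contributions at $x=0$ are substantially more delicate than in the four-sector Hermite case, and getting the constants right is exactly what secures the integrality of $m$.
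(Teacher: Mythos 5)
Your forward direction and the first half of your converse (Nevanlinna--Elfving rationality of $-\tfrac12\mathcal{S}(f)$, the symmetry \eqref{eq:symmetryokamoto} forcing only degree $1\bmod 3$ monomials, the critical-point multiplicity pinning down $\theta_0=\tfrac12 n-\tfrac16$) coincide with the paper's proof. The genuine gap is precisely the step you flag as ``the main obstacle'': the integrality of $m$. The residue/zero-counting argument of Theorem~\ref{thm:nevanlinnahermite12} cannot be transplanted to the Okamoto case, because the hypothesis that makes it work there has no analogue here. In the Hermite~I case, $w_0=w_2$ forces $\psi_0\equiv\psi_2$, and the sectors $\Omega_0$, $\Omega_2$ have the \emph{same} exponential parity, so this single solution is of type $e^{-S}$ in every sector; that is what makes the estimate $\psi_0'/\psi_0=-V^{1/2}+\mathcal{O}(\la^{-2})$ valid on the whole plane, gives $\psi_0$ finitely many zeros, and makes the contour integral converge. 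For an Okamoto oscillator, the relations \eqref{eq:okamotostokes} give (up to the usual branch-cut normalisations) $\psi_{k+2}=(1+s_ks_{k+1})\psi_k+s_{k+1}\psi_{k-1}=s_{k+1}\psi_{k-1}$, i.e., $\psi_k\equiv\psi_{k+3}$; but the sectors $\Omega_k^{10}$ and $\Omega_{k+3}^{10}$ have \emph{opposite} parity: for a globally continued branch, $\Re S\sim\Re\big(x^6/6\big)$ has opposite signs in the two sectors. Hence the distinguished solution is of type $e^{-S}$ near $\Omega_0^{10}$ and of type $e^{+S}$ near $\Omega_3^{10}$; along the four transition rays in between it is a genuine combination $ae^{S}+be^{-S}$ with $ab\neq0$, so its zeros accumulate along those rays and it has infinitely many zeros. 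Consequently $\lim\limits_{R\to\infty}\oint_{|x|=R}\psi_0'(x)/\psi_0(x)\,{\rm d}x$ diverges: there is no finite zero count to equate with a residue at infinity, and no single-branch WKB extension ``across twelve sectors'' exists. No bookkeeping of constants repairs this; the approach is structurally wrong, which is also why the paper's Okamoto correspondence is weaker than the Hermite one (the value of $m$ is not read off the covering at all).

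The paper quantises $\theta_\infty$ by a purely algebraic argument that you have available but did not use. The asymptotic values \eqref{eq:asymptoticokamoto} imply the Stokes relations \eqref{eq:okamotostokes}; undoing the change of variables \eqref{eq:changevariables}, these become $s_0=s_2$, $s_1=s_3$, $s_0s_1+1=0$ for the underlying degree-two oscillator. Substituting into the trace identity \eqref{eq:stcubic} kills the coefficient of $e^{2\pi i\theta_\infty}$ and the term $(1+s_2s_3)(1+s_0s_1)$, leaving $-e^{-2\pi i\theta_\infty}=2\cos 2\pi\theta_0=(-1)^n$, where the last equality uses $\theta_0=\tfrac12 n-\tfrac16$. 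This forces $\theta_\infty=\tfrac12(2m+n+1)$ for some $m\in\mathbb{Z}$, which is exactly the integrality you need. Replacing your final step by this one-line computation completes the converse.
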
	
	
\begin{proof}
	It is immediately clear that $f$ has a critical point at $\lambda=0$, of multiplicity $|3n-1|-1$. Furthermore, by equation \eqref{eq:semi}, we have
	\begin{gather*}
	E=\begin{pmatrix}
	E_{12} & 0\\
	0 & E_{22}
	\end{pmatrix}\begin{pmatrix}
	s_0e^{-\frac{\pi i}{3}} & 1\\
	s_0e^{\frac{\pi i}{3}} & 1,
	\end{pmatrix}
	\end{gather*}
	and hence
	\begin{gather*}
	f=\frac{\widetilde{\psi}_+}{\widetilde{\phi}_-}=\frac{E_{22}}{E_{12}}\frac{\widetilde{\phi}_1-s_0\big(1+e^{\frac{\pi i}{3}}\big)\widetilde{\phi}_0}{\widetilde{\phi}_1+s_0\big(1+e^{\frac{-\pi i}{3}}\big)\widetilde{\phi}_0}.
	\end{gather*}
Therefore, upon rescaling $f\mapsto E_{12}/E_{22}f$, it is easy to see that the Stokes data \eqref{eq:okamotostokes} imply that~$f$ takes the asymptotic value $w_k$ def\/ined in \eqref{eq:asymptoticokamoto}, in the Stokes sector~$\Omega_k$, for $k\in\mathbb{Z}_{12}$. The symmetry~\eqref{eq:symmetryokamoto} is easily deduced from the fact that the potential satisf\/ies $V_{10}(\zeta x)=\zeta V_{10}(x)$.
	
Conversely, let $f$ belong to $F_3^n$ for some $n\in\mathbb{Z}$. Then, by Theorem \ref{thm:prenevanlinna}, it is clear that $V(x):=-\frac12 \mathcal{S}(f)(x)$ def\/ines a Laurent polynomial, of degree~$10$. Furthermore, by the symmetry~\eqref{eq:symmetryokamoto}, we have $V(\zeta x)=\zeta V(x)$, hence $V(x)$ is a Laurent polynomial with only $ 1 \pmod{3}$ degree terms. Namely $ V(x)=V_{10}(x;a,b,\theta)$ for some $a$, $b$, $\theta_0$ and $\theta_\infty$. By the characterisation near $x=0$ of~$f$, i.e., the f\/irst property in Def\/inition \ref{def:F3}, it is clear that $\theta_0=\tfrac{1}{2}n-\tfrac{1}{6}$.
 	
Now, equations \eqref{eq:asymptoticokamoto} imply that the Stokes multipliers of $V_{10}(x;a,b,\theta)$ satisfy~\eqref{eq:okamotostokes}. In particular, 'undoing' the change of variables~\eqref{eq:changevariables}, equation \eqref{eq:stcubic}, implies $-e^{-2\pi i \theta_\infty}=(-1)^n$. So there exists an $m\in\mathbb{Z}$ such that $\theta_\infty=\tfrac{1}{2}(2m+n+1)$ and indeed $V_{10}(x;a,b,\theta)$ is a $(m,n)$ Okamoto oscillator.
\end{proof}

The description of the correspondence in the above theorem is weaker than the ones in the Hermite cases, as the value of $m$ is not understood on the coverings side of the correspondence. To be more concrete, let us discuss the special case $n=0$, i.e., $f$ has no critical points, for which it is easy to work out all possible line complexes for the $F_3^0$ family (up to rotation by $\frac\pi2 i$). Indeed in such case the line complex must be given by Fig.~\ref{fig:okamotocomplexI}, for some $p,q,r\in\mathbb{N}$, where we used $b_1=1$, $b_2=\zeta$, $b_3=\zeta^{-1}$ and the Jordan curve $\gamma$ equal to the unit circle. The question that remains, what values of $p$, $q$, $r$ correspond to which value of $m$? We will not pursue this question further here.

\section{Asymptotic analysis of Hermite~I oscillators}\label{section:asymptotic}
This section of the article is dedicated to the asymptotic analysis of the zeros and poles of rational solutions of Hermite type, that is to say roots of the generalised Hermite polyno\-mials~$H_{m,n}(z)$, in \textit{the asymptotic regime $n$ bounded and $m \to \infty$}.

Our analysis is based on the solution of the inverse problem characterising zeros of Hermite~I solutions of ${\rm P}_{\rm IV}$, see Theorem~\ref{thm:hermiteanharmonic}.
For later convenience, we restate the inverse problem using the variables $\{\al=E^{-\frac12}a,E=2m+n\}$ introduced in equation~\eqref{eq:alE} above.
\begin{probinv*} Given $E,n\in\mathbb{N}^*$, determine the points $(\alpha,\beta)\in\mathbb{C}^2$ such that the anharmonic oscillator
	\begin{gather}\nonumber
	\psi''(\lambda)=V_{{\rm I}}(\lambda;\al,\beta,E,n)\psi(\lambda), \\ \label{eq:hIK}
	 V_{\rm I}(\la)= \lambda^2+2E^{\frac12}\al\lambda-E\big(1-\al^2\big)-\frac{E^{\frac12 }\beta}{\lambda}+\frac{n^2-1}{4 \la^2},
	\end{gather}
	satisfies the following two properties:
	\begin{enumerate}\itemsep=0pt
		\item[$1.$] \textbf{No-logarithm condition.} The resonant singularity at $\la=0$ is apparent.
		\item[$2.$] \textbf{Quantisation condition.} The solution subdominant at $\la=+\infty$, which we denote by $\psi_0$, is also subdominant at $\la=0$,
		namely $\psi_0 \equiv \psi_+ \sim \la^{\frac{n+1}{2}}$ as $\la \to 0$.
	\end{enumerate}
\end{probinv*}

We tackle f\/irst Theorem \ref{thm:asymptoticzeros}, whose proof is divided in four steps:
\begin{enumerate}\itemsep=0pt
 \item In Section \ref{sub:apparent} we analyse the no-logarithm condition and by doing so we express the unknown $\beta$ as an $n-$valued function of the unknown~$\al$.
\item Section \ref{sub:whittaker} is devoted to the large $E$ limit of the solution $\psi_+$ subdominant at $\la=0$.
\item In Section \ref{sub:wkb} we compute, by means of the WKB method, the large $E$ limit of the solu\-tion~$\psi_0$.
\item In Section \ref{sub:matching}, we match the asymptotic expansions of $\psi_+$ and $\psi_0$ to solve the inverse monodromy problem in the large $E$ limit and prove Theorem~\ref{thm:asymptoticzeros}.
\end{enumerate}
Afterwards we show, in Section \ref{sub:thm2}, all above steps can be appropriately modif\/ied in order to prove Theorem~\ref{thm:asymptoticzerosedge}. Finally in Section~\ref{sub:edge} we comment on the distribution of zeros in the edge.

\subsection{Asymptotic analysis of the no-logarithm condition}\label{sub:apparent}
Equation \eqref{eq:hIK} has a resonant singularity at $\la=0$ with exponents $\frac{1\pm n}{2}$. For generic values of the parameters $\{\al,\beta\}$, the Frobenius expansion of dominant solutions contain logarithmic terms. The singularity is apparent in those cases when these logarithmic terms are absent.

The no-logarithm condition imposes a polynomial constraint on the coef\/f\/icients of equation~\eqref{eq:hIK} which allow us to express $\beta$ as an $n$-valued function of the variable~$\al$. The $n$ branches of this functions, which we denote by $\beta_j(\al)$, $j \in J_n$, are asymptotic to $j i (1-\al^2)^{\frac12}$, in the large~$E$ limit. Indeed, we have the following proposition.
\begin{pro}\label{pro:apparent}
	Fix a simply connected compact domain $D$ in the $\al$-plane, not containing the points $\al=\pm1$. Then there exists an $E_0 >0$, and $n$ analytic functions $\beta_j(\al)=\beta_j(\al,E)$ on $D \times [E_0,\infty)$, $j\in J_n$, such that the following statements hold true:
	\begin{enumerate}\itemsep=0pt
		\item[$1.$] For every $\al\in D$ and $E\geq E_0$, there exist exactly $n$ distinct values $\beta$ such that the resonant singularity of equation~\eqref{eq:hIK} is apparent, given by $\beta_j(\al)$, $j \in J_n$.
		\item[$2.$] For $j \in J_n$, the branch $\beta_j(\al)$ has the following asymptotic expansion
		\begin{gather*}
		\beta_j(\al,E)=j i \big(1-\al^2\big)^{\frac12} \big(1+E^{-1}r_j(\al,E))\big) ,
		\end{gather*}
		where $r_j(\al,E)$ is a bounded function on $D \times [E_0,\infty)$.
	\end{enumerate}
\end{pro}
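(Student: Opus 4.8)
The plan is to convert the no-logarithm condition into a single algebraic equation in $\beta$ by means of the Frobenius recursion at $\lambda=0$, and then to analyse that equation perturbatively in the large parameter $E$.

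First I would set up the apparency condition explicitly. Writing the Laurent coefficients of the potential as $V_{\rm I}(\lambda)=\tfrac{n^2-1}{4}\lambda^{-2}+v_{-1}\lambda^{-1}+v_0+v_1\lambda+v_2\lambda^2$ with $v_{-1}=-E^{\frac12}\beta$, $v_0=-E(1-\alpha^2)$, $v_1=2E^{\frac12}\alpha$ and $v_2=1$, and seeking the Frobenius solution $\psi=\sum_{k\ge0}c_k\lambda^{k+\rho}$ at the smaller exponent $\rho=\tfrac{1-n}{2}$ with $c_0=1$, the recursion reads $c_k\,k(k-n)=v_{-1}c_{k-1}+v_0c_{k-2}+v_1c_{k-3}+v_2c_{k-4}$. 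For $1\le k\le n-1$ this determines $c_k$ uniquely, the coefficient $k(k-n)$ vanishing only at $k=n$; the resonance is apparent exactly when the right-hand side vanishes there, i.e.\ when
\[
P_n(\beta;\alpha,E):=v_{-1}c_{n-1}+v_0c_{n-2}+v_1c_{n-3}+v_2c_{n-4}=0 .
\]
Since $\beta$ enters only through $v_{-1}$ and each $c_k$ has degree $k$ in $v_{-1}$, the polynomial $P_n$ has degree exactly $n$ in $\beta$, with nonzero leading coefficient. This already yields at most $n$ apparent values of $\beta$, proving the upper bound in item~1.

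Next I would identify the leading polynomial and its roots. Substituting $c_k=\hat c_kE^{k/2}$ turns the recursion into $\hat c_k\,k(k-n)=-\beta\hat c_{k-1}-(1-\alpha^2)\hat c_{k-2}+2\alpha E^{-1}\hat c_{k-3}+E^{-2}\hat c_{k-4}$, so every $\hat c_k$, and hence the normalised condition $Q_n:=E^{-n/2}P_n$, is a polynomial in $\beta$ whose coefficients are polynomials in $E^{-1}$ (all half-integer powers of $E$ cancel). As $E\to\infty$ it converges to $Q_n^{(0)}(\beta;\alpha)=-\beta\hat c_{n-1}^{(0)}-(1-\alpha^2)\hat c_{n-2}^{(0)}$, where $\hat c^{(0)}$ solve the two-term limit recursion. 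To locate the roots of $Q_n^{(0)}$ I pass to the rescaled variable $\lambda=E^{-\frac12}\mu$: the oscillator limits to the Whittaker equation $\psi_{\mu\mu}=\bigl[-(1-\alpha^2)-\beta\mu^{-1}+\tfrac{n^2-1}{4}\mu^{-2}\bigr]\psi$, and the affine change $\mu=\tfrac{z}{2i(1-\alpha^2)^{1/2}}$ brings it to standard Whittaker form with indices $\tfrac{1\pm n}{2}$, parameters $m=\tfrac n2$ and $\kappa=-\tfrac{i}{2}\beta(1-\alpha^2)^{-\frac12}$. The classical termination criterion for the smaller-index solution states that it is logarithm-free precisely when the Kummer parameter $\tfrac{1-n}{2}-\kappa$ lies in $\{0,-1,\dots,-(n-1)\}$, i.e.\ when $\kappa=\tfrac{j}{2}$ for $j\in J_n$, equivalently $\beta=ji(1-\alpha^2)^{\frac12}$. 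These are $n$ distinct values on $D$ (where $\alpha\ne\pm1$), and since $Q_n^{(0)}$ has degree $n$ they are exactly its roots, all simple.

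Finally I would run the perturbation. Because $Q_n^{(0)}$ has $n$ simple roots $\beta_j^{(0)}(\alpha)=ji(1-\alpha^2)^{\frac12}$ that stay uniformly separated and bounded on the compact set $D$ avoiding $\pm1$, and because $Q_n(\cdot;\alpha,E)\to Q_n^{(0)}(\cdot;\alpha)$ uniformly in $(\alpha,\beta)$ on compacta, a Rouché/Hurwitz argument supplies a constant $E_0$ such that for $E\ge E_0$ and every $\alpha\in D$ the equation $Q_n=0$ has exactly one root $\beta_j(\alpha,E)$ in a fixed small disc about each $\beta_j^{(0)}(\alpha)$, and these exhaust all $n$ roots, giving item~1. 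Since $Q_n$ and $\partial_\beta Q_n$ are analytic in $\alpha$ and polynomial in $E^{-1}$, with $\partial_\beta Q_n\ne0$ at each simple root, the implicit function theorem renders $\beta_j$ analytic on $D\times[E_0,\infty)$ with an expansion in integer powers of $E^{-1}$; factoring out the leading term gives $\beta_j(\alpha,E)=ji(1-\alpha^2)^{\frac12}\bigl(1+E^{-1}r_j(\alpha,E)\bigr)$ with $r_j$ bounded, which is item~2. I expect the decisive step to be the identification of $Q_n^{(0)}$ and its roots: the degree count and the convergence are immediate from the recursion, but pinning the roots down requires recognising the $E\to\infty$ limit as a Whittaker equation and invoking the precise log-free termination criterion for its dominant Frobenius solution, and it is essential to verify that the half-integer powers of $E$ genuinely cancel so that the perturbation is organised in powers of $E^{-1}$.
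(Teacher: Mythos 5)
Your proposal is correct and follows essentially the same route as the paper's proof: the Frobenius recursion at $\lambda=0$ converted into a degree-$n$ polynomial constraint on $\beta$, a rescaling that organises this constraint in integer powers of $E^{-1}$, identification of the $E\to\infty$ limit with the Whittaker equation whose logarithm-free condition singles out the $n$ values $ji\big(1-\al^2\big)^{\frac12}$, and a Rouch\'e/implicit-function argument to conclude existence, uniqueness and the expansion. The only differences are cosmetic: you keep $\beta$ unrescaled and invoke the Kummer termination criterion where the paper normalises $\tilde\beta=\beta\big(1-\al^2\big)^{-\frac12}$ and cites NIST equation (13.14.18), and you spell out the perturbation step that the paper compresses into ``the thesis now easily follows.''
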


\begin{proof}
	As a f\/irst simplif\/ication we apply a change of variables
	$\nu=E^{\frac{1}{2}}(1-\al^2)^{\frac12}\la$ and $\tilde{\beta}=\beta(1-\al^2)^{-\frac12}$. The resulting equation reads
	\begin{gather}\label{eq:hIKres}
	\psi''= \left(E^{-2}(1-\alpha^2)^{-2}\nu^2 + 2 E^{-1}\big(1-\alpha^2\big)^{-3/2} \al \nu-1- \frac{\tilde{\beta}}{\nu} + \frac{n^2-1}{4 \nu^2}
	\right) \psi .
	\end{gather}
	Clearly the singularity of the latter equation is apparent if and only if the singularity of equation~\eqref{eq:hIK} is apparent. Let us now consider the existence of a dominant solution, without logarithmic terms, of~\eqref{eq:hIKres},
	\begin{gather*}
	\psi_-(\la)=\la^{\frac{-n+1}{2}}\sum_{k=0}^\infty \gamma_k \la^k,\qquad \gamma_0=1.
	\end{gather*}
	Following the Frobenius method, the coef\/f\/icients $\gamma_k$ satisfy the recursion
	\begin{gather}\label{eq:frobenius}
	k(n-k)\gamma_k= - \tilde{\beta} \gamma_{k-1}-\gamma_{k-2}+c_1 E^{-1} \gamma_{k-3}+ c_2 E^{-2} \gamma_{k-4} ,
	\end{gather}
	where $c_1=2 (1-\alpha^2)^{-3/2} \al$ and $c_2=(1-\alpha^2)^{-2}$.

	The recursion \eqref{eq:frobenius} is overdetermined at $k=n$. It has a~solution, and thus inf\/initely many, if and only if
	\begin{gather*}
	-\tilde{\beta} \gamma_{n-1}-\gamma_{n-2}+c_1 E^{-1} \gamma_{n-3}+ c_2 E^{-2} \gamma_{n-4}=0 .
	\end{gather*}
	Solving equation \eqref{eq:frobenius} recursively for all
	$\gamma_k$ with $k \leq n-1$, the no-logarithm condition becomes a single polynomial constraint of order $n$ in $\tilde{\beta}$, of the form
	\begin{gather*}
	Q_n(\tilde{\beta})+ \sum_{j=1}^{\lfloor \frac n2 \rfloor}E^{-j}Q_{n,j}(\tilde{\beta},\alpha)=0,
	\end{gather*}
	where $Q_n(\tilde{\beta})$ is a polynomial of order $n$, and the $Q_{n,j}$ are certain polynomial expressions in $\tilde{\beta}$ and
	$(1-\al^2)^{-\frac12}$. By def\/inition, the equation $Q_n(\tilde{\beta})=0$ is the no-logarithm condition for the
	dif\/ferential equation \eqref{eq:hIKres} with $E$ set equal to $\infty$, namely
	\begin{gather*}
	\psi''=\left(- 1 - \frac{\tilde{\beta} }{\nu}+\frac{n^2-1}{4 \nu^2}\right)\psi.
	\end{gather*}
	The latter equation is the Whittaker equation in a non-standard normal form, see \cite[Section~13.14]{nist}. 	In fact, the general solution of this ODE is a linear combination of the Whittaker functions $M_{\kappa,\mu}(z)$ and $W_{\kappa,\mu}(z)$ with $\kappa=-\tfrac{1}{2} \beta i$, $\mu=\tfrac{1}{2}n$ and $z= 2 i\nu$. From the formula \cite[equation~(13.14.18)]{nist}, it follows that the above ODE has an apparent singularity if and only if $\tilde{\beta}=j i$, for some $j \in J_n$. The thesis now easily follows.
\end{proof}

As a consequence of the latter proposition, we have solved half of the inverse monodromy problem. Indeed, we can now limit our study to the following inverse problem depending on just one unknown $\al$.
\begin{probinvred*}
	Upon fixing a suitable domain $D$ and $E_0>0$ as in Proposition~{\rm \ref{pro:apparent}}, given $j\in J_n$ and $E\geq E_0$, determine all $\alpha\in D$ such that the anharmonic oscillator
	\begin{gather}\label{eq:h1asym}
	\psi''(\lambda)=V_A\left(\lambda;\alpha,E,n\right)\psi(\lambda), \\
	V_A= \lambda^2+2\al E^{\frac12}\lambda- (1-\al^2)E-\frac{j i (1-\al^2)^{\frac12} E^{\frac12}\big(1+E^{-1} r_j(\al,E))\big)}{\lambda}
	+\frac{n^2-1}{4 \la^2} ,\nonumber
	\end{gather}
	satisfies the single constraint
	\begin{itemize}\itemsep=0pt
\item\textbf{Quantisation condition.} The solution subdominant at $\la=+\infty$, is also subdominant at $\la=0$, namely $\psi_0(\lambda) \equiv \psi_+(\lambda) \sim \la^{\frac{n+1}{2}}$ as $\la \to 0$.
	\end{itemize}
	Here the function $r_j(\al,E)$ is an asymptotically irrelevant contribution.
\end{probinvred*}

\subsection{Whittaker asymptotics near the origin}\label{sub:whittaker}
In this subsection we f\/ind an approximation for the solution $\psi_+(\la)$ of equation (\ref{eq:h1asym}) that is valid uniformly in $E$, in some $E$-dependent neighbourhood of the origin, slowly shrinking as $E \to +\infty$. According to our result, the solution $\psi_+$ takes the asymptotic form
\begin{subequations}\label{eq:trig}
	\begin{gather}
	 \Sigma(\la;j,\phi,\chi):=\sin\big({-} E^{\frac12}\big(1-\al^2\big)^{\frac12}\la + \varphi -\tfrac{1}{2}ji \log \la +i \chi \big) ,\\
	 \varphi= \frac{(n+3)\pi}{4} ,\qquad \chi=\tfrac12 \big[\log F_{n,j} -j \log\big(2 E^{\frac12}\big(1-\al^2\big)^\frac12\big)\big].
	\end{gather}
\end{subequations}
Here $F_{n,j}$ is def\/ined in \eqref{eq:Fnj}. This approximation is valid for~$\la$ belonging to a domain which scales as $E^{\gamma-\frac12}$, for some $\gamma<\frac12$, on which the modulus of the function $\Sigma$ is uniformly bounded. We therefore consider subsets of the $\la$-plane, which take the form
\begin{gather}\label{eq:Lambdag}
\Lambda_{+}=\big\lbrace \Re \la>0 , \, |\la|= E^{\gamma-\frac12} s \,\mbox{and}\, \big|\Im\big[E^{\frac12}\big(1-\al^2\big)^{\frac12} \la +
\tfrac{1}{2}ji \log{\big( E^{\frac12}\big(1-\al^2\big)^{\frac12}\la\big)} \big]\big|\leq c \big\rbrace .\!\!\!
\end{gather}
for some f\/ixed positive real numbers $s$, $c$, $\gamma$ with $\gamma<\frac12$. On such domains we have the following estimate for the solution subdominant at zero of equation \eqref{eq:h1asym}.
\begin{pro}\label{pro:estimateat0}
	Upon fixing a suitable domain $D$ and $E_0>0$ as in Proposition~{\rm \ref{pro:apparent}} and $j\in J_n$, positive real numbers $s$, $c$, $\gamma$ with $\gamma<\frac12$ and defining $\Lambda_+$ as in \eqref{eq:Lambdag}, let $\psi_+(\la)$ be the subdominant solution at zero of \eqref{eq:h1asym} normalised as follows
	\begin{gather*}
	\psi_+(\la)=\big(E^{\frac12}\big(1-\al^2\big)^{\frac12} \la\big)^{\frac{n+1}{2}} \big( 1+ \mathcal{O}(\la) \big).
	\end{gather*}
	Then there exists an $E_1\geq E_0$ such that for all $E\geq E_1$, the solution $\psi_+(\la)$ has the following properties:
	\begin{enumerate}[label={\rm (\roman*)}] \itemsep=0pt
		\item $\psi_+(\la)$ is a holomorphic function of $\al$ in $D$;
		\item There exists a constant $C$ -- independent of $\al$ and $E$ -- such that for all $\la \in \Lambda_{+}$,
		\begin{gather}\nonumber
		 |\psi_+(\la)-\Sigma(\la;j,\varphi,\chi \big)| \leq C E^{-\gamma'} , \\ \label{eq:estimate0}
		\big| E^{-\frac12} \psi_+^{\prime}(\la)+ \Sigma\big(\la;j,\varphi+\tfrac{1}{2}\pi,\chi\big)\big| \leq C E^{-\gamma'} ,
		\end{gather}
		where $\Sigma$, $\varphi$ and $\chi$ are as defined in equations \eqref{eq:trig} with $\gamma'=\min\lbrace1-2\gamma,\gamma \rbrace$.
	\end{enumerate}
\end{pro}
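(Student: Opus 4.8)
The plan is to change variables so that \eqref{eq:h1asym} becomes a small perturbation of a Whittaker equation, to solve the unperturbed problem exactly in terms of Whittaker functions (this is the source of $\Sigma$, $\varphi$, $\chi$ and $F_{n,j}$), and then to control the perturbation on the strip $\Lambda_+$ by a Volterra integral equation. First I would set $\nu=E^{\frac12}(1-\al^2)^{\frac12}\la$, exactly as in the proof of Proposition~\ref{pro:apparent}, which turns \eqref{eq:h1asym} into
\begin{gather*}
\psi_{\nu\nu}=\big(V_0(\nu)+V_1(\nu)\big)\psi,\qquad V_0(\nu)=-1-\frac{ji}{\nu}+\frac{n^2-1}{4\nu^2},\\
V_1(\nu)=\frac{\nu^2}{E^2(1-\al^2)^2}+\frac{2\al\,\nu}{E(1-\al^2)^{3/2}}-\frac{ji\,E^{-1}r_j(\al,E)}{\nu}.
\end{gather*}
Here $V_0$ is the Whittaker potential corresponding to the apparent value $\tilde\beta=ji$ isolated in Proposition~\ref{pro:apparent}, and on $\Lambda_+$, where $|\nu|\asymp E^{\gamma}$ with $\gamma<\tfrac12$, every term of $V_1$ is small.

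Next I would treat the unperturbed equation $u''=V_0u$ as Whittaker's equation with $\ka=\tfrac12 j$, $\mu=\tfrac12 n$ and $z=2i\nu$. Its solution subdominant at the origin, normalised so that $u_0\sim\nu^{\frac{n+1}{2}}$, is a fixed multiple of $M_{j/2,n/2}(2i\nu)$. The standard connection formula expressing $M_{\ka,\mu}$ through $W_{\pm\ka,\mu}$ yields, as $\nu\to\infty$, a balanced combination of $e^{\pm i\nu}(2\nu)^{\mp j/2}$ whose two coefficients have ratio $\Gamma[\tfrac12(1+n+j)]/\Gamma[\tfrac12(1+n-j)]=F_{n,j}$, see \eqref{eq:Fnj}. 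A direct computation then shows that this leading two-term asymptotic is precisely $\Sigma(\la;j,\varphi,\chi)$ of \eqref{eq:trig}; this is exactly how the constants $\varphi=\tfrac{(n+3)\pi}{4}$ and $\chi$ have been calibrated. The error incurred in replacing $M_{j/2,n/2}$ by its two-term asymptotic is $\mathcal{O}(\nu^{-1})=\mathcal{O}(E^{-\gamma})$ on $\Lambda_+$.

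To pass from $V_0$ to $V_0+V_1$ I would represent $\psi_+$ as the solution of the Volterra equation
\begin{gather*}
\psi_+(\nu)=u_0(\nu)+\int_0^{\nu}G(\nu,\nu')\,V_1(\nu')\,\psi_+(\nu')\,{\rm d}\nu',
\end{gather*}
where $G$ is the Green's function built from the two Whittaker solutions and the integration runs along a path inside $\Lambda_+$. The decisive geometric point is that $\Lambda_+$, defined in \eqref{eq:Lambdag}, is exactly the anti-Stokes strip $|\Im(\nu+\tfrac12 ji\log\nu)|\leq c$ on which the competing solutions $e^{\pm i\nu}(2\nu)^{\mp j/2}$ have comparable modulus; hence $G$ is uniformly bounded, both $u_0$ and $\psi_+$ stay $\mathcal{O}(1)$, and a Gronwall/Neumann-series estimate bounds the correction by a constant times $\int_{\Lambda_+}|V_1|\,{\rm d}\nu$. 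Since the path has length $\asymp E^{\gamma}$, the linear term contributes $\mathcal{O}(E^{-1}E^{2\gamma})=\mathcal{O}(E^{-(1-2\gamma)})$, whereas the quadratic and $\nu^{-1}$ terms contribute $\mathcal{O}(E^{3\gamma-2})$ and $\mathcal{O}(E^{-1}\log E)$, both of strictly lower order. This dominant linear contribution is nothing but the subleading WKB phase correction generated by the term $2\al E^{\frac12}\la$ in $V_A$, which $\Sigma$ omits, so it fixes the genuine size of the perturbation error.

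Combining the two independent error sources — the $\mathcal{O}(E^{-\gamma})$ error of the Whittaker asymptotic and the $\mathcal{O}(E^{-(1-2\gamma)})$ perturbation error — gives $|\psi_+-\Sigma|\leq C E^{-\gamma'}$ with $\gamma'=\min\{1-2\gamma,\gamma\}$, which is the first inequality in \eqref{eq:estimate0}; differentiating the integral equation (equivalently using the WKB relation for $\psi_+'/\psi_+$) produces the companion bound for $E^{-\frac12}\psi_+'$, the factor $E^{\frac12}$ being absorbed by the rescaling $\tfrac{\rm d}{{\rm d}\la}=E^{\frac12}(1-\al^2)^{\frac12}\tfrac{\rm d}{{\rm d}\nu}$ and the shift $\varphi\mapsto\varphi+\tfrac{\pi}{2}$ recording the derivative of the sine. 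Property~(i), holomorphy in $\al$, follows since all ingredients — the coefficients of \eqref{eq:h1asym}, the functions $\beta_j$ of Proposition~\ref{pro:apparent}, the Whittaker reference solution, and the Volterra solution — depend analytically on $\al$ on $D$ (where $\al\neq\pm1$), uniformly for $E\geq E_1$. The main obstacle will be the uniform control of $G$ and of the accumulated perturbation over a path of length $\asymp E^{\gamma}$ in the oscillatory regime: one must exploit the precise shape of the anti-Stokes strip $\Lambda_+$ to keep the two Whittaker solutions balanced and rule out any exponential amplification, and then verify that the perturbation enters at exactly order $E^{-(1-2\gamma)}$ and not worse.
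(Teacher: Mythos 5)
Your route is the same as the paper's: rescale by $\nu=E^{\frac12}\big(1-\al^2\big)^{\frac12}\la$, recognise the $E=\infty$ limit as a Whittaker equation (your $\ka=\tfrac12 j$, $\mu=\tfrac12 n$, $z=2i\nu$ is the correct reading), extract the sine form $\Sigma$ of \eqref{eq:trig} and the constant $F_{n,j}$ of \eqref{eq:Fnj} from the Whittaker large-$\nu$ asymptotics with error $\mathcal{O}(\nu^{-1})=\mathcal{O}(E^{-\gamma})$, and control the difference between the true solution and the Whittaker solution by a Volterra integral equation whose dominant contribution is $\mathcal{O}\big(E^{-1}|\nu|^2\big)=\mathcal{O}\big(E^{2\gamma-1}\big)$; combining the two error sources gives $\gamma'=\min\{\gamma,1-2\gamma\}$, exactly as the paper does.

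There is, however, one step that fails as written: the claim that the Green's function $G$ built from the two Whittaker solutions is uniformly bounded along an integration path starting at $\nu=0$, together with the bound of the correction by a constant times $\int_{\Lambda_+}|V_1|\,{\rm d}\nu$. The recessive solution behaves like $\nu^{\frac{n+1}{2}}$ at the origin, but its companion behaves like $\nu^{\frac{1-n}{2}}$, so for $n\geq2$ the kernel blows up as the integration variable tends to $0$; moreover $V_1$ contains the term $-ji\,E^{-1}r_j(\al,E)/\nu$, whose absolute integral from $0$ diverges logarithmically, so the majorant $\int|V_1|$ for your Neumann series is actually infinite (your $\mathcal{O}(E^{-1}\log E)$ tacitly cuts the integral off at $|\nu|\asymp1$). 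This is precisely why the paper splits the analysis at $|\nu|=1$: on $|\nu|\leq1$ it runs the Volterra equation in the weighted Banach space with norm $\sup\big|\nu^{-\frac{n+1}{2}}f(\nu)\big|$, in which the kernel and the weight conspire to give an operator norm of order $\int_0^1|t\,R(t)|\,{\rm d}t=\mathcal{O}\big(E^{-1}\big)$ (the extra factor of $t$ kills the $1/t$ singularity of the perturbation); it then re-matches Cauchy data at $\nu=1$ to an exact Whittaker solution $\widehat u$ and restarts a second Volterra equation outward, and only there uses the sup-norm boundedness argument you describe. Note also that you locate the ``main obstacle'' in the oscillatory large-$\nu$ regime, where in fact your argument is sound (on the anti-Stokes strip all Whittaker solutions, hence the kernel, are uniformly bounded, as the paper checks by a WKB estimate); the genuine difficulty sits at the origin. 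With this weighted-space splitting inserted, your proof coincides with the paper's.
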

\begin{proof}	Part (i) of the proposition is a standard application of the regular perturbation theory for linear ODEs.	We divide the proof of part (ii) into several steps.	Firstly, we apply the change of variables $u_E(\nu)=\psi_+\big(E^{-\frac12}(1-\al^2)^{-\frac12} \nu\big)$ to equation~\eqref{eq:h1asym} to obtain
	\begin{gather*}
	u''(\nu) =W_E(\nu) u(\nu) ,\\
	W_E(\nu) =E^{-2}\big(1-\al^2\big)^{-2}\nu^2+ 2 \al E^{-1}\big(1-\al^2\big)^{-\frac32}\nu- 1-\frac{j i \big(1+E^{-1} r_j(\al,E)\big)}{\nu}+\frac{n^2-1}{4 \nu^2}.\nonumber
	\end{gather*} Next we compare the solution $u_E(\nu)=\nu^{\frac{n+1}{2}}( 1+ \mathcal{O}(\nu) )$ with $u_{\infty}(\nu)$, the subdominant solution of the
	same equation after $E$ has been set equal to $+\infty$,
	\begin{gather}\label{eq:h1at0W}
	u''(\nu)= \left(- 1-\frac{j i }{\nu}+\frac{n^2-1}{4 \nu^2}\right) u(\nu) .
	\end{gather}
As was already mentioned in the proof of Proposition (\ref{pro:apparent}), the latter equation is the Whittaker equation in disguise; the general solution of this ODE is a linear combination of the Whittaker functions $M_{\kappa,\mu}(z),W_{\kappa,\mu}(z)$	with $\kappa=-\tfrac{1}{2} j$, $\mu=\tfrac{1}{2}n$ and $z= 2 i\nu$. In particular
	\begin{gather}\label{eq:uWhitaker}
	u_{\infty}(\nu)=(2i)^{-\frac{n+1}{2}} M_{\kappa,\mu}(2 i \nu) , \qquad \kappa=-\tfrac{1}{2}ji , \qquad \mu=\tfrac{1}{2}n .
	\end{gather}
	We now compare the solutions $u_E(\nu)$ and $u_{\infty}(\nu)$.
	\begin{claim}Let $V=\lbrace \Re \nu >0 , \, |\nu|\geq 1 \, \mbox{and} \, \Im\big[ \nu-\tfrac{1}{2}ji \log \nu \big]\leq c \rbrace$, then there exist constants $C_1$, $C_2$, $C_3$~--- independent of~$\al$ and~$E$~-- such that for all $\nu \in V$, the following estimates hold
		\begin{gather*}
		| u_{E}(\nu)-u_{\infty}(\nu) | \leq C_1 E^{-1} |\nu|^2 + C_2 E^{-2} |\nu|^3 + C_3 E^{-1} \log{|\nu|}, \\
		| u_{E}^{\prime}(\nu)-u_{\infty}^{\prime}(\nu)| \leq C_1 E^{-1} |\nu|^2 + C_2 E^{-2} |\nu|^3 + C_3 E^{-1} \log{|\nu|}.
		\end{gather*}
	\end{claim}
	
	To prove the claim, let us f\/irst def\/ine
	\begin{gather*}
	R(\nu)=E^{-2}\big(1-\al^2\big)^{-2}\nu^2 + 2 \al E^{-1}\big(1-\al^2\big)^{-\frac32} \nu+E^{-1}\frac{r_j(\al,E)}{\nu}.
	\end{gather*}
	It is a standard result (see, e.g., \cite[Section~4]{erdelyi10}) that $u_E(\nu)$ satisf\/ies the following integral equation
	\begin{gather*}
	u_E(\nu)=u_{\infty}(\nu)-\int_0^{\nu}K(\nu,t)R(t)u_E(t) {\rm d}t ,
	\end{gather*}
provided the integral equation has a solution. Here $K(\nu,t)=u_{\infty}(\nu)\tilde{u}(t)-u_{\infty}(t)\tilde{u}(\nu)$, where~$\tilde{u}(\nu)$ is any solution of Whittaker equation \eqref{eq:h1at0W} such that the Wronksian $W[u_{\infty},\tilde{u}]=1$; for example $\tilde{u}(\nu)=\frac{1}{n}M_{\kappa,-\mu}(2 i \nu)$, where $\kappa$, $\mu$ are def\/ined in equation~\eqref{eq:uWhitaker} above.
	
In order to prove the claim we need to study the integral operator
	\begin{gather*}
	\mathcal{K}[f](\nu)=-\int_{0}^\nu K(\nu,t) R(t) f(t) {\rm d}t.
	\end{gather*}
To this end, note that
	\begin{gather}\label{eq:firstKestimate}
	K(\nu,t)=\frac{1}{n}(\nu t)^{-\frac{n-1}2}\big(\nu^n-t^n\big)\big(1+ \mathcal{O}(\nu)+\mathcal{O}(t) \big)\nu, \qquad \mbox{as} \quad t \to 0 ,
	\end{gather}
	since $\tilde{u}(\nu)=\frac{\nu^{\frac{1-n}{2}}}{n} ( 1+ \mathcal{O}(\nu) )$.
	
	Let us def\/ine $B=\{|\nu|\leq 1\colon |\arg{\nu}|\leq \frac{\pi}{4}\}$ and $H_n$ the Banach space of continuous functions in~$B$, analytic in its interior and with f\/inite norm $||f||_n=\sup\limits_{\nu \in B} |\nu^{-\frac{n+1}{2}}f(z)|$. It is easy to see that~$\mathcal{K} $ is a bounded operator on~$H_n$ with operator norm
	\begin{gather*}
	||\mathcal{K}||_n \leq C_0 \int_0^1 |t R(t)| {\rm d}t,
	\end{gather*}
	where $C_0=2 \sup\limits_{|t|\leq |\nu|}\big|K(\nu,t)t^{\frac{n-1}{2}}\nu^{-\frac{n+1}{2}}\big| <\infty$, because of~\eqref{eq:firstKestimate}.

Since $|\int_0^1 |t R(t)|{\rm d}t$ is of order $E^{-1}$, we have, for $E$ big enough, that $||\mathcal{K}||_n$ is smaller than one, and hence the resolvent series converges. Therefore, for $E$ big enough, $ u_E(1)=u_{\infty}(1)+c_1(\al,E) E^{-1} $ and $ u_E'(1)=u_{\infty}'(1)+c_2(\al,E) E^{-1}$, for some bounded functions $c_1$, $c_2$.
	
Let us call $\widehat{u}(\nu)$ the solution of \eqref{eq:h1at0W} which solves the Cauchy problem $u(1)=u_E(1)$, $u'(1)=u_E'(1)$. Because of our discussion so far,
	\begin{gather}\label{eq:widehatu}
	\widehat{u}(\nu)=\big(1+E^{-1}\tilde{c_1}(\al,E) \big)u_{\infty}(\nu)+E^{-1}\tilde{c_2}(\al,E) \tilde{u}(\nu),
	\end{gather}
	for some bounded functions $\tilde{c_1}$, $\tilde{c_2}$.
	
	Note that $u_E$ also solves the integral equation
	\begin{gather*}
	u_E(\nu)=\widehat{u}(\nu)-\int_1^{\nu}\widehat{K}(\nu,t)R(t)u_E(t) {\rm d}t ,
	\end{gather*}
where $\widehat{K}(\nu,t)=\widehat{u}(\nu) u(t)-\widehat{u}(t)u(\nu)$, with $u(\nu)$ is any solution of Whittaker equation~\eqref{eq:h1at0W} such that the Wronksian $W[\widehat{u},u]=1$.
	
	To f\/inish the proof of the claim we study the new integral operator
	\begin{gather*}
	\widehat{{\mathcal{K}}}[f](\nu)=-\int_{0}^\nu \widehat{ K}(\nu,t) R(t) f(t){\rm d}t.
	\end{gather*}
on the Banach space $H_{\rho}$ of functions continuous on $V_\rho=V \cap \lbrace |\nu| \leq \rho \rbrace$, analytic in its interior, equipped with the standard supremum norm, which we denote by $||\cdot||_{\infty,\rho}$.
	
From a standard WKB estimate, it follows that the general solution of the Whittaker equation (and its f\/irst derivatives) is, for large~$\nu$, asymptotic to a~linear combination of the functions $\exp(\pm i \nu \mp \frac{j}{2} \log \nu)$. Therefore $\widehat{u}(\nu)$, $ u(t)$, $\widehat{K}(\nu,t)$ and its f\/irst derivatives are all uniformly bounded on $V$. The following estimate on the operator norm then easily follows,
	\begin{gather*}
	\big\|\widehat{\mathcal{K}}\big\|_{\infty,\rho}\leq\big(d_1 E^{-1} \rho^2 + d_2 E^{-2} \rho^3 + d_3 E^{-1} \log{\rho}\big),
	\end{gather*}
	for some $ d_1>0$, $d_2>0$, $d_3>0$ independent of~$\al$. The latter estimate together with the standard Volterra property
	\begin{gather*}
	\big\|\widehat{\mathcal{K}}^N\big\|_{\infty,\rho}\leq \frac{||\widehat{\mathcal{K}}||^N_{\infty,\rho}}{N!} , \qquad N \in \bb{N}
	\end{gather*}
	leads to the following asymptotic estimate, for $E$ big enough
	\begin{gather*}
	||u_E-\widehat{u}||_{\infty,\rho}\leq 2 \big(d_1 E^{-1} \rho^2 + d_2 E^{-2} \rho^3 + d_3 E^{-1} \log{\rho}\big).
	\end{gather*}
	The same kind of estimate also holds for the dif\/ference of the derivatives because the kernel $\widehat{K}$ has bounded derivatives. The claim now follows from these estimates and equation~\eqref{eq:widehatu}.
	
We have come to the third and last step of the proof, which is a straightforward application of the known asymptotic results of the Whittaker functions. From \cite[equation~(13.14.21)]{nist}, we know there exists a~$C_4$ such that in the domain~$V$ of the claim, the following estimates hold
	\begin{gather*}
	 \left|u_{\infty}(\nu)-\sin\left(-\nu+\frac{3+n}{4}\pi-\tfrac{1}{2}j i\log \nu + \tfrac12 i\log F_{n,j} \right)\right| \leq C_4 \nu^{-1}, \\
	 \left|u_{\infty}^{\prime}(\nu)+\cos\left(-\nu+\frac{3+n}{4}\pi-\tfrac{1}{2}j i\log \nu + \tfrac12 i\log F_{n,j} \right)\right| \leq C_4 \nu^{-1} .
	\end{gather*}
	At the same time, if we let $|\nu|=E^{\gamma}s$, then, from the claim, we obtain that there exists a~cons\-tant~$C_5$ depending only on~$s$ such that the following estimates hold
	\begin{gather*}
	|u_{E}(\nu)-u_{\infty}(\nu)| \leq C_5 E^{-1+2 \gamma}, \\
	|u_{E}^{\prime}(\nu)-u_{\infty}^{\prime}(\nu)| \leq C_5 E^{-1+2\gamma} .
	\end{gather*}
	Combining the two sets of estimates, noticing that $V \cap \lbrace |\nu|=E^{\gamma}s \rbrace$ coincides with the set $\Lambda_{+}$,
	after the change of variable $\la=E^{-\frac12}(1-\al^2)^{-\frac12} \nu$, we obtain the thesis.
\end{proof}

Observe that the best approximation is achieved with the choice $\gamma=\frac13$, in which case the error scales with exponent~$-\frac13$.

\subsection{Asymptotic in the transition region}\label{sub:wkb}
In this subsection we study the asymptotic behaviour, as $E \to \infty$, of the solution $\psi_0$ of equation~(\ref{eq:h1asym}) subdominant in the Stokes sector~$S_0$, in the right half-plane $\Re \la >0$. To this aim we use the well-known WKB method. Since the literature is abundant, we refer to it
(in particular~\cite{wasow12} and~\cite{piwkb}) for all basic results.

In order to compare the solution $\psi_0$ with the formula we derived for the solution $\psi_+$, we need to compute the asymptotic expansion of $\psi_0$ for points scaling as $E^{\gamma-\frac12}$ with $\gamma<\frac12$ (even though the WKB analysis actually yields uniform estimates in the whole complex plane). Therefore we f\/ix a compact subset $K$ of the strip $ \lbrace {0<\Re \la < 1-\Re \al }\rbrace$ and we def\/ine the set
\begin{gather*}
\Lambda_0= \big\lbrace \la \in \bb{C} \mbox{ s.t. } E^{\frac12-\gamma}\la\in K \big\rbrace.
\end{gather*}
We have the following result.

\begin{pro}\label{pro:wkbasy}
	Fix a compact simply-connected domain $D$ of the $\al$-half-plane $\Re \al < 1$ and a positive number $\gamma<\frac12$.
	Then there exists an $E_0>0$ such that for $E\geq E_0$, upon appropriate normalisation of the solution $\psi_0(\la;\al)$ of
	equation \eqref{eq:h1asym}, subdominant in the Stokes sector $S_0$, the solution $\psi_0(\la;\alpha)$ depends holomorphically on $\al$ in $D$ and there exists a constant $C$, such that for all $\la \in \Lambda_0$ and $\al\in D$,
		\begin{gather}\nonumber
		\big|\psi_0(\la;\al)-\Sigma(\la;j,\varphi',\chi') \big|\leq C E^{-\tilde{\gamma}} \big(|\Sigma(\la;j,\varphi',\chi')|+
		\big|\Sigma\big(\la;j,\varphi'+\tfrac{1}{2}\pi,\chi'\big)\big|\big),\\ \nonumber
		\big|E^{-\frac12} \frac{\partial \psi_0(\la; \al)}{\partial \la}+\Sigma\big(\la;j,\varphi'+\tfrac{1}{2}\pi,\chi'\big)\big|\leq
		C E^{-\tilde{\gamma}} \big(|\Sigma(\la;j,\varphi',\chi')|+\big|\Sigma\big(\la;j,\varphi'+\tfrac{1}{2}\pi,\chi'\big)\big|\big),\\ \label{eq:asymptransition}
		\varphi'=\tfrac{1}{2}E\big[{-}\al\big(1-\al^2\big)^{\frac12}+\arccos(\al)\big]+\tfrac{1}{4}\pi, \qquad \chi'=\tfrac{1}{2}j\log\big(2\big(1-\al^2\big)E^{-\frac12}\big),
		\end{gather}
where $\tilde{\gamma}=\min\lbrace 2 \gamma, 1-2\gamma \rbrace$ and the function $\Sigma(\la;j,\varphi,\chi)$ is defined in equation \eqref{eq:trig}.
\end{pro}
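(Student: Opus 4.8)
The plan is to treat the quantisation side of the Reduced Inverse Monodromy Problem by a Liouville--Green (WKB) analysis of \eqref{eq:h1asym} with large parameter $E^{\frac12}$, following the machinery of \cite{wasow12,piwkb}. Rescaling $\lambda=E^{\frac12}x$ turns the potential into $V_A=E\big[(x+\al)^2-1\big]+\tfrac{ji(1-\al^2)^{\frac12}}{x}-\tfrac{n^2-1}{4Ex^2}+\cdots$, so to leading order $V_A\sim E\big[(x+\al)^2-1\big]$ and on $\Re\lambda>0$ there is a single simple turning point at $\lambda_*=(1-\al)E^{\frac12}\big(1+o(1)\big)$, the image under the rescaling of the boundary $\Re x=1-\Re\al$ of the defining strip. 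For $\Re\lambda>\Re\lambda_*$ one has $V_A>0$ (the classically forbidden region) and $\psi_0$ is the recessive solution there, while $\Lambda_0$, which scales like $E^{\gamma-\frac12}$, lies well to the left of $\lambda_*$ in the oscillatory region. The holomorphic dependence on $\al\in D$ (part of the claim) is immediate from the analytic dependence of solutions of a linear ODE on parameters, together with the analyticity of $V_A$ and of the chosen normalisation.

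First I would set up the Liouville--Green representation away from $\lambda_*$ and the Airy-type comparison across it, so that on the allowed side
\[
\psi_0(\lambda)\ \propto\ (-V_A)^{-\frac14}\sin\Big(\textstyle\int_\lambda^{\lambda_*}\sqrt{-V_A(\mu)}\,d\mu+\tfrac{\pi}{4}\Big),
\]
the $\tfrac{\pi}{4}$ being exactly the turning-point connection phase responsible for the additive $\tfrac14\pi$ in $\varphi'$. Next I would expand the phase integral in the transition region. Substituting $\lambda=E^{\frac12}x$ and expanding the square root gives
\[
\int_\lambda^{\lambda_*}\!\sqrt{-V_A}\,d\mu = E\!\int_{x}^{1-\al}\!\sqrt{1-(y+\al)^2}\,dy+\frac{ji(1-\al^2)^{\frac12}}{2}\!\int_{x}^{1-\al}\!\frac{dy}{y\sqrt{1-(y+\al)^2}}+O\big(E^{-\frac12}\big),
\]
with $x=E^{-\frac12}\lambda$. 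The first integral equals $\tfrac{E}{2}\big[\arccos\al-\al(1-\al^2)^{\frac12}\big]$ at $x=0$, which with the connection phase gives precisely $\varphi'$, and its linear Taylor coefficient at $x=0$ reproduces the term $-E^{\frac12}(1-\al^2)^{\frac12}\lambda$ inside $\Sigma$. The second integral contributes $-\tfrac12 ji\log\lambda$ plus a convergent constant; absorbing the amplitude $(-V_A)^{-\frac14}\sim E^{-\frac14}$ into the normalised $\Sigma$-form (which converts a multiplicative constant into an additive shift of the imaginary phase) supplies the remaining $E$- and $\al$-dependence and yields the stated $\chi'$.

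Finally I would control the errors by Olver-type bounds on the error-control function of the Liouville--Green approximation, integrated along a path from $\Lambda_0$ towards $\lambda_*$. Two sources compete: the degradation of the approximation near the regular singular point $\lambda=0$, where the $\tfrac{n^2-1}{4\lambda^2}$ and $\lambda^{-1}$ terms dominate the variation and contribute an error of size $E^{-2\gamma}$ on $\Lambda_0$; and the higher-order terms dropped from the action together with the turning-point patching, contributing $E^{-(1-2\gamma)}$. Their maximum produces the claimed relative rate $E^{-\tilde{\gamma}}$ with $\tilde{\gamma}=\min\{2\gamma,1-2\gamma\}$, optimised at $\gamma=\tfrac14$, and multiplied by $|\Sigma|+|\Sigma'|$ as in \eqref{eq:asymptransition}. \textbf{The main obstacle} is exactly this uniform error control in a region that simultaneously approaches the singular point $\lambda=0$, where Liouville--Green is least accurate, and must be matched, through the subleading logarithmic and constant terms, to the normalisation fixed near the turning point; tracking these constants, rather than merely the leading phase, is the delicate part and is what makes the estimate precise enough for the matching carried out in Section~\ref{sub:matching}.
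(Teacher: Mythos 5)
Your proposal is correct in substance and strategically parallel to the paper's own proof: both isolate the large turning point ($\la_+\approx E^{\frac12}(1-\al)$, equation \eqref{eq:la+}), derive a sine-type representation of $\psi_0$ in the transition region from the simple-turning-point connection formula (the $+\tfrac{\pi}{4}$ in $\varphi'$ arising exactly as you say), expand the phase integral $\int_\la^{\la_+}\sqrt{-V_A(\mu)}\,{\rm d}\mu$ to extract the leading term of $\varphi'$, the linear term $-E^{\frac12}(1-\al^2)^{\frac12}\la$, and the logarithmic $ji$-term producing $\chi'$ (this is precisely the content of Lemma~\ref{lem:phasefunction}), and budget the error as a near-origin contribution $E^{-2\gamma}$ plus a phase-linearisation contribution $E^{-(1-2\gamma)}$, giving $\tilde\gamma=\min\{2\gamma,1-2\gamma\}$. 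The genuine difference is the implementation of the connection formula. You match across the turning point with an Airy model and Olver-type error bounds; the paper never approximates near $\la_+$ at all: it introduces the lateral subdominant solutions $\widetilde{\psi}_{\pm1}$ in the Stokes sectors $\Sigma_{\pm1}$, observes that $\widetilde{\psi}_0=\widetilde{\psi}_1-\widetilde{\psi}_{-1}$ is subdominant in $\Sigma_0$ (Lemma~\ref{lem:wkbturning}), and controls each lateral solution separately by Fedoryuk-type error functions $\rho_{\pm1}$ integrated along progressive paths (Lemma~\ref{lem:errors}). The sector decomposition buys two things your sketch glosses over: (i)~holomorphy in $\al$, which you call ``immediate'' but which, for a solution normalised by its asymptotics at an irregular singular point, requires an argument --- the paper gets it from Sibuya's theorem applied to $\widetilde{\psi}_{\pm1}$; (ii)~the additive two-exponential form of the bound \eqref{eq:asymptransition}, with independent multiplicative corrections $c_{\pm1}$, $\tilde c_{\pm1}$, which is what keeps the estimate meaningful near the zeros of the sine, where a purely relative error statement for $(-V_A)^{-\frac14}\sin(\cdots)$ would be empty. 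Your Airy route buys uniformity across $\la_+$, which is not actually needed here since $\Lambda_0$ stays at distance $\mathcal{O}(E^{\gamma-\frac12})$ from the origin and $\mathcal{O}(E^{\frac12})$ from the turning point.

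Two small corrections. First, $V_A$ has \emph{two} turning points in the right half-plane, not one: besides $\la_+$ there is $\la_-=\mathcal{O}\big(E^{-\frac12}\big)$ near the origin, invisible to your leading-order rescaling because the $\la^{-1}$ and $\la^{-2}$ terms are not uniformly negligible there. This is harmless for your argument, since $\Lambda_0$ sits between $\la_-$ and $\la_+$ and the influence of $\la_-$ is exactly the near-origin error you budget for (the paper notes that the phase shift due to $\la_-$ is already encoded in the Whittaker asymptotics of Proposition~\ref{pro:estimateat0}), but the claim of a single turning point is false as stated. Second, your parenthetical assertion that a multiplicative constant can be converted into an additive shift of the imaginary phase is incorrect for the full sine (it holds only for a single exponential): the real amplitude constant coming from $(-V_A)^{-\frac14}$ is absorbed by the ``appropriate normalisation'' allowed in the statement, while $\chi'$ is entirely the imaginary additive constant produced by the $ji\log$ term of the phase integral, which you do compute correctly.
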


Notice that no generality is lost in restricting to $\Re \al <1$, as the roots of the generalised Hermite polynomials are symmetric under ref\/lection in the imaginary axis.

In the proof of the above proposition, we assume that the reader has some familiarity with the WKB method. We begin the proof by analysing the turning points, i.e., zeros of the potential, of equation~\eqref{eq:h1asym}. A detailed dominant balance analysis, in the large~$E$ limit of the potential~$V_A$ in equation \eqref{eq:h1asym}, shows that asymptotically there are two turning points in the right half-plane. One of order $E^{\frac12}$ and the other of order $E^{-\frac12}$, which we denote respectively by $\la_+$ and $\la_-$. In fact we have
\begin{gather}\label{eq:la+}
 \la_+=E^{\frac12}(1-\al)+ E^{-\frac{1}{2}}\frac{ji}{2(1-\al)} + \mathcal{O}\big(E^{-\frac{3}{2}}\big),\\ \nonumber
 \la_-=E^{-\frac12}\frac{\sqrt{n^2-j^2-1} -ji}{(1-\al^2)^{\frac12}}+\mathcal{O}\big(E^{-\frac32}\big) .
\end{gather}
In our analysis only $\la_+$ plays a role because the phase-shift originating from the turning point~$\la_-$ is already taken into account in the Whittaker-like asymptotic, see Proposition \ref{pro:estimateat0}.

The WKB theory is based around the analysis of the phase function
\begin{gather*}
S_A(\la)=-i\int_{\la}^{\la_+}\sqrt{-V_A(\mu)}{\rm d}\mu ,
\end{gather*}
where $V_A$ is the potential in equation~\eqref{eq:h1asym}. We compute the asymptotic behaviour of the phase function in the following lemma.
\begin{lem}\label{lem:phasefunction}
	Fix a positive $t$ and denote ${\mathcal{H}}=\lbrace t \leq {\Re \la} \leq E^{\frac12} (1-\Re\al)\rbrace $. Then the phase function~$S_A$ admits the following asymptotic expansion in~$E$, on~${\mathcal{H}}$,
	\begin{gather}\label{eq:phaseA}
	S_A(\la)=\tfrac{1}{4}E\left(-2 \big(\al+E^{-\frac12}\la\big) \sqrt{1-\big(\al+E^{-\frac12}\la\big)^2}
	-2\sin^{-1}\big(\al+E^{-\frac12}\la\big)+\pi \right)\\ \nonumber
	{}+\tfrac{1}{2}ji\log\left(
	\Big[\sqrt{(1-\al^2) \big(1-\big(\al+E^{-\frac12}\la\big)^2\big)}-\al \big(\al+E^{-\frac12}\la\big)+1 \Big]/\big(E^{-\frac12}\la\big) \right)+\mathcal{O}\big(E^{-\frac{1}{2}+\e}\big),
	\end{gather}
	for any $\e>0$.

	Moreover, f\/ix positive constants $s$ and $\gamma<\frac12$ and let $\la=E^{-\tfrac{1}{2}+\gamma} s$, then $S_A(\la)$ admits the expansion
	\begin{gather*}
	S_A(\la)=\tfrac{1}{2}E\big[-\al\big(1-\al^2\big)^{\frac12}+\cos^{-1}{\al}\big]-E^{\frac12}\big(1-\al^2\big)^{\frac12}\la +\tfrac{1}{2}ji \log{\frac{2(1-\al^2)}{E^{\frac12}\la}} +\mathcal{O}\big(E^{-\tilde{\gamma}}\big),
\end{gather*}
	where $\gamma'=\min\lbrace \gamma, 1-2 \gamma \rbrace$.

The above errors $\mathcal{O}(E^{-\frac{1}{2}+\e})$ and $\mathcal{O}(E^{-\tilde{\gamma}})$ are uniform w.r.t.\ $\al \in D$, where~$D$ is the domain defined in Proposition~{\rm \ref{pro:wkbasy}}.
\end{lem}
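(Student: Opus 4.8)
The plan is to evaluate the defining integral $S_A(\la)=-i\int_{\la}^{\la_+}\sqrt{-V_A(\mu)}\,{\rm d}\mu$ asymptotically in $E$ by isolating its dominant balance. First I would fix the branch of $\sqrt{-V_A}$ compatible with the WKB normalisation used earlier (so that $\sqrt{-V_A}$ equals $i$ times a positive envelope throughout the oscillatory region to the right of $\la_-$); with this choice the prefactor $-i$ renders the leading contribution real, as the stated formula requires. Writing $w=w(\mu)=\al+E^{-\frac12}\mu$, the crucial observation is that the three leading monomials of $-V_A$ combine, since $\la^2+2\al E^{\frac12}\la-(1-\al^2)E=E\big((\al+E^{-\frac12}\la)^2-1\big)$, into $-E(1-w^2)$. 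Hence I factor
\begin{gather*}
\sqrt{-V_A(\mu)}=iE^{\frac12}\big(1-w^2\big)^{\frac12}\Big(1+\tfrac{ji(1-\al^2)^{\frac12}}{E^{\frac12}\mu(1-w^2)}-\tfrac{n^2-1}{4E\mu^2(1-w^2)}+\cdots\Big)^{\frac12},
\end{gather*}
and expand the square root binomially. On $\mathcal{H}$ the bound $\Re\la\geq t>0$ keeps every correction term uniformly $O\big(E^{-\frac12}\big)$, while the domain $D$ bounded away from $\al=\pm1$ keeps $1-\al^2$ bounded below, which is exactly what makes all the estimates uniform in $\al$.

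The two dominant terms integrate in closed form. Changing variable to $w$ (so ${\rm d}\mu=E^{\frac12}{\rm d}w$) and using $\int(1-w^2)^{\frac12}{\rm d}w=\tfrac12\big(w(1-w^2)^{\frac12}+\arcsin w\big)$, the leading piece $-i\int iE^{\frac12}(1-w^2)^{\frac12}{\rm d}\mu$ produces the real block $\tfrac14 E\big(-2w(1-w^2)^{\frac12}-2\arcsin w+\pi\big)$, the $\pi$ arising from evaluation at the turning point $w=1$. The first correction reduces to the elementary integral $\int\frac{{\rm d}w}{(w-\al)(1-w^2)^{\frac12}}$, whose antiderivative is logarithmic; evaluating between the endpoints, with the contribution at $w=1$ cancelling against $E^{-\frac12}\la_+\approx 1-\al$, yields precisely $\tfrac12 ji\log\big([\sqrt{(1-\al^2)(1-w^2)}-\al w+1]/(E^{-\frac12}\la)\big)$. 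All remaining terms — the quadratic part of the binomial expansion, the centrifugal term, and the $r_j$ contribution — are shown to integrate to $O\big(E^{-\frac12+\e}\big)$, the logarithmic loss $\e$ absorbing the mild divergence at the lower end of the $\mu$-integration. This establishes \eqref{eq:phaseA}.

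For the second expansion I would push the analysis into the boundary layer $\la=E^{-\frac12+\gamma}s$, where $E^{-\frac12}\la=E^{\gamma-1}s\to0$ yet $\la$ still lies well to the right of $\la_-$. Substituting $w=\al+E^{\gamma-1}s$ and Taylor-expanding the closed forms about $w=\al$: the identity $\tfrac{\rm d}{{\rm d}w}\big(-2w(1-w^2)^{\frac12}-2\arcsin w\big)=-4(1-w^2)^{\frac12}$ turns the linear term into $-E^{\frac12}(1-\al^2)^{\frac12}\la$, the constant term into $\tfrac12 E\big[-\al(1-\al^2)^{\frac12}+\cos^{-1}\al\big]$ via $\arcsin=\tfrac\pi2-\arccos$, and the logarithm collapses using the endpoint value $2(1-\al^2)$ of its numerator; the quadratic remainder is $O\big(E^{2\gamma-1}\big)$, while the corrections integrated down from $\la\sim E^{-\frac12+\gamma}$ supply the competing power, so that the total error is of the stated order $O\big(E^{-\tilde\gamma}\big)$. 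The hard part will be precisely this uniform error control in the boundary layer: once $\la\to0$ the $\tfrac{n^2-1}{4\mu^2}$ and $\beta/\mu$ terms are no longer uniformly small near the lower endpoint, so the naive binomial expansion must be supplemented by a careful estimate of the tail of the $\mu$-integral near $\mu=\la$, balancing the gain from $(1-w^2)^{\frac12}$ being bounded below against the loss from the inverse powers of $\mu$, and verifying throughout that the bounds remain uniform for $\al\in D$.
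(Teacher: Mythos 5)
Your overall strategy is the same as the paper's (pass to the variable $w=\al+E^{-\frac12}\mu$, expand the integrand in inverse powers of $E$, integrate the two leading terms in closed form, then Taylor-expand in the boundary layer $\la=E^{-\frac12+\gamma}s$ for the second formula), but there is a genuine gap in your error control at the \emph{upper} endpoint. The integral defining $S_A$ always terminates at the turning point $\la_+$, i.e.\ at $w=1+\mathcal{O}(E^{-1})$, and your binomial expansion of $\sqrt{-V_A}$ is not uniform there: the expansion parameter behaves like $E^{-1}(1-w^2)^{-1}$, so the series only converges for $|1-w|\gtrsim E^{-1}$, and the term-by-term integrals of the order-$k$ corrections, $k\geq 2$, involve $\int^{1}(1-w^2)^{\frac12-k}\,{\rm d}w$, which \emph{diverge}. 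Hence your claim that ``the quadratic part of the binomial expansion, the centrifugal term, and the $r_j$ contribution are shown to integrate to $\mathcal{O}(E^{-\frac12+\e})$'' fails as stated. The missing idea — which is exactly what the paper's proof supplies — is to regularise the upper endpoint \emph{before} expanding: shift the upper limit from $E^{-\frac12}\la_++\al$ to $1+iE^{-1+\e}$, estimate the discarded arc directly (its contribution is $\mathcal{O}\big(E^{-\frac{1-3\e}{2}}\big)$, and this, not the lower endpoint, is the true origin of the $\e$-loss in \eqref{eq:phaseA}), and only then expand the integrand, controlling the $k\geq2$ tail by splitting the contour at $\nu=\frac{1+\al}{2}$ and estimating each piece separately.

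A secondary point: for the boundary-layer formula you correctly identify the lower endpoint as the delicate region, but you only announce the needed estimate rather than perform it. The paper carries it out by the rescaling $\rho=E^{1-\gamma}(\nu-\al)$, which converts the correction series into one with radius of convergence of order $E^{\gamma}$, yielding the contribution $\mathcal{O}(E^{-\gamma})$ that, combined with the $\mathcal{O}(E^{-1+2\gamma})$ Taylor-expansion errors of the closed-form terms, produces the stated exponent $\min\{\gamma,1-2\gamma\}$. Without this (or an equivalent quantitative argument), the matching of the two competing error powers — which is what the lemma actually asserts — is not established.
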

\begin{proof}
	The proof is in Appendix \ref{appendix:wkb}.
\end{proof}

In order to understand the WKB computation it is useful to draw the Stokes graph of the potential,
that is the union of the Stokes lines $\Re\int_{\la^*}^\la \sqrt{V_A} {\rm d}\mu=0$ where $\la^*$ is any
turning point of $V_A(\la)$; we refer to \cite{piwkb} for more details.

We are interested only in the Stokes lines emanating from $\la_+$, which can be read of\/f for\-mu\-la~\eqref{eq:phaseA}, see Fig.~\ref{fig:stokescomplex}.
\begin{figure}[htpb]		\centering
	{\setlength{\fboxsep}{0pt}%
		\setlength{\fboxrule}{0.4pt}%
		\fbox{\includegraphics[width=8cm]{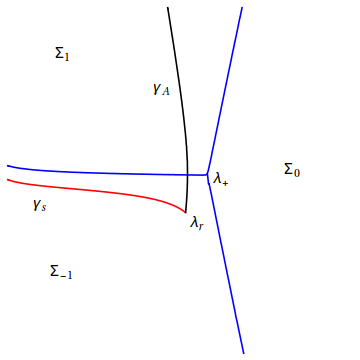}}}
	\caption{The right half-plane, with the turning point $\la_+$ and the Stokes lines emanating from it, the Stokes sectors~$\Sigma_{-1,0,1}$.
		The further point $\la_r$ and the lines $\gamma_A$, $\gamma_S$ are used in the proof of Lemma~\ref{lem:errors}.} \label{fig:stokescomplex}
\end{figure}

Of the three Stokes lines emanating from $\la_+$, one is the boundary between the Stokes sector~$\Sigma_0$ and~$\Sigma_{+1}$, another one is the boundary between the Stokes sector~$\Sigma_0$ and~$\Sigma_{-1}$, and the third one is the boundary between~$\Sigma_{+1}$ and~$\Sigma_{-1}$. Notice that the region~$\Lambda_{+}$, where the behaviour of~$\psi_+$ is known from Proposition~\ref{pro:estimateat0}, lies in the proximity of the third Stokes line, along which the solutions are oscillatory in nature\footnote{Notice that the notion of Stokes sectors $\Sigma_{k}$ that we use in WKB approximation does not coincide with the notion of Stokes sectors~$\Omega_k$ used in Theorem \ref{thm:hermiteanharmonic}. However for large $\la$ their boundaries are asymptotic.}.

Therefore in order to compare $\psi_0$ and $\psi_+$ we need to compute the asymptotic behaviour of~$\psi_0$ in the union of the Stokes domains~$\Sigma_1$ and~$\Sigma_{-1}$. To this aim we express~$\psi_0$ as a linear combination of the solutions $\psi_{\pm1}(\la)$ subdominant in the Stokes domains~$\Sigma_{\pm1}$. Before doing so we need to def\/ine the error function $\rho_{\pm1}$ that controls the WKB estimates,
\begin{gather*}
\rho_{\pm1}(\la)=\inf_{\ell}\int_{\ell}\left|\frac{4V''(\mu)V(\mu)-5V'(\mu)^2}{4 V(\mu)^{\frac52}}\right|{\rm d}\mu.
\end{gather*}
Here $\ell\colon [0,1) \to \bb{C}$ is any path such that:
\begin{enumerate}[label=(\roman*)]\itemsep=0pt
	\item $\ell(0)=\la$;
	\item $\pm \Re S_A(\ell(t))$ is monotonically increasing;
	\item $\lim\limits_{t\to 1}\Re S_A(\ell(t))=\pm \infty$;
\end{enumerate}
see \cite{piwkb} for more details. We have the following lemma.
\begin{lem}\label{lem:wkbturning}
	There exist subdominant solutions $\widetilde{\psi}_{\pm1}$ in $\Sigma_{\pm1}$ of equation~\eqref{eq:h1asym}, uniquely cha\-racterised by the asymptotic behaviour
	\begin{gather*}
	\widetilde{\psi}_{\pm1}(\la;\al)=V_A(\la)^{-\frac14}e^{i \pm ( S_A(\la)-\frac{\pi}{4}) } \big (1 + o(1) \big), \qquad \la \in
	\Sigma_{\pm1} \qquad \mbox{and} \qquad |\la|\gg0 .
	\end{gather*}
	Furthermore the following hold true:
	\begin{enumerate}[label={\rm (\roman*)}]\itemsep=0pt
\item	Both these subdominant solutions depend holomorphically on $\alpha$ in $D$.
\item The solution $\widetilde{\psi}_0=\widetilde{\psi}_1-\widetilde{\psi}_{-1}$ is subdominant in~$\Sigma_0$.
\item The solution $\widetilde{\psi}_0(\la)$ has the following asymptotic expansion in the union of the Stokes domains
		$\Sigma_{+1}$ and~$\Sigma_{-1}$,
		\begin{gather}\nonumber
		\widetilde{\psi}_0(\la) =V_A(\la)^{-\frac14}\big[c_1(\la) e^{i ( S_A(\la)-\frac{\pi}{4})}
		-c_{-1}(\la)e^{-i(S_A(\la)-\frac{\pi}{4})}\big],\\ \label{eq:basicwkb}
		\frac{\partial \widetilde{\psi}_0(\la)}{\partial \la} =-V_A(\la)^{\frac14}\big[\tilde{c}_1(\la) e^{i ( S_A(\la)-\frac{\pi}{4} )}+
		\tilde{c}_{-1}(\la)e^{-i (S_A(\la)-\frac{\pi}{4}\ )}\big],
		\end{gather}
		for some multiplicative correction terms $c_{\pm1}$, $\tilde{c}_{\pm1}$ such that $|c_{\pm1}(\la)-1|=O\big(\rho_{\pm1}(\la)\big) $ and $|\tilde{c}_{\pm1}-c_{\pm1}|\leq \big|\frac{V'(\la)}{4 V(\la)^{\frac32}} c_{\pm1}\big|$.
	\end{enumerate}
\end{lem}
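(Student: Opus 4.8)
The plan is to prove Lemma~\ref{lem:wkbturning} by the standard Liouville--Green (WKB) machinery for the equation $\psi''=V_A\psi$, following \cite{wasow12,piwkb}, with the simple turning point $\la_+$ of \eqref{eq:la+} playing the central role. First I would construct the subdominant solutions $\widetilde{\psi}_{\pm1}$. Writing $\widetilde{\psi}_{\pm1}(\la)=V_A(\la)^{-1/4}e^{\pm i(S_A(\la)-\frac{\pi}{4})}w_{\pm}(\la)$ and inserting into \eqref{eq:h1asym}, the amplitude $w_\pm$ satisfies a second-order equation which, integrated along an admissible path $\ell$ (one on which $\pm\Re S_A$ increases monotonically to $+\infty$, exactly as in the definition of $\rho_{\pm1}$), becomes a Volterra integral equation. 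Its kernel is built from $e^{\mp 2iS_A}$ times the error density $\frac{4V_A''V_A-5(V_A')^2}{4V_A^{5/2}}$ occurring in $\rho_{\pm1}$; monotonicity of $\Re S_A$ along $\ell$ bounds the exponential factor and makes the kernel absolutely integrable with total mass $\le\rho_{\pm1}(\la)$. Solving by Neumann series (the Volterra property yields the factorial bound $\|\mathcal{K}^N\|\le\|\mathcal{K}\|^N/N!$) produces a unique solution with $w_\pm(\la)\to 1$ as $|\la|\to\infty$ in $\Sigma_{\pm1}$ and $|w_\pm-1|=O(\rho_{\pm1})$. This simultaneously defines $\widetilde{\psi}_{\pm1}$, establishes the prescribed asymptotic normalisation, and identifies the correction terms $c_{\pm1}:=w_\pm$ with the bound $|c_{\pm1}-1|=O(\rho_{\pm1})$ claimed in (iii).

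For part (i), holomorphic dependence on $\al$: on the compact domain $D$, which avoids $\al=\pm1$, equation \eqref{eq:la+} shows that $\la_+$ stays bounded away from $\la=0,\infty$ and remains a simple turning point, so the potential $V_A$, the phase $S_A$ of Lemma~\ref{lem:phasefunction}, the prefactor $V_A^{-1/4}$, and hence the kernel all depend holomorphically on $\al$. Each term of the Neumann series is then holomorphic in $\al$, and the series converges uniformly on $D$, so $w_\pm$, and therefore $\widetilde{\psi}_{\pm1}$, is holomorphic in $\al$ by the usual Morera/Weierstrass argument.

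Claim (ii) is the genuine turning-point content, and it is where I expect the main difficulty. The three Stokes lines of Fig.~\ref{fig:stokescomplex} issue from the simple turning point $\la_+$, and I must show that the recessive solution in the middle sector $\Sigma_0$ coincides with $\widetilde{\psi}_1-\widetilde{\psi}_{-1}$. I would establish this connection formula either by invoking the classical simple-turning-point lateral connection result from \cite{wasow12,piwkb}, or by comparison with the Airy equation: rescaling near $\la_+$ reduces $V_A$ to a linear potential, sends $\widetilde{\psi}_{\pm1}$ to the Airy solutions subdominant in the two adjacent sectors carrying the phase shifts $\mp\frac{\pi}{4}$, and sends their difference to the Airy solution recessive in $\Sigma_0$. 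The precise phase $-\frac{\pi}{4}$ built into the asymptotics is exactly what singles out this difference, rather than some other linear combination, as the subdominant one.

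Finally, part (iii) follows once (ii) is in hand. Substituting the two WKB forms into $\widetilde{\psi}_0=\widetilde{\psi}_1-\widetilde{\psi}_{-1}$ gives directly the first line of \eqref{eq:basicwkb}, with the $c_{\pm1}$ the correction amplitudes produced above. Differentiating in $\la$, the dominant contribution comes from differentiating the exponential, producing a factor $\pm i S_A'$ which combines with $V_A^{-1/4}$ into the $V_A^{1/4}$ prefactor of the second line of \eqref{eq:basicwkb}; the remaining contribution, from differentiating $V_A^{-1/4}w_\pm$, is of lower order and accounts precisely for the bound $|\tilde{c}_{\pm1}-c_{\pm1}|\le\bigl|\frac{V_A'}{4V_A^{3/2}}c_{\pm1}\bigr|$. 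All estimates are uniform in $\al\in D$ by the corresponding uniformity in Lemma~\ref{lem:phasefunction} and in the integral-equation bounds.
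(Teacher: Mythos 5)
Your proposal is correct, but it takes a genuinely more constructive route than the paper, whose own proof is only a few lines long and leans heavily on the literature. For the construction of $\widetilde{\psi}_{\pm1}$ and for part (iii), the paper simply cites the standard WKB estimates (\cite{piwkb}, \cite{fedoryuk}); your Volterra integral equation along paths of monotone $\Re S_A$, solved by a Neumann series with the factorial Volterra bound, is precisely the proof of those cited estimates, so here you are proving what the paper quotes. The two real divergences are in (i) and (ii). For (i), the paper does not argue through the integral equation at all: it observes that $\widetilde{\psi}_{\pm1}$ is the Sibuya solution $\psi_{\pm1}$, which is entire in $\al$ by \cite{sibuya75}, multiplied by the explicit factor $\exp\big\{\lim_{\la\to\pm i\infty}\big(S_A(\la)-g\big(\la,E^{\frac12}\al\big)+\tfrac{E}{2}\log\la\big)\big\}$, which is holomorphic on $D$; this sidesteps the (resolvable but fiddly) point in your argument that the admissible paths and the kernel themselves move with $\al$, so holomorphy of the Neumann series requires locally uniform convergence in the parameter. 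For (ii), the paper avoids any Airy or turning-point model altogether: since both $\widetilde{\psi}_{1}$ and $\widetilde{\psi}_{-1}$ are normalised against the same phase function $S_A$ based at $\la_+$ with the compensating phases $\mp\tfrac{\pi}{4}$, their WKB asymptotics, continued across the Stokes lines into $\Sigma_0$, have the \emph{identical} dominant term (the square-root monodromy of $S_A$ and $V_A^{-\frac14}$ around the simple turning point is exactly cancelled by the $\mp\tfrac{\pi}{4}$ phases), so the dominant parts cancel in the difference and $\widetilde{\psi}_0$ is subdominant. Your alternative—matching to the Airy equation near $\la_+$, or invoking the lateral connection formula of \cite{wasow12}—reaches the same conclusion and is classical, but it is heavier machinery: making the rescaling rigorous needs Langer-type uniform asymptotics or a matching region, whereas the paper's cancellation argument needs only the standard fact that the asymptotic expansion of a recessive solution persists into the adjacent sectors where it is dominant. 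In short, your proof is self-contained and makes every error bound explicit, at the cost of re-deriving standard theory; the paper's is shorter because it outsources the analysis to Sibuya and Fedoryuk and replaces the connection formula by a two-line cancellation.
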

\begin{proof} i) For each $\al$, the solution $\widetilde{\psi}_{\pm1}(\la;\al)$ are just multiples of the subdominant solu\-tions~$\psi_{\pm1}$ def\/ined in equation~\eqref{eq:scalarpsi10}. More precisely we have
	\begin{gather*}
	\widetilde{\psi}_{\pm1}=\psi_{\pm1} \times
	\exp\left\lbrace\lim_{\la \to \pm i \infty} \left( S_A(\la)- g\big(\la,E^{\frac12}\al\big)+\frac{E}{2} \log \la \right) \right\rbrace ,
	\end{gather*}
	where the `$g$-function' $g(\la,z)$ was def\/ined in~\eqref{eq:gfunction}. It is a standard result that the subdominant solutions (sometimes called Sibuya's solutions) $\psi_{\pm1}(\al)$ are entire functions of $\al$ with that given normalisation, see, e.g.,~\cite{sibuya75}. Since the exponential factor is a holomorphic function of $\al \in D$, the solutions $\widetilde{\psi}_{\pm1}$ are as well.
	
ii) Since, by construction, $\widetilde{\psi}_1$ and $\widetilde{\psi}_{-1}$ have the same dominant asymptotic behaviour in~$\Sigma_0$, their dif\/ference is subdominant.
	
iii) The estimate (\ref{eq:basicwkb}) follows from the standard WKB estimates (see, e.g., \cite[Appendix]{piwkb} or~\cite{fedoryuk}).
	\begin{gather*}\nonumber
	 \widetilde{\psi}_{\pm1}(\la) =V_A(\la)^{-\frac14}c_{\pm1}(\la) e^{ \pm i ( S_A-\frac{\pi}{4}) }, \\
	 \frac{\partial \widetilde{\psi}_{\pm1}(\la)}{\partial \la} =-V_A(\la)^{\frac14} \tilde{c}_{\pm1}(\la) e^{ \pm i ( S_A-\frac{\pi}{4})},
	\end{gather*}
	which f\/inishes the proof of the lemma.
\end{proof}

The error functions $\rho_{\pm1}$ of the WKB approximation are estimated in the following lemma.
\begin{lem}\label{lem:errors}
	Given the hypothesis in Proposition~{\rm \ref{pro:wkbasy}}, there exist constants $E_0$, $C$ such that, for $E\geq E_0$, the following inequality holds
	\begin{gather*}
	\rho_{\pm1}(\la)\leq C E^{- 2 \gamma},
	\end{gather*}
	for all $(\la,\al) \in \Lambda_+ \times D$.
\end{lem}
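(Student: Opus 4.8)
The plan is to estimate the error integral by reducing to the rescaled coordinate already used in Propositions~\ref{pro:apparent} and~\ref{pro:wkbasy}. First I would observe that the WKB error density $\left|\frac{4V''V-5(V')^2}{4V^{5/2}}\right|\,|d\mu|$ is invariant under an affine change of the independent variable: if $\nu=c\la$ and $W=c^{-2}V_A$, then the Schwarzian-type density transforms with a factor $c^{-1}$ which exactly cancels the factor $c$ coming from $|d\nu|=c\,|d\la|$, while the monotonicity of $\Re S_A$ along the path is unaffected. Hence I may compute $\rho_{\pm1}$ for the potential $W(\nu)$ of the rescaled equation~\eqref{eq:hIKres}, with $c=E^{\frac12}(1-\al^2)^{\frac12}$. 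In this coordinate the region $\Lambda_+$ becomes an annular region $|\nu|\asymp E^{\gamma}$, while the two turning points of~\eqref{eq:la+} sit at $|\nu|\asymp1$ and $|\nu|\asymp E$; in particular the starting point of the path is well separated from both.

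Next I would establish a pointwise bound on the integrand. Writing $W=-1-\frac{ji}{\nu}+\frac{n^2-1}{4\nu^2}+R$, where $R$ collects the $E^{-1}\nu$, $E^{-2}\nu^2$ and $E^{-1}\nu^{-1}$ corrections visible in~\eqref{eq:hIKres}, I would check that on the range $E^{\gamma}\leq|\nu|\leq cE$, for a small constant $c$ depending only on $D$, the deviations from $-1$ are $o(1)$, so that $|W|\geq\tfrac12$, and that termwise differentiation gives $|\dot W|\leq C(|\nu|^{-2}+E^{-1})$ and $|\ddot W|\leq C(|\nu|^{-3}+E^{-2})$. Consequently $|4\ddot WW-5\dot W^2|\leq C(|\nu|^{-3}+E^{-2})$, and the error density is itself bounded by $C(|\nu|^{-3}+E^{-2})$ on this annulus. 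For $|\nu|\gg E$ the leading behaviour $W\sim E^{-2}(1-\al^2)^{-2}\nu^2$ instead bounds the density by $C\,E\,|\nu|^{-3}$, which is integrable at infinity.

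I would then choose the progressive path $\ell$ (along which $\pm\Re S_A$ increases monotonically to $\pm\infty$) dictated by the Stokes geometry of Figure~\ref{fig:stokescomplex}, using the auxiliary point $\la_r$ and the curves $\gamma_A$, $\gamma_S$ to leave $\nu$ into the sector $\Sigma_{\pm1}$ and run out to infinity while keeping an angular separation from the large turning point $\nu_+$, so that $\ell$ stays at distance of order $E$ from $\nu_+$. Setting $r=|\nu|$ and splitting $\int_\ell$ at $r=cE$, the inner part is bounded by $C\int_{E^{\gamma}}^{cE}r^{-3}\,dr+CE^{-2}\cdot E\leq C'E^{-2\gamma}+C'E^{-1}$, and the tail $r\geq cE$ by $C\,E\,(cE)^{-2}=C'E^{-1}$. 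Since $\gamma<\tfrac12$ we have $E^{-1}=o(E^{-2\gamma})$, so the near-origin annulus dominates and yields $\rho_{\pm1}(\la)\leq CE^{-2\gamma}$, uniformly in $(\la,\al)\in\Lambda_+\times D$.

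The main obstacle is the behaviour near the large turning point $\nu_+$, where $W$ vanishes and the error density acquires a non-integrable $(\nu-\nu_+)^{-\frac52}$ singularity; a naive path entering the turning-point boundary layer, which has width of order $E^{\frac13}$, would contribute at order $O(1)$ and destroy the estimate. The crux is therefore to verify that a monotone progressive path from the starting point out to infinity in $\Sigma_{\pm1}$ exists that stays a safe (order $E$) distance from $\nu_+$; this is exactly what the Stokes-line picture of Figure~\ref{fig:stokescomplex} supplies, and it confines the turning-point contribution to the harmless $O(E^{-1})$ tail. The remaining ingredients are the routine differentiations and integrations indicated above.
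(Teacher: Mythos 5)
Your proposal is correct and takes essentially the same approach as the paper: the same progressive path (the Stokes line $\gamma_S$ out to an intermediate point kept at distance of order $E^{\frac12}$ in $\la$, i.e.\ order $E$ in your variable $\nu$, from the large turning point, then the anti-Stokes line $\gamma_A$ to infinity, exactly as in Fig.~\ref{fig:stokescomplex}), with the region near the origin contributing $O\big(E^{-2\gamma}\big)$ and all remaining pieces $O\big(E^{-1}\big)$. The only difference is presentational: you exploit the affine invariance of the error density to work in the rescaled coordinate $\nu=E^{\frac12}\big(1-\al^2\big)^{\frac12}\la$ with the single pointwise bound $C\big(|\nu|^{-3}+E^{-2}\big)$ on one annulus, whereas the paper states the equivalent estimates in the original variable (of order $E^{-1}|\la|^{-3}$ near the origin and $E^{-3/2}$ in the intermediate region) and rescales only for the tail integral along~$\gamma_A$.
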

\begin{proof}
		The proof is in Appendix \ref{appendix:wkb}.
\end{proof}

We can now complete the proof of Proposition \ref{pro:wkbasy}. To this aim, we set $\psi_0(\la;\al)=E^{\frac14}\widetilde{\psi}_0(\la;\al)$, where $\widetilde{\psi}_0$ is the solution studied in Lemma~\ref{lem:wkbturning}(ii). Due to Lemma \ref{lem:errors} and Lemma~\ref{lem:wkbturning}(iii), there exists a~$C$ such that
\begin{gather*}
|1-c_{\pm1}|\leq C E^{-2\gamma}, \qquad |1-\widetilde{c}_{\pm1}|\leq C E^{-2\gamma},
\end{gather*}
for all $(\la,\al) \in \Lambda_+ \times D$. In fact, the f\/irst estimate follows directly from the estimate $|1-c_{\pm1}|=\mathcal{O}(\rho_{\pm1})$, the second arises from the previous estimate and the fact that also $\frac{V'(\la)}{4 V(\la)^{\frac32}} = \mathcal{O}(E^{-2\gamma})$ for $(\la,\al) \in \Lambda_+ \times D$. The estimate~(\ref{eq:asymptransition}) now follows from Lemma~\ref{lem:phasefunction}.

\subsection{Matching and asymptotics of poles}\label{sub:matching}
In this subsection we complete the proof of Theorem~\ref{thm:asymptoticzeros}, by comparing the asymptotic form of the solutions
$\psi_+(\la)$ and $\psi_0(\la)$ of equation~\eqref{eq:h1asym}.
These were computed in Propositions~\ref{pro:estimateat0} and~\ref{pro:wkbasy}, on the respective domains $\Lambda_+$ and $\Lambda_0$; these domains satisfy $\Lambda_+\subset \Lambda_0$.

According to these results, on the domain $\Lambda_{+}$ the solutions $\psi_0$ and $\psi_+$ share the same trigonometric form
but for a generally dif\/ferent phase, see equations \eqref{eq:estimate0} and \eqref{eq:asymptransition}.
The phase-dif\/ference is the following function of $\al$,
\begin{gather}\nonumber
\Phi(\al;E)=\frac{E}{2}\big({-}\al\big(1-\al^2\big)^{\frac12}+\cos^{-1}(\al)\big)-\frac{(2+n)\pi}{4} \\ \label{eq:Phi}
\hphantom{\Phi(\al;E)=}{}+
i\left( j \log\big(2\big(1-\al^2\big)^{\frac34}E^{\frac12}\big) -\frac{1}{2} \log F_{n,j}\right) .
\end{gather}
Here $F_{n,j}=\frac{\Gamma\big[\tfrac{1}{2}(1+n+j)\big]}{\Gamma\big[\tfrac{1}{2}(1+n-j)\big]}$, as def\/ined in~\eqref{eq:Fnj}. It plays a major role in our analysis. In fact, we can express the Wronskian $W[\psi_0,\psi_+]=\psi_0\psi_+'-\psi_0'\psi_+$ in terms of $\Phi(\al;E)$.

\begin{pro}\label{pro:wronskian}
	Here $D$ is a simply connected compact subset of the vertical strip $-1<\Re \al <1$ and
	$\psi_+(\la)$ and $\psi_0(\la)$ are the solutions of \eqref{eq:h1asym} studied in Proposition~{\rm \ref{pro:estimateat0}} and~{\rm \ref{pro:wkbasy}}.
	For $E$ big enough we have the following decomposition of the Wronskian,
	\begin{gather}\label{eq:wrdec}
	E^{-\frac12}W[\psi_0,\psi_+]=\sin\big(\Phi(\al;E)\big) + E^{-\frac13} R(\al,E) ,
	\end{gather}
	where $\Phi(\al;E)$ is defined in \eqref{eq:Phi} and
	$R(\al,E)$ is an analytic function of $\al \in D$ such that
	\begin{gather*}
	\sup_{\al \in D,\, E\geq E_0} \left| \frac{R(\al,E)}{\exp{\big( i\Phi(\al,E)\big)}}\right|< +\infty .
	\end{gather*}
\end{pro}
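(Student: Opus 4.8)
The plan is to exploit that, because equation~\eqref{eq:h1asym} has no first-order term, the Wronskian $W[\psi_0,\psi_+]=\psi_0\psi_+'-\psi_0'\psi_+$ is independent of $\la$. I may therefore evaluate it at whichever $\la$ is most convenient, and the natural choice is a point of the overlap region $\Lambda_+\subset\Lambda_0$, on which both the Whittaker-type expansion of $\psi_+$ from Proposition~\ref{pro:estimateat0} and the WKB expansion of $\psi_0$ from Proposition~\ref{pro:wkbasy} are simultaneously valid and on which the profile $\Sigma$ is controlled.

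On $\Lambda_+$ I would insert the two pairs of estimates. Abbreviating $\Sigma_+:=\Sigma(\la;j,\varphi,\chi)$ and $\Sigma_0:=\Sigma(\la;j,\varphi',\chi')$, and writing $\Sigma^{\perp}$ for the same profile with its phase advanced by $\tfrac12\pi$, Propositions~\ref{pro:estimateat0} and~\ref{pro:wkbasy} read
\begin{gather*}
\psi_+=\Sigma_++\mathcal{O}\big(E^{-\gamma'}\big),\qquad E^{-\frac12}\psi_+'=-\Sigma_+^{\perp}+\mathcal{O}\big(E^{-\gamma'}\big),\\
\psi_0=\Sigma_0+\mathcal{O}\big(E^{-\tilde{\gamma}}\big)\rho,\qquad E^{-\frac12}\psi_0'=-\Sigma_0^{\perp}+\mathcal{O}\big(E^{-\tilde{\gamma}}\big)\rho,
\end{gather*}
with $\rho:=|\Sigma_0|+|\Sigma_0^{\perp}|$. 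Multiplying out $E^{-\frac12}W[\psi_0,\psi_+]=\psi_0\,(E^{-\frac12}\psi_+')-(E^{-\frac12}\psi_0')\,\psi_+$, the leading contribution is $\Sigma_0^{\perp}\Sigma_+-\Sigma_0\Sigma_+^{\perp}$. Since the two profiles differ only in their constant phases $\varphi+i\chi$ and $\varphi'+i\chi'$ while sharing their $\la$-dependent part, writing $\Sigma_\bullet=\sin\Theta_\bullet$, $\Sigma_\bullet^{\perp}=\cos\Theta_\bullet$ and applying the subtraction identity gives
\begin{gather*}
\Sigma_0^{\perp}\Sigma_+-\Sigma_0\Sigma_+^{\perp}=\sin(\Theta_+-\Theta_0)=\sin\big(\Phi(\al;E)\big),
\end{gather*}
the $\la$-dependent part cancelling and the surviving phase difference being exactly $\Phi$ of~\eqref{eq:Phi}. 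This furnishes the main term of~\eqref{eq:wrdec}, and fixing the constants in $\Theta_\pm-\Theta_0$ against $\varphi,\varphi',\chi,\chi'$ is the routine bookkeeping that identifies $\Phi$.

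It then remains to collect the error. Every remaining term is a leading profile times one of the $\mathcal{O}(E^{-\gamma'})$ or $\mathcal{O}(E^{-\tilde{\gamma}})\rho$ corrections, or a product of two corrections. Choosing the balancing value $\gamma=\tfrac13$ makes $\gamma'=\tilde{\gamma}=\tfrac13$, so that defining $R(\al,E):=E^{\frac13}\big(E^{-\frac12}W[\psi_0,\psi_+]-\sin\Phi\big)$ yields the decomposition~\eqref{eq:wrdec}. That $R$ is analytic in $\al$ on $D$ follows because $\psi_0,\psi_+$, and hence $W$, depend holomorphically on $\al$ by Propositions~\ref{pro:estimateat0}(i) and~\ref{pro:wkbasy}, while $\sin\Phi$ is holomorphic on the strip $-1<\Re\al<1$.

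The main obstacle is the normalised growth bound $\sup_{\al\in D,\,E\ge E_0}|R/\exp(i\Phi)|<\infty$, rather than plain smallness of $R$. The subtlety is twofold: on $\Lambda_+$ the profiles $\Sigma_\bullet$ are not bounded uniformly in $E$ but grow with a power of $E$ governed by $\Im\Theta_\bullet$, and the genuinely bilinear error terms (products such as $\Sigma_0\Sigma_+$) carry the \emph{sum} of the two phases and are therefore $\la$-dependent, whereas $\sin\Phi$ carries only their difference. To control this I would expand every factor into its two exponentials $e^{\pm i\Theta_\bullet}$ and factor $\exp(i\Phi)$ out of the whole identity; the defining inequality of $\Lambda_+$ in~\eqref{eq:Lambdag} bounds $\Im$ of the shared $\la$-dependent phase, which is precisely what is needed to show that, after this factorisation, each error contribution is $\mathcal{O}(E^{-\frac13})$ uniformly on $\Lambda_+\times D$. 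Since $W$ is independent of $\la$, the resulting bound on $R$ does not depend on the chosen evaluation point, and the analyticity of $R$ already noted promotes this pointwise control to the stated uniform estimate.
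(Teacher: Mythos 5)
Your overall route is the paper's own. The published proof of Proposition~\ref{pro:wronskian} consists of two sentences (analyticity of $\psi_0$, $\psi_+$ and $\sin\Phi$, plus ``the estimate is a direct consequence of Propositions~\ref{pro:estimateat0} and~\ref{pro:wkbasy}, specialised to $\gamma=\frac13$''), and your computation --- evaluating the $\lambda$-independent Wronskian on the overlap $\Lambda_+\subset\Lambda_0$, cancelling the shared $\lambda$-dependent phase via the sine-subtraction identity, balancing $\gamma'=\tilde{\gamma}=\frac13$, and deducing analyticity of $R$ from analyticity of $\psi_0$, $\psi_+$, $\sin\Phi$ --- is exactly the content those two sentences compress. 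Up to there your write-up is correct. (One warning on the ``routine bookkeeping'': with the constants as printed in \eqref{eq:trig} and \eqref{eq:asymptransition}, the phase difference comes out as $-\Phi+\tfrac{ij}{2}\log E$ rather than $\pm\Phi$; this traces back to the sign of the exponent of $E$ inside the logarithm in the second expansion of Lemma~\ref{lem:phasefunction}, and with that corrected one indeed gets $\Theta_+-\Theta_0=-\Phi$.)

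The genuine gap is in your final paragraph, precisely where you try to justify the part the paper glosses over. Factoring $e^{i\Phi}$ out and invoking the defining inequality of $\Lambda_+$ in \eqref{eq:Lambdag} does not make every error contribution $\mathcal{O}\big(E^{-\frac13}\big)$: that inequality keeps $\Im\Theta_+$ bounded (the phase of $\Sigma_+$), but the phase of $\Sigma_0$ is $\Theta_0=\Theta_+\pm\Phi$, so the dominant exponential component of $\Sigma_0$ has modulus comparable with $e^{|\Im\Phi|}$, and nothing in \eqref{eq:Lambdag} controls it. Concretely, Proposition~\ref{pro:estimateat0} bounds the errors $\varepsilon_+$ of $\psi_+$ and $E^{-\frac12}\psi_+'$ only in absolute value, with no phase structure; hence the cross terms $\Sigma_0\varepsilon_+$, $\Sigma_0^{\perp}\varepsilon_+$ in the expanded Wronskian are of size $E^{-\frac13}e^{|\Im\Phi|}$, and after division by $e^{i\Phi}$ they are of size $E^{-\frac13}e^{|\Im\Phi|+\Im\Phi}$. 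Pointwise, for real $\alpha\in D$ and branch $j$, one has $\Im\Phi(\alpha;E)=\tfrac{j}{2}\log E+\mathcal{O}(1)$, so for every $j\geq1$ (i.e., whenever $n\geq2$) this quantity is of order $E^{\,j-\frac13}$ and is unbounded on $D\times[E_0,\infty)$. What your computation (and the paper's ``direct consequence'') actually delivers is the symmetric bound $|R|\leq C\big(|e^{i\Phi}|+|e^{-i\Phi}|\big)$; the one-sided normalisation $|R|\leq C|e^{i\Phi}|$ cannot be reached from Propositions~\ref{pro:estimateat0} and~\ref{pro:wkbasy} by this argument, because the absolute error of $\psi_+$ multiplies the large component of $\Sigma_0$ and nothing cancels it. Note that the symmetric bound is all that is used later: in the Rouch\'e step of Theorem~\ref{thm:asymptoticzeros} the estimate is applied on circles centred at the $\al_{j,k}$, where $|\Im\Phi|=\mathcal{O}(1)$ and the two bounds coincide. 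So either prove the symmetric version and observe that it suffices downstream, or state explicitly that the stronger one-sided bound is not attainable by this route.
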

\begin{proof}
	$R$ is analytic because $\psi_0$, $\psi_+$ and $\sin(\Phi(\al;E))$ are analytic in $D$. The estimate is a~direct consequence of Proposition~\ref{pro:estimateat0} and~\ref{pro:wkbasy}, specialised to $\gamma=\frac13$.
\end{proof}

According to the proposition above, the asymptotic distribution of the zeros of $H_{m,n}(\alpha)$ with $n$ bounded is determined by the zeros of the transcendental equation $\sin(\Phi(\al;E))=0$. Equivalently
\begin{gather}
 \frac{1}{2}\big(\al\big(1-\al^2\big)^{\frac12}-\cos^{-1}(\al)\big)+ \frac{\pi}{4}
 - E^{-1} i\left( j \log\big(2\big(1-\al^2\big)^{\frac34}E^{\frac12}\big) -\frac{1}{2} \log F_{n,j} \right) =
\frac{\pi k}{E}\label{eq:wronskian}
\end{gather}
for some $k$, integer if $m$ is odd, half-integer if $m$ is even, whose range will be determined below.

Notice that for any compact subset of the horizontal strip $-1<{\Re} \al <1$, equation \eqref{eq:wronskian} is just a~vanishing perturbation of the equation
$f(\al)=\frac{\pi k}{E}$, where $f(\al)= \frac{1}{2}\big(\al(1-\al^2)^{\frac12}-\cos^{-1}(\al)\big)+\frac{\pi}{4} $, because
$1/f'(\al)=(1-\al^2)^{-\frac12}$ is bounded. The equation $f(\al)=\frac{\pi k}{E}$
has one and only one (and real) solution in the 'bulk' for each $|k|\leq \frac E4$ and no solutions at all if $|k|>\frac E4$.

Since we are interested in the solutions of equation \eqref{eq:wronskian} only up to an additive error of the order $E^{-\frac43}$, equation \eqref{eq:wronskian} can be greatly simplif\/ied. Firstly, the imaginary part of the solutions is of order $E^{-1}\log{E}$ and thus we can linearise the system with respect to $\al_I=\Im \al$ at $\al_I=0$. Secondly, we can discard all terms of order $E^{-\delta}$, $\delta >\frac43$ in the equation that
we obtained after the linearisation with respect to $\al_I$.

These two simplif\/ications lead to the system \eqref{eqrealandimaginaryI}, whose solutions $\al_{j,k}$ we described at the beginning of the present
section,
and to the following lemma, whose details of the proof are left to the reader.

\begin{lem}\label{lem:system} For $j\in J_n$ denote, as in Definition~{\rm \ref{def:aljk}}, by $\al_{j,k}$ the unique solution of the system~\eqref{eqrealandimaginaryI}.	 Fix $0<\sigma<\frac14$, set $d_R:=f^{-1}\big(\frac{\pi}{4}(1-4 \sigma)\big)$ and let $d_I>0$ be such that
$D_0:=\{|\Re{\al}|\leq d_R,|\Im{\al}|\leq d_I\}$ contains all points $\al_{j,k}$, with $|k| \leq \sigma E$ and $j\in J_n$, for $E$ large enough, say $E\geq E_0$.
	
Now, for an arbitrarily small but fixed $\e$, let $D:=\{|\Re{\al}|\leq d_R+\e,|\Im{\al}|\leq d_I\}$. Then there exists an $E_1\geq E_0$ and a constant $C$ such that, for all $E\geq E_1$, $|k| \leq \sigma E$ and $j\in J_n$, equation~\eqref{eq:wronskian} has a unique solution $\tilde{\al}_{j,k}$ in $D$ and
	\begin{gather*}
	|\tilde{\al}_{j,k}- \al_{j,k}|\leq C E^{-2} \log^2 E .
	\end{gather*}
\end{lem}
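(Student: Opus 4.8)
The plan is to treat \eqref{eq:wronskian} as an analytic perturbation, in the complex variable $\al$, of the real equation $f(\al)=\tfrac{\pi k}{E}$, and to solve it by a quantitative inverse function argument centred at the approximate root $\al_{j,k}$. Write the left-hand side of \eqref{eq:wronskian} as $F(\al)=f(\al)-iE^{-1}h(\al)$, where $f(\al)=\tfrac12\big(\al(1-\al^2)^{\frac12}-\cos^{-1}(\al)\big)+\tfrac\pi4$ and $h(\al)=j\log\big(2(1-\al^2)^{\frac34}E^{\frac12}\big)-\tfrac12\log F_{n,j}$, both holomorphic on the compact box $D$, for which $d_R+\e<1$ so that $D$ avoids the turning points $\al=\pm1$. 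Set $G(\al)=F(\al)-\tfrac{\pi k}{E}$, so the solutions of \eqref{eq:wronskian} are exactly the zeros of $G$. The key structural observation is that $h(\al)=\tfrac j2\log E+\widehat{h}(\al)$ with $\widehat{h}$ and all its $\al$-derivatives bounded uniformly on $D$; hence $h=\mathcal{O}(\log E)$ while $h',h''=\mathcal{O}(1)$. Recalling $f'(\al)=(1-\al^2)^{\frac12}$, the defining relations \eqref{eq:realI}--\eqref{eq:imaginaryI} of $\al_{j,k}=\al_R+i\al_I$ read precisely $f(\al_R)=\tfrac{\pi k}{E}$ and $f'(\al_R)\al_I-E^{-1}h(\al_R)=0$, so that $\al_I=g_{j,E}(\al_R)=\mathcal{O}(E^{-1}\log E)$ uniformly in $k$ and $j$.

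Two estimates drive the argument. First, since $D$ lies in the strip $|\Re\al|\le d_R+\e<1$, the derivative $f'(\al)=(1-\al^2)^{\frac12}$ is bounded away from $0$ on $D$, while $|E^{-1}h'(\al)|=\mathcal{O}(E^{-1})$; hence there is a constant $c>0$ with $|G'(\al)|=|f'(\al)-iE^{-1}h'(\al)|\ge c$ for all $\al\in D$ and all $E\ge E_1$. Second, Taylor-expanding $f$ and $h$ about the real point $\al_R$ in the imaginary direction $i\al_I$ and invoking the two cancellations built into the definition of $\al_{j,k}$, one finds
\begin{gather*}
\Re G(\al_{j,k})=-\tfrac12 f''(\al_R)\al_I^2+E^{-1}h'(\al_R)\al_I+\mathcal{O}\big(\al_I^3\big),\qquad
\Im G(\al_{j,k})=\mathcal{O}\big(\al_I^3+E^{-1}\al_I^2\big).
\end{gather*}
Because $f'',f'''$ and $h',h''$ are uniformly bounded on $D$ and $\al_I=\mathcal{O}(E^{-1}\log E)$, both right-hand sides are $\mathcal{O}(E^{-2}\log^2E)$, and therefore
\begin{gather*}
|G(\al_{j,k})|\le C_0E^{-2}\log^2E
\end{gather*}
uniformly for $|k|\le\sigma E$ and $j\in J_n$.

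With these in hand the conclusion follows from the quantitative inverse function theorem. For $E$ large, $G$ is injective on the thin box $D$ — being a perturbation of size $\mathcal{O}(E^{-1}\log E)$ of the injective map $f-\tfrac{\pi k}{E}$, with nowhere vanishing derivative — and the bound $|G'|\ge c$ forces its inverse to be $c^{-1}$-Lipschitz on $G(D)$. Since $\al_I\to0$, the disc $\{|\al-\al_{j,k}|<\e\}$ lies in $D$ for $E$ large, and $G$ maps it onto a region containing the disc $\{|w-G(\al_{j,k})|<c\e\}$; as $|G(\al_{j,k})|\le C_0E^{-2}\log^2E<c\e$ for $E$ large, this disc contains $0$, so $G$ has a zero $\tilde{\al}_{j,k}$ in $D$, unique there by injectivity. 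Moreover
\begin{gather*}
|\tilde{\al}_{j,k}-\al_{j,k}|=\big|G^{-1}(0)-G^{-1}\big(G(\al_{j,k})\big)\big|\le c^{-1}\,|G(\al_{j,k})|\le CE^{-2}\log^2E,
\end{gather*}
which is the asserted estimate.

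The only genuinely delicate point is the uniform residual estimate $|G(\al_{j,k})|=\mathcal{O}(E^{-2}\log^2E)$. What makes it work is that the $\log E$ growth sits exclusively in the $\al$-independent part $\tfrac j2\log E$ of $h$, so that the full first-order contribution in the imaginary direction is cancelled exactly by the definition of $\al_I$, leaving the quadratic term $-\tfrac12 f''(\al_R)\al_I^2$ as the leading survivor, of size $\al_I^2=\mathcal{O}(E^{-2}\log^2E)$. Uniformity in $j$ is automatic since $J_n$ is finite, and uniformity in $k$ reduces to the uniform boundedness of $f'',f''',h',h''$ on $D$, together with the lower bound on $f'$ — both of which hold precisely because $D$ stays a fixed distance from $\al=\pm1$.
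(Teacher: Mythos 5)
Your proposal is correct, and it is essentially a fleshed-out version of the argument the paper merely sketches (the paper explicitly leaves the details of this lemma to the reader): the system \eqref{eqrealandimaginaryI} is the linearisation of \eqref{eq:wronskian} in $\al_I$, the residual at $\al_{j,k}$ is quadratic in $\al_I=\mathcal{O}\big(E^{-1}\log E\big)$ and hence $\mathcal{O}\big(E^{-2}\log^2E\big)$, and a quantitative inverse-function/Rouch\'e argument converts this residual bound into the stated distance between exact and approximate roots.

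One step is justified too loosely: a pointwise bound $|G'|\geq c$ alone implies neither injectivity of $G$ on $D$ nor a $c^{-1}$-Lipschitz inverse (consider $e^{z}$ on a long strip), and ``small perturbation of an injective map'' is likewise not a valid reason. The fix is immediate in your setting: on the convex box $D$ one has $\Re\big(1-\al^2\big)\geq 1-(d_R+\e)^2>0$, so the principal branch gives $\Re f'(\al)=\Re\big(1-\al^2\big)^{\frac12}\geq c'>0$, while $\big|E^{-1}h'(\al)\big|=\mathcal{O}\big(E^{-1}\big)$; hence $\Re G'\geq \tfrac{c'}{2}$ on $D$ for $E$ large, and writing $G(\al)-G(\beta)=(\al-\beta)\int_0^1G'(\beta+t(\al-\beta))\,{\rm d}t$ yields $|G(\al)-G(\beta)|\geq \tfrac{c'}{2}|\al-\beta|$ on $D$. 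This single inequality delivers injectivity, the Lipschitz bound for $G^{-1}$, and (via Rouch\'e on a small circle about $\al_{j,k}$) the existence of the zero, so the rest of your argument goes through unchanged.
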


We have collected all the intermediate results required to prove Theorem \ref{thm:asymptoticzeros}, that we state again here.
\begin{thm*}
Fix $0<\sigma<\frac14$, then there exist a constant $E_0$ and $C_{\sigma}$ such that, for all $E\geq E_0$ and $|k|\leq \sigma E$, the polynomial~$H_{m,n}(z)$ has one and only one zero in each disc of the form $\big\{\big|z-E^{\frac12}\al_{j,k}\big|\leq C_{\sigma} E^{-\frac56}\big\}$.
\end{thm*}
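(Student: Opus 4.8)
The plan is to convert the statement about zeros of $H_{m,n}$ into a zero-counting problem for the Wronskian $W[\psi_0,\psi_+]$ and then to locate its zeros by a Rouch\'e argument. First I would recall, via case \ref{H1} of Theorem~\ref{thm:hermiteanharmonic} together with Proposition~\ref{pro:apparent}, that the rescaled zeros $\al=E^{-\frac12}a$ of $H_{m,n}$ are precisely those $\al$ for which, for some branch $j\in J_n$ (that is, with $\beta=\beta_j(\al)$ enforcing the no-logarithm condition), the quantisation condition $\psi_0\equiv\psi_+$ holds; equivalently, the Wronskian $W[\psi_0,\psi_+](\al)$ vanishes. Since the coefficient $b$, hence the branch $j$, attached to a zero is unique, each rescaled zero belongs to exactly one branch, and it suffices to locate, for every $j$ and every admissible $k$, the zeros of the map $\al\mapsto W[\psi_0,\psi_+](\al)$ (with $\beta=\beta_j(\al)$), which is holomorphic on $D$ by Propositions~\ref{pro:estimateat0} and~\ref{pro:wkbasy}.

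Next I would invoke the decomposition of Proposition~\ref{pro:wronskian}, namely $E^{-\frac12}W[\psi_0,\psi_+]=\sin(\Phi(\al;E))+E^{-\frac13}R(\al,E)$, and apply Rouch\'e's theorem to this analytic function on the disc $B_{j,k}=\{|\al-\al_{j,k}|\le C\,E^{-\frac43}\}$. The unperturbed term $\sin\Phi$ has exactly one zero in $B_{j,k}$: by Lemma~\ref{lem:system} the equation $\sin\Phi=0$ (equation~\eqref{eq:wronskian}) has a unique solution $\tilde{\al}_{j,k}$ in $D$ with $|\tilde{\al}_{j,k}-\al_{j,k}|\le C E^{-2}\log^2E$, which therefore lies deep inside $B_{j,k}$, while the zeros attached to other indices $(j',k')$ are separated from $\al_{j,k}$ by at least a constant multiple of $E^{-1}$ when $k'\neq k$ and of $E^{-1}\log E$ when $j'\neq j$ (read off from Definition~\ref{def:aljk} and the growth of $g_{j,E}$), both of which exceed $E^{-\frac43}$ for large $E$; moreover this zero is simple since $\Phi'(\al)=-E(1-\al^2)^{\frac12}+\mathcal{O}(1)$ does not vanish on $D$.

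The heart of the argument is the Rouch\'e domination on $\partial B_{j,k}$. Because $\Phi(\tilde{\al}_{j,k})\in\pi\mathbb{Z}$ and $|\Phi'|$ is of order $E$ on $D$ — here the hypothesis $|k|\le\sigma E$ with $\sigma<\frac14$ is used, as it confines $\Re\al$ to a compact subinterval of $(-1,1)$ and hence keeps $(1-\al^2)^{\frac12}$ bounded below — linearising $\sin\Phi$ about $\tilde{\al}_{j,k}$ yields $|\sin\Phi|\ge c_1 E\,|\al-\tilde{\al}_{j,k}|\ge c_1'E^{-\frac13}$ on $\partial B_{j,k}$ (the linearisation is legitimate since $|\Phi'|E^{-\frac43}\to0$). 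For the perturbation I would use the bound $|R|\le C_3\,|e^{i\Phi}|=C_3 e^{-\Im\Phi}$ from Proposition~\ref{pro:wronskian}: since $\Im\Phi(\tilde{\al}_{j,k})=0$ and $\Im\Phi$ varies by $\mathcal{O}(|\Phi'|E^{-\frac43})=\mathcal{O}(E^{-\frac13})$ across $B_{j,k}$, one gets $|E^{-\frac13}R|\le 2C_3 E^{-\frac13}$ there. Choosing $C$ large enough that $c_1'>2C_3$ makes $|\sin\Phi|>|E^{-\frac13}R|$ on $\partial B_{j,k}$, so $E^{-\frac12}W$ and $\sin\Phi$ have the same number of zeros in $B_{j,k}$, namely exactly one. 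I expect this balancing — matching the lower bound $E\rho$ against the perturbation size $E^{-\frac13}$ — to be the main obstacle, and it is precisely what pins the radius to $\rho$ of order $E^{-\frac43}$.

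Finally I would assemble the conclusion. For each $j$ and each $|k|\le\sigma E$ there is exactly one rescaled zero of $H_{m,n}$ on branch $j$ inside $B_{j,k}$, and by the branch separation above $B_{j,k}$ meets no rescaled zeros from other branches, so $H_{m,n}$ has exactly one rescaled zero in $B_{j,k}$. Undoing the scaling $z=E^{\frac12}\al$ maps $B_{j,k}$ onto the disc $\{|z-E^{\frac12}\al_{j,k}|\le C E^{\frac12}\cdot E^{-\frac43}=C E^{-\frac56}\}$, and setting $C_\sigma=C$, with $E_0$ taken large enough for all the above estimates to hold uniformly in $|k|\le\sigma E$ and $j\in J_n$, yields the theorem.
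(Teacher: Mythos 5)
Your proposal is correct and follows essentially the same route as the paper's own proof: reduce to counting zeros of $E^{-\frac12}W[\psi_0,\psi_+]$, use the decomposition of Proposition~\ref{pro:wronskian} together with Lemma~\ref{lem:system}, and apply Rouch\'e's theorem on discs of radius $\sim E^{-\frac43}$ in the $\al$-plane, with the lower bound on $|\sin\Phi|$ coming from $|\Phi'|\sim E(1-\al^2)^{\frac12}$ (bounded below because $\sigma<\frac14$ keeps $\al$ in the bulk) and the upper bound on the remainder from the boundedness of $e^{i\Phi}$ near the zeros. The only difference is cosmetic: you linearise about the exact zero $\tilde\al_{j,k}$ and spell out the separation between branches $j'\neq j$, points the paper treats implicitly via its Taylor expansion about $\al_{j,k}$ and its appeal to Lemma~\ref{lem:system}.
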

\begin{proof} We follow the proof of \cite[Theorem 1]{piwkb2} quite closely. Because of Lemma \ref{lem:system}, the thesis is equivalent to: $W[\psi_0,\psi_+](\al;E)$ and $\sin(\Phi(\al;E))$ have the same number of zeros (namely one) in the disk $|\al-\al_{j,k}|\leq C E^{-\frac43}$.
	
The proof of this statement is based on Proposition \ref{pro:wronskian} and the classical Rouch\'e's theorem: Let $K$ be a compact domain of the complex plane with simple boundary $\partial K$. If $p$ and $q$ are holomorphic functions, in a larger domain~$K'$, such that $|q| <|p|$ on $\partial K$, then $p$ and $p+q$ have the same number of zeros~-- counting multiplicities~-- inside~$K$.
	
In our setting $K$ is a disc centred in $\al_{j,k}$ with radius $\kappa E^{-\rho}$ for some $\kappa$, $\delta$ to be def\/ined below, $p=\sin \big(\Phi(\al;E)\big) $ and $q=E^{-\frac12} W[\psi_0,\psi_+](\al;E)-\sin\big(\Phi(\al;E)\big)$.
	
Since $\sigma<\frac14$, the $\al_{j,k}$ belong to a bounded subset $D$ of the bulk and	$|\al_{j,k}^2-1|$ is therefore uniformly bounded from below by a~positive constant.	Hence, by means of a Taylor expansion estimate, for any $\kappa>0$, $\rho>0$ there exists some numbers, $c_{j,k}$ uniformly bounded (with the bound depending just on $\kappa,\rho$), such that $||p(\al)\big|{-}E^{1-\rho}\kappa |1-\al_{j,k}^2|^{\frac12}\big|\leq c_{j,k}E^{1-2\rho}\kappa^2$ for any $\al$ belonging to the circle
	with centre $\al_{j,k}$ and radius $\kappa E^{-\delta}$.
	
	Similarly on the union of all these circles $|\exp{\big( i\Phi(\al,E)\big)}|$ is uniformly bounded and therefore,
	by Proposition \ref{pro:wronskian} there exists a $C'$ such that, for $E$ large enough,
	$|q| \leq E^{-\frac13} C'$ on the union of all these circles.
	
	Hence, for $E$ big enough, if $\rho=\frac43$ and $\kappa >2 C' \sup\limits_{j,k}|1-\al_{j,k}^2|^{-\frac12}$, then
	$|q|<|p|$ on all the circles with centre $\al_{j,k}$ and radius $\kappa E^{-\rho}$. This concludes the proof.
\end{proof}

\subsection{Proof of Theorem \ref{thm:asymptoticzerosedge}.}\label{sub:thm2}
Recall that in stating and proving Theorem \ref{thm:asymptoticzeros} and all necessary preparatory lemmas and propositions,
we worked with a compact subset $D$ of the
$\al$-plane not containing the points $\al=\pm1$. This is because all estimates involved break down at those two points.

However, we can improve our results without modifying the procedures if we let $D_E$ vary with $E$ and approach the points $\pm1 $
slowly enough. To this aim we f\/ix $\delta>\frac13$ and we assume that the distance of $D_E$ from $\al= \pm1$ is greater
or equal to a constant times $E^{-1+\delta}$.

Indeed, we can prove Theorem \ref{thm:asymptoticzerosedge} that we state again here for convenience.
\begin{thm*}
	Fix $\frac13<\delta\leq1$ and $s >0$. There exist a constant $E_0$ and a constant $C_{s}$ such that, if $E\geq E_0$, then for all $|k|\leq \big( \frac{1}{4}-s E^{3\frac{-1+\delta}{2}} \big) E$, the polynomial $H_{m,n}(z)$ has one and only one zero in each disc of the form $\big|z-E^{\frac{1}{2}}\al_{j,k}\big|\leq C_s E^{-\delta+\frac16}$.
\end{thm*}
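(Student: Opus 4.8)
The plan is to follow the same four-step scheme used to prove Theorem~\ref{thm:asymptoticzeros}---analysis of the no-logarithm condition, Whittaker asymptotics near $\la=0$, WKB asymptotics in the transition region, and the Rouch\'e matching argument---but now allowing the domain $D=D_E$ in Propositions~\ref{pro:apparent},~\ref{pro:estimateat0} and~\ref{pro:wkbasy} to depend on $E$ and to approach the turning values $\al=\pm1$ at a controlled rate. Concretely, I would fix $c>0$ and work on
\begin{gather*}
D_E=\big\{\al\in\bb{C}\colon \mathrm{dist}(\al,\{\pm1\})\geq c\,E^{-1+\delta},\ |\Im\al|\leq d_I\big\}.
\end{gather*}
The first thing to verify is that the hypothesis $|k|\leq(\tfrac14-sE^{-\frac32(1-\delta)})E$ is exactly what confines $\al_{j,k}$ to $D_E$: since $\Re\al_{j,k}$ solves $f(\Re\al)=\pi k/E$ and $f(1)-f(x)=\int_x^1\sqrt{1-t^2}\,{\rm d}t\sim\tfrac{2\sqrt2}{3}(1-x)^{3/2}$ as $x\to1$, the bound on $k$ forces $\tfrac{\pi}{4}-f(\Re\al_{j,k})\geq c'E^{-\frac32(1-\delta)}$ and hence $1-\Re\al_{j,k}\geq c''E^{-1+\delta}$, as required.

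The substance of the argument is then to reinspect every estimate in Sections~\ref{sub:whittaker}--\ref{sub:matching} keeping track of its dependence on $1-\al^2$, which was previously absorbed into constants uniform on a fixed compact $D$. In Proposition~\ref{pro:estimateat0} the change of variable $\nu=E^{\frac12}(1-\al^2)^{\frac12}\la$ introduces the factors $(1-\al^2)^{-3/2}$ and $(1-\al^2)^{-2}$ multiplying $E^{-1}$ and $E^{-2}$ in the remainder $R(\nu)$; likewise the WKB error function $\rho_{\pm1}$ of Lemma~\ref{lem:errors} and the phase asymptotics of Lemma~\ref{lem:phasefunction} acquire negative powers of $1-\al^2$. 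Substituting the lower bound $1-\al^2\geq c\,E^{-1+\delta}$ turns each such factor into an explicit power of $E$, so the matching estimate of Proposition~\ref{pro:wronskian} degrades from the uniform $E^{-\frac13}$ to a bound of the form $E^{-\eta(\delta)}$; one re-optimises the free exponent $\gamma$ in the WKB domain to make $\eta(\delta)$ as large as possible.

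Finally I would run Rouch\'e's theorem exactly as in the proof of Theorem~\ref{thm:asymptoticzeros}, with $p=\sin\big(\Phi(\al;E)\big)$ and $q=E^{-\frac12}W[\psi_0,\psi_+]-\sin\big(\Phi(\al;E)\big)$, but on the circle of radius $\kappa E^{-\rho}$ in the $\al$-variable with $\rho=\delta+\tfrac13$; after the scaling $z=E^{\frac12}\al$ this is precisely a disc of radius $\kappa E^{-\delta+\frac16}$ in $z$. Along this circle a Taylor expansion of $p$ gives $|p|\geq c\,E^{1-\rho}|1-\al^2|^{\frac12}\kappa$, of order $E^{\frac16-\frac\delta2}\kappa$, while $|q|$ is controlled by the degraded matching error; the condition $\delta>\tfrac13$ is exactly the threshold guaranteeing that, for $E$ large and $\kappa$ chosen large enough, $|q|<|p|$ holds uniformly on the circle. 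Rouch\'e then yields that $W[\psi_0,\psi_+]$ and $\sin\Phi$ have the same number of zeros---namely one, by the analogue of Lemma~\ref{lem:system}---inside each such disc; via the reduced inverse monodromy problem and the scaling $z=E^{\frac12}\al$ these zeros are precisely the roots of $H_{m,n}$, which is the assertion.

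The main obstacle is the uniform control of the WKB approximation as $\al\to\pm1$. As $\al\to1$ the outer turning point $\la_+=E^{\frac12}(1-\al)+\mathcal{O}(E^{-\frac12})$ of~\eqref{eq:la+} drifts towards the origin and towards the inner turning point $\la_-$, so the two turning points cease to be uniformly separated and the WKB error $\rho_{\pm1}$ near $\la_+$ can no longer be absorbed into a constant. Redoing Lemma~\ref{lem:errors} and Lemma~\ref{lem:phasefunction} with the precise $(1-\al^2)$-dependence is the delicate computation on which the whole result rests, and it is there that the cut-off $\delta>\tfrac13$---equivalently, the rate $E^{-1+\delta}$ at which $D_E$ may approach $\pm1$---emerges as the sharp condition under which the leading oscillatory term still dominates the accumulated errors.
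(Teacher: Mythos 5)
Your proposal is correct and follows essentially the same route as the paper: an $E$-dependent domain $D_E$ kept at distance $cE^{-1+\delta}$ from $\al=\pm1$, re-derivation of Propositions~\ref{pro:apparent}, \ref{pro:estimateat0}, \ref{pro:wkbasy}, \ref{pro:wronskian} and Lemma~\ref{lem:system} with $(1-\al^2)$-dependent error bounds, and then Rouch\'e's theorem with the same $p$, $q$ on circles of radius $\kappa E^{-\rho}$, $\rho=\delta+\tfrac13$. The only thing you leave implicit is the explicit value of the re-optimised exponent (the paper takes $\gamma=\tfrac{3\delta-1}{6}$, so your $\eta(\delta)=\tfrac{3\delta-1}{6}$, which is exactly of the same order as your lower bound $E^{\frac16-\frac{\delta}{2}}\kappa$ for $|p|$, forcing $\kappa$ large rather than $E$ large to close the Rouch\'e inequality), but this is a computation, not a gap.
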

	\begin{proof} We assume that $\al \in D_E$, where $D_E$ is a compact simply connected domain whose distance to the points $\al=\pm1$ is bigger or equal than a constant times~$E^{1-\delta}$. In other words, there exists a $c>0$ such that $\sup\limits_{\al \in D_E}|(1-\al^2)^{-1}| \leq c E^{\delta-1}$.
		
It turns out that the statement of Propositions \ref{pro:apparent}, \ref{pro:estimateat0}, \ref{pro:wkbasy} and \ref{pro:wronskian} holds true if we appropriately modify the error bounds.
		
Below, we list the necessary modif\/ication, whose verif\/ication is straightforward.
		\begin{itemize}\itemsep=0pt
			\item[(i)] In Proposition~\ref{pro:apparent}(ii). The equation~\eqref{eq:estimate0} must be modif\/ied as follows.
			For $j \in J_n$, the branch $\beta_j(\al)$ has the following asymptotic expansion
			\begin{gather*}
			\beta_j(\al,E)=j i \big(1-\al^2\big)^{\frac12} \big(1+E^{\frac{1-3 \delta}{2}}r_j(\al,E))\big) ,
			\end{gather*}
			where $r_j(\al,E)$ is a bounded function on $D_E \times [E_0,\infty)$.
			\item[(ii)]Proposition \ref{pro:estimateat0}, having f\/ixed $\gamma=\frac{3 \delta-1}6$,
			holds true if the error terms in the estimate
			\eqref{eq:estimate0}
			are modif\/ied as follows
			\begin{gather*}\nonumber
			 |\psi_+(\la)-\Sigma(\la;j,\varphi,\chi \big)| \leq C E^{-\frac{3\delta-1}{6}} , \\
			\big| E^{-\frac12} \psi_+^{\prime}(\la)+
			\Sigma\big(\la;j,\varphi+\tfrac{1}{2}\pi,\chi\big)\big| \leq C E^{-\frac{3\delta-1}{6}} .
			\end{gather*}
			\item[(iii)]Proposition \ref{pro:wkbasy}, having f\/ixed $\gamma=\frac{3 \delta-1}6$,
			holds true if the error bound in equations \eqref{eq:estimate0}
			is modif\/ied as follows
			\begin{gather*}
			 \big|\psi_0(\la;\al)-\Sigma(\la;j,\varphi',\chi') \big|\leq C E^{-\frac{3\delta-1}{6}} \big(|\Sigma(\la;j,\varphi',\chi')|+
			\big|\Sigma\big(\la;j,\varphi'+\tfrac{1}{2}\pi,\chi'\big)\big|\big), \\
			 \left|E^{-\frac12} \frac{\partial \psi_0(\la; \al)}{\partial \la}+\Sigma\big(\la;j,\varphi'+\tfrac{1}{2}\pi,\chi'\big)\right|\\
\qquad{} \leq C E^{-\frac{3\delta-1}{6}} \big(|\Sigma(\la;j,\varphi',\chi')|+\big|\Sigma\big(\la;j,\varphi'+\tfrac{1}{2}\pi,\chi'\big)\big|\big) .
			\end{gather*}
			\item[(iv)]Proposition \ref{pro:wronskian} holds true if the error bound in equation \eqref{eq:wrdec}
			is modif\/ied as follows
			\begin{align*}
			& E^{-\frac12}W[\psi_0,\psi_+]=\sin\big(\Phi(\al;E)\big)
			+ E^{-\frac{3\delta-1}{6}} R(\al,E) .
			\end{align*}
			\item[(v)]Finally the estimate of Lemma \ref{lem:system} about the distance among solutions $\tilde{\al}_{j,k}$
			of equation~\eqref{eq:wronskian}
			and the points $\al_{j,k}$ is modif\/ied as follows: For $k$ in the range of the present Theorem,
			there exists a constant $C$
			depending on $s$ such that
	 \begin{gather*}
	 |\tilde{\al}_{j,k}- \al_{j,k}|\leq C E^{-2 \delta} \log^2 E .
	 \end{gather*}
		\end{itemize}
		We notice that as $\Re \al \to 1^- $, we have $f(\al)=\frac{2^{\frac32}}{3}(1-\al)^{\frac32}+\mathcal{O}\big((1-\al)^{\frac52}\big)$
		(and a similar behaviour for $\Re \al \to -1^+$). Therefore, we can appropriately choose a $c>0$ such that all solutions of the system~\eqref{eqrealandimaginaryI}, with $k$ in the given range, belong to the strip $ -1+cE^{-1+\delta}\leq \Re \alpha\leq 1-cE^{-1+\delta}$.
	
The proof can then be continued following the very same steps of the proof of Theorem~\ref{thm:asymptoticzeros}; we prove that $H_{m,n}\big(E^{\frac12}\al\big)$ has one and only one zero in each disc of the form $|\al-\al_{j,k}|\leq C_s E^{-\frac{5+3 \delta}6}$. Indeed we again set $p=\sin (\Phi(\al;E) ) $ and $q=E^{-\frac12} W[\psi_0,\psi_+](\al;E)-\sin (\Phi(\al;E) )$.
		
		We f\/ix some $\kappa,\rho>0$ and estimate the value of $|p(\al)|$ for $\al$ belonging to the circle with centre
		$\al_{j,k}$ and radius $\kappa E^{-\rho}$. By means of a Taylor series estimate,
		and taking in account that $|1-\al^2|\geq c E^{-1+\delta}$ and $\delta>\frac13$, we obtain the following lower bound of $p$
		on the aforementioned circles: if $E$ is big enough, there exists a
		$c'$ (depending on $c$) such that, on the union of all these circles,
		$|p(\al)|\geq c' E^{\frac{1+\delta}{2}-\rho}$ provided $\rho>1-\delta$.
		Due to the above modif\/ication of Proposition \ref{pro:wronskian}, there exists a $c''$ (depending on $c$) such that
		on the union of the circle $|q(\al)|\leq c'' E^{-\frac{3\delta+1}{6}} $.
		
		Therefore as long as $1-\delta<\rho\leq \delta+\frac13$, and $E$ is big enough, we can f\/ind
		a $\kappa$ such that $|q| < |p|$ on the aforementioned circle. The thesis then follows from Rouch\'e's theorem.
	\end{proof}
	
\subsection{Edge asymptotics}\label{sub:edge}
In this last section of the paper we brief\/ly discuss the edge asymptotic in the regime $n$ bounded and $E=2m+n \to \infty$.
We anticipate below some preliminary results about the edge-asymptotic and we postpone more details and full proofs to a further publication.

Recall that the proof of Theorem \ref{thm:asymptoticzerosedge} breaks down when $1-\al^2=\mathcal{O}\big(E^{-\frac23}\big)$, because in that regime
the error of our asymptotic method is of the same order as the distance between the roots, as depicted in Fig.~\ref{fig:edge}.
\begin{figure}[ht]
	\centering
	\begin{tabular}{ c c }
		{\setlength{\fboxsep}{0pt}%
			\setlength{\fboxrule}{0.2pt}%
			\fbox{\includegraphics[width=7cm]{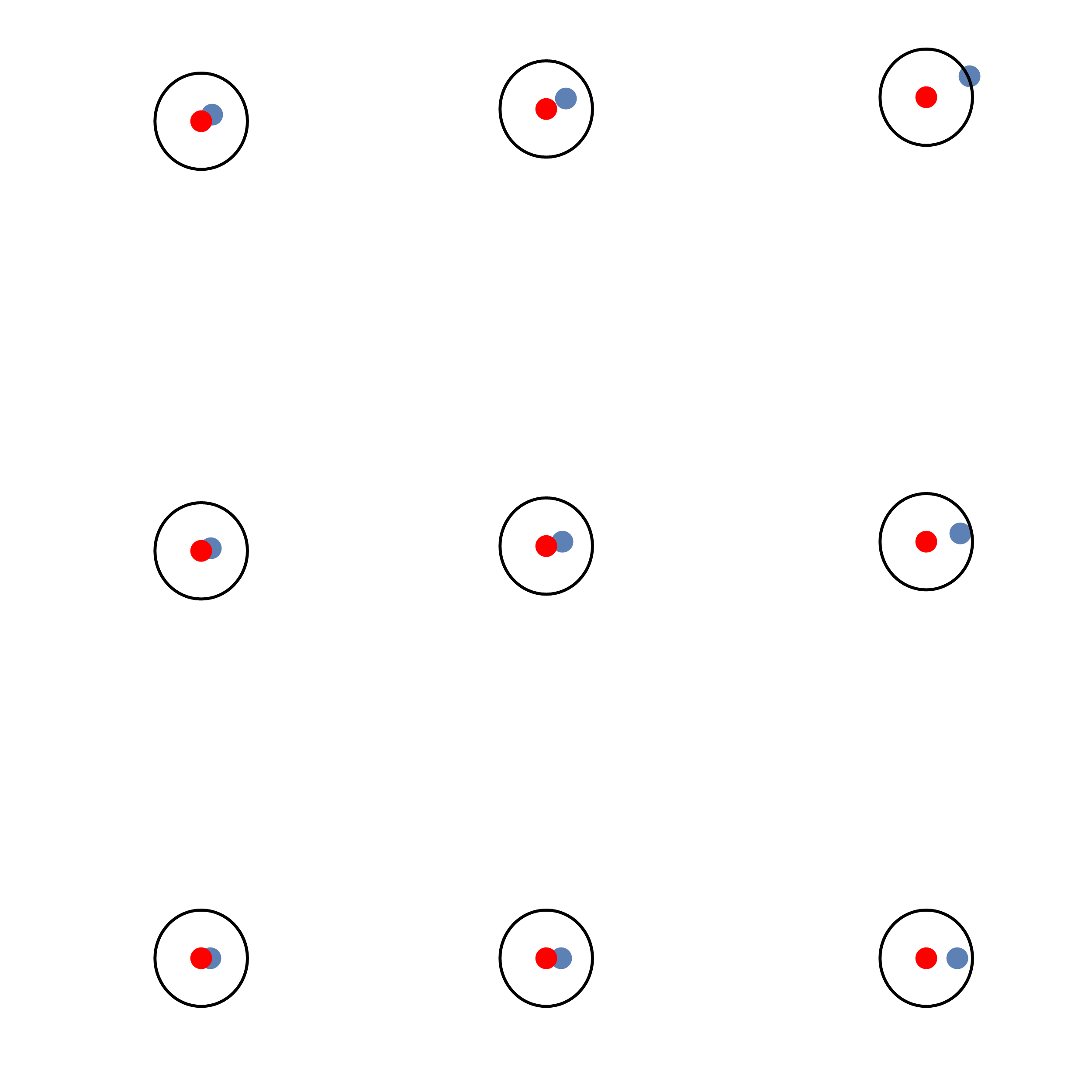}}} &	{\setlength{\fboxsep}{0pt}%
			\setlength{\fboxrule}{0.2pt}%
			\fbox{\includegraphics[width=7cm]{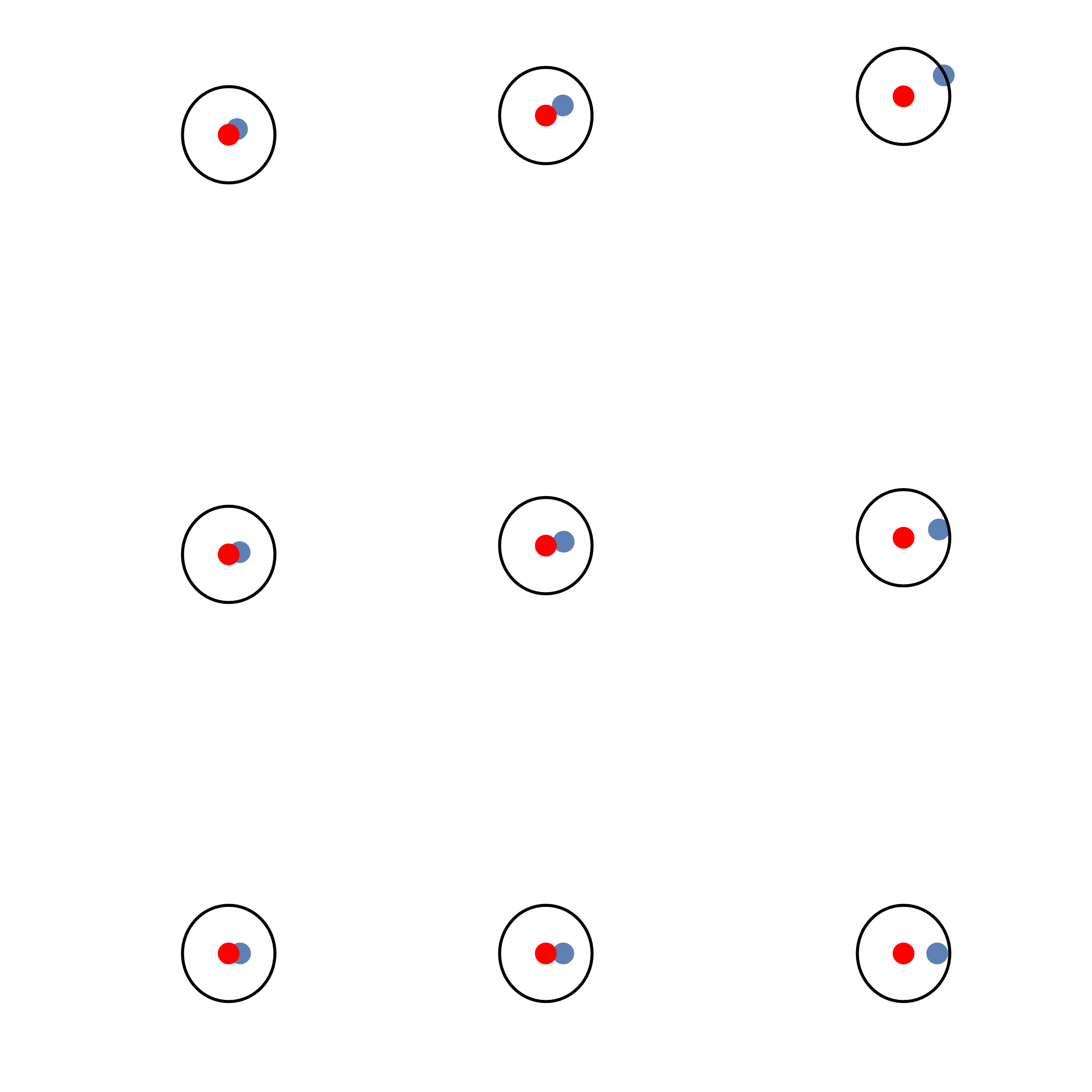}}}\\
	$m=16$ & $m=144$
	\end{tabular}
	\caption{Rescaled roots of $H_{m,5}$ in upper right corner. In blue numerically exact location, in red the asymptotic approximation $\al_{j,k}$, encircled by a circle of radius $C_s E^{-\frac{2}{3}}$, with $C_s=\frac{1}{6}$, for the range $\frac{m+1}{2}-k\in\{0,1,2\}$, $j\in\{0,2,4\}$. Even though the approximation by~$\al_{j,k}$ still look reasonably good, it does not give the dominant term in the asymptotic expansion of extreme roots. This is ref\/lected in the fact that the absolute error of Theorem \ref{thm:asymptoticzerosedge}, if extrapolated at the value $\delta=\frac13$,
	is of the same order of the distance between roots, namely~$E^{-\frac{2}{3}}$ .} \label{fig:edge}
\end{figure}

However, our procedures can be adapted appropriately to study the limit $1-\al^2=\mathcal{O}\big(E^{-\frac23}\big)$. Without losing generality we restrict our discussion to the case $\al \to 1$.

As a f\/irst step we make the Ansatz $1-\al=\kappa E^{-\frac23}$ and look for $\kappa$ of order $E^0$ that solves the inverse monodromy problem for Hermite~I rational solutions of ${\rm P}_{\rm IV}$, equation~\eqref{eq:hIK}. Upon inserting the Ansatz in equation~\eqref{eq:hIK}, redef\/ining the unknown
$\beta \to E^{\frac13}\beta$, and discarding lower order contributions, equation~\eqref{eq:hIK} reads
\begin{gather*}\nonumber
\psi''(\lambda)=V_{\kappa} (\lambda;\kappa,\beta,E,n )\psi(\lambda), \\ 
 V_{\kappa}(\la)= \lambda^2+2 E^{\frac12}\lambda-2 \kappa E^{\frac13}
-\frac{E^{\frac16 }\beta}{\lambda}+\frac{n^2-1}{4 \la^2} .
\end{gather*}
We look for $(\kappa,\beta)$ such that no-logarithm and quantisation conditions hold.

{\bf No-logarithm in the edge.} Reasoning as in Proposition \ref{pro:apparent}, we can express the unknown $\beta$ as an n-valued function of the unknown~$\kappa$. To this aim, we apply the change of coordinates $\nu=E^{-\frac16} \la$. The resulting equation reads
\begin{gather*}
\psi''= \left(E^{-\frac23}\nu^2 + 2 \nu-2\kappa - \frac{\beta}{\nu} + \frac{n^2-1}{4 \nu^2}\right) \psi .
\end{gather*}
Therefore $\beta=\beta_j(\kappa) \big(1+\mathcal{O}(E^{-\frac23})\big)$ where $\beta_j (\kappa)$, $j \in J_n $ is one of the $n$ solutions of the no-logarithm condition for the equation
\begin{gather}\label{eq:hedgenoinfinity}
\psi''= \left( 2 \nu-2\kappa - \frac{\beta}{\nu} + \frac{n^2-1}{4 \nu^2}
\right) \psi .
\end{gather}
The above equation does not belong to the any standard class of linear special functions listed in~\cite{nist} or~\cite{bateman2}. In particular, the solution of the no-logarithm condition for the latter equation does not seem to have a simple factorisation. For the sake of the present discussion we assume that for $\Re \kappa>0$, the functions $\beta_j(\kappa)$ are all distinct as in the case of the Whittaker equation.

{\bf Airy-like model near the origin.} Applying again the change of variable $\nu=E^{-\frac16} \la$, and reasoning as in Proposition~\ref{pro:estimateat0}, we deduce that the solution $\psi_+$ is asymptotic to the solution $u_+(\nu)$, subdominant at $\lambda=0$, of the equation
\begin{gather}\label{eq:hedgenoinfinityj}
u''(\nu)= \left( 2 \nu-2\kappa - \frac{\beta_j(\kappa)}{\nu} + \frac{n^2-1}{4 \nu^2}\right) u(\nu) .
\end{gather}

The latter equation is an Airy equation with a centrifugal term. The solution $u_+(\nu)$ is thus for generic values of $\kappa$
dominant at $+\infty$. However, we say that $\kappa$ is an eigenvalue of equation~\eqref{eq:hedgenoinfinityj} if
$u_+(\nu)$ is subdominant at~$+\infty$.

{\bf Matching.} The asymptotic analysis of the solution $\psi_0$ simplif\/ies in this regime. In fact, in the large $E$ limit, the turning points $\la_+$ and $\la_-$ coalesce with the Fuchsian singularity $\la=0$. Therefore the standard approximation $\psi_0\sim V_A^{-\frac14}e^{-S_A}$ is valid in a region close to $\la=0$, where the approximation of $\psi_+$ by $u_+$ is also valid. Finally, reasoning as in Proposition~\ref{pro:wronskian}, we deduce that, asymptotically,
the solutions $\psi_+$ and $\psi_0$ are proportional if and only if $\kappa$ is one of the eigenvalue of~(\ref{eq:hedgenoinfinityj}).\footnote{Remarkably a slightly simpler version of this problem was shown to 	model the growth-rate distribution of E.~Coli, see~\cite{cinese1} by one of the authors.}

{\bf Extremal asymptotic.} We have therefore arrived at a (preliminary) description for the edge asymptotic for roots with extreme real part,
without entering into the details of estimating the error of the approximation: Let $(\kappa,\beta)$ be such that the Airy-like equation~(\ref{eq:hedgenoinfinity}) satisf\/ies the no-logarithm condition and the quantisation condition $\lim\limits_{\nu \to + \infty} u_+(\nu) =0$. If $E$ is big enough, there exists a root of
the generalised Hermite polynomial $H_{m,n}(\alpha)$ with the asymptotic behaviour $1-\al=\kappa E^{-\frac23}+o(E^{-\frac23})$.

\begin{rem*}
	In the case $n=1$, the above description coincides with the well known edge-asymptotic for roots of the Hermite polynomial \cite{bateman2}.	
	Indeed, in the case $n=1$, the only solution of the no-logarithm condition is $\beta=0$: the equation \eqref{eq:hedgenoinfinity}
	reduces to the standard Airy equation with potential $2\la-2\kappa$ and $u_+$ is the solution of the Airy equation
	that vanishes at $\la=0$. Therefore the eigenvalues equal $\kappa_n=-2^{-\frac13}\e_n, n \geq 1$ where
	$\e_n$ is the $n$-th zero of the Airy function on the imaginary axis.
\end{rem*}

\section{Concluding remarks}
All singularities of Painlev\'e IV rationals can be expressed in terms of zeros of generalised Hermite~$H_{m,n}$
and Okamoto polynomials $Q_{m,n}$. We have characterised those zeros by means of the inverse monodromy problem for an anharmonic
oscillator of degree two. We have shown that these oscillators in turn characterise three special classes of
inf\/initely sheeted coverings of the sphere. In the case of generalised Hermite polynomials we have also explicitly computed the
monodromy representation of such coverings by means of line complexes; as a by-product we obtained the number of real roots.
Finally we have computed the asymptotic distribution in the bulk of zeros of
generalised Hermite polynomials in the regime $m\gg0$ and $n$ f\/ixed; we have also presented some preliminary results
about the edge asymptotics.

Many questions about rational solutions of Painlev\'e~IV remain open and we plan to address
some of them by means of the methods developed in the present paper.
These include, in the case of generalised Hermite polynomials,
a precise description of zeros in the edge~-- in particular the cross-over between the bulk and the edge behaviour described by the
large $\kappa$ limit of equation~\eqref{eq:hedgenoinfinityj}~-- and the asymptotic distribution of zeros in the more general regime
$r:=m/n\in (0,1]$ f\/ixed and both $m,n\rightarrow \infty$.
The latter is in principle a straightforward extension of our results,
even though the error analysis presents some new analytical challenges.

The real testing ground of our methods is the analysis of Okamoto polynomials.
On the theoretical side, we have yet to f\/ind a combinatorial classif\/ication
of line complexes of the functions in the class $F_3^n$ discussed in Section \ref{section:nevanlinna}. Such a classif\/ication
would lead, along the lines of Corollary \ref{cor:numberrealroots}, to compute the number of real zeros.
We also plan to tackle the asymptotic analysis of the distribution of zeros of generalised Okamoto polynomials, but at this stage it is uncertain
whether our methods allows to deal with the greater complexity of these polynomials.

\appendix
\section{Generalised Hermite and Okamoto polynomials}
\label{appendix:generalisedhermite}
The generalised Hermite polynomials $H_{m,n}(z)$, where $m,n\in\mathbb{N}$, are recursively def\/ined by
	\begin{gather*}
	2mH_{m+1,n}H_{m-1,n}=H_{m,n}H_{m,n}''- (H_{m,n}' )^2+2mH_{m,n}^2,\\
	2nH_{m,n+1}H_{m,n-1}=-H_{m,n}H_{m,n}''+ (H_{m,n}' )^2+2nH_{m,n}^2,
	\end{gather*}
with $H_{0,0}=H_{1,0}=H_{0,1}=1$ and $H_{1,1}=2z$. Specialising to $n=1$, we obtain the classical $m$-th Hermite polynomial, that is,
\begin{gather*}
H_{m,1}(z)=(-1)^me^{z^2}\frac{\partial^m}{\partial z^m}\big[e^{-z^2}\big].
\end{gather*}
Similarly the generalised Okamoto polynomials $Q_{m,n}$, where $m,n\in\mathbb{Z}$, are def\/ined via the recursion
\begin{gather*}
Q_{m+1,n}Q_{m-1,n}=\tfrac{9}{2}\big(Q_{m,n}Q_{m,n}''- (Q_{m,n}' )^2\big)+\big(2z^2+3(2m+n-1)\big)Q_{m,n}^2,\\
Q_{m,n+1}Q_{m,n-1}=\tfrac{9}{2}\big(Q_{m,n}Q_{m,n}''+ (Q_{m,n}' )^2\big)+\big(2z^2+3(1-m-2n)\big)Q_{m,n}^2,
\end{gather*}
with $Q_{0,0}=Q_{1,0}=Q_{0,1}=1$ and $Q_{1,1}=\sqrt{2}z$. The polynomials $Q_{m,0}$ and $Q_{m,1}$ with $m\in\mathbb{Z}$, were originally introduced by Okamoto \cite{okamoto}, to describe the subcases of~\eqref{eq:parameterokamoto} where $\theta_0\in\big\{\pm \frac{1}{6},\pm \frac{1}{3}\big\}$.
\begin{pro}\label{pro:elementaryprop}
	For $m,n\in\mathbb{N}$, the polynomial $H_{m,n}$ has only simple roots and none common with $H_{m+1,n}$, $H_{m,n+1}$ and $H_{m-1,n+1}$. Furthermore $H_{m,n}\big(\frac{1}{2}z\big)$ is monic of degree $mn$ with integer coefficients and we have the symmetry
	\begin{gather*}
	H_{m,n}(iz)=i^{mn}H_{n,m}(z).
	\end{gather*}
	For $m,n\in \mathbb{Z}$, the polynomial $Q_{m,n}$ has only simple roots and none common with $Q_{m+1,n}$, $Q_{m,n+1}$ and~$Q_{m-1,n+1}$. Furthermore $Q_{m,n}\big(\frac{1}{\sqrt{2}}z\big)$ is monic of degree $d_{m,n}=m^2+n^2+mn-m-n$ with integer coefficients and we have the symmetries
	\begin{gather*}
	Q_{m,n}(iz)=i^{d_{m,n}}Q_{n,m}(z),\qquad 
	Q_{m,n}(z)=Q_{-m-n+1,m}(z).
	\end{gather*}
\end{pro}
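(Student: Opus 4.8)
I would prove the whole proposition by a simultaneous induction on $m$ and $n$ directly from the two defining recursions, using the symmetries to halve the work. The arguments for $H_{m,n}$ and $Q_{m,n}$ run in parallel, so I describe the Hermite case in detail and indicate the modifications for Okamoto.

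First I would settle the degree, monicity and integrality statements. Writing $d_{m,n}=\deg H_{m,n}$, the dominant contribution to the right-hand side of the first recursion is $2mH_{m,n}^2$, since the bracket $H_{m,n}H_{m,n}''-(H_{m,n}')^2$ has degree at most $2d_{m,n}-2$; hence $d_{m+1,n}+d_{m-1,n}=2d_{m,n}$, and similarly in $n$. With the base values $d_{m,0}=d_{0,n}=0$ (one checks $H_{m,0}=H_{0,n}=1$ by induction) and $d_{1,1}=1$, this forces $d_{m,n}=mn$. Passing to $\widehat H_{m,n}(z):=H_{m,n}(\tfrac12 z)$ the recursions acquire integer coefficients, and tracking leading terms shows $\widehat H_{m,n}$ is monic while the factor $2m$ in the recursion is exactly absorbed, so a divisibility bookkeeping keeps the coefficients integral. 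For the Okamoto polynomials the only change is that the degree-raising term $2z^2Q_{m,n}^2$ gives $d_{m,n}+d_{m-1,n}\mapsto d_{m+1,n}+d_{m-1,n}=2d_{m,n}+2$, a recursion whose solution with the stated base cases is the quadratic $d_{m,n}=m^2+n^2+mn-m-n$.

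Next I would prove the symmetries. The key observation is that under $z\mapsto iz$ the two derivatives in the bracket contribute a factor $i^2=-1$, so the pullback of $H_{m,n}H_{m,n}''-(H_{m,n}')^2$ equals $-(H_{m,n}H_{m,n}''-(H_{m,n}')^2)$ evaluated at $iz$; consequently the substitution $z\mapsto iz$ interchanges the two Hermite recursions. Setting $G_{m,n}(z):=i^{-mn}H_{m,n}(iz)$, a direct computation converts the $n$-recursion for $H_{m,n}$ into $2nG_{m,n+1}G_{m,n-1}=G_{m,n}G_{m,n}''-(G_{m,n}')^2+2nG_{m,n}^2$, which is precisely the first recursion defining $H_{n,\cdot}$; together with the base checks $G_{m,0}=1=H_{0,m}$ and $G_{1,1}=i^{-1}\cdot2iz=2z=H_{1,1}$ this yields $G_{m,n}=H_{n,m}$, i.e.\ $H_{m,n}(iz)=i^{mn}H_{n,m}(z)$. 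For the Okamoto polynomials the term $2z^2$ also changes sign under $z\mapsto iz$, so the same mechanism gives $Q_{m,n}(iz)=i^{d_{m,n}}Q_{n,m}(z)$; the second Okamoto symmetry $Q_{m,n}=Q_{-m-n+1,m}$ follows because the index substitution $(m,n)\mapsto(-m-n+1,m)$ sends $3(2m+n-1)$ to $3(1-m-2n)$, hence interchanges the two recursions while preserving the degree, so both sides satisfy the same recursion with matching initial data.

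The heart of the matter is simplicity of the roots together with the three coprimality statements, which I would treat by a single coupled induction. The straight coprimalities are cheap: evaluating the first recursion at a common root $z_0$ of $H_{m,n}$ and $H_{m+1,n}$ annihilates the left-hand side and leaves $-(H_{m,n}'(z_0))^2=0$, so $z_0$ is a multiple root of $H_{m,n}$; thus \emph{if $H_{m,n}$ has only simple roots it is coprime to $H_{m+1,n}$}, and the same computation with the second recursion gives coprimality with $H_{m,n+1}$, while the symmetry $H_{m,n}(iz)=i^{mn}H_{n,m}(z)$ lets me pass between the two directions. What the Toda recursions do \emph{not} reach is the diagonal pair $H_{m,n}$ and $H_{m-1,n+1}$, and this is the main obstacle: indeed, closing the induction forces a hypothetical double root of $H_{m,n}$ to be a common root of $H_{m+1,n}$ and $H_{m,n+1}$, a contradiction precisely when that diagonal pair is coprime. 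I would supply the diagonal coprimality from the first-order bilinear (Hirota) identities underlying Table~\ref{table:zerosandpoles}, namely the Wronskian factorisation $H_{m-1,n}H_{m,n}'-H_{m,n}H_{m-1,n}'=c\,H_{m-1,n+1}H_{m,n-1}$ (with $c\neq0$) and its lattice analogues, which reflect the B\"acklund/affine-Weyl $A_2^{(1)}$ structure of ${\rm P}_{\rm IV}$. Equivalently, and more transparently, all three coprimalities and the simplicity can be read off the main text: by Table~\ref{table:zerosandpoles} the roots of $H_{m,n}$ are poles and the roots of $H_{m-1,n+1}$ are zeros of the single rational solution $\omega_{m-1,n}^{({\rm I})}$, while the Laurent expansions \eqref{eq:laurentzero}--\eqref{eq:laurentpole} show that a ${\rm P}_{\rm IV}$ transcendent has only simple zeros and simple poles with residue $\pm1$ and that no point is simultaneously a zero and a pole. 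The Okamoto statements follow by the identical strategy applied to the Okamoto recursions and to the corresponding line of Table~\ref{table:zerosandpoles}.
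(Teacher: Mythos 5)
The paper does not actually prove this proposition: its ``proof'' is the single line ``See Noumi and Yamada \cite{noumiyamada}'', where these facts come out of the $\tau$-function (Jacobi--Trudi) representation of the polynomials. Your attempt therefore has to stand on its own, and it does not, because the step you yourself single out as ``the heart of the matter'' --- coprimality of the diagonal pair $H_{m,n}$, $H_{m-1,n+1}$ --- is never established. Route (a) asserts the Wronskian identity $H_{m-1,n}H_{m,n}'-H_{m,n}H_{m-1,n}'=c\,H_{m-1,n+1}H_{m,n-1}$ with $c\neq 0$ without proof; this bilinear identity is of exactly the same depth as the statement being proven (it is one of the differential-difference relations that Noumi--Yamada derive from the $\tau$-function machinery), so invoking it begs the question. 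Route (b) is circular inside this paper: Table~\ref{table:zerosandpoles} is computed \emph{from} the representations \eqref{eq:hermiterationals} and \eqref{eq:okamotorationals}, and its exactness already encodes simplicity and pairwise coprimality. Indeed, from $\omega^{({\rm I})}_{m,n}=\tfrac{d}{dz}\log(H_{m+1,n}/H_{m,n})$ together with the Laurent expansions \eqref{eq:laurentzero}--\eqref{eq:laurentpole}, all one can conclude at a point where $H_{m+1,n}$ and $H_{m,n}$ vanish to orders $k_1$ and $k_2$ is that $k_1-k_2\in\{-1,0,1\}$; a configuration such as $k_1=k_2=3$ is perfectly compatible with the ODE (the solution is simply regular there). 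Excluding it is precisely the content of simplicity plus coprimality, so these cannot be ``read off'' Table~\ref{table:zerosandpoles} without assuming what is to be proved. Your induction scaffold for simplicity, the straight coprimalities, and the degree count are fine, but they all hang on this missing input.

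Your proof of the Okamoto symmetry $Q_{m,n}=Q_{-m-n+1,m}$ is also wrong as stated. Under $\sigma\colon(m,n)\mapsto(m',n')=(-m-n+1,m)$ the vertical steps $(m,n\pm1)$ do map to horizontal steps $(m'\mp1,n')$, so the second recursion transforms into a first-index recursion; but the horizontal steps $(m\pm1,n)$ map to the \emph{diagonal} steps $(m'\mp1,n'\pm1)$, so the first recursion transforms into a Toda-type relation connecting $Q_{m'-1,n'+1}$ and $Q_{m'+1,n'-1}$, which is not one of the two given recursions. Hence $\sigma$ does not ``interchange the two recursions'', and the identification ``same recursion, same initial data'' collapses; the missing diagonal relation is again an $A_2^{(1)}$ fact supplied only by the Noumi--Yamada construction. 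Relatedly, your ``same mechanism'' claim for $Q_{m,n}(iz)=i^{d_{m,n}}Q_{n,m}(z)$ fails on the paper's printed formulas: under $z\mapsto iz$ each bracket $QQ''\mp(Q')^2$ flips overall sign but keeps its type, while the two printed Okamoto recursions have brackets of \emph{different} type ($-(Q')^2$ versus $+(Q')^2$). Concretely, the printed recursions give $Q_{2,1}=4z^4+12z^2-9$ and $Q_{1,2}=4z^4-12z^2+9$, violating the claimed symmetry; there is a sign typo in the paper that any direct-manipulation proof must detect and repair, and your proposal glosses over it. Finally, the integrality of the coefficients (``divisibility bookkeeping'': one divides by $2m$ \emph{and} by $H_{m-1,n}$ at every step) and the well-definedness/path-independence of the over-determined two-recursion definition are asserted rather than proven. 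As it stands, the proposal is a plausible outline whose two crucial steps --- the diagonal coprimality and the Okamoto index symmetry --- are exactly the ones that are missing.
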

\begin{proof}
	See Noumi and Yamada \cite{noumiyamada}.
\end{proof}

\section{B\"acklund transformations}\label{section:backlund}
Okamoto \cite{okamoto} f\/irst uncovered that ${\rm P}_{\rm IV}$ enjoys the action of the af\/f\/ine Weyl group $A_1^{(1)}$ by means
of B\"acklund transformations on its solutions, see also Noumi and Yamada \cite{noumiyamada}.
In this study we only require the including translations, generated by
\begin{alignat}{3}
&\mathcal{R}_1\colon \quad&& \om^{(1)}= \frac{(\om'+4\theta_0)^2+8(\theta_\infty-\theta_0)\om^2-\om^2(\om+2z)^2}{2\om(\om^2+2z\om-\om'-4\theta_0)},&\nonumber \\
&\mathcal{R}_2\colon \quad &&\om^{(2)}= \frac{(\om'-4\theta_0)^2+8(\theta_\infty-1-\theta_0)\om^2-\om^2(\om+2z)^2}{2\om(\om^2+2z\om+\om'-4\theta_0)},&\nonumber\\
&\mathcal{R}_3\colon \quad & &\om^{(3)}= \frac{(\om'-4\theta_0)^2+8(\theta_\infty+\theta_0)\om^2-\om^2(\om+2z)^2}{2\om(\om^2+2z\om-\om'+4\theta_0)},&\nonumber\\
&\mathcal{R}_4\colon \quad &&\om^{(4)}= \frac{(\om'+4\theta_0)^2+8(\theta_\infty-1+\theta_0)\om^2-\om^2(\om+2z)^2}{2\om(\om^2+2z\om+\om'+4\theta_0)},&\label{eq:backluend1}
\end{alignat}
where the complex parameters are transformed correspondingly as
\begin{alignat*}{5}
& \theta_0^{(1)}=\theta_0-\tfrac{1}{2},\qquad &&\theta_0^{(2)}=\theta_0+\tfrac{1}{2},\qquad &&
\theta_0^{(3)}=\theta_0+\tfrac{1}{2},\qquad &&\theta_0^{(4)}=\theta_0-\tfrac{1}{2},& \\
& \theta_\infty^{(1)}=\theta_\infty+\tfrac{1}{2},\qquad &&\theta_\infty^{(2)}=\theta_\infty-\tfrac{1}{2},\qquad &&\theta_\infty^{(3)}=\theta_\infty+\tfrac{1}{2},\qquad &&\theta_\infty^{(4)}=\theta_\infty-\tfrac{1}{2}.&
\end{alignat*}
It is these B\"acklund transformations, which can be extended to Schlesinger transformations of the associated linear system~\eqref{eq:laxpair}, as shown by Fokas et al.~\cite{fokasmugan1988}. Note that the following identities hold true
\begin{gather*}
\mathcal{R}_1\mathcal{R}_2=\mathcal{I},\qquad \mathcal{R}_3\mathcal{R}_4=\mathcal{I},\qquad \mathcal{R}_1\mathcal{R}_3=\mathcal{R}_3\mathcal{R}_1,
\end{gather*}
where $\mathcal{I}$ denotes the identity.

\section{WKB Computations}\label{appendix:wkb}
\begin{proof}[Proof of Lemma \ref{lem:phasefunction}]
	After the change of variable $\nu=E^{-\frac12}\mu+\al$, the integral reads
	\begin{gather*}
	S_A(\la)= E \int^{E^{-\frac12}\la_+ +\al}_{E^{-\frac12}\la+\al}
	\sqrt{1-\nu^2+ \frac{ji (1-\al^2)^{\frac12}}{E(\nu-\al)}-\frac{n^2-1}{4 E^2(\nu-\al)^2}} {\rm d}\nu,
	\end{gather*}
where, by equation \eqref{eq:la+}, we have $E^{-\frac12}\la_+ +\al= 1+\frac{ji}{2(1-\al)}E^{-1}+\mathcal{O}(E^{-2})$. We introduce a~f\/ixed but arbitrarily small $\e>0$
	and shift the upper integration limit to get
	\begin{gather*}
	S_A(\la)=E \int^{1+ i E^{-1+\e}}_{E^{-\frac12}\la+\al}
	\sqrt{1-\nu^2+ \frac{ji (1-\al^2)^{\frac12}}{E(\nu-\al)}-\frac{n^2-1}{4 E^2(\nu-\al)^2}} {\rm d}\nu +\mathcal{O}\big(E^{-\frac{1-3\e}{2}}\big).
	\end{gather*}
	Next, expanding the integrand about $E=\infty$, gives{\samepage
	\begin{gather}\nonumber
	S_A(\la)= \int^{1+i E^{-1+\e}}_{E^{-\frac12}\la+\al} \left(E\sqrt{1-\nu^2}+ \tfrac{1}{2}ji
	\frac{ (1-\al^2)^{\frac12}}{(\nu-\al)(1-\nu^2)^{\frac12}} \right) {\rm d}\nu \\ \label{eq:phaseappendix}
\hphantom{S_A(\la)=}{} +\int^{1+i E^{-1+\e}}_{E^{-\frac12}\la+\al} \sum_{k\geq2} E^{-k+1} \frac{c_k}{(\nu-\al)^k(1-\nu^2)^{k-\frac12}} {\rm d}\nu +\mathcal{O}\big(E^{-\frac{1-3\e}{2}}\big),
	\end{gather}
	where the series $\sum c_k w^k$ has positive radius of convergence $r_0$.}

In the f\/irst line, discarding again an error $\mathcal{O}\big(E^{-\frac{1-3\e}{2}}\big)$, the upper integration limit can be replaced by $1$, and the remaining integral can be written explicitly in terms of elementary functions. Indeed, the integral of the f\/irst term of the f\/irst line equals
	\begin{gather*}
	\tfrac{1}{4}E \big({-}2 \big(\al+\la E^{-\frac12}\big) \sqrt{1-\big(\al+\la E^{-\frac12}\big)^2}
	-2\arcsin\big(\al+\la E^{-\frac12}\big)+\pi \big),
	\end{gather*}
	which in the limit $\la\asymp E^{-\frac12+\gamma}$ yields
	\begin{gather*}
	\tfrac{1}{2}E\big({-}\al\big(1-\al^2\big)^{\frac12}+\arccos{\al}\big)-E^{\frac12}\big(1-\al^2\big)^{\frac12}\la+\mathcal{O}\big(E^{-1+2\gamma}\big) .
	\end{gather*}
	The second term of the f\/irst line of~\eqref{eq:phaseappendix} is equal to
	\begin{gather*}
	\tfrac{1}{2}ji\Big({-}\log\la +\tfrac12 \log E+
	\log\Big[\sqrt{\big(1-\al^2\big) \big(1-\big(\al+E^{-\frac12}\la\big)^2\big)}-\al \big(\al+E^{-\frac12}\la\big)+1 \Big] \Big),
	\end{gather*}
	and in the limit $\la\asymp E^{-\frac12+\gamma}$ yields
	\begin{gather*}
	\tfrac{1}{2}ji\big({-}\log\la +\log E^{\frac12}+
	\log2(1-\al^2)\big)+\mathcal{O}\big(E^{-1+\gamma}\big) .
	\end{gather*}
	
To conclude the proof, we show that for $\la\asymp E^{-\frac12+\gamma}$ the second line of \eqref{eq:phaseappendix} is of order $E^{-\gamma}$, if $\e$ is suf\/f\/iciently small. We therefore f\/ix $s \in \bb{C}$, $\Re s >0$ and let $\la=s E^{-\frac12 +\gamma}$ ($\gamma \geq0$).
	
Since the contribution of the endpoints is dif\/ferent, we split the integration contour in two, namely $\int_{E^{-\frac12}\la+\al}^{\frac{1+\alpha}{2}}$ and $\int_{\frac{1+\alpha}{2}}^{1+i E^{-1+\e}}$.

Along the f\/irst sub-contour, after the change of variable $\rho=E^{1-\gamma}(\nu-\al)$, the integrand becomes
	\begin{gather*}
\int_{s}^{E^{1-\gamma} (1-\frac\al2)} \sum_{k \geq 2} E^{-\gamma(k-1)} c_k \rho^{-k}\big(1-\big(E^{\gamma-1}\rho+\al\big)^2\big)^{-k+\frac12} {\rm d}\rho .
	\end{gather*}
Estimating $(1-(E^{\gamma-1}\rho+\al)^2)^{-k+\frac12}$ with its supremum, the radius of convergence of the series is easily estimated to be at least $E^{\gamma}(1-\al^2)s r_0$. Therefore, for $E$ large enough, the integrand is seen to converge uniformly and the dominant contribution is $\mathcal{O}(E^{-\gamma})$.
	
	Similar considerations, along the second sub-contour, lead to a dominant contribution of order $\mathcal{O}(E^{-\frac{1+\e}2})$.
\end{proof}

\begin{proof}[Proof of Lemma \ref{lem:errors}]
We prove the lemma for $\rho_1$. An analogue proof is valid for the case~$\rho_{-1}$.
	
For any potential $V$, we def\/ine the function $R(V,\la)=\frac{4V''(\la)V(\la)-5V'(\la)^2}{V(\la)^\frac52}$.
	
By def\/inition $\rho_{1}\leq \int_{\ell} | R(V_A,\mu) {\rm d}\mu|$ where the oriented integration contour $\ell$ is such that
	along it the real part of the phase function, $\Re{S_A}$, is not decreasing and tends to $+\infty$
	
	To this aim, we f\/ix some $0<r< 1-\Re \al$ and def\/ine the following integration contour
	\begin{gather*}
	\int_{\la,\gamma_S}^{\la_r} |R(V_A,\mu)| {\rm d}\mu +\int_{\la_r,\gamma_A}^{\infty}|R(V_A,\mu)| {\rm d}\mu,
	\end{gather*}
	where $\la_r$ is the intercept with real part $E^{\frac12}r$ of
	the Stokes line $\gamma_S$ def\/ined by the equation $\Re S(\mu)=\Re S(\la)$. The second integral in taken along the anti-Stokes
	line $\gamma_A$, def\/ined by the equation
	$\Im S(\mu)=\Im S(\la_r)$ and the inequality $\Re S(\mu)\geq \Re S(\la_r)$; see Fig.~\ref{fig:stokescomplex}.
	
	We f\/irst analyse the integral along the anti-Stokes line $\gamma_A$. By our choice of the endpoint $\la_r$,
	along $\gamma_A$ we have that $|\mu|\geq C E^{\frac12}$ for some $C>0$.
	We make the change of variable $\nu=E^{-\frac12}\mu$ and expand the integrand around $E=\infty$ to get
	\begin{gather*}
	\int_{\la_r}^{\infty} |R(V_A,\mu)| {\rm d}\mu = E^{-1}\int_{E^{-\frac12}\la_r}^{\infty} |R(V_0,\nu)| {\rm d}\nu\big(1+\mathcal{O}\big(E^{-1}\big)\big),
	\end{gather*}
	where $V_0(\nu)=\nu^2+2\al \nu-(1-\al^2)$. Since $E^{-\frac12}\la_r$ has a well-def\/ined limit dif\/ferent from zero and
	$R(V_0,\nu)$ is integrable at $\infty$, we conclude that the integral along $\gamma_A$ is $\mathcal{O}(E^{-1})$.
	
	We tackle now the contribution from the integration along the Stokes line.
	We split the integration contour in two: We f\/ix $r'>0$ and write
	\begin{gather*}
	\int_{\la,\gamma_S}^{\la'_r} |R(V_I,\mu)| {\rm d}\mu + \int_{\la_1,\gamma_S}^{\la'_r} |R(V_I,\mu)| {\rm d}\mu,
	\end{gather*}
	where $\la'_r$ is the intercept with real part $E^{\frac16}r'$ of the Stokes line $\gamma_S$.
	
A detailed balance analysis shows that for any f\/ixed positive $s$, $s'$, 	in the domain $s E^{-\frac12+\gamma}\leq|\la|\leq E^{\frac16} s'$, $|R(V_A,E)|\leq C E^{-1}|\la|^{-3}$ for $E$ large enough (depending on $s$, $s'$). This means that that the f\/irst integral is $\mathcal{O}(E^{-2\gamma})$.
	
Similarly, for any f\/ixed positive $t$, $t'$, in the domain $ E^{\frac16} t \leq|\Re \la |\leq E^{\frac12} t'$, $|R(V_A,E)|\leq C' |\la| E^{-\frac32}$ for $E$ large enough (depending on $r$, $r'$). This means that the second integral is~$\mathcal{O}(E^{-1})$.
\end{proof}

\subsection*{Acknowledgements}
D.M.~is an FCT Researcher supported by the FCT Investigator Grant IF/00069/2015.
D.M.~is also partially supported by the FCT Research Project PTDC/MAT-STA/0975/2014.
The present work began in December 2015 while D.M.~was a Visiting Scholar at the University
of Sydney funded by the ARC Discovery Project DP130100967.
D.M.~wishes to thank the Department of Mathematics and Statistics of the University of
Sydney and the Centro di Ricerca Matematica Ennio De Giorgi in Pisa for the kind
hospitality.

P.R.~is a research associate at the University of Sydney, supported by Nalini Joshi's ARC Laureate Fellowship Project FL120100094.
P.R.~would like to extend his gratitude to the department of mathematics of the University of Lisbon
and the Centro di Ricerca Matematica Ennio De Giorgi in Pisa, where a major part of this collaboration took place.
P.R.~was also supported by an IPRS scholarship at the University of
Sydney.

We are deeply indebted to Nalini Joshi for her continuous scientif\/ic and material support. We also thank Alexandre Eremenko, Davide Guzzetti, Peter Miller and Walter Van Assche for discussions about the present topic of investigation at various stages of this work.

We also acknowledge the anonymous referees for helping us improving the paper.

\pdfbookmark[1]{References}{ref}
\LastPageEnding


\begin{thebibliography}{99}
\footnotesize\itemsep=0pt

\bibitem{bender98quasi}
Bender C.M., Boettcher S., Quasi-exactly solvable quartic potential,
 \href{https://doi.org/10.1088/0305-4470/31/14/001}{\textit{J.~Phys.~A: Math. Gen.}} \textbf{31} (1998), L273--L277,
 \href{https://arxiv.org/abs/physics/9801007}{physics/9801007}.

\bibitem{bertolavorob}
Bertola M., Bothner T., Zeros of large degree {V}orob'ev--{Y}ablonski
 polynomials via a {H}ankel determinant identity, \href{https://doi.org/10.1093/imrn/rnu239}{\textit{Int. Math. Res.
 Not.}} \textbf{2015} (2015), 9330--9399, \href{https://arxiv.org/abs/1401.1408}{arXiv:1401.1408}.

\bibitem{boutroux}
Boutroux P., Recherches sur les transcendantes de {M}. {P}ainlev\'e et
 l'\'etude asymptotique des \'equations dif\/f\'erentielles du second ordre,
 \href{https://doi.org/10.24033/asens.661}{\textit{Ann. Sci. \'Ecole Norm. Sup.~(3)}} \textbf{30} (1913), 255--375.

\bibitem{buckinghampiv}
Buckingham R., Large-degree asymptotics of rational {P}ainlev\'e-{IV} functions
 associated to generalized {H}ermite polynomials, \href{https://arxiv.org/abs/1706.09005}{arXiv:1706.09005}.

\bibitem{buckinghamnoncr}
Buckingham R.J., Miller P.D., Large-degree asymptotics of rational
 {P}ainlev\'e-{II} functions: noncritical behaviour, \href{https://doi.org/10.1088/0951-7715/27/10/2489}{\textit{Nonlinearity}}
 \textbf{27} (2014), 2489--2578, \href{https://arxiv.org/abs/1310.2276}{arXiv:1310.2276}.

\bibitem{buckinghamcr}
Buckingham R.J., Miller P.D., Large-degree asymptotics of rational
 {P}ainlev\'e-{II} functions: critical behaviour, \href{https://doi.org/10.1088/0951-7715/28/6/1539}{\textit{Nonlinearity}}
 \textbf{28} (2015), 1539--1596, \href{https://arxiv.org/abs/1406.0826}{arXiv:1406.0826}.

\bibitem{chudnovsky90}
Chudnovsky D.V., Chudnovsky G.V., Classical constants and functions:
 computations and continued fraction expansions, in Number Theory ({N}ew
 {Y}ork, 1989/1990), Springer, New York, 1991, 13--74.

\bibitem{clarkson2003piv}
Clarkson P.A., The fourth {P}ainlev\'e equation and associated special
 polynomials, \href{https://doi.org/10.1063/1.1603958}{\textit{J.~Math. Phys.}} \textbf{44} (2003), 5350--5374.

\bibitem{costin12}
Costin O., Huang M., Tanveer S., Proof of the {D}ubrovin conjecture and
 analysis of the tritronqu\'ee solutions of~{$P_I$}, \href{https://doi.org/10.1215/00127094-2429589}{\textit{Duke Math.~J.}}
 \textbf{163} (2014), 665--704, \href{https://arxiv.org/abs/1209.1009}{arXiv:1209.1009}.

\bibitem{cinese1}
De~Martino D., Masoero D., Asymptotic analysis of noisy f\/itness maximization,
 applied to metabolism \& growth, \href{https://doi.org/10.1088/1742-5468/aa4e8f}{\textit{J.~Stat. Mech. Theory Exp.}}
 \textbf{2016} (2016), 123502, 25~pages, \href{https://arxiv.org/abs/1606.09048}{arXiv:1606.09048}.

\bibitem{deifttalk}
Deift P., Universality for mathematical and physical systems, in International
 {C}ongress of {M}athematicians, {V}ol.~{I}, \href{https://doi.org/10.4171/022-1/7}{Eur. Math. Soc.}, Z\"urich, 2007,
 125--152, \href{https://arxiv.org/abs/math-ph/0603038}{math-ph/0603038}.

\bibitem{dubrovintopological}
Dubrovin B., Painlev\'e transcendents in two-dimensional topological f\/ield
 theory, in The {P}ainlev\'e Property, CRM Ser. Math. Phys., Springer, New
 York, 1999, 287--412, \href{https://arxiv.org/abs/math.AG/9803107}{math.AG/9803107}.

\bibitem{dubrovinmazzocco}
Dubrovin B., Mazzocco M., Monodromy of certain {P}ainlev\'e-{VI} transcendents
 and ref\/lection groups, \href{https://doi.org/10.1007/PL00005790}{\textit{Invent. Math.}} \textbf{141} (2000), 55--147,
 \href{https://arxiv.org/abs/math.AG/9806056}{math.AG/9806056}.

\bibitem{elfving1934}
Elfving G., \"Uber eine {K}lasse von {R}iemannschen {F}l\"achen und ihre
 {U}niformisierung, \textit{Acta Soc. Sci. Fennicae} \textbf{2} (1934), 1--60.

\bibitem{erdelyi10}
Erd\'elyi A., Asymptotic expansions, Dover Publications, Inc., New York, 1956.

\bibitem{bateman2}
Erd{\'e}lyi A., Magnus W., Oberhettinger F., Tricomi F.G., Higher
 transcendental functions, {V}ol.~{II}, McGraw-Hill Book Company, New York,
 1953.

\bibitem{eremenko2004}
Eremenko A., Geometric theory of meromorphic functions, in In the Tradition of
 {A}hlfors and {B}ers, {III}, \href{https://doi.org/10.1090/conm/355/06453}{\textit{Contemp. Math.}}, Vol.~355, Amer. Math.
 Soc., Providence, RI, 2004, 221--230.

\bibitem{eremenko}
Eremenko A., Gabrielov A., Analytic continuation of eigenvalues of a quartic
 oscillator, \href{https://doi.org/10.1007/s00220-008-0663-6}{\textit{Comm. Math. Phys.}} \textbf{287} (2009), 431--457,
 \href{https://arxiv.org/abs/0802.1461}{arXiv:0802.1461}.

\bibitem{fedoryuk}
Fedoryuk M.V., Asymptotic analysis. Linear ordinary dif\/ferential equations,
 \href{https://doi.org/10.1007/978-3-642-58016-1}{Springer-Verlag}, Berlin, 1993.

\bibitem{FEL} Felder G., Hemery A.D., Veselov A.P.,
 Zeros of {W}ronskians of {H}ermite polynomials and {Y}oung diagrams,
 \href{https://doi.org/10.1016/j.physd.2012.08.008}{\textit{Phys.~D}} \textbf{241}  (2012), 2131--2137,
 \href{https://arxiv.org/abs/1005.2695}{arXiv:1005.2695}.

\bibitem{filipuk09}
Filipuk G., Halburd R.G., Movable algebraic singularities of second-order
 ordinary dif\/ferential equations, \href{https://doi.org/10.1063/1.3068414}{\textit{J.~Math. Phys.}} \textbf{50} (2009),
 023509, 18~pages, \href{https://arxiv.org/abs/0804.2859}{arXiv:0804.2859}.

\bibitem{fokas}
Fokas A.S., Its A.R., Kapaev A.A., Novokshenov V.Yu., Painlev\'e transcendents.
 {T}he {R}iemann--{H}ilbert approach, \href{https://doi.org/10.1090/surv/128}{\textit{Mathematical Surveys and
 Monographs}}, Vol.~128, Amer. Math. Soc., Providence, RI, 2006.

\bibitem{fokasquantum}
Fokas A.S., Its A.R., Kitaev A.V., Discrete {P}ainlev\'e equations and their
 appearance in quantum gravity, \href{https://doi.org/10.1007/BF02102066}{\textit{Comm. Math. Phys.}} \textbf{142}
 (1991), 313--344.

\bibitem{fokasmugan1988}
Fokas A.S., Mu\u{g}an U., Ablowitz M.J., A method of linearization for
 {P}ainlev\'e equations: {P}ainlev\'e~{${\rm IV}$},~{${\rm V}$}, \href{https://doi.org/10.1016/0167-2789(88)90021-8}{\textit{Phys.
 D}} \textbf{30} (1988), 247--283.

\bibitem{gromakrussian}
Gromak V.I., Theory of the fourth Painlev\'e equation, \textit{Differ.
 Equations} \textbf{23} (1987), 506--513.

\bibitem{gromak}
Gromak V.I., Laine I., Shimomura S., Painlev\'e dif\/ferential equations in the
 complex plane, \href{https://doi.org/10.1515/9783110198096}{\textit{De Gruyter Studies in Mathematics}}, Vol.~28, Walter de
 Gruyter \& Co., Berlin, 2002.

\bibitem{its86}
Its A.R., Novokshenov V.Yu., The isomonodromic deformation method in the theory
 of {P}ainlev\'e equations, \href{https://doi.org/10.1007/BFb0076661}{\textit{Lecture Notes in Math.}}, Vol.~1191,
 Springer-Verlag, Berlin, 1986.

\bibitem{jimbomiwaII1981}
Jimbo M., Miwa T., Monodromy preserving deformation of linear ordinary
 dif\/ferential equations with rational coef\/f\/icients.~{II}, \href{https://doi.org/10.1016/0167-2789(81)90021-X}{\textit{Phys.~D}}
 \textbf{2} (1981), 407--448.

\bibitem{joshimilena}
Joshi N., Radnovi\'c M., Asymptotic behavior of the fourth {P}ainlev\'e
 transcendents in the space of initial values, \href{https://doi.org/10.1007/s00365-016-9329-3}{\textit{Constr. Approx.}}
 \textbf{44} (2016), 195--231, \href{https://arxiv.org/abs/1412.3541}{arXiv:1412.3541}.

\bibitem{kapaev1996}
Kapaev A.A., Global asymptotics of the fourth {P}ainlev\'e transcendent,
 available at \url{http://www.pdmi.ras.ru/preprint/1996/96-06.html}.

\bibitem{kapaev1998}
Kapaev A.A., Connection formulae for degenerated asymptotic solutions of the
 fourth {P}ainlev\'e equation, \href{https://arxiv.org/abs/solv-int/9805011}{solv-int/9805011}.

\bibitem{kitaev1988}
Kitaev A.V., Asymptotic description of solutions of the fourth {P}ainlev\'e
 equation on analogues of {S}tokes's rays, \href{https://doi.org/10.1007/BF01101121}{\textit{J.~Soviet Math.}}
 \textbf{54} (1991), 916--920.

\bibitem{lando2004}
Lando S.K., Zvonkin A.K., Graphs on surfaces and their applications,
 \href{https://doi.org/10.1007/978-3-540-38361-1}{\textit{Encyclopaedia of Mathematical Scien\-ces}}, Vol.~141, Springer-Verlag,
 Berlin, 2004.

\bibitem{lukasevic}
Luka\v{s}evi\v{c} N.A., The theory of {P}ainlev\'e's fourth equation,
 \textit{Differ. Equations} \textbf{3} (1967), 771--780.

\bibitem{marquettequesne2016}
Marquette I., Quesne C., Connection between quantum systems involving the
 fourth {P}ainlev\'e transcendent and {$k$}-step rational extensions of the
 harmonic oscillator related to {H}ermite exceptional orthogonal polynomial,
 \href{https://doi.org/10.1063/1.4949470}{\textit{J.~Math. Phys.}} \textbf{57} (2016), 052101, 15~pages,
 \href{https://arxiv.org/abs/1511.01992}{arXiv:1511.01992}.

\bibitem{myphd}
Masoero D., Essays on the {P}ainlev\'e f\/irst equation and the cubic oscillator,
 Ph.D.~Thesis, SISSA, 2010, available at \url{https://urania.sissa.it/xmlui/handle/1963/5007}.

\bibitem{piwkb}
Masoero D., Poles of int\'egrale tritronqu\'ee and anharmonic oscillators. {A}
 {WKB} approach, \href{https://doi.org/10.1088/1751-8113/43/9/095201}{\textit{J.~Phys.~A: Math. Theor.}} \textbf{43} (2010), 095201,
 28~pages, \href{https://arxiv.org/abs/0909.5537}{arXiv:0909.5537}.

\bibitem{piwkb2}
Masoero D., Poles of int\'egrale tritronqu\'ee and anharmonic oscillators.
 {A}symptotic localization from {WKB} analysis, \href{https://doi.org/10.1088/0951-7715/23/10/008}{\textit{Nonlinearity}}
 \textbf{23} (2010), 2501--2507, \href{https://arxiv.org/abs/1002.1042}{arXiv:1002.1042}.

\bibitem{masoerobethe}
Masoero D., Y-system and deformed thermodynamic {B}ethe ansatz, \href{https://doi.org/10.1007/s11005-010-0425-1}{\textit{Lett.
 Math. Phys.}} \textbf{94} (2010), 151--164, \href{https://arxiv.org/abs/1005.1046}{arXiv:1005.1046}.

\bibitem{pibelyi}
Masoero D., Painlev\'e~{I}, coverings of the sphere and {B}elyi functions,
 \href{https://doi.org/10.1007/s00365-013-9185-3}{\textit{Constr. Approx.}} \textbf{39} (2014), 43--74, \href{https://arxiv.org/abs/1207.4361}{arXiv:1207.4361}.

\bibitem{millerrational}
Miller P.D., Sheng Y., Rational solutions of the {P}ainlev\'e-{II} equation
 revisited, \href{https://doi.org/10.3842/SIGMA.2017.065}{\textit{SIGMA}} \textbf{13} (2017), 065, 29~pages,
 \href{https://arxiv.org/abs/1704.04851}{arXiv:1704.04851}.

\bibitem{milne}
Milne A.E., Clarkson P.A., Bassom A.P., Application of the isomonodromy
 deformation method to the fourth {P}ainlev\'e equation, \href{https://doi.org/10.1088/0266-5611/13/2/015}{\textit{Inverse
 Problems}} \textbf{13} (1997), 421--439.

\bibitem{murata}
Murata Y., Rational solutions of the second and the fourth {P}ainlev\'e
 equations, \textit{Funkcial. Ekvac.} \textbf{28} (1985), 1--32.

\bibitem{nevanlinna32}
Nevanlinna R., \"Uber {R}iemannsche {F}l\"achen mit endlich vielen
 {W}indungspunkten, \href{https://doi.org/10.1007/BF02547780}{\textit{Acta Math.}} \textbf{58} (1932), 295--373.

\bibitem{noumi}
Noumi M., Painlev\'e equations through symmetry, \textit{Translations of
 Mathematical Monographs}, Vol.~223, Amer. Math. Soc., Providence, RI, 2004.

\bibitem{noumiyamada}
Noumi M., Yamada Y., Symmetries in the fourth {P}ainlev\'e equation and
 {O}kamoto polynomials, \href{http://projecteuclid.org/euclid.nmj/1114630819}{\textit{Nagoya Math.~J.}} \textbf{153} (1999), 53--86,
 \href{https://arxiv.org/abs/q-alg/9708018}{q-alg/9708018}.

\bibitem{novok15}
Novokshenov V.Yu., Schelkonogov A.A., Distribution of zeroes to generalized
 {H}ermite polynomials, \href{https://doi.org/10.13108/2015-7-3-54}{\textit{Ufa Math.~J.}} \textbf{7} (2015), 54--66.

\bibitem{okamoto1979}
Okamoto K., Sur les feuilletages associ\'es aux \'equations du second ordre \`a
 points critiques f\/ixes de {P}.~{P}ainlev\'e, \textit{Japan.~J. Math.~(N.S.)}
 \textbf{5} (1979), 1--79.

\bibitem{okamoto}
Okamoto K., Studies on the {P}ainlev\'e equations.~{III}. {S}econd and fourth
 {P}ainlev\'e equations, {$P_{{\rm II}}$} and~{$P_{{\rm IV}}$}, \href{https://doi.org/10.1007/BF01458459}{\textit{Math.
 Ann.}} \textbf{275} (1986), 221--255.

\bibitem{nist}
Olver F.W.J., Olde Daalhuis A.B., Lozier D.W., Schneider B.I., Boisvert R.F., Clark C.W., Miller B.R., Saunders B.V. (Editors), NIST digital library of mathematical functions, {R}elease 1.0.15 of 2017-06-01, {a}vailable at \url{http://dlmf.nist.gov}.

\bibitem{reegerfornberg2014}
Reeger J.A., Fornberg B., Painlev\'e {IV}: a numerical study of the fundamental
 domain and beyond, \href{https://doi.org/10.1016/j.physd.2014.04.006}{\textit{Phys.~D}} \textbf{280/281} (2014), 1--13.

\bibitem{roffelsenirrational}
Rof\/felsen P., Irrationality of the roots of the {Y}ablonskii--{V}orob'ev
 polynomials and relations between them, \href{https://doi.org/10.3842/SIGMA.2010.095}{\textit{SIGMA}} \textbf{6} (2010),
 095, 11~pages, \href{https://arxiv.org/abs/1012.2933}{arXiv:1012.2933}.

\bibitem{roffelsennumber}
Rof\/felsen P., On the number of real roots of the {Y}ablonskii--{V}orob'ev
 polynomials, \href{https://doi.org/10.3842/SIGMA.2012.099}{\textit{SIGMA}} \textbf{8} (2012), 099, 9~pages,
 \href{https://arxiv.org/abs/1208.2337}{arXiv:1208.2337}.

\bibitem{sibuya75}
Sibuya Y., Global theory of a second order linear ordinary dif\/ferential
 equation with a polynomial coef\/f\/icient, \textit{North-Holland Mathematics
 Studies}, Vol.~18, North-Holland Publishing Co., Amsterdam-Oxford, American
 Elsevier Publishing Co., Inc., New York, 1975.

\bibitem{steinmetz}
Steinmetz N., On {P}ainlev\'e's equations {I}, {II} and {IV}, \href{https://doi.org/10.1007/BF02791235}{\textit{J.~Anal.
 Math.}} \textbf{82} (2000), 363--377.

\bibitem{turbiner2016}
Turbiner A.V., One-dimensional quasi-exactly solvable {S}chr\"odinger
 equations, \href{https://doi.org/10.1016/j.physrep.2016.06.002}{\textit{Phys. Rep.}} \textbf{642} (2016), 1--71,
 \href{https://arxiv.org/abs/1603.02992}{arXiv:1603.02992}.

\bibitem{wasow12}
Wasow W., Linear turning point theory, \href{https://doi.org/10.1007/978-1-4612-1090-0}{\textit{Applied Mathematical Sciences}},
 Vol.~54, Springer-Verlag, New York, 1985.

\end{thebibliography}
\end{document}